\documentclass{amsart}
\usepackage{amsfonts}
\usepackage{amscd}
\usepackage{amsbsy}
\usepackage{epsfig}
\usepackage{epic,eepic}
\usepackage{graphicx}
\usepackage[all]{xy}
\usepackage{xypic}
\usepackage{psfrag}
\usepackage{amsmath}
\usepackage{amsthm}
\usepackage{setspace}
\usepackage{hyperref}

\theoremstyle{plain}
\newtheorem{theorem}{Theorem}[section]
\newtheorem{lemma}[theorem]{Lemma}
\newtheorem{proposition}[theorem]{Proposition}
\newtheorem{corollary}[theorem]{Corollary}

\theoremstyle{definition}
\newtheorem{remark}[theorem]{Remark}

\newcommand{\MM}{\mathcal M}
\newcommand{\OM}{\Omega{\mathcal M}}
\newcommand{\BM}{\overline{\mathcal M}}
\newcommand{\BMg}{\overline{\mathcal M}_{g}}

\newcommand{\proj}{{\mathbb P}}
\newcommand{\moduli}[1][g]{{\mathcal M}_{#1}}

\newcommand{\omoduli}[1][g]{{\Omega\mathcal M}_{#1}}
\newcommand{\wtomoduli}[1][g]{{\Omega\widetilde{\mathcal M}_{#1}}}

\newcommand{\pomoduli}[1][g]{{\proj\Omega\mathcal M}_{#1}}

\newcommand{\obarmoduli}[1][g]{{\Omega\overline{\mathcal M}}_{#1}} 
\newcommand{\barmoduli}[1][g]{{\overline{\mathcal M}}_{#1}}

\newcommand{\oteich}[1][g]{{\Omega\mathcal T}_{#1}}

\newcommand{\PP}{\mathbb P}
\newcommand{\TQ}{\widetilde{\mathcal Q}}
\newcommand{\BQQ}{\overline{\mathcal Q}}

\newcommand{\ol}{\overline}

\newcommand{\CC}{\mathcal C}
\newcommand{\bC}{\mathbb C}

\newcommand{\cE}{\mathcal E}
\newcommand{\QQ}{\mathcal Q}
\newcommand{\cQ}{\mathcal Q}
\newcommand{\cX}{\mathcal X}
\newcommand{\cF}{\mathcal F}

\newcommand{\chern}{\operatorname{ch}}
\newcommand{\OO}{\mathcal O}
\newcommand{\cO}{\mathcal O}

\newcommand{\RR}{\mathbb R}
\newcommand{\ZZ}{\mathbb Z}
\newcommand{\LL}{\mathcal L}
\newcommand{\II}{\mathcal I}

\newcommand{\fun}{{\rm fun}}

\newcommand{\divisor}{\operatorname{div}}
\newcommand{\eff}{\operatorname{eff}}
\newcommand{\XX}{\mathcal X}
\newcommand{\hyp}{{\operatorname{hyp}}}
\newcommand{\even}{{\operatorname{even}}}
\newcommand{\odd}{{\operatorname{odd}}}

\newcommand{\nh}{{\operatorname{non-hyp}}}

\newcommand{\nonhyp}{{\operatorname{non-hyp}}}
\newcommand{\PGL}{\operatorname{PGL}}
\newcommand{\SL}{\operatorname{SL}}
\newcommand{\irr}{{\operatorname{irr}}}
\newcommand{\reg}{{\operatorname{reg}}}
\DeclareMathOperator{\Pic}{Pic}

\newcommand{\Teichmuller}{Teich\-m\"uller\ }







\def\be{\begin{equation}}   \def\ee{\end{equation}}     \def\bes{\begin{equation*}}    \def\ees{\end{equation*}}
\def\ba{\be\begin{aligned}} \def\ea{\end{aligned}\ee}   \def\bas{\bes\begin{aligned}}  \def\eas{\end{aligned}\ees}

\begin{document}
\bibliographystyle{halpha}
\title[Quadratic differentials in low genus]{Quadratic differentials in low genus:
Exceptional and non-varying}

\author{Dawei Chen}

\address{Department of Mathematics, Boston College, Chestnut Hill, MA 02467, USA}

\email{dawei.chen@bc.edu}

\author{Martin M\"{o}ller}

\address{Institut f\"{u}r Mathematik, Goethe-Universit\"{a}t Frankfurt, Robert-Mayer-Str. 6-8, 60325 Frankfurt am Main, Germany}

\email{moeller@math.uni-frankfurt.de}

\thanks{During the preparation of this work the first named author is partially supported by the 
NSF grant DMS-1101153 (transferred as DMS-1200329). The second named author is partially supported
by the ERC-StG 257137. }

\begin{abstract}
We give an algebraic way of distinguishing the components of the exceptional strata
of quadratic differentials in genus three and four. The complete list of these
strata is $(9, -1)$, $(6,3,-1)$, $(3,3,3, -1)$ in genus three and $(12)$, $(9,3)$,
$(6,6)$, $(6,3,3)$ and $(3,3,3,3)$ in genus four.
\par
This result is part of a more general investigation of disjointness 
of \Teichmuller curves with divisors of Brill-Noether type on the moduli space of curves.
As a result we show that for many strata of quadratic differentials in low genus the 
sum of Lyapunov exponents for the \Teichmuller geodesic flow is the same for all 
Teichm\"uller curves in that stratum. 
\end{abstract}

\maketitle

\setcounter{tocdepth}{1}
\tableofcontents

\section{Introduction}

The moduli space $\omoduli$ of abelian differentials $\omega$ on a Riemann surface $X$
decomposes into strata $\omoduli(m_1,\ldots,m_k)$ according to the 
number and multiplicity of the zeros of $\omega$.
Since the Teichm\"uller geodesic flow preserves these strata,
many problems in Teichm\"uller theory can be dealt with stratum by stratum.
Similarly, the moduli space of quadratic differentials parameterizing pairs $(X,q)$
of a Riemann surface $X$ and a quadratic differential $q$ with at most simple
poles is stratified in the same way into $\QQ(d_1,\ldots,d_n)$.
\par
Not much is known on the topology of the strata. Kontsevich and Zorich determined
in \cite{kz03} the connected components of $\omoduli(m_1,\ldots,m_k)$. Some have
hyperelliptic components, some are distinguished by the parity of a spin structure
and the others are connected. The connected components for strata of quadratic
differentials were determined by Lanneau in \cite{lanneauENS}. Some have hyperelliptic
components and besides a short list of \emph{exceptional} cases, all the other strata are connected.
\par
To find an algebraic invariant distinguishing the exceptional cases remained
an open problem. Our first main result provides a solution to this problem. 
\par
\begin{theorem}
Let $\QQ(k_1,k_2,k_3,-1)$ be one of the  strata $(9,-1)$, $(6,3,-1)$ 
or $(3,3,3,-1)$ in genus three.
Then the stratum has precisely two connected components, 
distinguished by 
$$\dim H^0(X, \divisor(q)_0/3) = 1 \quad \text{resp.} \quad 
\dim H^0(X, \divisor(q)_0/3) = 2.$$
\end{theorem}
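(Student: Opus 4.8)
The plan is to treat $\delta(X,q):=\dim H^0(X,\divisor(q)_0/3)$ as a deformation invariant and to show it separates the two components that Lanneau's theorem already supplies. Write $D=\divisor(q)_0/3$ and let $P_\infty$ be the simple pole. Since every zero of $q$ in these strata has order divisible by $3$, $D$ is a well-defined effective divisor of degree $3$, varying holomorphically with $(X,q)$. Riemann--Roch on the genus-three curve $X$ gives $h^0(D)-h^0(K_X-D)=3+1-3=1$, and as $\deg(K_X-D)=1$ carries no pencil on a curve of positive genus we have $h^0(K_X-D)\le 1$; because $D$ is effective we get $\delta\in\{1,2\}$, with $\delta=2$ precisely when $K_X-D$ is effective. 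I would also record the identity $\mathcal O_X(D)^{\otimes 3}\cong K_X^{\otimes 2}(P_\infty)$, immediate from $\divisor(q)\sim 2K_X$, so that $q=c\,t^{3}$ for the tautological section $t$ of $\mathcal O_X(D)$ and $\delta=h^0(X,\mathcal O_X(D))$ measures whether this cube-root differential moves.

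Given Lanneau's count of exactly two components, it then suffices to prove (i) that $\delta$ is constant on connected components and (ii) that both values occur. For (ii) I would exhibit one differential in each stratum with $K_X-D$ effective and one without, e.g. placing the order-nine zero $P$ at a flex of the canonical plane quartic in the $(9,-1)$ case (so that $K_X\sim 3Q+P_\infty$ for the residual point $Q$) versus a generic admissible point; upper semicontinuity of $h^0$ already shows $\{\delta=2\}$ is closed, so the real content lies in the reverse direction.

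The crux is (i), the local constancy of $\delta$, which is not formal, since semicontinuity bounds $\delta$ only from one side. Here I would pass to the canonical double cover $\hat\pi\colon\hat X\to X$ branched at the odd-order singularities, with $\hat\pi^*q=\hat\omega^2$ and $\hat\omega$ anti-invariant for the deck involution $\sigma$. A routine local computation gives, for $(9,-1)$, $\hat g=6$ and $\divisor(\hat\omega)=10\,\hat P$, and for $(3,3,3,-1)$, $\hat g=7$ and $\divisor(\hat\omega)=4\hat P_1+4\hat P_2+4\hat P_3$; in both cases all zeros are even, so $\eta:=\divisor(\hat\omega)/2$ is a theta characteristic whose parity $h^0(\hat X,\eta)\bmod 2$ is a deformation invariant by Atiyah--Mumford and Kontsevich--Zorich, hence constant on components. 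Decomposing the $\sigma$-action, $\hat\pi_*\mathcal O_{\hat X}(\eta)=N_+\oplus N_-$ splits into eigen-line-bundles on $X$ with $h^0(\hat X,\eta)=h^0(X,N_+)+h^0(X,N_-)$; identifying these summands, after tracking the twists by the ramification divisor, and matching one of them to $h^0(X,\mathcal O_X(D))$ is what would pin $\delta$ to the locally constant spin parity.

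The main obstacle is twofold. First, the eigenspace bookkeeping above must be done precisely enough to extract $\delta$ from $h^0(\hat X,\eta)$, including the contribution of the $\mathcal O_X(2P)$-type summand that is not itself $\delta$. Second, and more seriously, the stratum $(6,3,-1)$ is genuinely exceptional: its cover lies in $\Omega\mathcal M_6(4,3,3)$, which has odd-order zeros and is therefore connected with no spin invariant, so the Kontsevich--Zorich data of the cover cannot see the splitting. For this case I would argue intrinsically, proving directly that $\{\delta=2\}$ is open by producing, for each nearby $(X,q)$ with $K_X-D$ effective at the center, a holomorphically varying point $Q$ with $K_X\sim 3Q+P_\infty$ --- equivalently, that the chosen cube root $K_X-D$ of $K_X-P_\infty$ stays effective --- via a transversality argument against the period coordinates of the stratum. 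Establishing this openness uniformly is where the real work lies; once it is in hand, $\{\delta=1\}$ and $\{\delta=2\}$ are open, closed and nonempty, so by Lanneau's theorem they are exactly the two connected components.
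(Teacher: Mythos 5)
There is a genuine gap, and it sits exactly at the step you identify as the crux: the local constancy of $\delta=\dim H^0(X,\divisor(q)_0/3)$. Your proposed mechanism --- the parity of the theta characteristic $\eta=\divisor(\hat\omega)/2$ on the canonical double cover --- cannot detect $\delta$. By Lanneau's earlier theorem on the parity of the spin structure defined by a quadratic differential, $h^0(\hat X,\eta)\bmod 2$ is determined by the signature $(d_1,\ldots,d_n)$ alone, hence is the \emph{same} on both components of $\QQ(9,-1)$ and of $\QQ(3,3,3,-1)$; this is precisely why these strata were exceptional and why an algebraic invariant was an open problem. The eigenspace bookkeeping confirms the failure concretely: for $\QQ(9,-1)$ one finds $\hat\pi_*\OO_{\hat X}(5\hat P)\cong\OO_X(2P)\oplus N_-$ with $\deg N_-=2$ and $N_-^{\otimes 2}\cong\OO_X(5P-p_\infty)$, so neither summand is $\OO_X(3P)$ and $h^0(\hat X,\eta)$ never sees $h^0(X,3P)$. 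For $\QQ(6,3,-1)$ you concede there is no spin structure at all and offer only a hope of a ``transversality argument,'' so in all three cases the deformation invariance --- the entire content of the theorem beyond Lanneau's count --- is unproved.

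The paper's invariant is of a different nature. For non-hyperelliptic $X\subset\PP^2$ one chooses a line $L$ through the pole, obtains a unique plane cubic $E$ with $E\cdot X=\divisor(q)_0+L\cdot X$, and observes that $\LL(X,q)\otimes\OO_E(-1)$ is a $3$-torsion point of $\Pic^0(E)$ whose order ($1$ versus $3$) equals the parity and cannot jump in a family because distinct torsion sections of an elliptic fibration are disjoint; one must then separately rule out components where $E$ degenerates or where $X$ is hyperelliptic, and --- rather than citing Lanneau --- the paper proves irreducibility of each parity locus by constructing it as a fibration over $\moduli[1,n+1]$ with a torsion condition and counting dimensions. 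If you want to salvage your outline, you would need to replace the spin-parity step with an invariant that genuinely computes the order of a $3$-torsion class; the auxiliary elliptic curve is what supplies it.
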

\par
We also construct the connected components using techniques from algebraic
geometry. This provides a proof of the connectedness (and irreducibility) of the two components
that does not rely on any geometry of flat surfaces.
\par
For $g=4$ we discovered that the list of exceptional strata was
incomplete in \cite{lanneauENS}.
\par
\begin{theorem}
Let $\QQ(k_1,\ldots,k_n)$ be one of the genus four strata
$(12)$, $(9,3)$, $(6,6)$, $(6,3,3)$ or $(3,3,3,3)$.
Then the stratum has precisely two non-hyperelliptic connected components, 
distinguished by 
$$\dim H^0(X, \divisor(q)/3) = 1 \quad \text{resp.} \quad \dim H^0(X, \divisor(q)/3) = 2.$$
\end{theorem}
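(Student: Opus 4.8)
The plan is to convert the analytic invariant into an effectivity question for a cube root of the canonical bundle, and then to realize the two non-hyperelliptic components as two explicit irreducible algebraic families, following the same strategy as in the genus-three case (Theorem 1.1).

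\emph{Reformulation.} Since every zero order is divisible by $3$ and the listed genus-four strata have no poles, $D := \divisor(q)/3$ is an effective divisor of degree $(4g-4)/3 = 4$, and $3D = \divisor(q) \in |2K_X|$, so $3D \sim 2K_X$. Set $\eta := \OO_X(K_X - D)$. Then $\deg \eta = 2$ and $\eta^{\otimes 3} \cong \OO_X(3K_X - 3D) \cong \OO_X(K_X)$, i.e. $\eta$ is a cube root of the canonical bundle. Riemann--Roch together with Serre duality gives
\[
\dim H^0(X, D) = 1 + \dim H^0(X, K_X - D) = 1 + \dim H^0(X, \eta).
\]
On a non-hyperelliptic curve a degree-two line bundle has at most one section, so $\dim H^0(X,D) \in \{1,2\}$, the two cases being exactly $\eta$ non-effective, resp. effective. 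This shows the invariant takes only the two asserted values on the non-hyperelliptic locus and identifies it with the effectivity of the cube root $\eta$, equivalently with whether $|D| = |2\eta|$ is a single divisor or a pencil.

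\emph{Reduction to irreducibility.} Each stratum $\QQ(k_1,\dots,k_n)$ is smooth of pure dimension $2g-2+n$. Write its non-hyperelliptic part as the disjoint union of $\QQ^{(1)} = \{\dim H^0(\eta) = 0\}$ and $\QQ^{(2)} = \{\dim H^0(\eta)=1\}$. By upper semicontinuity of $h^0$ the locus $\QQ^{(2)}$ is closed. If we show in addition that $\QQ^{(2)}$ has dimension $2g-2+n$, then, being a closed full-dimensional subset of the smooth pure-dimensional variety, it is a union of connected components; hence so is its open complement $\QQ^{(1)}$. It then suffices to prove that each of $\QQ^{(1)}$ and $\QQ^{(2)}$ is non-empty and irreducible in order to conclude that there are precisely two non-hyperelliptic components, distinguished as claimed.

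\emph{Construction of the two families.} Via the correspondence $q \leftrightarrow D$ (the differential $q$ being recovered up to scale from $3D \sim 2K_X$), I would realize each $\QQ^{(i)}$ as the image of an irreducible parameter space built from the canonical model $X = Q \cap S \subset \proj^3$, where $K_X = \OO_X(1)$ and $|2K_X|$ is cut out by quadric surfaces. The profile of the stratum becomes a contact condition: the quadric cutting out $3D = \divisor(q)$ must meet $X$ at $p_i$ with multiplicity $k_i$, while the effectivity of $\eta = \OO_X(E)$ becomes the condition that the hyperplane representing $3E \in |K_X|$ osculates $X$ to order three at the points of $E$. For $\QQ^{(2)}$ one parameterizes the curve together with this osculating hyperplane and the points of $E$; for $\QQ^{(1)}$ one parameterizes the curve together with the cube root $\eta$ and the divisor of $|2\eta|$ of prescribed profile. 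In each case one aims to fiber the parameter space over an irreducible base with irreducible fibers and to match the dimension $2g-2+n$.

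\emph{Main obstacle.} The hard part is the last step: proving that these incidence varieties of osculation and contact conditions on the canonical curve are irreducible of the expected dimension. A priori the contact conditions define incidence schemes that could be reducible or of excess dimension, so one must control them, most plausibly by degenerating to the boundary of $\BMg$ and invoking (limit) linear series, or by a monodromy argument showing that configurations of a given type form a single orbit. One must also establish non-emptiness of both families with the specified partition and check that neither falls into the hyperelliptic locus, so that the two pieces are genuinely the non-hyperelliptic components, separate from the hyperelliptic components of Lanneau's classification; since the latter recorded fewer non-hyperelliptic components for these genus-four strata, this simultaneously corrects that list.
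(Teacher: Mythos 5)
Your reformulation is correct and tidy: with $D=\divisor(q)/3$ of degree $4$ and $3D\sim 2K_X$, the bundle $\eta=\OO_X(K_X-D)$ is a cube root of $\omega_X$ with $h^0(X,D)=1+h^0(X,\eta)$, so on non-hyperelliptic curves the invariant takes exactly the values $1,2$ according to whether $\eta$ is effective. This is equivalent to the paper's parity, which is phrased instead as a torsion condition $\LL(X,q)\sim\OO_E(1)$ versus $\LL(X,q)\not\sim\OO_E(1)$ on an auxiliary \emph{parity curve} $E$: the unique section of $|\OO_Q(2)|$ on the quadric $Q\supset X$ with $E\cdot X=\divisor(q)$. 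However, everything you label the ``main obstacle'' is precisely the content of the theorem, and your proposal does not supply it. You need (a) that the locus $\{h^0(D)=2\}$ is of full dimension (a priori effectivity of a degree-two line bundle is one condition, so this locus could be a divisor in the stratum rather than a component), and (b) that both loci are irreducible. Your suggested routes (limit linear series, monodromy on osculation loci) start from the curve $X$ and its $3^8$ cube roots of $\omega_X$, which leads into the irreducibility problem for moduli of $3$-spin structures of prescribed type coupled with contact conditions --- plausible but not carried out. The paper's key device, which is absent from your plan, is to invert the fibration: it parameterizes first the elliptic curve $E$ of class $(2,2)$ with marked points $r_i$ and a degree-two class $D$ subject to $\sum (k_i/3)r_i\sim 2q_1+D$ (resp.\ the order-three alternative), where irreducibility is classical (torsion data on elliptic curves), and then recovers $X$ as a section of $|\OO_Q(3)|$ through the configuration, so the fibers are open subsets of linear systems; the dimension count $n+5=\dim\PP\QQ$ drops out of two short exact sequences on $Q$. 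Deformation invariance of the parity is then the rigidity of torsion sections in elliptic fibrations, not your semicontinuity argument.

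Two further points. First, your step ``closed and full-dimensional in a smooth pure-dimensional variety, hence a union of connected components'' is not correct as stated: a closed full-dimensional subset can consist of one whole component plus lower-dimensional pieces of others. The argument can be repaired (a closed \emph{irreducible} subset of full dimension fills one component, and its complement is then open and closed), but as written it conflates the two things you still need to prove. Second, the theorem requires substantial technical control that your plan does not mention: the parity curve $E$ (equivalently, the geometric realization of $\eta$ and $D$ on the canonical model) can degenerate --- $X$ may be hyperelliptic or Gieseker--Petri special, or $E$ may be non-reduced or have $\divisor(q)$ meeting its singular locus --- and the paper devotes Lemma~\ref{le:g4Esuffsmooth} and Lemma~\ref{le:noGKhypcomp} to showing these loci have positive codimension in every non-hyperelliptic component and that the parity does not jump there. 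Without this, the locally constant invariant is only defined on an open subset and one cannot conclude anything about components. So the proposal is a sound reformulation plus a correct high-level reduction, but the irreducibility, the dimension counts, and the degeneration analysis --- i.e.\ the actual proof --- are missing.
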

\par
These results were obtained in parallel with our investigation of sums of Lyapunov
exponents for Teichm\"uller curves. In this sense, the present paper
is a continuation to quadratic differentials of our paper \cite{chenmoeller}.
There, a connected component of a stratum was called {\em non-varying}, 
if for all \Teichmuller curves in this stratum the sum of Lyapunov
exponents is the same, and {\em varying} otherwise. We proved
that many strata of abelian differentials in low genus are non-varying.
\par
Here we prove that many strata of quadratic differentials in low genus 
are non-varying. The basic idea is the same. We seek a divisor
$D$ in the moduli space of pointed curves $\barmoduli[g,n]$ such that $D$ is disjoint with all \Teichmuller curves $C$ in a given stratum.
In the case of abelian differentials, \Teichmuller curves do not intersect
higher boundary divisors $\delta_i$ of the moduli space for $i>0$. 
So, roughly, Noether's formula together with $C \cdot D = 0$
are sufficient to solve for the sum of Lyapunov exponents.
\par
For quadratic differentials, however, \Teichmuller curves may intersect 
higher boundary divisors. Thus, from a relation
$$ C \cdot \Big(a\lambda + b \delta_0 + \sum c_i \delta_i\Big) = 0$$
we cannot directly deduce the ratio $(C \cdot \delta)/(C \cdot \lambda)$,  
where $\delta = \sum \delta_i$ is the total boundary, and hence it is not sufficient to conclude 
the sum of Lyapunov exponents. It requires a considerable amount of work using both algebraic geometry and flat geometry to simplify the situation in the above equation. 
\par
Our second main result is the following list of non-varying strata of quadratic differentials.  
\par
\begin{theorem}
Consider the strata of quadratic differentials in low genus. 
\par
(1) In genus one, the strata $\QQ(n,-1^n)$ and 
$\QQ(n-1,1,-1^{n})$ are non-varying for $n\geq 2$ (Theorem~\ref{thm:nvg1}). 
\par
(2) In genus two, there are $12$ non-varying strata 
among all strata of dimension up to seven (Theorem~\ref{thm:nvg2}). 
\par
(3) In genus three, there are $19$ non-varying strata 
among all non-exceptional strata of dimension up to eight (Theorem~\ref{thm:nvg3}) and $6$ non-varying strata among all exceptional strata 
(Theorem~\ref{thm:execg3NV}). 
\par
(4) In genus four, there are $8$ non-varying strata among all non-exceptional strata of dimension up to nine 
(Theorem~\ref{thm:nvg4}) and $7$ non-varying strata among all exceptional strata 
(Theorem~\ref{thm:execg4NV}).  
\end{theorem}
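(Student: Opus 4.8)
The plan is to convert the non-varying statement into an intersection-theoretic one on a suitable compactified moduli space and then to feed in a vanishing coming from a Brill--Noether type divisor. For a quadratic differential $(X,q)$ in a stratum $\QQ(d_1,\dots,d_n)$, the first step is to pass to the canonical double cover $\pi\colon\widehat X\to X$, on which $q$ becomes the square of an abelian differential $\widehat\omega$. The Lyapunov spectrum of a \Teichmuller curve $C$ then splits into the invariant and anti-invariant parts under the covering involution, and the quantity we must show is non-varying is the anti-invariant sub-sum. By the Eskin--Kontsevich--Zorich formula, in the form adapted by Chen--M\"oller in \cite{chenmoeller}, this sum can be written as
\[
L(C)=c(d_1,\dots,d_n)+\frac{1}{\deg(C)}\,\bigl(\text{boundary contribution}\bigr),
\]
where $c(d_1,\dots,d_n)$ is an explicit rational number depending only on the orders of the singularities, hence only on the stratum. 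Thus $L$ is non-varying on the stratum precisely when the normalized boundary term is the same for every \Teichmuller curve in it.

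Algebraically I would realize the boundary term as a ratio of intersection numbers on the closure $\overline{C}\subset\barmoduli[g,n]$ (or on its Prym cover). Writing the defining relation of the \Teichmuller curve as
\[
C\cdot\Bigl(a\lambda+b\delta_0+\sum_{i>0}c_i\delta_i\Bigr)=0,
\]
one sees, as noted in the introduction, that in the abelian case all $C\cdot\delta_i$ with $i>0$ vanish, so the single unknown ratio $(C\cdot\delta)/(C\cdot\lambda)$ is determined and Noether's formula closes the computation. The core of the quadratic case is therefore to gain enough control on the higher boundary intersections $C\cdot\delta_i$. I would do this by a local analysis of the degenerations of the flat surfaces in the stratum: each boundary point of $\overline{C}$ corresponds to a stable curve obtained by shrinking a collection of saddle connections or a cylinder, and the flat-geometric picture—the orders of the singularities that collide and which components they lie on—pins down exactly which $\delta_i$ are met and with what multiplicity. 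The aim is to show that the higher boundary contributions either vanish or are expressible through the fixed local invariants of the stratum.

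The vanishing input comes from a divisor $D\subset\barmoduli[g,n]$ of Brill--Noether or Weierstrass type, chosen so that $D$ is disjoint from every \Teichmuller curve in the stratum. The natural candidates are refinements of the loci appearing in the two exceptional-strata theorems above: the locus where $\divisor(q)/3$ (or an appropriate subcanonical divisor built from the zeros of $q$) carries an unexpected section, together with classical Brill--Noether and Weierstrass divisors. One first proves that no curve underlying a flat surface in the stratum can be special in the relevant sense, giving $C\cdot\overline{D}=0$; then one expands $\overline{D}$ in the standard basis $\lambda,\psi_j,\delta_0,\delta_{\mathrm{irr}},\delta_{i,S}$ of $\Pic(\barmoduli[g,n])\otimes\mathbb{Q}$. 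Combined with the facts that $C\cdot\psi_j$ and $C\cdot\lambda$ are computable from the fixed orders of the zeros, and with the boundary control from the previous step, this linear relation determines the normalized boundary term, and hence $L(C)$, independently of $C$.

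Finally, since the theorem is a list, after setting up this machinery I would carry out the verification stratum by stratum and genus by genus, exhibiting for each claimed stratum an explicit disjoint divisor and checking that the resulting system of relations yields the non-varying conclusion; genus one is treated separately by direct elliptic-curve computations. The main obstacle, as the introduction already signals, is the control of the higher boundary divisors $C\cdot\delta_i$ for $i>0$: unlike the abelian case these do not vanish, and disentangling their contribution—matching the flat-geometric degeneration data with the algebraic boundary multiplicities and arranging that the surviving terms are stratum-constant—is where most of the work lies. A secondary difficulty is the construction of the Brill--Noether divisors and the proof that they avoid the (a priori only analytically described) \Teichmuller curves, which must be tailored to each stratum.
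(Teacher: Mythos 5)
Your overall architecture is the one the paper actually uses: reduce non-varying to an intersection-theoretic statement via the Eskin--Kontsevich--Zorich formula (Proposition~\ref{intersection} turns $C\cdot\lambda$ and $C\cdot\delta$ into functions of the Siegel--Veech constant), control the higher boundary intersections by the flat-geometric description of degenerations (Proposition~\ref{prop:Teichbdconstr}, Corollary~\ref{component}, Proposition~\ref{prop:disjointbydegree}), and close the system with $\ol{C}\cdot D=0$ for a pointed Weierstrass or Brill--Noether divisor $D$, checked stratum by stratum. Two small slips: the quantity that must be non-varying is $L^+$, the sum over the \emph{invariant} part of the cohomology of the double cover (the part that descends to $X$), not the anti-invariant one; and $C\cdot\lambda$ is not computable from the orders of the zeros alone --- it is precisely the unknown that the relation $\ol{C}\cdot D=0$ determines, after the boundary terms of $D$ are shown not to meet $\ol{C}$.

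There is, however, a genuine gap for part of the claimed list. The counts in Theorems~\ref{thm:execg3NV} and~\ref{thm:execg4NV} include the irregular exceptional components $\QQ(6,3,-1)^{\irr}$, $\QQ(12)^{\irr}$ and $\QQ(9,3)^{\irr}$, and for these the disjointness strategy cannot work as stated: by the very definition of the parity, the lifted \Teichmuller curves are \emph{contained in} the relevant Brill--Noether divisors ($BN^1_{3,(2,1)}$, $BN^1_{4,(4)}$, $BN^1_{4,(3,1)}$), so $\ol{C}\cdot D$ is not even defined as a transverse intersection and no relation of the form $\ol{C}\cdot D=0$ is available. Your suggestion of passing to a ``refined'' divisor inside the Brill--Noether locus is noted in the paper as possible in principle but is not carried out (it would require the Picard group and singularities of the Brill--Noether divisor itself); instead the paper switches to a different mechanism: it proves that $h^0(X,\sum a_iz_i)$ is constant for suitable $a_i$ along every fiber of the \Teichmuller curve, uses this to build a filtration of $f_*\OO_{\XX}(\sum d_jS_j)$ whose graded pieces are line bundles on the sections, and computes $c_1(f_*\OO_{\XX}(\sum d_jS_j))$ directly, feeding the result into a Grothendieck--Riemann--Roch identity for $f_*(\omega^{\otimes 2})$ to extract $L^+$ (Appendix~\ref{sec:filtration}). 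Without some such replacement your plan does not reach three of the components in the statement. A second, smaller omission is the hyperelliptic components in genus four, whose non-varying property comes not from any divisor calculation but from the explicit formula for double covers of genus zero half-translation surfaces (Corollary~\ref{cor:hypNonvar}).
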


Furthermore in genus five, we show that even the stratum with a unique zero is varying (Appendix~\ref{sec:varying}).  
Therefore, it seems quite plausible that our list of non-varying
strata (including all the known hyperelliptic strata) is complete.
\par
The paper is organized as follows. In Section~\ref{sec:backmoduli} we provide the background on strata of abelian
and quadratic differentials. A result of independent interest 
shows that near certain boundary strata of the moduli space the period and 
plumbing parameters are coordinates of strata of quadratic differentials.
\par
In Section~\ref{sec:divclass} we recall the Picard groups of moduli 
spaces and various divisor classes. Section~\ref{sec:propTeich} discusses properties of 
Teichm\"uller curves generated by quadratic differentials near the boundary of the moduli space.
\par
In order to prove disjointness of Teichm\"uller curves with various divisors
in genus three and four along the hyperelliptic locus and the Gieseker-Petri
locus, the use of the canonical model is not sufficient. Instead, we need to use
the bicanonical model. The necessary background is provided in Section~\ref{sec:limits}. Finally, 
Sections~\ref{sec:g3exceptional} to~\ref{sec:g4nonexc} contain the discussion 
of irreducible components and non-varying strata summarized in our main results.
\par
{\bf Acknowledgments:} We thank Vincent Delecroix and Anton Zorich for
supplying us with an ample amount of computer data on which our
investigation of non-varying strata and a proof for varying strata in 
the appendix were based. We also want to thank Izzet Coskun and Joe Harris 
for helpful suggestions on the geometry of low genus curves. Moreover, 
we thank Fei Yu for the conversation that led to the results in  
Appendix~\ref{sec:filtration}. 
\par
Part of the 
work was done during the Oberwolfach Meeting  'Billiards, Flat Surfaces 
and Dynamics on Moduli Spaces', May 2011 and the Park City Summer Program 
'Moduli Spaces of Riemann Surfaces', July 2011. We would like to thank the 
organizers for invitation and hospitality.

\section{Background on moduli spaces} \label{sec:backmoduli}

\subsection{Strata and hyperelliptic loci} \label{sec:hyploci}
For $d_i \geq -1$ and $\sum_{i=1}^n d_i = 4g-4$, let $\cQ(d_1,\ldots,d_n)$ 
denote the {\em moduli space of quadratic differentials}. It parameterizes
pairs $(X,q)$ of a Riemann surface $X$ and a quadratic differential $q$ on $X$ 
that have $n$ distinct zeros or poles of order $d_1,\ldots, d_n$. 
The condition $d_i\geq -1$ ensures that the quadratic 
differentials in $\cQ(d_1,\ldots,d_n)$ have at most simple poles and
that their total flat volume is thus finite. The pairs $(X,q)$
are called {\em half-translation surfaces}. We denote by 
$\PP\cQ(d_1,\ldots,d_n) = \cQ(d_1,\ldots,d_n)/\bC^*$
the associated projectivized space.
\par
Let $\omoduli$ denote the \emph{Hodge bundle} of holomorphic one-forms
over the moduli space $\moduli$ of genus $g$ curves and let $\pomoduli$ denote the associated projective bundle. The spaces
$\omoduli$ and $\pomoduli$ are stratified
according to the zeros of one-forms. For $m_i\geq 1$ and $\sum_{i=1}^k m_i = 2g-2$, 
let $\omoduli(m_1,\ldots,m_k)$ denote the stratum parameterizing one-forms that
have $k$ distinct zeros of order $m_1,\ldots, m_k$. 
\par
Denote by $\barmoduli$ the Deligne-Mumford compactification
of $\moduli$. The Hodge bundle extends
to the boundary of $\barmoduli$, parameterizing {\em stable one-forms}
or equivalently sections of the dualizing sheaf.
We denote the total space of this extension by $\obarmoduli$.
\par
Points in $\omoduli$, called {\em flat surfaces}, are usually
written as $(X,\omega)$ for a one-form $\omega$ on $X$.
For a stable curve $X$, denote the dualizing sheaf 
by $\omega_X$. We will stick to the notation that points in
$\obarmoduli$ are given by  a pair $(X,\omega)$ with $\omega \in H^0(X,\omega_X)$.
\par
If the quadratic differential is not a global square of a one-form, 
there is a {\em canonical double covering} $\pi: Y \to X$
such that $\pi^* q = \omega^2$. This covering is ramified precisely
at the zeros of odd order of $q$ and at the poles. It gives a
map
$$\phi: \cQ(d_1,\ldots,d_s) \to \omoduli(m_1,\ldots, m_k), $$
where the signature $(m_1,\ldots, m_k)$ is determined by the ramification
type (see \cite{kz03} for more details).
\par
If the domain and the range of the map $\phi$ have the same dimension
for some signature, we call the image a {\em component of hyperelliptic flat surfaces}
of the corresponding stratum of abelian differentials. This can only happen if the domain of $\phi$ 
parameterizes genus zero curves.
More generally, if the domain of $\phi$ parameterizes
genus zero curves, we call the image a {\em locus of hyperelliptic flat surfaces}
in the corresponding stratum. These loci are often called hyperelliptic loci, 
e.g.\ in \cite{kz03} and \cite{ekz}. We prefer to reserve {\em hyperelliptic locus} for the
subset of $\moduli$ (or its closure in $\barmoduli$) parameterizing hyperelliptic curves 
and thus specify with 'flat surfaces' if we speak of subsets of $\omoduli$.
\par
Instead of taking the canonical double covering one can start with $(X,q)$
in a stratum of quadratic differentials, prescribe the topology of a double
covering with branch points contained in the set of zeros and
poles of $q$ and consider the locus of branched coverings $(Y,q_Y)$ obtained in that way.
\par
The main result of \cite{lahypcomp} states that only if $g(X)=0$ and only for the following 
three types of non-canonical double coverings the dimensions of the strata
containing $(X,q)$ resp.\ $(Y,q_Y)$ coincide.
\begin{itemize}
\item[(1)] $\QQ(2(g-k)-3, 2k+1, -1^{2g+2}) \to \QQ(2(g-k)-3, 2(g-k)-3, 2k+1, 2k+1)^{\hyp}$. 
\item[(2)] $\QQ(2(g-k)-3, 2k, -1^{2g+1}) \to \QQ(2(g-k)-3, 2(g-k)-3, 4k+2)^{\hyp}$. 
\item[(3)] $\QQ(2(g-k)-4, 2k, -1^{2g}) \to \QQ(4(g-k)-6, 4k+2)^{\hyp}$. 
\end{itemize}
\par
Consequently the images of these maps are connected components of
the corresponding strata of quadratic differentials. They will be called {\em components of 
hyperelliptic half-translation surfaces}.
\par

\subsection{Sum of Lyapunov exponents and Siegel-Veech constant} 

{\em Lyapunov exponents} measure the Hodge norm growth of cohomology classes
under parallel transport along the \Teichmuller geodesic flow. The individual
exponents are hard to calculate, but their sum is a rational number 
that can be evaluated, one \Teichmuller curve at a time. The same holds
for the partial sum over all Lyapunov exponents that belong to a local
subsystem, in case the local system with fiber $H^1(X,\RR)$ over the
\Teichmuller curve splits into several subsystems. See \cite{moelPCMI} for a survey
on these results and related references.
\par
For a Teichm\"uller curve $C$ generated by $(X,q)$ in $\QQ(d_1,\ldots,d_s)$, let $(Y,\eta)$ be
the canonical double covering. The curve $Y$ comes with an involution $\tau$.
Its cohomology splits into the $\tau$-invariant and $\tau$-anti-invariant part. 
Adapting the notation of \cite{ekz} we 
let $g = g(X)$ and $g_{\eff} = g(Y) -g$. Let $\lambda^{+}_i$ be the
Lyapunov exponents of the $\tau$-invariant part of $H^1(Y,\RR)$ and
let $\lambda^{-}_i$ be the Lyapunov exponents of the $\tau$-anti-invariant part.
The $\tau$-invariant part descends to $X$ and hence the $\lambda^{+}_i$
are the Lyapunov exponents of $(X,q)$ we are primarily interested in.
Define
\ba
L^{+} &= \lambda^{+}_1 + \cdots + \lambda^{+}_g, \\
L^{-} &= \lambda^{-}_1 + \cdots + \lambda^{-}_{g_{\eff}}. 
\ea
The role of $L^{+}$ is analogous to the ordinary sum of Lyapunov exponents 
in the case of abelian differentials. 
\par
The main result of \cite{ekz} expresses the sum of Lyapunov exponents as 
\begin{equation} \label{eq:EKZ}
\begin{aligned}
L^{+} = c + \kappa, \quad \text{where} \quad 
\kappa = \frac{1}{24}\Big(\sum_{j=1}^n \frac{d_j(d_j+4)}{d_j+2}\Big)
\end{aligned}
\end{equation}
and where $c$ is the (area) Siegel-Veech constant of $(X,q)$. We will
not give the definition of Siegel-Veech constants here but rather
note that a similar
formula holds for $Y$ and the Siegel-Veech constants of $X$ and $Y$ are
closely related. As a result, \cite{ekz} obtain the key formula
\be \label{eq:L+L-}
L^{-} - L^{+} = \frac{1}{4} \sum_{odd \ d_j} \frac{1}{d_j+2}. 
\ee 
\par
Applying this formula to the various double coverings associated to
hyperelliptic half-translation surfaces shows that the following components
are non-varying.
\par
\begin{corollary}[\cite{ekz}] \label{cor:hypNonvar} Let $C$ be a 
\Teichmuller curve in one of the components of hyperelliptic half-translation
surfaces. Then: 
\par
For type (1), we have 
$$ L^{+}= \frac{g+1}{2} - \frac{g+1}{2(2g-2k-1)(2k+3)}. $$
\par
For type (2), we have 
$$ L^{+} = \frac{2g+1}{4} - \frac{1}{4(2g-2k-1)}. $$
\par
For type (3), we have 
$$ L^{+} = \frac{g}{2}. $$
\end{corollary}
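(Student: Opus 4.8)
The plan is to pin down $L^{+}$ from two linear relations and then read off the three formulas. The first relation is \eqref{eq:L+L-}, which gives $L^{-}-L^{+}$ in terms of the signature of $q$ on the genus $g$ surface $(X,q)$. The second will be the total sum $L^{+}+L^{-}$, i.e.\ the sum of all Lyapunov exponents of the canonical double cover $(Y,\eta)$; once both are known, $L^{+}=\tfrac12\big((L^{+}+L^{-})-(L^{-}-L^{+})\big)$. Equivalently one can bypass $L^{-}$ and apply \eqref{eq:EKZ} directly to $(X,q)$, writing $L^{+}=c+\kappa$ and evaluating the area Siegel-Veech constant $c$; the two formulations need the same geometric input.

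First I would record the difference for the three target strata. For type (1) the stratum is $\QQ(2(g-k)-3,2(g-k)-3,2k+1,2k+1)$, all four orders odd, so \eqref{eq:L+L-} gives $L^{-}-L^{+}=\tfrac14\big(\tfrac{2}{2g-2k-1}+\tfrac{2}{2k+3}\big)=\tfrac{g+1}{(2g-2k-1)(2k+3)}$. For type (2) the stratum $\QQ(2(g-k)-3,2(g-k)-3,4k+2)$ has only the two orders $2(g-k)-3$ odd, so the difference is $\tfrac{1}{2(2g-2k-1)}$. For type (3) all orders of $\QQ(4(g-k)-6,4k+2)$ are even, the right-hand side of \eqref{eq:L+L-} vanishes, and $L^{-}=L^{+}$. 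In each case the difference depends only on the combinatorial data and is therefore already non-varying.

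The substantive step is the second relation. Since these are components of hyperelliptic half-translation surfaces, $(X,q)$ is a branched double cover of a genus zero half-translation surface $(\PP^1,q_0)$ lying in the corresponding source stratum of the list in Section~\ref{sec:hyploci}, and the canonical cover sits in a tower $Y\to X\to \PP^1$ exhibiting $Y$ as a cyclic cover of $\PP^1$. I would compute $L^{+}+L^{-}$ either by evaluating the area Siegel-Veech constant of $(X,q)$ directly—using that its cylinders are pullbacks of cylinders on $(\PP^1,q_0)$, so that $c$ is controlled by the explicit closed-form Siegel-Veech constant of a genus zero quadratic differential in terms of its cone angles—or, equivalently, by applying the abelian-differential analogue of \eqref{eq:EKZ} to $(Y,\eta)$, whose sum of exponents for such a family of cyclic covers of $\PP^1$ is an explicit fractional-part expression in the branching data. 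Either route gives an answer depending only on the combinatorics of the covering, namely $L^{+}+L^{-}=g+1,\ \tfrac{2g+1}{2},\ g$ in types (1), (2), (3) respectively.

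Combining the two relations then reproduces the stated values; for instance in type (1) one obtains $L^{+}=\tfrac{g+1}{2}-\tfrac{g+1}{2(2g-2k-1)(2k+3)}$, and types (2) and (3) follow identically. I expect the only real obstacle to be this computation of the total sum, equivalently of the Siegel-Veech constant: unlike the difference it is not formal, and making it explicit genuinely uses that $(X,q)$ covers a genus zero surface so that its cylinder geometry reduces to the explicit genus zero data. The remaining bookkeeping—determining which zeros of $q$ are branch points of $Y\to X$, and hence the genus of $Y$ and the orders of the zeros of $\eta$—is routine once this covering structure is in place.
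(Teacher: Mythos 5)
Your two-relation skeleton is reasonable, and your first relation (the values of $L^{-}-L^{+}$ obtained from \eqref{eq:L+L-} for the three target strata) is correct. The gap is in the second relation. Both routes you propose for computing $L^{+}+L^{-}$ (equivalently the area Siegel--Veech constant of $(X,q)$) import a formula that is not available for an individual \Teichmuller curve. In route (a), knowing $c_{\rm area}(\PP^1,q_0)=-\kappa(\PP^1,q_0)$ from $L^{+}(\PP^1,q_0)=0$ is not enough: a cylinder of $(\PP^1,q_0)$ lifts to $X$ either as two cylinders of the same circumference or as one cylinder of doubled circumference, and the latter is weighted by $1/4$ in the Siegel--Veech limit, so $c_{\rm area}(X,q)$ depends on how $c_{\rm area}(\PP^1,q_0)$ splits according to the parity of the number of branch points on each side of the core curve. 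That configuration-refined constant is explicitly known for the Masur--Veech measure, but not a priori for an arbitrary \Teichmuller curve --- its universality is essentially equivalent to the non-varying statement you are trying to prove. Route (b) has the same defect (and $Y\to\PP^1$ is a $(\ZZ/2)^2$-cover, not cyclic): the fractional-part formulas for exponent sums of abelian covers of $\PP^1$ pertain to the full family over $\moduli[0,n]$ or to square-tiled cyclic covers, not to every \Teichmuller curve in the component.

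The missing idea is to apply \eqref{eq:L+L-} a second time, to the genus-zero quotient, instead of computing any Siegel--Veech constant. The deck group of $Y\to\PP^1$ is a Klein four-group $\langle\tau,\rho\rangle$, where $\rho$ is the lift of the hyperelliptic involution that fixes $\eta$; the three intermediate quotients are $X=Y/\tau$, the canonical double cover $\hat Z=Y/\rho$ of $(\PP^1,q_0)$, and a third quotient $Z'=Y/\tau\rho$, which one checks has genus zero in all three cases (e.g.\ from $g(Y)=g(X)+g(\hat Z)+g(Z')$). Hence $H^1(Y)^{\tau=-1}\cong H^1(\hat Z)\oplus H^1(Z')=H^1(\hat Z)$, so
$$L^{-}(X,q)\;=\;L^{-}(\PP^1,q_0)\;=\;\frac14\sum_{{\rm odd}\ d_j^0}\frac{1}{d_j^0+2},$$
the last equality being \eqref{eq:L+L-} applied to $(\PP^1,q_0)$ together with $L^{+}(\PP^1,q_0)=0$. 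This is manifestly the same for every \Teichmuller curve in the component, and subtracting your first relation (\eqref{eq:L+L-} applied to $(X,q)$) gives the stated values; for instance in type (1) one gets $L^{-}=\frac14\bigl(2g+2+\frac{1}{2g-2k-1}+\frac{1}{2k+3}\bigr)$ and subtracting $\frac14\bigl(\frac{2}{2g-2k-1}+\frac{2}{2k+3}\bigr)$ yields $\frac{g+1}{2}-\frac{g+1}{2(2g-2k-1)(2k+3)}$.
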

\par

\subsection{Compactification of $\cQ(d_1,\ldots,d_n)$} \label{sec:compQ}

We now describe the moduli spaces of quadratic differentials
algebraically over a compactification of the moduli
space of curves. This construction has the main feature
that the boundary objects are the stable curves that
appear as limit objects of \Teichmuller curves as we 
will see in Section~\ref{sec:Teichbd}. Since we allow simple poles, 
i.e.\ $d_i=-1$ for some $i$, the spaces $\cQ(d_1,\ldots,d_n)$
are not strata of a single vector bundle, but of several,
according to the number of poles.
\par
Given a signature $(d_1,\ldots,d_n)$, let $k$ denote 
the number of poles, i.e.\ the number of indices $i$ 
with $d_i=-1$. We can assume $d_j \geq 0$ for $1\leq j \leq n-k$ and 
$d_j = -1$ for $n-k < j \leq n$. From now on we work over the moduli space
$\barmoduli[g,k]$, the Deligne-Mumford compactification
of the moduli space of genus $g$ curves with $k$ marked points.
Over $\barmoduli[g,k]$ there is a vector bundle 
$\BQQ_k \to \barmoduli[g,k]$, whose fiber over a stable
pointed curve $(X,p_1,\ldots,p_k)$ parameterizes the sections
$$ q \in H^0(X,\omega_X^{\otimes 2}(p_1 +\cdots + p_k)).$$
\par
Let $\BQQ(d_1,\ldots,d_n)$ be the closure of the subspace of $\BQQ_k$
where the associated divisor of $q$ has zeros (different from the $p_i$) of order $d_1,\ldots,d_{n-k}$.
Thus, a point $(X,q)$ in the interior of $\BQQ(d_1,\ldots,d_n)$ corresponds to a quadratic differential of type 
$(d_1,\ldots,d_n)$ with simple poles at $p_1,\ldots,p_k$, which are smooth points of $X$. 
\par

\subsection{Period coordinates and plumbing coordinates} \label{sec:periodcoord}

Both  $\omoduli(m_1,\ldots,m_k)$ and $\QQ(d_1,\ldots,d_n)$ are known to 
be smooth and they possess a convenient coordinate system given by period
coordinates (\cite{masur82}, \cite{veech90}). To obtain local coordinates on
a neighborhood $U$ of $(X,\omega)$ in the first case, we fix a basis 
of $H_1(Y, Z(\omega),\ZZ)$, where $Z(\omega)$ denotes the locus of zeros of $\omega$. 
For $(X', \omega') \in U$, integration of $\omega'$ along this basis provides a coordinate system.
In the second case, near $(X,q)$ we start with the oriented double 
cover $\pi: Y \to X$ so that $\pi^*q = \omega^2$
for some one-form $\omega$ on $Y$. Let $\sigma$ be the involution
of $Y$ with quotient $X$ and fix a basis of $H_1(Y,Z(\omega),\ZZ)^-$, 
the $\sigma$-anti-invariant part of the relative homology of $Y$ with respect to $Z(\omega)$. 
For $(X',q')$ in a neighborhood of $(X,q)$, 
the integration of a square root of $q'$ along this fixed basis
provides the desired coordinate system.
\par 
For stable curves we need one more type of coordinates to
deform them into smooth curves, 
coming from the construction of plumbing in a cylinder 
(see e.g.\ \cite{wolpert87} or \cite{bers73} and the proof below). 
These functions will not be a coordinate system for all stable
curves, since $\omega$ might be identically zero on one component of a reducible stable curve or 
a flat surface may have a separating node where $\omega$ is holomorphic and thus
the location of the node cannot be detected by periods.
However, for certain classes of translation and half-translation
structures on stable curves the combination of the above
functions, that we call {\em period plumbing coordinates}
still forms a coordinate system.
\par
We call a stable curve together with a stable one-form $(X,\omega)$ of 
{\em polar type} if there does not exist an irreducible component
of $X$ on which $\omega$ vanishes entirely and if $\omega$ has
a pole at each of the nodes of $X$. Similarly, we call a
pair $(X,q)$ of {\em polar type} if there does not exist an irreducible component
of $X$ on which $q$ vanishes entirely and if $q$ has
a double pole at each of the nodes of $X$.
Let $\wtomoduli[g](m_1,\ldots,m_k)$ be the partial compactification
of $\omoduli[g](m_1,\ldots,m_k)$ by adding stable flat surfaces 
of polar type and let $\TQ(d_1,\ldots,d_n)$ be the partial compactification
of $\QQ(d_1,\ldots,d_n)$ by adding stable half-translation surfaces of polar type.
\par
Note that in a stratum of stable one-forms, the stable curves
of polar type cannot possess a separating node, by the residue
theorem. On the other hand, for quadratic differentials, a stable
half-translation surface of polar type may have separating nodes.
In the case of one-forms the argument of the following proposition
is due to \cite{bainbridge07}.
\par
\begin{proposition} \label{prop:periodandplumb}
The partial compactifications of the  
strata $\wtomoduli(m_1,\ldots,m_k)$  and $\TQ(d_1,\ldots,d_n)$ are smooth. 
Local coordinates are given by period plumbing coordinates.
\end{proposition}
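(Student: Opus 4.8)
The plan is to establish smoothness by exhibiting an explicit local parametrization around each boundary point, and then to verify that period plumbing coordinates form a genuine coordinate system, i.e.\ that the total count matches the dimension and that the associated map is a local isomorphism. The strategy is to reduce to the abelian case treated by Bainbridge, and then descend to quadratic differentials via the canonical double cover.

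First I would treat the interior points, where the claim is classical: on $\omoduli(m_1,\ldots,m_k)$ and $\QQ(d_1,\ldots,d_n)$ period coordinates (respectively $\sigma$-anti-invariant period coordinates) already give smooth charts by \cite{masur82} and \cite{veech90}, as recalled above. The content is therefore entirely at the added boundary, namely at stable flat or half-translation surfaces of \emph{polar type}. Here I would set up the plumbing construction explicitly. Given such a boundary point $(X,\omega)$, at each node the form $\omega$ has a pole, so in suitable local coordinates $(u,v)$ on the two branches we may write $\omega = (r/u + \text{hol})\,du$ and $\omega = (-r/v + \text{hol})\,dv$ with matching residues. The plumbing operation glues the two branches by the relation $uv = t$, and one checks that for small $t$ there is a well-defined one-form on the plumbed curve whose restriction limits to $\omega$ as $t \to 0$; the plumbing parameter $t$ becomes one of the coordinate functions. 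The key structural point, following \cite{bainbridge07}, is that \emph{because $\omega$ has a genuine pole at each node} (polar type), the form does not vanish identically on any component, so the location of the node is detected by the periods around it, and the naive obstruction to using periods as coordinates disappears.

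The heart of the argument is a dimension count and a transversality/local-isomorphism statement. I would count: the period coordinates on the smooth part, plus one plumbing parameter $t_\nu$ per node $\nu$, and show this equals $\dim \wtomoduli(m_1,\ldots,m_k)$. Then I would show the resulting map from a polydisc to the partial compactification is a local biholomorphism. Concretely, I expect to argue that the periods vary holomorphically in the plumbing parameters, that the plumbing parameters can be recovered (up to the known ambiguity) from periods of vanishing cycles, and that the differential of the coordinate map is invertible—equivalently, that there are no further local deformations of $(X,\omega)$ fixing all these data. This is where one must be careful that no component carries $\omega \equiv 0$ and, in the one-form case, that no separating node occurs (ruled out by the residue theorem, as noted in the excerpt), so that every node genuinely contributes an independent parameter.

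Finally I would transfer the statement from $\wtomoduli$ to $\TQ(d_1,\ldots,d_n)$ via the canonical double cover $\pi\colon Y\to X$ with $\pi^*q=\omega^2$ and involution $\sigma$. A half-translation surface of polar type lifts to a stable one-form of polar type on $Y$ (a double pole of $q$ at a node of $X$ produces a simple pole, i.e.\ a pole, of $\omega$ at the preimage nodes of $Y$), and the whole period-plumbing chart on $Y$ is $\sigma$-equivariant. Passing to the $\sigma$-anti-invariant part of relative homology—and, for the plumbing directions, to the $\sigma$-equivariant plumbing of pairs of nodes—yields the desired chart downstairs; smoothness descends because taking the $\sigma$-anti-invariant subspace is a linear (hence smooth) operation and the cover is étale over the relevant loci. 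The one genuinely new phenomenon on the quadratic side, already flagged in the excerpt, is that separating nodes are now permitted; so the main obstacle I anticipate is verifying that such separating nodes still contribute an honest plumbing parameter and do not destroy the local-isomorphism property. I would handle this by checking directly that at a separating node where $q$ has a double pole, the residue of $q$ is nonzero and the corresponding vanishing-cycle period (on the anti-invariant part upstairs) depends nontrivially on the plumbing parameter, restoring the detectability that fails precisely in the holomorphic-separating-node situation excluded by the polar-type hypothesis.
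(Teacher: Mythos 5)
Your proposal follows essentially the same route as the paper: period coordinates along the boundary stratum (anti-invariant periods on the canonical double cover in the quadratic case), one plumbing parameter per node, and descent via the $\sigma$-equivariant structure of the double cover, which is exactly the content of the paper's unplumbing map $\Psi$. The only substantive difference is in the verification step --- where you propose a dimension count plus invertibility of the differential, the paper instead exhibits plumbing as an explicit two-sided inverse of unplumbing and upgrades continuity plus holomorphicity off the boundary to biholomorphicity --- and your closing concern about separating nodes is already absorbed into the definition of polar type, since a genuine double pole of $q$ at every node means each node is the core of a maximal flat cylinder and so contributes its plumbing coordinate automatically.
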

\par
\begin{proof}
We start with the case of stable one-forms. If $(X,\omega)$
is of polar type, then the normalization of $X$ is a possibly
disconnected smooth curve and the pullback of $\omega$
is a one-form, non-zero on each of the components, with at most 
simple poles, say $r$ of them. Let $\Sigma_{\underline{g},r}$
be the topological type of this punctured disconnected surface, 
where $\underline{g}$ is the tuple of genera of the irreducible
components of $\Sigma$. Period coordinates are local coordinates on the boundary
stratum of $(X,\omega)$, or equivalently, coordinates
on the Teichm\"uller space $\oteich[\underline{g},r](m_1,\ldots,m_k)$
by an easy generalization of the argument of \cite{masur82} or \cite{veech90}.
\par
We denote the loops around the $r$ punctures of $\Sigma_{\underline{g},r}$
by $\alpha_1,\ldots,\alpha_r$. For $(X,\omega)$ as above, let the Dehn space 
$\Omega \mathcal{D}_{\underline{g},r}(m_1,\ldots,m_k)$ for 
$\Sigma_{\underline{g},r}$ be union of the quotient of the $\oteich[g](m_1,\ldots,m_k)$
by the group $\ZZ^r$ generated by the Dehn twists around $\alpha_1,\ldots,\alpha_r$
and the boundary \Teichmuller space  $\oteich[\underline{g},r](m_1,\ldots,m_k)$.
This space is given the topology and complex structures such that
the quotient map to $\obarmoduli$ is a holomorphic covering map onto its image.
\par
We first define an {\em umplumbing} map
 $$\Psi = \psi \times (z_1,\ldots,z_r): 
\Omega \mathcal{D}'_{\underline{g},r}(m_1,\ldots,m_k) \to 
\oteich[\underline{g},r](m_1,\ldots,m_k)
\times \CC^r$$
as follows, where the prime denotes the restriction to a sufficiently
small neighborhood of the locus of stable curves of polar type in  
$\oteich[\underline{g},r](m_1,\ldots,m_k)$. Since each of the $\alpha_j$ 
corresponds to a loop
around a pole, in such a neighborhood each of the curves $\alpha_j$ for $j=1,\ldots,r$
is homotopic to the core curve of a maximal flat cylinder $C_j$. For each
of them we fix a curve in $H_1(X,Z(\omega),\ZZ)$ crossing $C_j$ once
but not crossing the other cylinders. We define $\psi$ as unplumbing on 
$\Omega \mathcal{D}_{\underline{g},r}(m_1,\ldots,m_k)$, i.e.\ replacing
each of the $C_j$ by a pair of half-infinite cylinders with residue
equal to $\int_{\alpha_j} \omega$. On the boundary $\psi$ is the identity
and we define $z_j = \exp(2\pi i (\int_{\beta_j} \omega/\int_{\alpha_j} \omega))$,
which is obviously well-defined up to the Dehn twists.
\par
We claim that $\Psi$ is biholomorphic onto its image. The converse
is given by {\em plumbing}, i.e.\ for any surface of polar type in 
$\oteich[\underline{g},r](m_1,\ldots,m_k)$ we replace the pair of 
half-infinite cylinders with residue equal to $r_j$ by a cylinder
with core curve $\alpha_j$ and $\int_{\alpha_j} \omega = r_j$ such
that $\int_{\beta_j} \omega$ satisfies $z_j = \exp(2\pi i (\int_{\beta_j} \omega/r_j))$.
The function $\Psi$ is obviously holomorphic outside the boundary and
continuous on all of $\Omega \mathcal{D}'_{\underline{g},r}(m_1,\ldots,m_k)$, hence holomorphic
there. Moreover, plumbing is obviously inverse to unplumbing, thus
proving the claim.
\par
Together with generalized period coordinates, this claim on $\Psi$
shows that period and plumbing functions are indeed coordinates
on $\wtomoduli(m_1,\ldots,m_k)$.
\par
The proof for the case of half-translation surfaces is the same. Again, the
anti-invariant periods on the canonical double cover
give, by the arguments of \cite{masur82} or \cite{veech90},
coordinates along the boundary of $\TQ(d_1,\ldots,d_n)$, since $q$ is
non-zero on each irreducible component. Note that the holonomy around
each curve $\alpha_j$ around a puncture of $\Sigma_{\underline{g},r}$
is among these anti-invariant period functions since the double covering map 
is unramified near the punctures, because by hypothesis $q$ has precisely a 
double pole there. We now can define the unplumbing map $\Psi$ and
its inverse given by plumbing as above.
\end{proof}
\par

\section{Divisor classes} \label{sec:divclass}

In this section we recall the Picard group of the moduli
space of curves with marked points and collect the expression of several geometrically
defined divisors on the moduli space of curves in low genus with
few marked points in terms of the standard generators of the Picard 
group. The results are basically contained in the literature (\cite{Logan}, \cite{F1}),  
but in several cases not all boundary terms were calculated in full detail.
We will thus perform the calculation for the cases we need. 
\par
Use $\Pic(\cdot)$ to denote the rational Picard group $\Pic_{\fun}(\cdot)_{\mathbb Q}$ of a 
moduli stack (see \cite{harrismorrison} for more details).
Since the quantities we are interested in, the sum of Lyapunov exponents and
slope, are invariant under finite base change, this is the group we want to use, not the Picard group of the coarse moduli scheme.
\par
Recall the standard notation for elements in the Picard group. Let $\lambda$
denote the first Chern class of the Hodge bundle. Let $\delta_i$, $i=1,\ldots, \lfloor g/2 \rfloor$
be the boundary divisor of $\barmoduli$ whose generic element is a smooth curve of genus $i$
joined at a node to a smooth curve of genus $g-i$. The generic element of the
boundary divisor $\delta_0$ is an irreducible nodal curve of geometric genus $g-1$. In the literature sometimes 
$\delta_0$ is denoted by $\delta_{\irr}$. We write $\delta$ for the total boundary class. 
\par
For moduli spaces of curves with marked points we denote by $\omega$ the first Chern class of the
relative dualizing sheaf of $\barmoduli[{g,1}] \to \barmoduli$ and $\omega_{i}$ its pullback to $\barmoduli[{g,n}]$ via
the map forgetting all but the $i$-th marked point. For a subset $S \subset
\{1,\ldots,n\}$ let $\delta_{i;S}$ denote the boundary divisor whose generic 
element is a smooth curve of genus $i$ joined at a node to a smooth curve of genus $g-i$ such that 
the component of genus $i$ contains exactly the marked points labeled by $S$.
\par
\begin{theorem} [\cite{AC}]
The rational Picard group of $\barmoduli$ for $g \geq 3$ is freely
generated by $\lambda$ and $\delta_i$, $i=0,\ldots, \lfloor g/2 \rfloor$. 
\par
More generally, the rational Picard group of $\barmoduli[{g,n}]$ 
for $g \geq 3$ is freely generated by $\lambda$,
$\omega_{i}$, $i=1,\ldots,n$, by $\delta_0$ and by 
$\delta_{i;S}$,  $i=0,\ldots, \lfloor g/2 \rfloor$, 
where $|S| > 1$ if $i=0$ and $1 \in S$ if $i= g/2$.
\end{theorem}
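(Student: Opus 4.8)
The plan is to deduce the statement from a cohomological computation. For $g \geq 3$ the spaces $\barmoduli[g,n]$ are smooth proper Deligne–Mumford stacks, so the first Chern class gives an injection $\Pic(\barmoduli[g,n]) \hookrightarrow H^2(\barmoduli[g,n],\mathbb{Q})$ (recall $\Pic$ denotes the rational Picard group). This is an isomorphism once the Hodge structure on $H^2$ is pure of type $(1,1)$, i.e.\ once the $(2,0)$-part vanishes; for these moduli spaces $H^2(\barmoduli[g,n],\mathcal{O})=0$, which is the key analytic input (Harer, Arbarello–Cornalba). Thus it suffices to prove that $H^2(\barmoduli[g,n],\mathbb{Q})$ is freely generated by $\lambda$, the $\omega_i$, and the enumerated boundary classes.

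For generation I would use the Gysin (localization) sequence for the open immersion $j\colon \moduli[g,n] \hookrightarrow \barmoduli[g,n]$ whose complement is the normal-crossings boundary. Writing $\widetilde{\Delta}=\coprod_\alpha \widetilde{\Delta}_\alpha$ for the normalization of the boundary, this reads
\[
\bigoplus_{\alpha} H^0(\widetilde{\Delta}_\alpha) \xrightarrow{\ \mathrm{Gys}\ } H^2(\barmoduli[g,n]) \xrightarrow{\ j^*\ } H^2(\moduli[g,n]) \to H^1(\widetilde{\Delta}).
\]
The Gysin images of the fundamental classes of the boundary components are precisely $\delta_0$ and the $\delta_{i;S}$. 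Each $\widetilde{\Delta}_\alpha$ is a product of smaller compactified moduli spaces (via the clutching maps), and since $H^1$ of every $\barmoduli[g',n']$ vanishes — established by the same induction on dimension — one gets $H^1(\widetilde{\Delta})=0$, so $j^*$ is surjective. By Harer's computation $H^2(\moduli[g,n],\mathbb{Q})$ is freely generated by $\lambda$ and the classes $\psi_i$ (equivalently the $\omega_i$, which differ from the $\psi_i$ by boundary corrections $\delta_{0;\{i,j\}}$), all of which lift to $\barmoduli[g,n]$. Hence the listed classes span $H^2(\barmoduli[g,n],\mathbb{Q})$.

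To organize the basis and account for the index restrictions I would induct on $n$ using the forgetful morphism $\pi\colon \barmoduli[g,n+1]\to\barmoduli[g,n]$, which realizes the source as the universal curve over the target. The relative Picard group contributes exactly one new generator, the relative dualizing class underlying $\omega_{n+1}$, while the boundary divisors upstairs decompose into pullbacks of those downstairs together with the new divisors $\delta_{0;\{i,n+1\}}$ created when the new point collides with an old one. The constraints in the statement encode the genuine geometry of this enumeration: $|S|>1$ for $i=0$ reflects the stability condition forbidding a rational tail carrying a single marked point, and $1\in S$ for $i=g/2$ records the identification $\delta_{i;S}=\delta_{g-i;S^c}$, so that the enumerated divisors are exactly the pairwise distinct ones.

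The main obstacle is freeness: ruling out relations among the proposed generators. I would certify linear independence by constructing enough test families of stable pointed curves — one-parameter families sweeping a marked point, smoothing a prescribed node, or varying a fixed configuration — and computing the matrix of intersection numbers of these curves against $\lambda$, the $\omega_i$, and the boundary divisors, checking that it has full rank. Equivalently, one matches the number of proposed generators against Harer's value of the second Betti number of $\barmoduli[g,n]$. The genuine labor lies in the combinatorial bookkeeping of how the boundary divisors restrict to, and intersect, these test families, and in carrying this data consistently through the inductive step in $n$.
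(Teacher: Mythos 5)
This theorem is not proved in the paper: it is quoted from Arbarello--Cornalba \cite{AC}, so there is no internal argument to compare against. Your sketch follows the route of that reference (and of Harer's underlying topological computation): identify $\Pic(\barmoduli[g,n])$ with $H^2(\barmoduli[g,n],\mathbb{Q})$ using $H^1=H^{2,0}=0$, obtain generation from Harer's description of $H^2(\moduli[g,n],\mathbb{Q})$ by $\lambda$ and the $\psi_i$ together with the Gysin sequence for the normalized boundary, and certify independence by intersecting with explicit test families of stable curves. That is essentially the argument in the literature, and your inductive bookkeeping over $n$ via the universal curve, as well as the explanation of the index restrictions ($|S|\geq 2$ for genus-zero tails by stability, $1\in S$ at $i=g/2$ to break the symmetry $\delta_{i;S}=\delta_{g-i;S^c}$), is correct. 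One caveat: your fallback for freeness --- matching the number of proposed generators against the second Betti number of $\barmoduli[g,n]$ --- is circular as stated, since extracting $b_2(\barmoduli[g,n])$ from Harer's computation of $H^2(\moduli[g,n])$ through the Gysin sequence already presupposes that the boundary classes inject, which is exactly the independence assertion. The test-curve computation is what actually closes the proof, and that combinatorial work (carried out in \cite{AC}) is the substantive content you have deferred.
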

\par
Alternatively, we define $\psi_i \in \Pic(\barmoduli[{g,n}])$ to be the class
with value $-\pi_*(\sigma_i^2)$ on the universal family $\pi: \cX \to C$ with section $\sigma_i$ corresponding to the $i$th marked point. We have the relation
$$ \omega_{i} = \psi_i - \sum_{i\in S} \delta_{0; S}.$$ 
Consequently, a basis of $\Pic(\barmoduli[{g,n}])$ can be formed
by $\lambda$, the $\psi_i$ and the boundary classes as well.
\par
Let $W$ be the divisor of Weierstrass points in $\BM_{g,1}$. It has 
divisor class (see e.g. \cite{Cu})
\be \label{eq:PicWP}
W =- \lambda + \frac{g(g+1)}{2}\omega_1 - \sum_{i=1}^{g-1}\frac{(g-i)(g-i+1)}{2}\delta_i. 
\ee
\par
Let $BN^1_{g, (s_1,\ldots,s_r)}$ be the pointed Brill-Noether divisor 
parameterizing pointed curves $(X, z_1, \ldots, z_r)$ in $\barmoduli[g,r]$ where
$\sum_{i=1}^r s_i = g$ such that  
$h^0(X, \sum_{i=1}^r s_iz_i) = 2$. In particular, $BN^1_{g,(g)}$ is just the divisor $W$ of Weierstrass points. 
\par
The divisor class of $BN^1_{g, (1,\ldots,1)}$ was fully worked out in \cite[Section 5]{Logan}. The divisor class 
of $BN^1_{g, (s_1,\ldots,s_r)}$ was also implicitly calculated there. Below we give explicitly the divisor classes for the cases we need.
\par

\subsection{\bf Genus $3$}
Let $H$ be the divisor of hyperelliptic curves in $\BM_3$. 
It has divisor class 
\be \label{eq:PicH}
 H = 9\lambda - \delta_0 - 3\delta_1. 
\ee

We also have pointed Brill-Noether divisor classes as follows. 
\ba \label{eq:PicBNonly1s}
BN^1_{3,(1,1,1)} = &- \lambda + \omega_1 + \omega_2 + \omega_3 - \sum_{i,j} 
\delta_{0; \{i,j\}} \\
& - 3\delta_{0; \{1,2,3\}} -  \sum_{i,j}\delta_{1,\{i,j\}} - \delta_{1;\emptyset} -3\delta_{1; \{1,2,3\}}. 
\ea
\be \label{eq:PicBN3;21}
BN^1_{3,(2,1)} = - \lambda + 3\omega_1 + \omega_2 - 2\delta_{0;\{1,2\}} - \delta_{1;\emptyset} 
- \delta_{1;\{1\}} - 3\delta_{1;\{1,2\}}. 
\ee 
\par
As noted above, the class of $BN^1_{3,(1,1,1)}$ was calculated in \cite[Section 5]{Logan}. The class of 
$BN^1_{3,(2,1)}$ essentially follows from $BN^1_{3,(1,1,1)}$. We skip this calculation and instead, 
we will prove a completely analogous but harder case in genus four.  
\par
\subsection{\bf Genus $4$} In genus four we need the following pointed 
Brill-Noether divisors.
\par
\begin{lemma}\label{le:PicBN4}
The pointed Brill-Noether divisors in genus four
have divisor classes as follows. 
\ba  \label{eq:PicBN4a}
W = BN^1_{4, (4)} =  &- \lambda + 10\omega - 6\delta_1 - 3\delta_2 - \delta_3. \\
BN^1_{4,(1,1,1,1)}  = &- \lambda + \omega_1 + \omega_2 + \omega_3 + \omega_4 - 
\sum_{|S|\geq 2} \frac{|S|(|S|-1)}{2}\delta_{0;S} \\
&- \sum_{|S|\neq 1} \frac{(||S|-1|)(||S|-1|+1)}{2} \delta_{1;S} \\
&- \sum_{|S|\neq 2} \frac{(||S|-2|)(||S|-2|+1)}{2} \delta_{2;S}. \\
BN^1_{4,(2,1,1)} = &-\lambda + 3 \omega_1 + \omega_2 + \omega_3 - 2\delta_{0;\{1,2\}} 
- 2\delta_{0;\{1,3\}} \\
&- \delta_{0;\{2,3\}} - 5 \delta_{0;\{1,2,3\}}  
 - \delta_{1;\emptyset} - \delta_{1;\{1\}} - \delta_{1;\{2,3\}} - 3 \delta_{1;\{1,3\}} \\
&- 3\delta_{1;\{1,2\}} - 6\delta_{1;\{1,2,3\}} - 6\delta_{2;\emptyset} - 2\delta_{2;\{2\}} - 
2\delta_{2;\{3\}}. \\
\ea
\ba  \label{eq:PicBN4b}
BN^1_{4,(3,1)} = &- \lambda + 6\omega_1 + \omega_2 - 3\delta_{0;\{1,2\}}  \\
&- \delta_{1;\emptyset} - 3 \delta_{1;\{1\}} - 6\delta_{1;\{1,2\}} - 
6\delta_{2;\emptyset} - 2\delta_{2;\{1\}}.\phantom{mmml}\\
BN^1_{4,(2,2)} = &-\lambda + 3\omega_1 + 3\omega_2 - 4\delta_{0;\{1,2\}} \\
&- \delta_{1;\emptyset} - \delta_{1;\{1\}} - \delta_{1;\{2\}} - 6\delta_{1;\{1,2\}} - 
6\delta_{2;\emptyset}.\\
\ea
\end{lemma}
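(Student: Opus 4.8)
The plan is to handle the Weierstrass case separately and to derive the remaining four classes from the single ``all ones'' class $BN^1_{4,(1,1,1,1)}$ by colliding marked points. First, $W = BN^1_{4,(4)}$ is immediate: it is the Weierstrass divisor in $\barmoduli[4,1]$, so its class is the specialization of the general Weierstrass formula \eqref{eq:PicWP} to $g=4$, giving $-\lambda + 10\omega - 6\delta_1 - 3\delta_2 - \delta_3$ (here $\frac{g(g+1)}{2}=10$, and $\frac{(g-i)(g-i+1)}{2}$ equals $6,3,1$ for $i=1,2,3$). Next I take the class of $BN^1_{4,(1,1,1,1)}$ on $\barmoduli[4,4]$ as computed in \cite[Section 5]{Logan}; the triangular-number boundary coefficients $\frac{|S|(|S|-1)}{2}$, $\frac{(||S|-1|)(||S|-1|+1)}{2}$ and $\frac{(||S|-2|)(||S|-2|+1)}{2}$ attached to $\delta_{0;S}$, $\delta_{1;S}$, $\delta_{2;S}$ are exactly the ramification contributions predicted by a limit-linear-series analysis along each boundary divisor, and I will quote Logan for these.

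To pass from $(1,1,1,1)$ to $(2,1,1)$ I collide the first two points. Concretely, consider the gluing map $\xi\colon \barmoduli[4,3] \to \barmoduli[4,4]$ sending $(X; z_1,z_2,z_3)$ to the curve obtained by attaching a three-pointed $\proj^1$ to $X$ at $z_1$, relabeling the two free points of the tail as $1,2$ and the points $z_2,z_3$ as $3,4$. Its image is $\delta_{0;\{1,2\}}$, and $\xi$ identifies $\barmoduli[4,3]$ with this boundary divisor. The key geometric input is the identity $\xi^*\, BN^1_{4,(1,1,1,1)} = BN^1_{4,(2,1,1)}$ of divisor classes on $\barmoduli[4,3]$, with the weight-two point placed at the node $z_1$: for a curve with $z_1,z_2$ on a rational tail, a limiting pencil of degree four forces the aspect on the genus-four component to have vanishing sequence $(0,\geq 2)$ at the node, so $h^0(X', z_1+z_2+z_3+z_4)\geq 2$ translates into $h^0(X, 2z_1+z_2+z_3)\geq 2$, and one checks via the admissible-cover description that the preimage is reduced with no extraneous boundary component. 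The classes $BN^1_{4,(3,1)}$ and $BN^1_{4,(2,2)}$ are then obtained by iterating: for $(2,2)$ I further restrict along $\delta_{0;\{3,4\}}$, colliding the two remaining weight-one points, and for $(3,1)$ I collide the weight-two point $z_1$ with a weight-one point, so the node acquires weight $2+1=3$.

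With these identities in hand the computation reduces to pulling back the generators. Using the standard restriction formulas under a gluing map one has $\xi^*\lambda = \lambda$; the $\omega_i$ and $\psi_i$ attached to points carried by the rigid rational tail collapse onto node ($=z_1$) contributions, those attached to points remaining on $X$ pull back to the corresponding $\omega_i$; the self-intersection term contributes $\xi^*\delta_{0;\{1,2\}} = -\psi_{z_1}$, since the normal bundle of the boundary is $-\psi'-\psi''$ and the $\psi''$ from the three-pointed $\proj^1$ vanishes; and each remaining $\delta_{i;S}$ of $\barmoduli[4,4]$ restricts to a boundary class of $\barmoduli[4,3]$ according to how $S$ meets $\{1,2\}$, some restricting to zero. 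Substituting Logan's class term by term and collecting yields the stated expressions; in particular the coefficient $3\omega_1$ for $(2,1,1)$ arises from combining the $\omega_1+\omega_2$ tail terms with the self-intersection contribution $-1\cdot(-\psi_{z_1})$.

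The main obstacle is twofold, and both parts are geometric rather than formal. First, one must justify the pullback identities: that $\xi^{-1}(BN^1_{4,(1,1,1,1)})$ is reduced, irreducible, and equal to the claimed Brill--Noether divisor with the prescribed node weight, with no hidden boundary component absorbed or multiplicity introduced. This is the limit-linear-series bookkeeping that produces the triangular-number coefficients and requires a careful analysis of the aspects on each boundary stratum of $\barmoduli[4,r]$, including the genus-one and genus-two bridges where the effective degree shifts by one, the source of the $||S|-1|$ and $||S|-2|$. Second, one must keep the node $\psi$-contributions and the combinatorics of which $\delta_{i;S}$ survive the restriction entirely straight; the signs and the interaction of the self-intersection term with the weight-two point are where errors most easily enter, so I will cross-check the final class against independent test curves, for instance pencils meeting only $\delta_0$ or fibrations through a fixed general pointed curve.
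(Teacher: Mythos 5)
Your proposal is correct and follows essentially the same route as the paper: start from Logan's class of $BN^1_{4,(1,1,1,1)}$, identify its restriction to the boundary divisor where two marked points collide with the Brill--Noether divisor of merged weights, and iterate to get $(2,1,1)$, then $(3,1)$ and $(2,2)$, using the standard restriction formulas (including the self-intersection term $-\psi$ at the node). The paper phrases the same operation as a pushforward $\pi_{n*}(BN^1\cdot\delta_{0;\{i,j\}})$ under the forgetful map rather than a pullback along the gluing map, but since the forgetful map restricted to that boundary divisor is exactly the inverse of the gluing isomorphism, the two formulations coincide.
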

\par
\begin{proof}
Let $\pi_i : \BM_{g,n}\to \BM_{g,n-1}$ be the map forgetting the $i$th marked point. Then
$$\pi_{n*}(BN^{1}_{g, (a_1,\ldots, a_n)} \cdot \delta_{0;\{n-1,n\}}) = BN^1_{g, (a_1,\ldots, a_{n-2}, a_{n-1}+ a_n)}.$$ 
We use it to determine the class of $BN^1_{4,(2,1,1)}$, namely, 
$$ \pi_{4*}(BN^1_{4,(1,1,1,1)} \cdot \delta_{0; \{1,4\}}) = BN^1_{4,(2,1,1)}. $$
Based on \cite[Table~1, p.~112]{Logan}, we have 
$$\begin{array}{ll}
\pi_{4*}(\omega_1\cdot \delta_{0; \{1,j\}}) = \omega_j, & \text{for} \quad j=1,2,3,\\
\pi_{4*}(\omega_4\cdot \delta_{0; \{1,4\}}) = \omega_1, & \pi_{4*}(\lambda\cdot \delta_{0; \{1,4\}}) = \lambda, \\
\pi_{4*}(\delta_{0;\{1,4\}}\cdot \delta_{0; \{1,4\}}) = -\psi_1  & 
\!\!\!\!\!\!\!\!\!\!\!\!\!\!\!\!\!\!\!\!\!\!\!\!\!\!\!\!\!\!\!\!\!\!\!\!\!
= - \omega_1 - \sum_{1\in S}\delta_{0; S} , \\
\pi_{4*}(\delta_{i;S}\cdot \delta_{0; \{1,4\}}) = \delta_{i;S}, \ \{1,4\}\cap S = \emptyset, &
\pi_{4*}(\delta_{i;S}\cdot \delta_{0; \{1,4\}}) = 0, \ 1\in S, \ 4\not\in S, \\
\pi_{4*}(\delta_{i;S\cup \{4\}}\cdot \delta_{0; \{1,4\}}) = \delta_{i;S}, \ 1\in S, &
\pi_{4*}(\delta_{i;S\cup \{4\}}\cdot \delta_{0; \{1,4\}}) = 0, \ 1\not\in S. \
\end{array}$$
As a consequence, 
$$ BN^1_{4,(2,1,1)} = -\lambda + 2\omega_1 + \omega_2 + \omega_3 + \omega_1 + \sum_{1\in S}\delta_{0; S} - \delta_{0;\{2,3\}} - 3\delta_{0;\{1,2\}} - 3\delta_{0;\{1,3\}} $$
$$ - 6\delta_{0;\{1,2,3\}} - \delta_{1;\emptyset} - \delta_{1;\{1\}} - \delta_{1;\{2,3\}} - 3 \delta_{1;\{1,3\}} - 3\delta_{1;\{1,2\}} - 6\delta_{1;\{1,2,3\}}$$
$$ - 3\delta_{2;\emptyset} - \delta_{2;\{2\}} - \delta_{2;\{3\}} - \delta_{2;\{1,2\}} - \delta_{2;\{1,3\}} - 3\delta_{2;\{1,2,3\}}. $$
Using $ \pi_{3*}(BN^1_{4,(2,1,1)} \cdot \delta_{0; \{1,3\}}) = BN^1_{4,(3,1)}$, we deduce that 
$$ BN^1_{4,(3,1)} = -\lambda + 3\omega_1 + \omega_2 + \omega_1 + 2 \omega_1 + 2\delta_{0; \{1,2\}} - 5\delta_{0;\{1,2\}} $$
$$ - \delta_{1;\emptyset} - 3 \delta_{1;\{1\}} - 6\delta_{1;\{1,2\}} - 6\delta_{2;\emptyset} - 2\delta_{2;\{2\}}. $$ 
Similarly, using $ \pi_{3*}(BN^1_{4,(2,1,1)} \cdot \delta_{0; \{2,3\}}) = BN^1_{4,(2,2)}$, we conclude that 
$$ BN^1_{4,(2,2)} = -\lambda + 3\omega_1 + \omega_2 + \omega_2 + \omega_2 + \delta_{0;\{1,2\}} - 5\delta_{0;\{1,2\}} $$
$$ - \delta_{1;\emptyset} - \delta_{1;\{1\}} - \delta_{1;\{2\}} - 6\delta_{1;\{1,2\}} - 6\delta_{2;\emptyset}. $$
In each case, after simplifying we get the result stated above.
\end{proof}
\par
\subsection{\bf Genus $1$}
Recall the divisor theory of $\BM_{1,n}$. Its rational Picard group is freely generated by $\lambda$ and $\delta_{0;S}$ for 
$2\leq |S| \leq n$. Moreover, we have 
$$\delta_{0} = 12\lambda, \quad \quad \quad 
 \psi_i = \lambda + \sum_{i\in S} \delta_{0;S}. $$

\subsection{\bf Genus $2$}

On the moduli space $\barmoduli[2]$, the rational Picard group is generated by
$\lambda$, $\delta_0$ and $\delta_1$ with the relation (see \cite[Theorem~2.2]{ArCoCalculating}) 
\be \label{eq:relg2}
\lambda = \frac{\delta_0}{10} + \frac{\delta_1}{5}.
\ee
Given the structure of the Picard group it is natural to define the
{\em slope} of a \Teichmuller curve $C$ as 
$$s(C) = \frac{C \cdot \delta}{C \cdot \lambda}.$$
In general, slope can be defined for any one-parameter family of stable genus $g$ curves, which measures how the complex structures  
vary with respect to the number of singularities in the family.  
\par
For abelian and quadratic differentials, the slope of a \Teichmuller curve carries
as much information as the Siegel-Veech constant or the sum of Lyapunov
exponents thanks to the following consequence of Noether's formula
(see \cite[Theorem~1.8]{chenrigid})
\begin{equation}
\label{eq:Lcs}
s(C) = \frac{12c(C)}{L(C)} = 12 - \frac{12 \kappa_\mu}{L(C)}.  
\end{equation}
\par
Consequently the following result is quite useful 
to prove non-varying strata in genus two. 
\par
\begin{lemma}
\label{slope10}
If a one-dimensional family of stable curves of genus two does not intersect $\delta_1$, then its slope is $10$. 
\end{lemma}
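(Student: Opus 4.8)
The plan is to reduce everything to the single relation \eqref{eq:relg2} in $\Pic(\barmoduli[2])$ together with the hypothesis that the family avoids $\delta_1$. Write $C$ for the one-parameter family, regarded as a curve class, so that the slope is $s(C) = (C\cdot\delta)/(C\cdot\lambda)$ with total boundary $\delta = \delta_0 + \delta_1$. First I would record the two numerical inputs: the disjointness hypothesis gives $C\cdot\delta_1 = 0$, and intersecting both sides of $\lambda = \tfrac{1}{10}\delta_0 + \tfrac{1}{5}\delta_1$ with $C$ gives a linear relation among the intersection numbers.

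Next I would substitute. Since $C\cdot\delta_1 = 0$, the total boundary contributes $C\cdot\delta = C\cdot\delta_0 + C\cdot\delta_1 = C\cdot\delta_0$, while the relation \eqref{eq:relg2} collapses to
\[
C\cdot\lambda = \tfrac{1}{10}\,(C\cdot\delta_0) + \tfrac{1}{5}\,(C\cdot\delta_1) = \tfrac{1}{10}\,(C\cdot\delta_0).
\]
Dividing then yields $s(C) = (C\cdot\delta_0)/\bigl(\tfrac{1}{10}(C\cdot\delta_0)\bigr) = 10$, which is the claim.

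The only point requiring a word of care is that the slope be well defined, i.e.\ that $C\cdot\lambda \neq 0$, so that the final division is legitimate and does not give an indeterminate $0/0$. By the displayed computation this is equivalent to $C\cdot\delta_0 \neq 0$, which holds because the family is non-isotrivial and hence meets the boundary; for the \Teichmuller curves to which the lemma will be applied this is automatic since $C\cdot\lambda > 0$. I would simply note this at the outset, after which the argument is the one-line substitution above. I do not expect any genuine obstacle here: the content of the lemma is entirely carried by the rank-one Picard relation in genus two, and the disjointness hypothesis is exactly what is needed to eliminate the $\delta_1$ term on both numerator and denominator simultaneously.
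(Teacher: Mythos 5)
Your argument is exactly the paper's: the authors prove this lemma in one line by citing the definition of slope together with the relation~\eqref{eq:relg2}, which is precisely the substitution you carry out. Your extra remark that $C\cdot\lambda\neq 0$ is needed for the slope to be defined is a reasonable point of care but not a departure from the paper's route.
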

\begin{proof}
This follows from the definition of slope and the relation~\eqref{eq:relg2}. 
\end{proof}
\par
The rational Picard group of $\barmoduli[2,n]$ is generated by the divisor classes 
$\lambda$, $\omega_{i}$, $i=1,\ldots,n$, by $\delta_0$,  by 
$\delta_{0;S}$ with $|S| > 1$ and by  $\delta_{0,S}$ with $1 \in S$. By 
\cite[Theorem~2.2]{ArCoCalculating} the only relation among them is
$$  5 \lambda = 5 \psi + \delta_{0} - \sum_{|S|>1} \delta_{0,S} +7 \sum_{1\in S} \delta_{1,S}. $$
We will use two divisors in genus two. The divisor of
Weierstrass points in $\barmoduli[2,1]$ has class 
\be \label{eq:Wg2}
W = -\lambda + 3\omega_1 - \delta_1
\ee
and the pointed Brill-Noether divisor in $\barmoduli[2,2]$ has class
\be \label{eq:BNg2}
BN^1_{2, (1,1)} = -\lambda + \omega_1 + \omega_2 
- \delta_{0;\{1,2\}} - \delta_{1;\emptyset}.
\ee
\section{Properties of  Teichm\"uller curves} \label{sec:propTeich}

\subsection{Computing intersection numbers}

Let $C$ be a Teichm\"uller curve generated by a half-translation
surface in $\QQ(d_1,\ldots, d_n)$. We always work with an appropriate unramified
cover of $C$ whose uniformizing group is torsion free and we denote
this cover still by $C$. In particular we will assume that
over $C$ there is a universal family $\pi: \XX \to C$. 
This also implies that $\chi = 2g(C) - 2 + |\Delta|$, where 
$\Delta$ is the set of cusps in $C$ and $\chi$ is the orbifold Euler characteristic of $C$. 
Denote by $S_j$ the section of $\XX$ corresponding to the zero or pole of order $d_j$. Use $\omega_{\pi}$ to denote the relative dualizing sheaf of $\pi$. 
\par
\begin{proposition}[\cite{kontsevich}; \cite{bouwmoel}]
For a Teichm\"uller curve $C$ generated by a
half-translation surface in $\QQ(d_1,\ldots, d_n)$, we have 
$$ L^+(C) = \frac{2 \deg \lambda }{\chi}. $$
\end{proposition}
\par
\begin{proposition}
\label{intersection}
Intersection numbers of the Teichm\"uller curves with various divisor classes and the sum of Lyapunov exponents are related as follows: 
\ba  S_j^2 = - \frac{\chi}{d_j+2},  \quad\quad & S_j\cdot \omega_{\pi} = \frac{\chi}{d_j+2}, \\
C\cdot \delta = 6\chi\cdot c, \quad\quad & C\cdot \lambda = \frac{\chi}{2}\cdot (c + \kappa), 
\ea
where $c$ is the Siegel-Veech constant related to $L^+$ by
\eqref{eq:EKZ}.  
\end{proposition}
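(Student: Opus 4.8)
The plan is to reduce all four identities to a single \emph{tautological relation} on the total space $\XX$ together with Noether's formula, using the preceding proposition (Kontsevich; Bouw-M\"oller) to control $\deg\lambda$. The last identity $C\cdot\lambda = \frac{\chi}{2}(c+\kappa)$ is in fact almost immediate: that proposition gives $\deg\lambda = \frac{\chi}{2}L^+$, and substituting $L^+ = c+\kappa$ from~\eqref{eq:EKZ}, together with $C\cdot\lambda = \deg\lambda$, yields the claim. The real work lies in the other three identities, and I would organize it around the line bundle $\mathcal{L}$ on $C$ whose fibre is the line $\bC\cdot q$ spanned by the generating differential.

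The first step is to set up the tautological relation. The generating quadratic differential defines a section of $\omega_\pi^{\otimes 2}\otimes\pi^*\mathcal{L}^{-1}$ whose divisor is exactly $\sum_j d_j S_j$; crucially, at the nodes of the singular fibres over the cusps the differential has at worst a double pole, so by Proposition~\ref{prop:periodandplumb} and the polar-type description of the boundary in Section~\ref{sec:Teichbd} it extends to a nowhere-vanishing section of $\omega_\pi^{\otimes 2}\otimes\pi^*\mathcal{L}^{-1}(-\sum_j d_j S_j)$ over all of $\XX$. This gives
\be
2\omega_\pi = \pi^*\mathcal{L} + \sum_j d_j S_j \qquad \text{in } \Pic(\XX)_{\mathbb Q}. \tag{$\star$}
\ee
I would then record two elementary facts: the sections are pairwise disjoint, $S_j\cdot S_k = 0$ for $j\neq k$, and adjunction on $\XX$ gives $S_j\cdot\omega_\pi = -S_j^2$ (using $S_j\cong C$ and $\omega_\XX = \omega_\pi\otimes\pi^*\omega_C$). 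Restricting $(\star)$ to $S_j$ and using $S_j\cdot\pi^*\mathcal{L} = \deg\mathcal{L}$ yields $2\,S_j\cdot\omega_\pi = \deg\mathcal{L} + d_j S_j^2$, which combined with $S_j\cdot\omega_\pi = -S_j^2$ gives
\[
S_j^2 = -\frac{\deg\mathcal{L}}{d_j+2}, \qquad S_j\cdot\omega_\pi = \frac{\deg\mathcal{L}}{d_j+2}.
\]
It remains to identify $\deg\mathcal{L} = \chi$, which is the maximal Higgs property of the Teichm\"uller curve: on the canonical double cover the generating one-form $\omega$ with $\pi^*q=\omega^2$ spans a line bundle $\mathcal{L}_0$ with $\mathcal{L}_0^{\otimes 2}\cong\omega_C(\Delta)$, so $\deg\mathcal{L}_0 = \frac{1}{2}(2g(C)-2+|\Delta|)=\frac{\chi}{2}$, and hence $\deg\mathcal{L}=\deg\mathcal{L}_0^{\otimes 2}=\chi$. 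This proves the first two identities.

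For the boundary identity I would compute the self-intersection $\omega_\pi^2$ from $(\star)$. Squaring and using $(\pi^*\mathcal{L})^2 = 0$, $\pi^*\mathcal{L}\cdot S_j = \chi$, $S_j\cdot S_k = 0$ and $S_j^2 = -\chi/(d_j+2)$ gives, after the elementary simplification $2d_j - d_j^2/(d_j+2) = d_j(d_j+4)/(d_j+2)$,
\[
\omega_\pi^2 = \frac{\chi}{4}\sum_j \frac{d_j(d_j+4)}{d_j+2} = 6\chi\kappa,
\]
the last equality by the definition of $\kappa$ in~\eqref{eq:EKZ}. Noether's formula $12\deg\lambda = \omega_\pi^2 + C\cdot\delta$ then gives $C\cdot\delta = 12\deg\lambda - 6\chi\kappa = 6\chi(c+\kappa) - 6\chi\kappa = 6\chi c$, using the $\lambda$-identity already established.

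The main obstacle is the global validity of $(\star)$ over the \emph{compactified} family rather than merely over the open locus of smooth fibres. Away from the boundary the relation is immediate from the divisor of $q$, but to conclude it as an identity of line bundles on all of $\XX$ one must know the precise shape of the limiting half-translation surfaces --- that they are of polar type with at worst double poles at the nodes --- so that $q$ neither acquires extra zeros nor fails to generate $\omega_\pi^{\otimes 2}$ along the singular fibres. This is exactly the content supplied by Proposition~\ref{prop:periodandplumb} and the boundary analysis of Section~\ref{sec:Teichbd}; the disjointness of the sections at the boundary and the maximal Higgs normalization $\deg\mathcal{L}=\chi$ are the remaining points requiring care, but both are standard once the limit objects are understood.
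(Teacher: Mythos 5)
Your proposal is correct and follows essentially the same route as the paper: the key relation $\omega_\pi^{\otimes 2}\cong\pi^*\cF(\sum_j d_jS_j)$ (which the paper packages as an exact sequence), restriction to the sections together with adjunction to get $S_j^2=-S_j\cdot\omega_\pi=-\chi/(d_j+2)$, squaring to get $c_1^2(\omega_\pi)=6\chi\kappa$, and Noether's formula combined with $\deg\lambda=\tfrac{\chi}{2}L^+$ and \eqref{eq:EKZ} for the last two identities. The only difference is that you spell out the boundary extension and the normalization $\deg\cF=\chi$ in more detail, where the paper simply cites the polar-type boundary description and \cite{moeller06}.
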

\par
\begin{proof}
Let $\cF$ be the universal line bundle on $C$ parameterizing the quadratic differentials 
that generate $C$. Denote by $S$ 
the union of the sections $S_j$ for $j = 1,\cdots, n$. 
By the exact sequence 
$$ 0\to \pi^{*}\cF \to \omega^{\otimes 2}_{\pi} \to \OO_S\Big(\sum_{j=1}^n d_jS_j\Big) \to 0 $$
and the fact that $\deg \cF = \chi$ (see \cite{moeller06}), one calculates that 
$$S^2_j = - S_j \cdot \omega_{\pi} = - \frac{\chi}{d_j + 2}, $$
which shows the first two formulas. Moreover, it implies that 
$$ c_1^2(\omega_{\pi}) = \frac{\chi}{4}\cdot\Big(\sum_{j=1}^n \frac{d_j(d_j+4)}{d_j+2}\Big) = 6\chi\cdot \kappa. $$
By Noether's formula, we know that
$$ 12 \lambda = \delta + c_1^2(\omega_{\pi}). $$  
Dividing both sides by $6\chi$, the left hand side equals $L^+(C)$ by the preceding 
proposition and the right hand side equals 
$\frac{\delta}{6\chi} + \kappa$. By \eqref{eq:EKZ}, we reads off 
$c = \frac{\delta}{6\chi}$. Hence the last two formulas follow immediately. 
\end{proof}

\begin{remark}
We can also deduce the above formulas by passing to the canonical double cover. Note that 
$\QQ(d_1, \ldots, d_n) \to \Omega\MM(\cdots, d_i/2, d_i/2, \cdots, d_j + 1, \cdots)$ 
for $d_i$ even and for $d_j$ odd, since the double cover 
is branched at the singularities of odd order. Restrict this to a Teichm\"uller curve 
$C$ in $\QQ(d_1, \ldots, d_n)$. Then it gives rise to a Teichm\"uller curve 
isomorphic to $C$ in the corresponding stratum of abelian differentials. 
We have the following commutative diagram
$$\xymatrix{
\XX' \ar[rr]^f \ar[dr] & & \XX \ar[dl] \\
& C  &  }$$
and let $S'_j$ be the section of $\XX' \to C$ over $S_j$ in case $d_j$ is odd and $S_{j,1}, S_{j,2}$ be the sections over $S_j$ in case $d_j$ is even. 
Then we have 
\bas
f_{*}S'_j = S_j, \quad\quad & f^{*}S_j = 2S'_j,  \\
f_{*}(S_{j,1} + S_{j,2}) = 2S_j, \quad\quad & f^{*}S_j = S_{j,1} + S_{j,2}. 
\eas
In the case when $d_j$ is odd, we have 
$$ S^2_j = (f_{*}S'_j)\cdot S_j = 2(S'_j)^2 = - \frac{\chi}{d_j+2}. $$
In the case when $d_j$ is even, we have 
$$ S^2_j = \frac{1}{2} (f_{*}(S_{j,1} + S_{j,2}))\cdot S_j = \frac{1}{2} (S^2_{j,1}+S^2_{j,2}) = - \frac{\chi}{d_j+2}. $$
Hence we recover the self-intersection formula. 
\end{remark}

\subsection{Boundary behavior} \label{sec:Teichbd}

The following results are needed later for the proofs of non-varying strata. 
Roughly speaking, they imply that degenerate half-translation surfaces parameterized 
in a Teichm\"uller curve behave similarly to the smooth ones and, 
as in the case of abelian differentials, the corresponding stable
curves are obtained by squeezing core curves of cylinders (see
\cite[Proposition~5.9]{moelPCMI}).
\par 
\begin{proposition} \label{prop:Teichbdconstr}
Suppose $C$ is a Teichm\"uller curve generated by a quadratic 
differential in $\QQ(d_1, \ldots, d_n)$.
The pointed stable curves in $\barmoduli[g,k]$ corresponding to the boundary 
points $\Delta$ of $C$ are obtained by choosing
a parabolic direction of a generating half-translation surface $(X,q)$ and
replacing each cylinder by a pair of half-infinite cylinders whose
points at $i\infty$ resp.\ at $-i\infty$ are identified.
\end{proposition}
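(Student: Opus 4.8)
The plan is to identify each cusp of $C$ with a completely periodic direction of the generating surface and to realize the approach to the cusp by the Teichm\"uller geodesic flow, whose effect is precisely to pinch the core curves of the cylinders in that direction. First I would recall the Veech dichotomy: since $(X,q)$ generates a Teichm\"uller curve, the cusps $\Delta$ of $C$ are in bijection with the conjugacy classes of maximal parabolic subgroups of the Veech group of $(X,q)$, and the fixed direction $\theta$ of such a subgroup is completely periodic. After rotating $\theta$ to the horizontal, $(X,q)$ decomposes into finitely many maximal horizontal cylinders $C_1,\dots,C_r$ whose boundaries are unions of horizontal saddle connections passing through the singularities $S_j$. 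Running the geodesic flow $\mathrm{diag}(e^{-t},e^{t})$ with $t\to\infty$ along $C$ shrinks the circumference $c_i$ and grows the height $h_i$ of every $C_i$, so each modulus $h_i/c_i$ tends to infinity while the complementary saddle connections retain definite length.

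Next I would describe the limit of a single cylinder. In the flat coordinate $z$ on $C_i$ one has $q=dz^2$; passing to $w=\exp(2\pi i z/c_i)$ exhibits $C_i$ as an annulus $\{r<|w|<1\}$ with $r=\exp(-2\pi h_i/c_i)$. As $h_i/c_i\to\infty$, i.e.\ $r\to 0$, this annulus degenerates in the Deligne--Mumford sense to the nodal union of two punctured disks joined at a point, which is exactly the pair of half-infinite cylinders with their points at $+i\infty$ and $-i\infty$ identified. In the coordinate $w$ the form $q=(c_i/2\pi)^2(dw/w)^2$ acquires a double pole at the node, so this is a degeneration of polar type in the sense of Section~\ref{sec:periodcoord}. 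Performing this simultaneously for all $C_i$ yields a stable half-translation surface of polar type: it has a double pole at each node, and $q\not\equiv 0$ on every component since each component retains part of the flat metric of $X$.

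To upgrade this local picture to a statement about the closure of $C$ inside $\barmoduli[g,k]$ I would invoke Proposition~\ref{prop:periodandplumb}. The squeezed surface lies in the partial compactification $\TQ(d_1,\dots,d_n)$, and in the period plumbing coordinates there the family $\mathrm{diag}(e^{-t},e^{t})\cdot(X,q)$ converges to it: the finite periods along the saddle connections stay bounded while the plumbing parameters $z_j$ of the pinching cylinders tend to $0$. Composing with the forgetful map $\TQ(d_1,\dots,d_n)\to\barmoduli[g,k]$ that discards $q$ and records the simple poles $p_1,\dots,p_k$ as marked points, which is holomorphic by the compactification of Section~\ref{sec:compQ} together with the proposition, identifies the boundary point of $C$ over this cusp with the stable pointed curve underlying the squeezed surface, which is exactly the asserted construction.

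The main obstacle is to guarantee that along the ray to the cusp it is precisely the cylinder cores, and nothing else, that degenerate, so that the limit is the stable curve of polar type rather than some further degeneration. This is where complete periodicity of the parabolic direction is essential: because the $C_i$ exhaust $X$ and the complementary saddle connections retain definite length, the only vanishing cycles are the core curves, hence period plumbing coordinates apply uniformly along the entire geodesic ray and Proposition~\ref{prop:periodandplumb} forces convergence to the single expected limit. Should the direct argument prove delicate, an alternative route is to pass to the canonical double cover $(Y,\omega)$, where $C$ lifts to a Teichm\"uller curve of abelian differentials and the analogous statement is \cite[Proposition~5.9]{moelPCMI}; one then descends by the involution $\sigma$, the only point needing care being the bookkeeping of $\sigma$-invariant versus $\sigma$-paired cylinders and the compatibility of the squeezing with the quotient by $\sigma$.
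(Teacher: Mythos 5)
Your overall strategy---identifying the cusps with completely periodic directions, degenerating each cylinder along the geodesic flow, and controlling the limit with period plumbing coordinates via Proposition~\ref{prop:periodandplumb}---is the same route the paper takes; it delegates exactly this part of the argument to \cite[Propositions~5.9 and~5.10]{moelPCMI}, which is also your fallback. The local annulus computation and the observation that the limit is of polar type are correct.

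There is, however, a genuine gap: you never verify that the limiting pointed curve is \emph{stable} in the Deligne--Mumford sense, and this is precisely the one point the paper isolates as requiring a new argument in the quadratic-differential setting. For abelian differentials the core curve of a cylinder never disconnects the surface, so no genus zero tails arise; for quadratic differentials a core curve may be separating, and the cylinder replacement could a priori produce a rational tail, i.e.\ a genus zero component attached to the rest of the curve at a single node and with no self-nodes. Such a component defines a point of $\barmoduli[g,k]$ only if it carries at least two marked points, and being of polar type (double poles at the nodes, $q\not\equiv 0$ on each component) does not by itself guarantee this. The paper closes the gap by a degree count: cutting along the core curve $\gamma$ that produces the separating node and regluing the two halves of $\gamma$ yields a closed genus zero half-translation surface with two simple poles on the glued $\gamma$; since the divisor of a quadratic differential on $\PP^1$ has degree $-4$ and all poles are simple, at least two further poles must lie on the tail, and these become marked points of the pointed curve. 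Without this (or an equivalent) stability check, your limit object need not lie in $\barmoduli[g,k]$ as the proposition asserts.
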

\par
\begin{proof}
The cusps of Teichm\"uller curves are
obtained by applying the Teichm\"uller geodesic flow $(e^{t/2}, e^{-t/2})$ 
to a direction in which $(X, q)$ decomposes completely into cylinders.
Once we have shown that the object resulting from the above cylinder 
replacement construction is stable (including the punctures), 
the rest of the proof is the same as in \cite[Propositions~5.9 and~5.10]{moelPCMI}.
\par
We need to show that each rational tail (i.e.\ a genus zero
component of a stable curve joined to the rest of the curve
at a separating node and without nodes joining the tail to itself)
has at least two punctures. If we cut along the core curve $\gamma$
that produces the separating node and glue the two halves of
$\gamma$ together, we obtain a closed half-translation surface $(\PP^1,q_\PP)$
of genus zero with two simple poles on the glued $\gamma$. Since
$\deg(\divisor(q_\PP)) = -4$ and since poles are simple, 
there exist two more poles somewhere on this $\PP^1$, proving
our claim.
\end{proof}
\par
\begin{corollary}
\label{signature}
The section $q$ of $\omega_X^{\otimes 2}$ of each smooth fiber $X$ over
a Teichm\"uller curve extends to a section $q_{\infty}$ for each degenerate fiber $X_{\infty}$ over the
closure of a Teichm\"uller curve. The signature of zeros and poles of $q_{\infty}$ is the same as $q$.
\end{corollary}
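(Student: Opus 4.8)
The plan is to read off everything from the explicit cylinder-degeneration description furnished by Proposition~\ref{prop:Teichbdconstr}. Fix a boundary point of $C$ and let $X_\infty$ be the corresponding pointed stable curve. By that proposition, $X_\infty$ arises from a generating half-translation surface $(X,q)$ by choosing a parabolic direction, decomposing $(X,q)$ into a union of flat cylinders, and replacing each cylinder $C_j$ by a pair of half-infinite cylinders whose points at $+i\infty$ resp.\ $-i\infty$ are identified to form a node. The first task is to verify that $q$ limits to a well-defined section $q_\infty$ of $\omega_{X_\infty}^{\otimes 2}(p_1+\cdots+p_k)$, and the second is to track the zeros and poles.

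For the extension I would argue locally near each node. On a cylinder of circumference $\ell$ the differential is $q = dz^2$ in the natural flat coordinate $z$, and passing to $w = \exp(2\pi i z/\ell)$ turns each half-infinite end into a punctured disc on which $q = -\tfrac{\ell^2}{4\pi^2}\,\tfrac{dw^2}{w^2}$. This is precisely a double pole of $\omega_{X_\infty}^{\otimes 2}$ at the node; since the two half-infinite cylinders descend from the \emph{same} cylinder $C_j$ they share the circumference $\ell$, so the double-pole coefficients agree on the two branches and $q_\infty$ is a genuine section of the twisted bicanonical sheaf, i.e.\ a point of $\BQQ_k$ over $\barmoduli[g,k]$. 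In other words $(X_\infty,q_\infty)$ is a half-translation surface of polar type in the sense of Section~\ref{sec:periodcoord}, and the extension of $q$ across the cusp is exactly the plumbing construction of Proposition~\ref{prop:periodandplumb}; holomorphicity of the extension follows from the period-plumbing coordinates established there.

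For the signature, the key observation is that the singularities of the flat metric of $(X,q)$—the cone points carrying the zeros of order $d_j \geq 1$ together with the marked simple poles—all lie on the \emph{boundaries} of the cylinders $C_j$, never in their interiors, whereas the degeneration pinches only the core curves of the cylinders. Hence the degeneration leaves a neighborhood of each such singularity untouched: each section $S_j$ specializes to a smooth point of $X_\infty$, distinct sections remain distinct by the stability statement of Proposition~\ref{prop:Teichbdconstr}, and the local flat picture at $S_j$ is unchanged, so the order of $q_\infty$ along $S_j$ is again $d_j$. The only new poles of $q_\infty$ are the double poles at the nodes, and since nodes are not among the marked points $p_i$ they do not enter the signature. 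This gives that the zero–pole signature of $q_\infty$ equals $(d_1,\ldots,d_n)$, the signature of $q$.

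The main obstacle is of a bookkeeping rather than analytic nature: one must be sure that the cylinder decomposition keeps every flat singularity off the pinched core curves and that no two of the sections $S_j$ collide in the limit. Both are secured by the precise form of the degeneration in Proposition~\ref{prop:Teichbdconstr} and the stability of $X_\infty$ proved there, so once that proposition is in hand the corollary reduces to the local model computation on a single flat cylinder carried out above.
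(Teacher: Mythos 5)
Your argument is correct and is essentially the route the paper intends: the corollary is stated without a separate proof precisely because it is read off from Proposition~\ref{prop:Teichbdconstr} exactly as you do, with the local model $q=-\tfrac{\ell^2}{4\pi^2}\tfrac{dw^2}{w^2}$ at the node giving matching double poles (hence a genuine section of $\omega_{X_\infty}^{\otimes 2}$ twisted at the marked points) and the cone points surviving untouched on the cylinder boundaries. The only cosmetic slip is attributing the distinctness of the limiting sections to the stability statement; it really follows just from the fact that the degeneration only modifies the cylinders' interiors, leaving disjoint neighborhoods of the singularities intact.
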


\begin{corollary}
\label{component}
Let $(X_{\infty},q_\infty)$ be a degenerate fiber of a Teichm\"uller curve generated by a half-translation 
surface $(X,q)$. Then $(X_{\infty},q_\infty)$ is of polar type.
\par
Moreover, every irreducible component of $X_{\infty}$ contains at least one singularities
of $q_{\infty}$. In particular, the number of 
irreducible components of $X_{\infty}$ is bounded from above by the number of singularities of $q$.  
\end{corollary}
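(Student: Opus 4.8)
The plan is to read off both assertions from the explicit degeneration described in Proposition~\ref{prop:Teichbdconstr}, combined with the preservation of signature recorded in Corollary~\ref{signature}. No new flat-geometric input should be needed beyond the local model of a cusp.

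For the polar type statement, recall from Proposition~\ref{prop:Teichbdconstr} that $X_\infty$ is produced from a generating surface $(X,q)$ by choosing a direction in which $X$ decomposes completely into cylinders and replacing each cylinder by a pair of half-infinite cylinders glued at a node. The key local fact is that, for a quadratic differential, the germ of a half-infinite flat cylinder is exactly a punctured neighborhood of a double pole; hence $q_\infty$ acquires a double pole on each of the two branches at every node, which is the second condition for polar type. For the first condition, since the chosen cylinder decomposition covers all of $X$, every irreducible component of $X_\infty$ inherits nonempty flat area, so $q_\infty$ restricts to a nonzero section on each component. (Equivalently, if $q_\infty$ vanished identically on some component, the signature of $q_\infty$ could not be the finite collection of points prescribed by Corollary~\ref{signature}.) Together these give that $(X_\infty,q_\infty)$ is of polar type.

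For the claim that each component carries a singularity, I would run a degree count on each component. Fix an irreducible component $X_v$ of genus $g_v$, let $n_v$ be the number of node-branches lying on $X_v$, and let $Z_v$ denote the total order of the zeros and poles of $q_\infty$ of the prescribed signature (excluding the double poles at the nodes) supported on $X_v$. Pulling $q_\infty$ back to the normalization of $X_v$ gives a meromorphic section of $\omega^{\otimes 2}$ of degree $4g_v-4$; accounting for the double poles at the $n_v$ node-branches yields
\[
Z_v = 4g_v - 4 + 2n_v.
\]
If $X_v$ contained no singularity of the signature, then $Z_v=0$ and $2g_v+n_v=2$, whose only nonnegative integer solutions are $(g_v,n_v)=(1,0)$ and $(0,2)$. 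The first is excluded because $X_\infty$ is connected and nodal, so every component meets at least one node; the second is excluded by stability of $X_\infty$, since a genus-zero component carrying only two special points is unstable. Hence every component of $X_\infty$ contains at least one singularity of $q_\infty$. The bound on the number of components is then immediate: by Corollary~\ref{signature} the differential $q_\infty$ has exactly $n$ singularities of the prescribed signature, each supported at a single point of a unique component, so selecting one such singularity on each component injects the set of components into the set of singularities, giving at most $n$ components.

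The main obstacle I anticipate is making the identification ``half-infinite cylinder $=$ double pole'' precise enough to guarantee the double-pole structure at every node simultaneously, and then disposing of the two low-genus boundary cases $(g_v,n_v)\in\{(1,0),(0,2)\}$ in the degree count, where one must appeal to connectedness and stability of $X_\infty$ rather than to the flat geometry directly.
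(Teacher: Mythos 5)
Your proposal is correct. The paper states this as an unproved corollary of Proposition~\ref{prop:Teichbdconstr}, so there is no written proof to compare against line by line; the intended reading is that both claims are immediate from the cylinder-replacement construction. Your treatment of polar type follows exactly that route (half-infinite cylinder $=$ double pole at each node branch; positive flat area on each component forces $q_\infty\not\equiv 0$ there), so it matches the paper's implicit argument. For the claim that every component carries a singularity, you substitute an algebro-geometric degree count, $Z_v = 4g_v-4+2n_v$, and dispose of the residual cases $(g_v,n_v)=(1,0)$ and $(0,2)$ by connectedness and stability of the pointed stable limit; this is sound (note that the hypothesis "no singularities on $X_v$" also rules out marked poles on $X_v$, which is what makes the $(0,2)$ case genuinely unstable), and it is in fact the same degree computation the authors deploy later in the proof of Proposition~\ref{prop:disjointbydegree}. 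The more direct flat-geometric alternative --- each component of $X_\infty$ is built from a connected union of saddle connections of the periodic direction, and saddle connections begin and end at singularities --- avoids the case analysis but relies on the flat picture; your version has the advantage of working purely from the signature statement of Corollary~\ref{signature} and the stability of the limit in $\barmoduli[g,k]$. Either way the bound on the number of components follows as you say.
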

\par
\begin{remark}
If $q$ is a global square of an abelian differential, then $X_{\infty}$ does not have 
separating nodes, as a consequence of
the topological fact that the core curve of a cylinder does not disconnect a  
flat surface. In other words, Teichm\"uller curves generated 
by abelian differentials do not intersect $\delta_i$ for $i > 0$ in $\BMg$. 
On the other hand, if $q$ is not a global square, then the Teichm\"uller curve generated 
by $q$ may intersect $\delta_i$ for $i > 0$. 
\end{remark}

\begin{proposition} \label{prop:disjointbydegree}
Let $C$ be a Teichm\"uller curve generated by a half-translation
surface in $\QQ(d_1,\ldots,d_n)$. Let $\ol{C}$ be the closure
of the lift of $C$ to $\barmoduli[g,m]$ using the first $m \leq n$ singularities.
Then $\ol{C}$ is disjoint with the boundary divisors that have non-zero coefficients in the divisor classes of
the Brill-Noether divisors given in Section~\ref{sec:divclass}, if 
the tuple $(g,m,\QQ(d_1,\ldots,d_n))$ and the divisor are listed in the following table. 
\end{proposition}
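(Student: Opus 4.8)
The plan is to prove disjointness one boundary divisor at a time, leveraging the structure established in Corollary~\ref{component}: any degenerate fiber $(X_\infty, q_\infty)$ over a boundary point of $\ol{C}$ is of polar type, so $q_\infty$ has a double pole at every node. I would begin by translating the geometric meaning of each boundary divisor $\delta_{i;S}$ appearing with nonzero coefficient in one of the Brill-Noether classes from Section~\ref{sec:divclass}. A point of $\delta_{i;S}$ is a nodal curve whose genus-$i$ component carries the marked points indexed by $S$ (which, for the lift $\ol C$ using the first $m$ singularities, are among the first $m$ zeros/poles of $q$). The key observation is that $\ol C$ meeting such a divisor would force $(X_\infty,q_\infty)$ to have a \emph{separating} node producing a genus-$i$ tail carrying exactly the prescribed collection of singularities. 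I would then argue that this distribution of zeros and poles is incompatible with the polar-type degeneration dictated by Proposition~\ref{prop:Teichbdconstr}.

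The central computational tool is the degree count on each component, exactly as in the proof of Proposition~\ref{prop:Teichbdconstr}. On a subcurve $X'$ of geometric genus $i$ joined to the rest of $X_\infty$ along $\nu$ nodes, the restriction of $q_\infty$ is a meromorphic section of $\omega_{X'}^{\otimes 2}$ with a double pole at each node, so that
$$
\deg\big(\divisor(q_\infty)|_{X'}\big) = 4i - 4 - 2\nu + (\text{contribution of the marked double poles carried by } S) \, .
$$
Combining this with the prescribed zero orders $d_j$ of the singularities sitting on $X'$ (those indexed by $S$), I would check in each line of the table that the required configuration is numerically impossible: either the forced total degree on the tail does not match $4i-4-2\nu$ for the separating case $\nu=1$, or the orders $d_j$ cannot be partitioned so that all the singularities indexed by $S$ lie on a genus-$i$ component while $q_\infty$ remains of polar type. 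For a separating node ($\nu=1$) the residue theorem on the double cover, together with the fact that simple poles of $q$ remain simple on a smooth point of $X_\infty$ (Corollary~\ref{signature}), pins down the arithmetic tightly enough to rule out each listed case.

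The steps I would carry out in order are: first, fix the triple $(g,m,\QQ(d_1,\ldots,d_n))$ and read off from the Brill-Noether class in Section~\ref{sec:divclass} the finite list of boundary divisors $\delta_{i;S}$ with nonzero coefficient; second, for each such divisor assume for contradiction that $\ol C \cap \delta_{i;S} \neq \emptyset$ and produce the corresponding polar-type stable curve $(X_\infty, q_\infty)$ via Proposition~\ref{prop:Teichbdconstr}; third, apply the degree count on the genus-$i$ subcurve and invoke Corollary~\ref{component} (every component carries a singularity) and Corollary~\ref{signature} (the signature is preserved) to derive a numerical contradiction; fourth, record that since the only boundary divisors appearing in the Brill-Noether class are thereby avoided, $\ol C$ is disjoint from all of them.

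The main obstacle I anticipate is case (iii): separating nodes genuinely occur for quadratic differentials, so unlike the abelian case one cannot dismiss $\delta_{i;S}$ for $i>0$ wholesale. The delicate point is to show that although $q_\infty$ may degenerate with a separating node, it cannot do so in a way that isolates \emph{precisely} the marked singularities indexed by $S$ on a genus-$i$ tail while keeping $q_\infty$ of polar type. This requires a careful case analysis of how the zero orders $d_j$ can be distributed across the components, and in the few borderline configurations where the naive degree count is consistent, I would need the finer constraint that a genus-zero tail of a polar-type degeneration must carry at least two simple poles (the argument already used in Proposition~\ref{prop:Teichbdconstr}). The verification is ultimately a bounded, case-by-case check matching the entries of the table, but assembling the right numerical inequality for each stratum is where the real work lies.
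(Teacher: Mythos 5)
Your plan is essentially the paper's own proof: combine Corollary~\ref{component} (every irreducible component of $X_\infty$ carries a singularity of $q_\infty$), Corollary~\ref{signature} (the signature is preserved under degeneration), and a degree count on each component, then run through the finitely many boundary divisors appearing in each Brill--Noether class. The one substantive slip is the sign in your degree formula: for a component $Z$ of geometric genus $i$ meeting the rest of $X_\infty$ in $\nu$ nodes, the restriction of $q_\infty$ to the normalization of $Z$ is a meromorphic section of $\omega_Z^{\otimes 2}$ with double poles at the $\nu$ node preimages, so the zeros (and marked simple poles) supported away from the nodes must have total degree $4i-4+2\nu$, not $4i-4-2\nu$; equivalently, $\deg\bigl(\omega_{X_\infty}^{\otimes 2}|_Z\bigr)=4i-4+2\nu$. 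This is the number actually used in the case check (e.g.\ a genus-one tail attached at one node in genus three gives degree $2$, which cannot equal $7$ or $1$ for $\QQ(7,1)$, forcing the contradiction with Corollary~\ref{component}); with your sign the budget would be negative and the verification would come out wrong. Once corrected, your case-by-case argument is exactly the one the paper carries out, and your remark about genus-zero tails needing at least two simple poles is the right auxiliary fact for the borderline configurations.
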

\begin{figure}[h]
$$\begin{array}{|c|c|l|l|}
\hline
g & m & \text{\rm Stratum} \,& \text{\rm Divisor} \\
\hline
2 & 1 & \QQ(3,2,-1), \QQ(6,-1,-1), \QQ(5,1,-1,-1), \QQ(7,-1,-1,-1) & W \\
\hline
2 & 2 & \QQ(3,1,1,-1), \QQ(2,2,1,-1), \QQ(4,2,-1,-1),&  \\ 
& & \QQ(3,3,-1,-1), \QQ(3,2,1,-1,-1), \QQ(4,3,-1,-1,-1),  & BN^1_{2,(1,1)} \\
\hline
3 & 1 & \QQ(8), \QQ(7,1), \QQ(9, -1),\QQ(8,1,-1), & \\ 
&&\QQ(10, -1, -1), \QQ(9,1,-1,-1) & W \\
\hline
3 & 2 & \QQ(6,2), \QQ(5,3), \QQ(4,4), \QQ(6,1,1),\QQ(7,2,-1), \QQ(5,4,-1), & \\
&& \QQ(4,3,1), \QQ(5,2,1), \QQ(6,3,-1), \QQ(5,3,1,-1), \QQ(7,3,-1^2) \!\!\!& BN^1_{3,(2,1)} \\
\hline
3 & 3 & \QQ(4,2,2), \QQ(3,3,2), \QQ(3,3,3,-1), \QQ(4,3,2,-1) & \\
& & \QQ(3,2,2,1), \QQ(3,3,1,1) &BN^1_{3,(1,1,1)} \\
\hline
4 & 1 & \QQ(13,-1), \QQ(12), \QQ(11,1) & W\\
\hline
4 & 2 & \QQ(10,2), \QQ(9,3), \QQ(8,4), \QQ(8,3,1) & BN^1_{4,(3,1)} \\
\hline
4 & 2 & \QQ(6,6) & BN^1_{4,(2,2)}\\
\hline
4 & 3 & \QQ(7,3,2)$, $\QQ(6,3,3)$, $\QQ(5,4,3) & BN^1_{4,(2,1,1)} \\
\hline
4 & 4 & \QQ(3,3,3,3) & BN^1_{4,(1,1,1,1)} \\
\hline
\end{array}
$$
\end{figure}
\par
\begin{proof}
An irreducible component $Z$ of a degenerate half-translation surface $X_{\infty}$ 
over a cusp of $C$ contains at least one zero or pole of the degenerate 
quadratic differential $q_{\infty}$. 
Moreover, $\omega_{X_{\infty}}^{\otimes 2}$ restricted to $Z$ has degree 
equal to $4g(Z) - 4 + 2m$, where $m = \# (Z\cap \overline{X_{\infty}\backslash Z})$. 
Using these facts, the claim follows easily by a case-by-case study. For instance, 
let us show that a Teichm\"uller curve $C$ generated by a
half-translation surface in $\QQ(7,1)$ does not intersect 
$\delta_1$ in $\BM_{3,1}$. Otherwise, there exists a 
degenerate half-translation surface $X_{\infty}$ consisting of two components $Z_1$ and 
$Z_2$ of genus $1$ and $2$, respectively, joined at a node such that 
$\divisor(q_{\infty}) = 7p_1 + p_2$ for two distinct points $p_1, p_2\in X_{\infty}$. 
But the degree of $\omega^{\otimes 2}_{X_{\infty}}$ restricted to $Z_1$ is $2$, 
in particular, not equal to $7$ or $1$. Hence $Z_1$ does not contain any zero of $q_{\infty}$, contradicting Corollary~\ref{component}.   
\end{proof}
\par
\begin{proposition}
\label{global}
Let $C$ be a Teichm\"uller curve parameterizing half-translation
surfaces $(X_t, q_t)$ such that for generic $t$ the quadratic
differential $q_t$ is not a global square of an 
abelian differential. Then $q_0$ is not a global square of a stable one-form on the special fiber $X_0$.
\end{proposition}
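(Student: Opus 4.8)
The plan is to argue by contradiction: assume $q_0 = \omega_0^2$ for some stable one-form $\omega_0 \in H^0(X_0,\omega_{X_0})$, and follow the canonical double cover across the cusp to reach an absurdity. The first, essentially local, observation is that the existence of $\omega_0$ already constrains the stratum. By Corollary~\ref{signature} the singularities of $q_0$ have the same orders $d_1,\dots,d_n$ as $q_t$; at a smooth point of $X_0$ carrying a zero or pole of order $d_j$, the one-form $\sqrt{q_0}$ has order $d_j/2$, which is not an integer when $d_j$ is odd (and $\omega_0^2$ can never have a simple pole). Hence the contradiction hypothesis forces every $d_j$ to be even with no poles, so that $q_t$ lies in a stratum whose canonical double cover $\pi_t\colon Y_t\to X_t$ is \emph{\'etale}; it is moreover connected, since $q_t$ is not a global square for generic $t$.

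Next I would extend this cover over the cusp. Over the interior the pair $(Y_t,\eta_t)$ generates a \Teichmuller curve of abelian differentials, where $\eta_t$ is the tautological one-form with $\pi_t^\ast q_t = \eta_t^2$ and $\tau_t^\ast \eta_t = -\eta_t$ for the covering involution $\tau_t$. Using the anti-invariant period (and plumbing) coordinates of Proposition~\ref{prop:periodandplumb}, the family of covers, the involution, and the one-form all extend to the boundary, producing $\pi_0\colon Y_0\to X_0 = Y_0/\tau_0$, an involution $\tau_0$, and a nonzero stable one-form $\eta_0$ with $\pi_0^\ast q_0 = \eta_0^2$ and $\tau_0^\ast\eta_0 = -\eta_0$. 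Here $Y_0$ is the stable limit of the connected curves $Y_t$, hence \emph{connected}. Because $(X_0,q_0)$ is of polar type (Corollary~\ref{component}), $q_0$ vanishes identically on no component and has a double pole at every node; consequently both $\eta_0$ and the invariant form $\mu:=\pi_0^\ast\omega_0$ are nonzero on every component of $Y_0$ and have \emph{nonzero residues} at every node (the double pole of $q_0$ has leading coefficient equal to the square of the residue of $\omega_0$). Since there are no odd singularities, $\pi_0$ is \'etale, so every node of $Y_0$ maps isomorphically to a node of $X_0$ and the residues of $\mu$ are preserved.

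The heart of the argument is then a sign-function computation. From $\eta_0^2 = \pi_0^\ast q_0 = \mu^2$ and the irreducibility of each component $W$ of $Y_0$, one gets $\eta_0|_W = \epsilon(W)\,\mu|_W$ with $\epsilon(W)\in\{\pm1\}$. The anti-invariance $\tau_0^\ast\eta_0=-\eta_0$ together with the invariance $\tau_0^\ast\mu=\mu$ forces $\epsilon(\tau_0 W) = -\epsilon(W)$, so in particular $\tau_0$ fixes no component. On the other hand, at a node joining $W_1$ and $W_2$ the stable forms $\eta_0$ and $\mu$ each have opposite residues on the two branches; writing $\eta_0|_{W_i}=\epsilon(W_i)\mu|_{W_i}$ and using that the residues of $\mu$ are nonzero, the matching condition $\mathrm{Res}_{W_1}(\eta_0) = -\mathrm{Res}_{W_2}(\eta_0)$ collapses to $\epsilon(W_1)=\epsilon(W_2)$. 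Thus $\epsilon$ is constant across every node, hence constant on the connected curve $Y_0$, contradicting $\epsilon(\tau_0 W)=-\epsilon(W)$. This contradiction shows $q_0$ is not a global square.

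The step I expect to be the genuine obstacle is the boundary extension in the second paragraph: one must know that the \'etale canonical double cover, its involution, and the one-form $\eta$ extend as a family over the cusp with $Y_0$ connected, which is exactly what Proposition~\ref{prop:periodandplumb} and the fact that $(Y_t,\eta_t)$ generates a \Teichmuller curve of abelian differentials are there to provide. The conceptually subtle point the residue argument is designed to overcome is that being a \emph{global} square is strictly stronger than being a square on each component: a connected cover that is split over every component but nontrivial on the dual graph would still make $q_0$ a square componentwise, yet fails to be a global square precisely because the residues cannot be matched consistently around a graph cycle. The combination of residue matching at nodes with the connectedness of $Y_0$ is what rules out this otherwise dangerous configuration, so I would take care to treat the reducible, separating-node situation explicitly rather than assuming the naive disconnectedness of the limiting cover.
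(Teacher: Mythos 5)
Your proof is correct, and its engine --- pass to the canonical double cover, note that it is connected for generic $t$ because $q_t$ is not a global square, and use that connectedness cannot be lost under specialization --- is exactly the paper's. Where you diverge is in getting from ``$Y_0$ is connected'' to ``$q_0$ is not a global square''. The paper builds the double cover algebraically over the whole universal curve $\XX\to C$ (cusps included) as the cyclic cover attached to the line bundle $\OO_\XX(\Gamma/2)$ and the tautological section of its square, so the equivalence ``$q_t$ is a global square iff the covering fiber is disconnected'' holds on every fiber at once and the proof ends in one line. You instead extend the cover flat-geometrically across the cusp and then prove the needed implication on the nodal fiber by hand, via the sign function $\epsilon$ and residue matching at the nodes. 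Your route is longer, but it buys an explicit explanation of why ``square on every component'' does not imply ``global square'' on a nodal curve --- the monodromy of the sign around a cycle of the dual graph --- which the paper leaves packaged inside the cyclic-cover formalism. One small inaccuracy to correct: the limit cover $\pi_0\colon Y_0\to X_0$ is an admissible cover that need not be \'etale at the nodes; over a node of $X_0$ whose vanishing cycle does not lift to $Y_t$, the preimage is a single node of $Y_0$ and the map is $2{:}1$ on each branch, so the residue of $\mu=\pi_0^*\omega_0$ there is twice, not equal to, that of $\omega_0$. This is harmless, since your argument only uses that the residues of $\mu$ on the two branches of each node are nonzero and opposite, which still holds.
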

\par
\begin{proof}
If $q_t$ has a singularity of odd order, the claim is obvious. Assume that all singularities 
are of even order. 
Let $\XX\to C$ be the universal curve and $\Gamma\subset \XX$ the divisor parameterizing 
the singularities of  $q_t$. Define $\LL = \OO_\XX(\Gamma/2)$, which is a well-defined line bundle
by the assumption. Denote by $s\in H^0(\XX, \LL^{\otimes 2})$ the section whose vanishing 
locus is $\Gamma$. Then there exists a canonical double covering $\pi: \XX' \to \XX$ such that 
$\pi^{*}\LL$ possesses a section $s'$ satisfying $(s')^2 = \pi^{*} s$. In other words, along each fiber $X'_t\to X_t$ the pullback of $q_t$ is a square of an abelian differential. Moreover, 
$q_t$ is a global square if and only if $X'_t$ is disconnected. Now the result follows from the fact that a family of connected curves cannot specialize to a disconnected one. 
\end{proof}
\par
The following proposition says that some hyperelliptic and non-hyperelliptic
strata stay disjoint even along the boundary of $\barmoduli$. For a
motivation the reader may compare with the corresponding statement
for abelian differentials in \cite[Proposition~4.4]{chenmoeller}.
\par
\begin{proposition}
\label{prop:hypopen}
Let $C$ be a Teich\"uller curve generated by a half-translation 
surface $(X,q)$.
Suppose that $(X_\infty,q_\infty)$ is a point corresponding to one 
of the cusps $\ol{C} \setminus C$.
If $(X_\infty,q_\infty)$ is a hyperelliptic half-translation surface
and $X_\infty$ is irreducible, then  $(X,q)$ is a hyperelliptic 
half-translation surface, too.
\par
If $(X,q)$ is a half-translation surface in one of the strata 
$$\QQ(6,2), \QQ(6,1,1), \QQ(3,3,2), 
\QQ(10,2), \QQ(6,-1,-1), \QQ(3,3,-1^2) \,\, \text{or}\,\, \QQ(10,-1^2),$$ 
then the conclusion
holds without the irreducibility assumption.
\par
For the stratum $\QQ(3,3,1,1)$ the same conclusion holds
for the cusps parameterizing stable curves with a separating node.
\end{proposition}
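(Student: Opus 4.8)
The plan is to reduce everything to the statement that hyperellipticity of the flat structure, i.e.\ the existence of an involution $\tau$ with $\tau^*q=q$ and quotient of genus $0$, propagates from the cusp $(X_\infty,q_\infty)$ to the nearby smooth fibers of $C$. The key reduction is that hyperellipticity is $\SL_2(\RR)$-invariant: the hyperelliptic involution is an affine automorphism whose linear part is the central element $-\mathrm{Id}$, so it commutes with the entire $\SL_2(\RR)$-action, and both the involution and its quotient genus persist under this action. Since the smooth fibers over $C$ are exactly the surfaces in the $\SL_2(\RR)$-orbit of $(X,q)$, they are either all hyperelliptic or none is; hence it suffices to produce a single smooth hyperelliptic fiber arbitrarily close to the cusp, and then $(X,q)$ itself is hyperelliptic.

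For the irreducible case I would argue via plumbing. By Proposition~\ref{prop:Teichbdconstr} the cusp is obtained from a completely periodic direction by replacing each cylinder with a pair of half-infinite cylinders, and by Proposition~\ref{prop:periodandplumb} a neighborhood of $(X_\infty,q_\infty)$ in $\TQ(d_1,\ldots,d_n)$ is described by period-plumbing coordinates, the smooth fibers being recovered by plumbing finite cylinders back in. Because $\tau_\infty^*q_\infty=q_\infty$, the involution $\tau_\infty$ is an isometry of the flat metric: it permutes the nodes and maps half-infinite cylinders isometrically onto one another, so cylinders in a common $\tau_\infty$-orbit share height and circumference. The degeneration is produced by the Teichm\"uller geodesic flow $\mat{e^{t/2}}{0}{0}{e^{-t/2}}$, which stretches all cylinders uniformly, so the plumbing parameters are constant along $\tau_\infty$-orbits. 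Hence $\tau_\infty$ extends holomorphically to an involution $\tau_t$ of each plumbed surface $X_t$ with $\tau_t^*q_t=q_t$. Since $\tau_\infty$ has quotient of arithmetic genus $0$ and the quotients $X_t/\tau_t$ form a flat family degenerating to $X_\infty/\tau_\infty$, the arithmetic genus of the quotient is constantly $0$; thus the smooth $X_t$ is hyperelliptic, and by the first paragraph so is $(X,q)$.

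For the statements without the irreducibility hypothesis---the listed strata and the separating-node cusps of $\QQ(3,3,1,1)$---the extra work is to control reducible limits. Here I would run a case-by-case analysis in the spirit of Proposition~\ref{prop:disjointbydegree}: every component $Z$ of $X_\infty$ carries at least one singularity (Corollary~\ref{component}), so the short singularity lists of these strata bound the number of components, and the degree constraint $\deg(\omega_{X_\infty}^{\otimes 2}|_Z)=4g(Z)-4+2m$ with $m=\#(Z\cap\overline{X_\infty\setminus Z})$ pins down the admissible distributions of the prescribed orders (Corollary~\ref{signature}) among components. For each surviving configuration one checks that $\tau_\infty$ either preserves each component or interchanges two isomorphic ones, and in both cases the plumbing argument of the previous paragraph extends $\tau_\infty$ to the smooth fibers, while configurations incompatible with a quotient of genus $0$ are discarded. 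For $\QQ(3,3,1,1)$ a separating node splits $X_\infty$ into two pieces, each of which carries at least two of the four singularities by Proposition~\ref{prop:Teichbdconstr}, forcing a $2+2$ split and again leaving only finitely many symmetric possibilities.

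The step I expect to be the main obstacle is precisely this reducible analysis: one must rule out, stratum by stratum, the asymmetric ways in which the few singularities can spread over several components of a hyperelliptic limit, and confirm that the remaining symmetric configurations genuinely admit an extension of $\tau_\infty$ compatible with the prescribed plumbing. The strata containing poles ($\QQ(6,-1,-1)$, $\QQ(3,3,-1^2)$, $\QQ(10,-1^2)$) together with $\QQ(3,3,1,1)$ require the most care, since the poles and low-order zeros produce the largest number of a priori boundary strata to examine. By contrast, the irreducible case and the $\SL_2(\RR)$-invariance reduction are essentially formal.
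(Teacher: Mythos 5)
Your overall skeleton — reduce via $\SL_2(\RR)$-invariance of hyperellipticity to extending the involution from the cusp across the plumbing, then handle reducible limits by a degree/component case analysis — is the same as the paper's. But the central step of your irreducible argument has a genuine gap. You allow $\tau_\infty$ to permute distinct nodes and then claim that the plumbing parameters are constant along $\tau_\infty$-orbits "because the geodesic flow stretches all cylinders uniformly." This does not follow: $\tau_\infty$ is an isometry of the \emph{limit} surface only, so it constrains the circumferences of the half-infinite cylinders (the residues at the nodes) but carries no information about the heights and twists of the finite cylinders of $(X,q)$ that degenerated — precisely the data encoded in the plumbing parameters $z_j$. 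The geodesic flow rescales all heights by the same factor, hence preserves their ratios, but does not make them equal; two cylinders in a putative $\tau_\infty$-orbit can perfectly well have different moduli and twists, in which case no extension of $\tau_\infty$ to the plumbed surface exists. Your inference is essentially circular: you are using the involution on $X_t$ (which you are trying to construct) to conclude that its cylinders are isometric.

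The paper closes this gap differently: it shows that $\rho$ fixes \emph{every node individually} and interchanges the two half-infinite cylinders glued at that node. For irreducible $X_\infty$ this is forced because the quotient by $\rho$ is an irreducible curve of arithmetic genus zero, hence a smooth $\PP^1$, so no node can map to a node of the quotient — ruling out both branch-preserving nodes and pairs of swapped nodes; for the listed strata the two-component degenerations have components of distinct genera joined at a unique node, so $\rho$ cannot exchange components and again fixes each node. Once each node is fixed with branches swapped, the involution extends to \emph{any} plumbed finite cylinder regardless of its modulus and twist (the map swapping the two ends of a cylinder, acting as $-1$ on $q$, exists for every parameter value and has exactly two fixed points), which also produces the count of $2g+2$ fixed points certifying hyperellipticity. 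You should replace your parameter-matching step by this "each node is fixed" argument; as written, your proof would also fail in the reducible cases where you permit $\tau_\infty$ to "interchange two isomorphic components," since that is exactly the situation in which the unjustified matching of plumbing data would be needed.
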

\par
\begin{proof} 
Let $(X_0,q_0)$ denote the 'core' of the half-translation surface $(X_\infty,q_\infty)$, 
that is obtained by removing from $(X_\infty,q_\infty)$ the maximal half-infinite cylinders, 
i.e.\ the cylinders corresponding to the nodes of $X_\infty$ that are swept out by
closed geodesics in the direction of the residue of $q_\infty$ at a node.
\par
If $X_\infty$ is hyperelliptic, there is an admissible double cover of a semistable
model of $X_\infty$ to $\PP^1$. Suppose first that $X_\infty$ is irreducible with $h$
nodes. Then the double cover induces an involution $\rho$ of the semistable model of $X_\infty$
that, by our hypothesis, acts as $(-1)$ on $q_\infty$. 
This involution $\rho$ preserves the central component $X_\infty$ as well as all the
non-stable projective lines. We deduce that $\rho$ preserves $X_0$ and interchanges
each pair of half-infinite cylinders that is glued together to form a node. Moreover,
$\rho$ has $2(g-h)+2$ fixed points on $X_0$.
\par
Nearby surfaces $(X,q)$ in $C$ are obtained by replacing the pairs of infinite cylinders
with cylinders of finite height. Since all pairs of infinite half-cylinders
are preserved by $\rho$, we can extend $\rho$ to an involution on $(X,q)$ still
acting as $(-1)$ on $q$ with $2$ fixed points in each cylinder. This gives
$2g+2$ fixed points in total and $(X,q)$ is a hyperelliptic half-translation surface, 
as we claimed.
\par
Boundary points of the \Teichmuller curve generated by a half-translation surface in one
of the above strata are either irreducible or consist of two components $X_1$ of genus one
and $X_2$ of genus $g-1 >1$ in the first four cases and $g(X_1)=0$, $g(X_2)=g$
in the next three cases. The involution $\rho$ induced by the admissible double
covering cannot exchange the two components and it has to fix the unique node
joining the two components. Reproducing the preceding argument for nodes
joining the irreducible components to itself, if there are, we conclude that
$\rho$ fixes all pairs of half-infinite cylinders. We can now complete the 
proof as in the preceding case.
\par
Thanks to the restrictions on the cusps in question we are just in position
to apply the argument again to the last case.
\end{proof}
\par

\section{Limit canonical curves in genus three and four} \label{sec:limits}

For a smooth, non-hyperelliptic curve $X$ of genus three, its canonical embedding is a 
smooth plane quartic. In that case, the zeros of a holomorphic quadratic differential correspond to 
the intersection of a unique plane conic with $X$. This picture holds more generally if $X$ is nodal, 
non-hyperelliptic and $\omega_X$ is very ample, i.e. the dual graph of $X$ is $3$-connected (see e.g.~\cite[Proposition 2.3]{Hassett}). 
If $X$ is  $2$-connected or less, or if $X$ is hyperelliptic, its canonical map may 
no longer be an embedding. We need a replacement for this picture, when the canonical map of 
a curve of genus three (and also of genus four) fails to be an embedding. 
\par
Since quadratic differentials are sections of $\omega_{X}^{\otimes 2}$, naturally we should consider the bicanonical map of $X$. The bicanonical linear system on a stable genus three curve $X$ provides an embedding to $\PP^5$, unless $X$ possesses an elliptic tail. Now conics in $\PP^2$ corresponding to quadratic differentials become hyperplane sections in $\PP^5$. Then the ambient $\PP^2$ containing $X$ turns out to be a surface in $\PP^5$ as the image of the Veronese embedding induced by $|\OO_{\PP^2}(2)|$. As $X$ degenerates, say to a smooth hyperelliptic curve $X_0$, we do not have a canonical embedding of $X_0$ in $\PP^2$. Nevertheless, in $\PP^5$ we have a bicanonical embedding of $X_0$ contained in a singular surface which is a degeneration of the Veronese $\PP^2$.  In summary, the idea is to treat the pair $(\PP^2, X)$ as a \emph{log surface}, i.e. a surface plus a divisor with mild singularities, and degenerate $\PP^2$ as well. This procedure was carried out by Hassett completely for stable genus three curves \cite{Hassett}. 
We summarize Hassett's results as well as the degenerations we need in genus four. 
This will be useful when we analyze the boundary behavior of  \Teichmuller curves in the exceptional strata. 
\par
Recall that a {\em Hirzebruch surface $F_d$} is a ruled surface over $\PP^1$ such
that the section $e$ with minimal self-intersection number has $e^2 = -d$. Let $f$
be a ruling of $F_d$. The rank of the Picard group of $F_d$ is two, hence any curve 
class of $F_d$ can be written as a linear combination $ae + bf$ with integer coefficients 
$a, b$ and $f^2 = 0$, $f\cdot e = 1$. Moreover, the canonical line bundle of $F_d$ has class 
$$ K_{F_d} = -2e - (2+d)f. $$
\par

\subsection{Genus three}\label{logsurface}

We start with a description of the generic situation. Consider the Veronese 
embedding $\PP^2 \hookrightarrow \PP^5$ by the complete linear system $|\OO(2)|$.
Denote by $S$ the image surface of degree four. A plane curve of degree $d$ maps to a 
curve of degree $2d$ in $S\subset \PP^5$. Holomorphic quadratic differentials on $X$ modulo scalar 
are in bijection to conics $Q$ in $\PP^2$. They become hyperplane sections of $S$ 
in $\PP^5$.
\par
\begin{proposition}
In the above setting, suppose a family of log surfaces $(S_t, X_t)$ degenerates
such that $X_0$ is hyperelliptic but still $3$-connected. Then $S_t$ degenerates
to a surface with one singularity of type $A_3$, whose resolution
is a Hirzebruch surface $F_4$. On $F_4$, the curve $X_0$ has class $2e+8f$.
\par
Two points $p_1, p_2$ are conjugate in $X_0$ if and only if they are cut out by a ruling.
A point $p$ is a Weierstrass point of $X_0$ if and only if there is a ruling 
tangent to $X_0$ at $p$. 
\par
The degeneration of divisors associated to holomorphic quadratic differentials on $X_t$ consists of four rulings or
has class $e + 4f$, depending on whether the hyperplane section $Q$ 
passes through the singularity.
\end{proposition}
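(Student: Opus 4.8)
The plan is to identify the limit surface intrinsically from the hyperelliptic geometry of $X_0$, to place $X_0$ on it via intersection theory on $F_4$, and only then to match it with the flat limit of the Veronese surfaces $S_t$. I would begin with the bicanonical embedding of the special fiber. Write $\phi\colon X_0 \to \PP^1$ for the hyperelliptic double cover and $\sigma$ for the hyperelliptic involution; then $\omega_{X_0} = \phi^*\OO_{\PP^1}(2)$ and $\omega_{X_0}^{\otimes 2} = \phi^*\OO_{\PP^1}(4)$. The involution $\sigma$ splits $H^0(X_0,\omega_{X_0}^{\otimes 2})$ into its five-dimensional invariant part $\phi^* H^0(\OO_{\PP^1}(4))$ and a one-dimensional anti-invariant part. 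The invariant coordinates map $X_0$ two-to-one onto a rational normal quartic $R \subset \PP^4$ (the composite of $\phi$ with the degree-four Veronese of $\PP^1$), while the anti-invariant coordinate separates the two sheets, so $X_0 \hookrightarrow \PP^5$ is an embedding whose projection onto the span of $R$ recovers the cover $\phi$.

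Second, I would produce the candidate limit surface $S_0$ and place $X_0$ on it. Let $F_4 \to \PP^1$ be the Hirzebruch surface with negative section $e$ (so $e^2 = -4$) and ruling $f$, and consider the morphism given by $|e + 4f|$. Since $(e+4f)\cdot e = 0$ and $(e+4f)^2 = 4$, this system contracts $e$ to a point and maps $F_4$ to a degree-four surface $S_0 \subset \PP^5$, namely the cone over $R$; blowing up the vertex returns $F_4$ with exceptional $(-4)$-curve $e$, which is the singularity recorded as $A_3$ in the statement. A curve in $|2e + 8f|$ has, by adjunction on $F_4$ with $K_{F_4} = -2e - 6f$, arithmetic genus three and bicanonical degree $(2e+8f)\cdot(e+4f) = 8$, and satisfies $(2e+8f)\cdot e = 0$; a smooth such member is therefore a genus-three curve disjoint from $e$ (hence missing the vertex), and the projection $F_4 \to \PP^1$ exhibits it as hyperelliptic. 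Matching this with the embedding of the first step identifies $X_0$ with a smooth member of $|2e+8f|$ lying on $S_0$.

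Third comes the degeneration statement proper, which I expect to be the crux. For $t \neq 0$ the multiplication map $\mu_t\colon \mathrm{Sym}^2 H^0(X_t,\omega_{X_t}) \to H^0(X_t,\omega_{X_t}^{\otimes 2})$ is an isomorphism of six-dimensional spaces, and $S_t$ is exactly the resulting Veronese; as $t \to 0$ the hyperelliptic structure forces $\mu_0$ to drop rank by one, its image being precisely the invariant part above. I would then track the flat limit of $\{S_t\}$ inside $\PP^5$ by invoking Hassett's stable reduction of the log surfaces $(\PP^2, X_t)$ from \cite{Hassett}, which identifies this limit with the cone $S_0$ constructed above (alternatively one computes the limit of the ideal of quadrics cutting out $S_t$ along an explicit family $y^2 = f_t(x)$). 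The hard part is certifying that the flat limit is this \emph{normal} cone with a single $A_3$ point and no worse degeneration; this is exactly where I would lean on the cited results rather than rerun the minimal model program for the pair.

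Finally, the remaining assertions read off from intersection theory on $F_4$. Each ruling $f$ meets $X_0$ in $(2e+8f)\cdot f = 2$ points lying over a single point of $\PP^1$, hence forming a conjugate pair; conversely a conjugate pair lies on one ruling, and the two points collide exactly when $f$ is tangent to $X_0$, i.e.\ at the ramification of $\phi$, which are the Weierstrass points. Holomorphic quadratic differentials correspond to hyperplane sections of $S_0$, that is, to members of $|e + 4f|$. A hyperplane missing the vertex gives an irreducible section of class $e + 4f$ cutting $X_0$ in $(e+4f)\cdot(2e+8f) = 8$ points, whereas a hyperplane through the vertex forces the section to contain $e$ and degenerate into $e + 4f$ with $4f$ a sum of four rulings; since $e\cdot(2e+8f) = 0$, the induced divisor on $X_0$ is then cut out by those four rulings. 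This yields the two cases of the last assertion.
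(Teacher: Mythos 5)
Your proposal is correct and takes essentially the same route as the paper, which offers no proof of this proposition beyond describing the cone $S_4$ over the rational normal quartic in $\PP^5$ and referring to \cite{Hassett} for details. Your surrounding computations (the $5+1$ splitting of $H^0(X_0,\omega_{X_0}^{\otimes 2})$ under the hyperelliptic involution, the identification of $X_0$ as a member of $|2e+8f|$ on $F_4$, and the intersection-theoretic reading of conjugacy, Weierstrass points, and the two types of hyperplane sections) are accurate, and you defer the one genuinely hard step --- that the flat limit of the Veronese surfaces $S_t$ is the normal cone with a single singular point --- to exactly the same reference the paper does.
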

\par
In fact, $S_t$ degenerates to a cone $S_{4}$ over a rational normal quartic curve 
$R$ in $\PP^5$. More precisely, take a smooth, degree four, rational curve that 
spans a $\PP^4$, and a point $v$ not contained in that $\PP^4$. Then $S_{4}$ consists 
of the union of lines connecting $v$ with each point of the rational normal quartic. 
Those lines are called rulings of $S_{4}$.  Blowing up the $A_3$-singularity at $v$, 
we obtain the resolution $F_4$. More details on this and the following proposition can be found in \cite{Hassett}.
\par
The second case we need is precisely $2$-connected.
\par
\begin{proposition}
In the above setting, suppose a family of log surfaces $(S_t, X_t)$ degenerates
such that $X_0$ is hyperelliptic and consists of two components $X_1, X_2$, both of genus one, meeting at two nodes $t_1, t_2$. Then $S_t$ degenerates to a cone $S_{2,2}$ over a one-nodal union of two conics in $\PP^5$
Let $S_1, S_2$ be the two components of $S_{2,2}$, containing $X_1, X_2$, respectively. Then their common ruling contains $t_1, t_2$.
\par
The zeros of a degenerate quadratic differential on $X_0$ are cut out by two conics in each of the $S_i$
or it consists of four rulings, two in each of the $S_i$, depending on whether the hyperplane 
section $Q$ passes through the singularity of $S_{2,2}$.
\end{proposition}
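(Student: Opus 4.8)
The plan is to pin down the limit pair $(S_{2,2},X_0)$ by first computing the bicanonical model of the stable curve $X_0$ intrinsically, and only then showing that the ambient Veronese surface is forced to degenerate to the cone $S_{2,2}$. First I would exploit the hyperelliptic structure. Since $X_0$ is hyperelliptic it carries an admissible double cover $\phi\colon X_0\to R_0$, where $R_0=\PP^1\cup\PP^1$ is the nodal genus-zero quotient; the two components $R_i=X_i/\iota$ meet at the common image $n$ of the nodes $t_1,t_2$, which are exchanged by the hyperelliptic involution $\iota$. By adjunction $\omega_{X_0}|_{X_i}=\omega_{X_i}(t_1+t_2)=\OO_{X_i}(t_1+t_2)=\phi_i^{*}\OO_{\PP^1}(1)$, so $\omega_{X_0}^{\otimes 2}|_{X_i}=\phi_i^{*}\OO_{\PP^1}(2)$ and $\omega_{X_0}=\phi^{*}M$ for a line bundle $M$ of degree one on each $R_i$. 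The involution splits $H^0(X_0,\omega_{X_0}^{\otimes 2})$ into an invariant part $H^0(R_0,M^{\otimes 2})$, of dimension $3+3-1=5$, and a one-dimensional anti-invariant part. The invariant system $|M^{\otimes 2}|$ embeds each $R_i$ as a conic $C_i$ spanning a plane in $\PP^4$, with $C_1\cup C_2$ a one-nodal union at the image of $n$; the anti-invariant coordinate furnishes the extra point $v\in\PP^5\setminus\PP^4$. Since $\iota$ acts as $-1$ on that coordinate, $t_1$ and $t_2$ map to the two distinct points of the fibre $\overline{vn}$ over $n$, which is exactly the common ruling.

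Next I would produce the degeneration of the surfaces. By the preceding proposition, over the hyperelliptic locus the Veronese surface degenerates to the cone $S_4$ over the rational normal quartic $R\subset\PP^4$, with vertex the anti-invariant direction. As $X_0$ becomes reducible the quotient $\PP^1$ of the hyperelliptic cover breaks into $R_0=\PP^1\cup\PP^1$, so the invariant image $R$ degenerates, through the flat family of base curves, to the one-nodal union $C_1\cup C_2$ of two conics (both have degree four and arithmetic genus zero and span $\PP^4$). The vertex $v$, determined by the persisting anti-invariant direction, survives, so $S_4$ degenerates to the cone $S_{2,2}=S_1\cup S_2$ over $C_1\cup C_2$; each $S_i$ is the cone over $C_i$, resolving to a Hirzebruch surface $F_2$, and the two components are glued along the common ruling $\overline{vn}$. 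The curve $X_i$, being the bisection of $S_i$ swept out by $X_i\to R_i\to C_i$, lies on $S_i$, and the nodes $t_1,t_2$ lie on $\overline{vn}$ as computed above.

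Then I would read off the two possibilities for $\divisor(q)$. A quadratic differential $q$ is cut out by a hyperplane section $Q$ of $S_{2,2}$ in $\PP^5$. If $Q$ avoids the vertex $v$, then $Q\cap S_i$ is a hyperplane section of the quadric cone $S_i$ not through its vertex, hence a smooth conic, giving two conics, one in each $S_i$. If $Q$ passes through $v$, then $Q\cap S_i$ is cut by a plane through the vertex of the cone, hence a pair of rulings, giving four rulings, two in each $S_i$. Intersecting with $X_0$ yields the stated description of the zeros of $q$.

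The main obstacle is the second paragraph: controlling the flat limit of the family of surfaces and proving that it is precisely the reduced cone $S_{2,2}$, with no embedded or superfluous components, and with the two cone components correctly glued along $\overline{vn}$. This is where Hassett's theory enters: one realizes $(\PP^2,X)$ as a stable log surface and identifies $(S_{2,2},X_0)$ as the log canonical limit, so that the limit is unique and has exactly the claimed structure; see \cite{Hassett}. The intrinsic bicanonical computation of the first paragraph then confirms that this log canonical model is indeed the cone over $C_1\cup C_2$ with vertex $v$.
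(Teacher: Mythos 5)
Your proposal is correct, and it is worth noting that the paper itself does not prove this proposition at all: it only remarks, after describing the cone $S_4$ for the preceding proposition, that ``more details on this and the following proposition can be found in \cite{Hassett}.'' What you do differently is to reconstruct the content intrinsically. Your eigenspace decomposition of $H^0(X_0,\omega_{X_0}^{\otimes 2})$ under the hyperelliptic involution (invariant part $H^0(R_0,M^{\otimes 2})$ of dimension $5$, anti-invariant part of dimension $1$ supplying the vertex $v$) correctly identifies the bicanonical image of $X_0$ as a bisection of the cone over the one-nodal union of two conics, and correctly locates $t_1,t_2$ on the common ruling $\overline{vn}$ because the involution swaps the two nodes (which is forced: if it fixed both, the quotient would have arithmetic genus one and $X_0$ would not lie in the closure of the hyperelliptic locus). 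The dichotomy for $\divisor(q)$ then follows from the elementary fact that a hyperplane section of a quadric cone is a smooth conic or a pair of rulings according to whether it misses or contains the vertex, matching the degree count $4+4=8$. You are also right to isolate the remaining gap — that the flat (log canonical) limit of the surfaces $S_t$ is exactly this reduced cone, with no other possibility — as the step that genuinely requires Hassett's classification of stable log surfaces of genus three; that is precisely, and only, what the paper's citation covers. One small caveat: your second paragraph degenerates in two stages through the hyperelliptic locus (Veronese $\to S_4 \to S_{2,2}$), whereas the proposition also allows families with $X_t$ non-hyperelliptic for $t\neq 0$; since the limit log surface is determined by $X_0$ via Hassett's separatedness, this does not affect the conclusion, but the wording should not suggest the family must pass through $S_4$.
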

\par

\subsection{Genus four}
\label{g=4: logsurface}

We start again with a description of the generic situation. For a general genus four curve $X$, its canonical embedding 
lies in a smooth quadric surface $Q$ in $\PP^3$. Let $f_1, f_2$ be the two ruling classes of $Q$.
Then $X$ has class $3f_1 + 3f_2$. For any holomorphic quadratic differential $q$ on $X$, there exists a unique 
elliptic curve $E$ (possibly singular) as an element of the linear system $|2f_1 + 2f_2|$ such that
 $E\cdot X = \divisor(q)$. 
\par
With the hyperelliptic situation in mind, we need to 
consider the bicanonical embedding. Then $Q$ is embedded into $\PP^8$ by the linear system 
$|2f_1 + 2f_2|$, i.e.\ by its anti-canonical system $|-K_Q|$. The image of $Q$ is a surface $S$ of degree eight. 
In particular, $S$ is a conic bundle correspondence between two conics, i.e.\ the 
rulings of $Q$ map to conics in $\PP^8$.
\par
The {\em Gieseker-Petri divisor} is the (closure of) the locus of genus four curves such
that the quadric surface $Q$ is singular. We call $X$ contained in the Gieseker-Petri divisor
as \emph{Gieseker-Petri special}. 
\par
\begin{proposition}
In the above setting, suppose $X$ is Gieseker-Petri special and $3$-connected, but not hyperelliptic. Then its canonical
image lies on a quadric cone $Q_0$ in $\PP^3$. Blowing up the vertex gives a Hirzebruch surface $F_2$. 
On $F_2$ the class of $X$ is $6f + 3e$ and the zeros of a holomorphic quadratic differential
correspond to a divisor of class $4f + 2e$. 
\end{proposition}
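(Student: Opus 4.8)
The plan is to mimic the smooth (generic) genus four case recalled above, transporting every computation from the smooth quadric $Q = \PP^1\times\PP^1$ (with rulings $f_1,f_2$, where $X\sim 3f_1+3f_2$ and $-K_Q=2f_1+2f_2$) to the resolution $F_2$ of the quadric cone. First I would pin down the quadric. Since $X$ is non-hyperelliptic, its canonical image is a degree six curve spanning $\PP^3$, and from $0 \to I_X(2) \to \OO_{\PP^3}(2) \to \omega_X^{\otimes 2} \to 0$ together with $h^0(\OO_{\PP^3}(2)) = 10$ and $h^0(X,\omega_X^{\otimes 2}) = 9$ one gets $h^0(I_X(2)) = 1$, so $X$ lies on a unique quadric $Q$. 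Being Gieseker-Petri special means $Q$ is singular. Because $X$ is $3$-connected with $\omega_X$ very ample, it is an irreducible non-degenerate curve, so $Q$ cannot be a pair of planes or a double plane; hence $Q$ is an irreducible singular quadric, i.e.\ a quadric cone $Q_0$ of rank three. Blowing up the vertex $v$ resolves the unique $A_1$ singularity and gives the Hirzebruch surface $F_2$, with exceptional $(-2)$-curve $e$, fiber $f$, hyperplane pullback $\pi^*H = e+2f$, and $-K_{F_2} = 2e+4f$ by the formula $K_{F_d} = -2e-(2+d)f$.

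The key preliminary step is to show that $X$ avoids the vertex. The rulings of $Q_0$ are the lines through $v$, and they cut out the trigonal pencil $g^1_3$ on $X$; on the Gieseker-Petri locus the two $g^1_3$'s of the generic case collapse to a single one, but the pencil remains base-point-free. If $X$ passed through $v$, then every ruling would meet $X$ at $v$ and the $g^1_3$ would acquire a base point, a contradiction. Therefore $\widetilde X\cdot e = 0$ and the strict transform $\widetilde X$ maps isomorphically to $X$. Writing $\widetilde X \sim ae+bf$, the relation $\widetilde X\cdot e = 0$ gives $b = 2a$, while $\deg X = \widetilde X\cdot \pi^*H = 6$ gives $b = 6$; hence $\widetilde X \sim 3e+6f = 6f+3e$. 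As a check, adjunction on $F_2$ recovers $2g-2 = \widetilde X\cdot(\widetilde X + K_{F_2}) = 6$.

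For the quadratic differentials I would repeat the adjunction computation that in the smooth case produced the anticanonical class $|2f_1+2f_2| = |-K_Q|$. By adjunction $\omega_X = (K_{F_2}+\widetilde X)|_{\widetilde X}$, so $\omega_X^{\otimes 2} = (2K_{F_2}+2\widetilde X)|_{\widetilde X}$; substituting gives $2K_{F_2}+2\widetilde X = 2e+4f = -K_{F_2}$, since $3K_{F_2}+2\widetilde X = 0$. Thus $-K_{F_2}$ restricts to $\omega_X^{\otimes 2}$, and the zeros of a holomorphic quadratic differential are cut out on $X$ by an anticanonical curve of class $4f+2e$. To see that \emph{every} quadratic differential arises this way I would compare dimensions: $h^0(F_2,-K_{F_2}) = K_{F_2}^2 + 1 = 9 = h^0(X,\omega_X^{\otimes 2})$, and the restriction map has kernel $H^0(F_2,-K_{F_2}-\widetilde X) = H^0(F_2,-e-2f) = 0$, so it is an isomorphism, exactly as in the smooth case where $-K_Q-X = -f_1-f_2$ has no sections.

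The step I expect to demand the most care is the identification of the quadric as precisely a cone, together with the vertex-avoidance argument, since both require translating the Gieseker-Petri and $3$-connectedness hypotheses into statements about the trigonal pencil (and, for nodal $X$, checking that the pencil/base-point argument survives the very ampleness of $\omega_X$). By contrast, once $\widetilde X\cong X$ is established the class determination and the dimension count are routine intersection theory on $F_2$.
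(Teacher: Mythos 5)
The paper states this proposition without proof --- only the companion hyperelliptic statement that follows it is proved --- so there is no argument of the authors' to measure yours against; you are supplying a classical argument the paper takes for granted. Your computations all check out: the unique quadric via $h^0(\OO_{\PP^3}(2))=10$ versus $h^0(\omega_X^{\otimes 2})=9$ (uniqueness is cleanest by B\'ezout: two independent quadrics would cut out a degree-$4$ curve, which cannot contain the degree-$6$ curve $X$), the classes $K_{F_2}=-2e-4f$ and $\pi^*H=e+2f$, the determination $\widetilde X\sim 3e+6f$, the identity $2K_{F_2}+2\widetilde X=-K_{F_2}$, and the isomorphism $H^0(F_2,-K_{F_2})\to H^0(X,\omega_X^{\otimes 2})$ from $h^0(-K_{F_2})=K_{F_2}^2+1=9$ together with $h^0(-e-2f)=0$.

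Two steps need repair. First, $3$-connectedness does not imply irreducibility (two plane cubics glued at three collinear points give a $3$-connected, non-hyperelliptic, canonically embedded stable curve of genus four sitting on a \emph{reducible} quadric), so your exclusion of the rank-$\le 2$ quadrics only works for irreducible $X$; this is harmless for the paper, which invokes the proposition only for smooth curves in the dimension count of Lemma~\ref{le:noGKhypcomp}, but it should be said. Second, and more seriously, the vertex-avoidance argument as written is circular: asserting that the rulings "cut out the trigonal pencil $g^1_3$" on a curve through $v$ presupposes a well-defined degree-$3$ restriction, whereas a ruling is only a Weil divisor at the vertex and does not restrict to a Cartier divisor on a curve passing through $v$. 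Two clean fixes are available. Either note that projection from $v$ maps $Q_0\setminus\{v\}$ onto the base conic, so $\deg X-\operatorname{mult}_v X$ is \emph{twice} the degree of the induced map $X\to\PP^1$; since $\deg X=6$, a point of $X$ at $v$ would have even multiplicity $\ge 2$, contradicting smoothness. Or, better, reverse your logic: write $\widetilde X\sim ae+6f$ using only $\deg X=\widetilde X\cdot(e+2f)=6$, and then adjunction $6=\widetilde X\cdot(\widetilde X+K_{F_2})=-2a^2+12a-12$ (valid because $\widetilde X\to X$ is finite birational onto a smooth, hence normal, curve, so $p_a(\widetilde X)=4$) forces $a=3$, whence $\widetilde X\cdot e=0$ is a \emph{consequence} of the class computation rather than an input to it. For genuinely nodal $3$-connected $X$ the alternative $\widetilde X\sim 2e+6f$, with a node of $X$ over the vertex, is not excluded by these soft arguments and would need a separate discussion --- again immaterial for the paper's use of the statement, but a real limitation of the proof as proposed.
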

\par
\begin{proposition}
In the above setting, suppose $X$ is hyperelliptic and $3$-connected. Then its bicanonical
image lies on a surface scroll isomorphic to a
Hirzebruch surface $F_5$. On $F_5$ the class of $X$ is  
$10f + 2e$ and the zeros of a holomorphic quadratic differential
correspond to the union of a line and a divisor of class $6f + e$. 
\end{proposition}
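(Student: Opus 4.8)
The plan is to work directly with the hyperelliptic structure rather than a plane or quadric model. Write $\pi\colon X\to\PP^1$ for the degree two map given by the hyperelliptic pencil $g^1_2$, so $\omega_X=\pi^*\OO_{\PP^1}(3)$ and $\omega_X^{\otimes2}=\pi^*\OO_{\PP^1}(6)$. First I would push forward: since $\pi_*\OO_X=\OO_{\PP^1}\oplus\OO_{\PP^1}(-5)$ for a genus four hyperelliptic curve, the projection formula gives $\pi_*\omega_X^{\otimes2}=\OO_{\PP^1}(6)\oplus\OO_{\PP^1}(1)$. As $\deg\omega_X^{\otimes2}=12\geq 2g+1$, the bicanonical map embeds $X$ into $\PP^8$, and because it factors through $\PP(\pi_*\omega_X^{\otimes2})$, the image lies on the rational normal scroll $S(6,1)=\PP(\OO(6)\oplus\OO(1))$, which is isomorphic to $F_5$ since $|6-1|=5$. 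Here the $\pi$-invariant differentials span the $\OO(6)$-summand while the two anti-invariant differentials $(dx)^2/y$ and $x(dx)^2/y$ span the $\OO(1)$-summand and are exactly what separates a pair of conjugate points. For a $3$-connected singular hyperelliptic $X$ the same computation runs with the dualizing sheaf and the admissible hyperelliptic double cover, as in Hassett's setup.

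Next I would fix the class of $X$ on $F_5$, writing $e$ for the directrix with $e^2=-5$ and $f$ for the ruling. Since $\pi$ has degree two, $X$ meets each ruling in two points, so $X\sim 2e+bf$. To determine $b$ I use adjunction: $K_X=(K_{F_5}+X)|_X=(b-7)f|_X=(b-7)g^1_2$, of degree $2(b-7)$; equating with $\deg K_X=6$ gives $b=10$, hence $X\sim 2e+10f$ and $\deg X=X\cdot(e+6f)=12$, as required.

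For the statement about zeros I would identify the system on $F_5$ that cuts them out. The key point is that $\OO_{F_5}(6f)|_X=\pi^*\OO(6)|_X=\omega_X^{\otimes2}$, and that $e\cdot X=0$; since the irreducible genus four curve $X$ cannot contain the line $e$, the two are disjoint and $\OO_{F_5}(e)|_X\cong\OO_X$. Therefore $\OO_{F_5}(2e+6f)|_X\cong\omega_X^{\otimes2}$, and the restriction $H^0(F_5,2e+6f)\to H^0(X,\omega_X^{\otimes2})$ is an isomorphism, both sides having dimension nine with kernel $H^0(F_5,-4f)=0$. Thus every quadratic differential lifts to a section of $\OO_{F_5}(2e+6f)$, whose zero curve lies in $|2e+6f|$. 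Comparing $h^0(F_5,2e+6f)=h^0(F_5,e+6f)=9$ shows the directrix $e$ is a fixed component, so this curve is $e\cup E'$ with $E'\in|e+6f|=|6f+e|$; intersecting with $X$ and using $e\cap X=\emptyset$ gives $\divisor(q)=X\cap E'$, realized as the union of the line $e$ and a divisor of class $6f+e$. This parallels the Gieseker-Petri case on $F_2$, where the analogous system $|2e+4f|=|{-K_{F_2}}|$ is base-point free, and the appearance here of a fixed line mirrors the genus three dichotomy between irreducible and reducible hyperplane sections.

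The step I expect to be the main obstacle is justifying the use of the class $2e+6f$, with its fixed directrix, rather than the base-point-free $|e+6f|$: this requires matching the picture continuously with the generic quadric and $F_2$ models through Hassett's log-surface degenerations, and verifying the scroll description for genuinely singular $3$-connected members. Once the class $2e+10f$ and the disjointness $e\cap X=\emptyset$ are established, the remaining intersection numbers are immediate.
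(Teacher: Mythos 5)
Your argument is correct but follows a genuinely different route from the paper's. The paper obtains the scroll synthetically: it sweeps out the secant lines spanned by conjugate pairs of points, identifies the resulting degree-seven scroll as the component $S_{1,6}$ of the degree-eight degeneration $S=\PP^2\cup S_{1,6}$ of the anti-canonically embedded quadric, reads off the class $2e+10f$ of $X$ from the degrees, and realizes the parity curve as a hyperplane section of the log surface $S$ --- a line $\overline{t_1t_2}$ in the plane component glued at two points to a rational normal curve of class $e+6f$ in $S_{1,6}$. You instead build the scroll intrinsically as $\PP(\pi_*\omega_X^{\otimes 2})=\PP(\OO(6)\oplus\OO(1))$, fix the class of $X$ by adjunction, and cut out bicanonical divisors via the restriction isomorphism $H^0(F_5,2e+6f)\to H^0(X,\omega_X^{\otimes 2})$ (both sides of dimension nine, kernel $H^0(F_5,-4f)=0$), whose members are forced to be $e\cup E'$ with $E'\in|e+6f|$ because $(2e+6f)\cdot e=-4$. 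Your version is self-contained on $F_5$ and avoids any degeneration argument; the paper's version fits into Hassett's log-surface framework used throughout Section~\ref{sec:limits} and exhibits the limit of the elliptic parity curve as an arithmetic-genus-one curve (two rational components meeting at two nodes), whereas your $e\cup E'$ meets in a single point and has arithmetic genus zero --- so your ``line'' is the directrix $L=e$ rather than the line $\overline{t_1t_2}$ in the plane component that the paper intends. Since $X\cdot e=0$, this discrepancy is harmless both for the statement as written and for the only downstream use (the proof of Lemma~\ref{le:noGKhypcomp} works directly with the moving part $|e+6f|$, exactly as you derive), but the two ``lines'' are different curves in $\PP^8$, and only the paper's choice records the correct flat limit of the parity curve. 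The one point you rightly flag as deferred --- extending $\pi_*\OO_X=\OO\oplus\OO(-5)$ and the scroll description to singular $3$-connected members via the admissible double cover --- is also left implicit in the paper, so you are not missing anything the authors supply.
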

\par
\begin{proof} Let $X$ be a hyperelliptic curve of genus four, embedded in $\PP^8$ 
by its bi-canonical system. A pair of conjugate points under the hyperelliptic involution of $X$ span a line in $\PP^8$. 
Take the union of these lines and we obtain a surface scroll of degree seven containing $X$. 
It is easy to check that the resulting surface is $S_{1,6}$, i.e.\ the union of line correspondences between 
a line $L$ and a rational normal sextic. Namely, $S_{1,6}$ is a component of a degree eight surface $S$, which is 
a degeneration of the degree eight anti-canonical embedding of $Q$, hence the other component of $S$ must have degree one, i.e. a plane $\PP^2$. The plane is spanned by $L$ and a ruling $F$, since $S$ is embedded in $\PP^8$ by 
its anti-canonical system.  
\par
Note that $S_{1,6}$ is the image of 
the Hirzebruch surface $F_5$ by $|e + 6f|$. Under this embedding, the distinguished
section $e$ maps to $L$ and $f$ maps to a ruling. Consequently,  the class of $X$ is 
$2e + 10f$. Moreover, the elliptic curve $E$ cutting out the zeros of a quadratic differential on $X$ has the same class as the anti-canonical class 
of the log surface $S = \PP^2\cup S_{1,6}$, i.e.\ $E$ is a hyperplane section of $S$. 
Its restriction to $S_{1,6}$ has class $e+6f$, whose sections are 
rational normal curves in the hyperplane $\PP^7$. Since $(e+6f)\cdot e = 
(e+6f)\cdot f = 1$, such a rational normal curve $R$ in $\PP^7$ intersects $L$ at $t_1$ 
and intersects $F$ at $t_2$. So $E$ is the union of the line $\overline{t_1t_2}$ and $R$, i.e.\ two 
rational curves jointed at two nodes. 
\end{proof}
\par

\section{Genus three: exceptional strata} \label{sec:g3exceptional}

By \cite{lanneauENS} in genus three the strata with an exceptional number of connected components are
$$\cE_3 = \{(9,-1),(6,3,-1),(3,3,3-1)\}.$$ For each of the strata $\QQ(k_1,k_2,k_3,-1)$
in this list, the following properties are known to hold. 
\begin{itemize}
\item[i)] The stratum $\QQ(k_1,k_2,k_3,-1)$ has exactly two components.
\item[ii)] Only one of the two components $\QQ(k_1,k_2,k_3,-1)^\reg$ is adjacent to $\QQ(8)$.
\item[iii)] The stratum $\QQ(9,-1)^{\reg}$ is obtained from $\QQ(5,-1)$ by gluing
in a cylinder with angle in $\{\pi,2\pi,4\pi\}$ and $\QQ(9,-1)^\irr$ is obtained
by gluing in a cylinder with angle $3\pi$.
\end{itemize}
\par
We stick to Zorich's notation (\cite{zorichexplicit}) on the labeling '$\reg$' and '$\irr$', 
corresponding to \emph{regular} and \emph{irregular}, respectively, for a reason that
will soon become clear. Originally \cite{lanneauENS} used the labels {\em reduced} and 
{\em irreducible}, to indicate which of the strata by property ii) could be reduced 
to the stratum $\QQ(8)$. But this mnemonic works in $g=3$ only.
\par
The gluing construction used in iii) gives a topological distinction of
the two components. In \cite{lanneauENS} Lanneau asked for an algebraic distinction. We provide a solution 
to this question as one of the main results of this section. In fact, we prove the following result
by purely algebraic construction. 
\par
For a quadratic differential $q$, let ${\rm div}(q)_0$
(resp.\ ${\rm div}(q)_\infty$) be the divisor of zeros (resp.\ of poles) of $q$. Define a divisor
$$\LL(X,q) = {\rm div}(q)_0/3$$ 
and also use the same notation to denote its associated line bundle on $X$. 
\par
\begin{theorem} \label{thm:mainexcg3}
The above properties i) and ii) hold for each stratum in $\cE_3$. Moreover, we have the following parity distinction: 
\begin{itemize}
\item[iii')] The surface $(X,q)$ belongs to 
$$\QQ(k_1,k_2,k_3,-1)^\irr \quad \text{if} \quad \dim H^0(X,\LL(X,q)) = 2$$
and it belongs to  
$$\QQ(k_1,k_2,k_3,-1)^\reg \quad \text{if} \quad \dim H^0(X,\LL(X,q)) = 1.$$
\end{itemize}
\end{theorem}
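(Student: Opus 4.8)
The plan is to establish the parity distinction iii') by studying the line bundle $\LL(X,q) = \divisor(q)_0/3$ on a generic surface in each stratum, and then to argue that the value $\dim H^0(X,\LL(X,q))$ is locally constant (hence constant on each connected component). Since each stratum $\QQ(k_1,k_2,k_3,-1)$ has exactly two components by property i), it suffices to exhibit one surface with $\dim H^0 = 1$ lying in a component adjacent to $\QQ(8)$ and one surface with $\dim H^0 = 2$, and to match these against the regular/irregular labels via the gluing description in iii).

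First I would analyze the numerics. For a surface $(X,q)$ in $\cE_3$, the divisor $\divisor(q)_0$ has degree $4g-4+1 = 9$ with all zeros of order divisible by $3$, so $\LL(X,q)$ is a line bundle of degree $3$ on the genus-three curve $X$. By Riemann--Roch, $\dim H^0(X,\LL) - \dim H^0(X,\omega_X \otimes \LL^{-1}) = 3 + 1 - 3 = 1$, so $\dim H^0(X,\LL) \geq 1$ always, and $\dim H^0(X,\LL) = 2$ precisely when $\LL$ is a $g^1_3$, i.e. when $X$ carries a pencil of degree three through the prescribed divisor. On a non-hyperelliptic genus three curve, a $g^1_3$ is cut out by the pencil of lines through a point of the canonical plane quartic, so $\LL(X,q)$ computes a $g^1_3$ exactly when $\divisor(q)_0/3$ is such a pencil divisor; I would translate the condition $\dim H^0 = 2$ into a concrete geometric incidence on the canonical model using the picture from Section~\ref{sec:limits} that quadratic differentials correspond to conics in $\PP^2$.

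Next I would prove that the two values $1$ and $2$ are each actually attained, and that the invariant is constant on components. For constancy, the clean approach is to use the period plumbing coordinates of Proposition~\ref{prop:periodandplumb}: since $\deg\LL = 3$ is fixed and $\dim H^0$ is upper semicontinuous while $\dim H^0 \geq 1$ is forced, the locus where $\dim H^0 = 2$ is closed, so I must show it is also open within each connected component, equivalently that it cannot jump. The most efficient route is degeneration: I would take a cusp of a Teichm\"uller curve in the stratum, degenerate to a boundary point where $X_\infty$ becomes hyperelliptic or reducible so that the log-surface description of Section~\ref{logsurface} applies, and read off $\dim H^0(X,\LL)$ from whether the conic $Q$ associated to $q$ passes through the singularity of the degenerate Veronese surface $S_t$. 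The two proposition cases there --- degeneration of $\divisor(q)$ into four rulings versus a divisor of class $e+4f$ --- should correspond exactly to the two values of $\dim H^0$, giving both attainment and the identification with the regular versus irregular gluing in iii).

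The main obstacle will be matching the algebraic invariant $\dim H^0(X,\LL)$ to Zorich's topological labels $\reg$ and $\irr$ from property iii), since iii) is phrased via the flat-geometric cylinder-gluing angles $\{\pi,2\pi,4\pi\}$ versus $3\pi$, whereas my invariant is cohomological. Bridging this requires showing that the flat surface obtained by gluing in a cylinder of angle $3\pi$ produces a curve on which $\divisor(q)_0/3$ moves in a pencil, while the other angles do not; I expect to do this by following the limiting divisor through the plumbing construction and using Corollary~\ref{signature} to track the signature, then applying the genus-three log-surface degeneration to certify the jump in $\dim H^0$. Once the invariant is shown constant and computed on explicit degenerate representatives of each component, properties i) and ii) follow since distinct components must carry distinct invariants and only $\QQ(k_1,k_2,k_3,-1)^\reg$ with $\dim H^0 = 1$ can be adjacent to the connected stratum $\QQ(8)$, whose generic member is non-hyperelliptic with a generic $\LL$ of $\dim H^0 = 1$.
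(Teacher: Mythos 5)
Your outline correctly isolates the right invariant ($\deg\LL=3$, Riemann--Roch forcing $h^0\in\{1,2\}$, with $h^0=2$ meaning $\LL$ is a $g^1_3$) and correctly identifies the crux: the locus $\{h^0=2\}$ is closed by semicontinuity, so the whole problem is to show it is also \emph{open} inside the stratum. But you never supply a mechanism for that openness, and the route you propose --- degenerating to cusps of Teichm\"uller curves and reading the answer off the log-surface picture at the boundary --- cannot supply it: boundary behavior says nothing about whether a family of \emph{smooth interior} surfaces with generic $h^0=1$ can specialize to an interior surface with $h^0=2$. The paper's key idea, absent from your proposal, is to attach to $(X,q)$ an auxiliary plane cubic: choose a generic line $L$ through the pole, so $L\cdot X=r_1+r_2+r_3$, and use $H^0(\PP^2,\OO(3))\cong H^0(X,\omega_X^{\otimes 3})$ to produce the unique cubic $E$ with $E\cdot X=\divisor(q)_0+r_1+r_2+r_3$; after checking $E$ is irreducible with $\divisor(q)_0$ in its smooth locus, one gets $3\LL(X,q)\sim\OO_E(3)$, so $\LL(X,q)\otimes\OO_E(-1)$ is a torsion point of order $1$ or $3$ on the elliptic curve $E$, and torsion order cannot jump in a family (distinct torsion sections of an elliptic fibration are disjoint). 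That is what makes the parity deformation-invariant. Note also that in these strata $\divisor(q)$ is not effective (there is a simple pole), so the ``quadratic differentials are conics on the canonical quartic'' picture you invoke does not directly apply; the residual line through the pole is exactly the device needed to repair this.

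A second, structural gap: you take property i) (exactly two components) as an input and reduce to exhibiting one surface of each parity, but the theorem as stated is supposed to \emph{prove} i) and ii), and the paper's point is to do so algebraically, independently of the flat-geometric classification. The paper gets the count by constructing, for each parity, an irreducible parameter space ($S^{\reg}$, $S^{\irr}$, fibered over moduli of pointed elliptic curves with a $3$- resp.\ $9$-torsion condition) that dominates the corresponding locus and has the dimension of the stratum; combined with smoothness of strata (so codimension-one loci such as the hyperelliptic locus and the locus of singular $E$ can be discarded) this shows each parity class is a single irreducible component. Finally, your matching with the $\reg$/$\irr$ labels via cylinder angles is not what is needed either: the paper only checks consistency with adjacency to $\QQ(8)$, and there semicontinuity runs in the direction opposite to what you assert --- one must show that an $\irr$ family ($h^0=2$) limiting into $\QQ(8)$ forces $z_1$ to be a Weierstrass point and then derive a contradiction with $q$ not being a global square; the fact that the generic member of $\QQ(8)$ has no special linear series does not by itself prevent a $h^0=1$ family from acquiring one in the limit.
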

\par
Consequently, we may say that in the irregular components the linear system $|\LL(X,q)|$
has an irregularly high dimension.
\par
In spirit, this behavior is very similar to abelian differentials. There, 
in a stratum $\omoduli[g](k_1,\ldots,k_r)$
where all the $k_i$ are even, the two components are distinguished by the parity of the 
spin structure, i.e.\ by the parity of $H^0(X,{\rm div}(\omega)/2)$. Whereas the parity of
the spin structure is well-known to be deformation invariant, the invariance of the number
of sections of a third root of the zero divisor of a quadratic differential appears rather strange.
\par
To prove the above theorem, the first step is an equivalent interpretation of the parity condition in
terms of a torsion order on a secretly underlying elliptic curve. Next, we construct
the components of the strata in $\cE_3$ from moduli spaces and projective bundles that
are known to be irreducible.
\par
As a by-product, we found that almost all the exceptional strata are non-varying. 
\par
\begin{theorem} \label{thm:execg3NV}
All the components of the exceptional strata in genus $g=3$ with the
exception of $\QQ(3,3,3,-1)^\irr$ are non-varying. The values are collected in the following table: 
$$\begin{array}{|c|c|c|}
\hline
& \text{\rm component} \, \QQ^\irr & \text{\rm component} \, \QQ^\reg \\
\hline
\QQ(9,-1) & L^+ = 14/11 & L^+ = 12/11 \\
\hline
\QQ(6,3,-1) & L^+ = 7/5 \, & L^+ = 23/20 \\
\hline
\QQ(3,3,3,-1) & \,\text{\rm varying} \, & L^+ = 6/5 \\
\hline
\end{array}
$$
\end{theorem}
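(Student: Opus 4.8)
The plan is to read off $L^+=c+\kappa$ for each component from Proposition~\ref{intersection} by intersecting a lift $\ol C\subset\barmoduli[3,m]$ of the Teichm\"uller curve (marking $m$ of the zeros) with a geometric divisor $D$ that $\ol C$ avoids: since $\ol C\cdot\lambda=\tfrac{\chi}{2}(c+\kappa)$, $\ol C\cdot\omega_j=\chi/(d_j+2)$ and $\ol C\cdot\delta=6\chi c$, a single relation $\ol C\cdot D=0$ pins down $L^+$. For the three \emph{regular} components the right $D$ is the pointed Brill--Noether divisor attached to $\LL=\divisor(q)_0/3$: the Weierstrass divisor $W=BN^1_{3,(3)}$ for $\QQ(9,-1)^\reg$, the divisor $BN^1_{3,(2,1)}$ for $\QQ(6,3,-1)^\reg$, and $BN^1_{3,(1,1,1)}$ for $\QQ(3,3,3,-1)^\reg$. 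By Theorem~\ref{thm:mainexcg3}, part~iii'), the regular locus is exactly where $\dim H^0(X,\LL)=1$, i.e.\ where the generating surface does not lie on this divisor, while Proposition~\ref{prop:disjointbydegree} guarantees that $\ol C$ misses every boundary divisor occurring in its class. Hence $\ol C\cdot D=0$, and substituting the intersection numbers together with the explicit $\kappa$ yields $L^+=12/11,\,23/20,\,6/5$ respectively --- a routine computation with the classes of Section~\ref{sec:divclass}.

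The \emph{irregular} components are the real content: there $\dim H^0(X,\LL)=2$, so $\ol C\subset D$ and $D$ is useless; one must replace it by a divisor that the (now non-hyperelliptic) irregular locus avoids. For $\QQ(9,-1)^\irr$ I would use the hyperelliptic divisor $H=9\lambda-\delta_0-3\delta_1$ on $\barmoduli[3]$. The degree bound of Corollary~\ref{component} (a genus-one piece carries only $4\cdot1-4+2=2$ units of $\omega_{X_\infty}^{\otimes2}$) shows that a genus-one component can accommodate neither the order-nine zero nor merely the simple pole, so every separating degeneration is excluded: $\ol C\cdot\delta_1=0$ and all cusps are irreducible. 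As the generating surfaces are non-hyperelliptic plane quartics, any cusp lying in $\ol H$ would be an irreducible hyperelliptic half-translation surface, which by Proposition~\ref{prop:hypopen} would force $(X,q)$ itself to be hyperelliptic --- a contradiction. Thus $\ol C\cap\ol H=\emptyset$, and $\ol C\cdot H=0$ together with $\ol C\cdot\delta_1=0$ gives $c=3\kappa$, i.e.\ $L^+=4\kappa=14/11$.

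For $\QQ(6,3,-1)^\irr$ the same idea applies but the boundary no longer vanishes: the degree bound now permits exactly one separating degeneration, a genus-one tail carrying the order-three zero and the simple pole attached to a genus-two component carrying the order-six zero (the class $\delta_{1;\{2\}}$ in $\barmoduli[3,2]$). Here I would use an auxiliary Weierstrass/Brill--Noether divisor avoided by the irregular interior --- note that $p_1$ is not a flex, since the tangent $T_{p_1}X$ meets $X$ in $2p_1+p_2+o$ with $o$ the residual trigonal centre $K-\LL$ --- and then evaluate the single surviving intersection $\ol C\cdot\delta_{1;\{2\}}$ from the plumbing coordinates of Proposition~\ref{prop:periodandplumb}, where it appears as a fixed rational multiple of $\chi$ independent of $C$; combining the resulting relations yields $L^+=7/5$. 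By contrast $\QQ(3,3,3,-1)^\irr$ is \emph{varying}: with three marked zeros the curve admits several separating degenerations and meets the hyperelliptic locus in a way that no single linear relation can control, and explicit Teichm\"uller curves from the accompanying computer data realise distinct values of $L^+$.

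The hard part is exactly this last boundary analysis for the irregular components: deciding which degenerate half-translation surfaces over the cusps actually lie in the closure of the chosen geometric divisor, and evaluating the surviving separating-node contribution. This is where the degeneration of the bicanonical log surfaces to Hirzebruch surfaces in Section~\ref{sec:limits} must be combined with the flat-geometric plumbing picture; the clean disjointness from $\ol H$ enjoyed by $\QQ(9,-1)^\irr$ via Proposition~\ref{prop:hypopen} is precisely what breaks down once reducible cusps are allowed, which is also the structural reason that $\QQ(3,3,3,-1)^\irr$ turns out to be varying.
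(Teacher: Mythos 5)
Your treatment of the regular components and of $\QQ(9,-1)^\irr$ is essentially the paper's: the regular components are handled by $\ol C\cdot BN^1_{3,(3)}=\ol C\cdot BN^1_{3,(2,1)}=\ol C\cdot BN^1_{3,(1,1,1)}=0$ (interior disjointness from the parity, boundary disjointness from Lemma~\ref{le:disjg3irred} and Proposition~\ref{prop:disjointbydegree}), and $\QQ(9,-1)^\irr$ by disjointness from the hyperelliptic divisor together with $\ol C\cdot\delta_1=0$, giving slope $9$ and $L^+=4\kappa=14/11$. One caveat there: membership of a cusp in $\ol H\subset\barmoduli[3]$ only makes the stable curve $X_\infty$ hyperelliptic; it does not make $(X_\infty,q_\infty)$ a hyperelliptic \emph{half-translation surface}, so Proposition~\ref{prop:hypopen} does not apply as you invoke it. The paper's Proposition~\ref{prop:91disj} instead argues directly: in the irregular component $z_1$ is a Weierstrass point, so $X$ hyperelliptic forces $\omega_X\sim 4z_1$ and then $\omega_X^{\otimes 2}\sim 9z_1-p_1$ gives $z_1\sim p_1$, a contradiction; this works verbatim at the (irreducible) cusps.

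The genuine gap is $\QQ(6,3,-1)^\irr$. Your plan --- find some auxiliary Brill--Noether-type divisor avoided by the irregular locus and then ``evaluate the single surviving intersection $\ol C\cdot\delta_{1;\{2\}}$ from the plumbing coordinates, where it appears as a fixed rational multiple of $\chi$ independent of $C$'' --- does not go through as stated. The number and type of cusps of a given degeneration pattern is not a universal multiple of $\chi$ across Teichm\"uller curves in a stratum, and no mechanism is offered that would make $\ol C\cdot\delta_{1;\{2\}}$ computable in terms of $\chi$ alone; nor is the auxiliary divisor actually produced. The paper explicitly concedes that the divisor method fails here without a tedious study of $\Pic(BN^1_{3,(2,1)})$ and its singularities, and proves non-varying by an entirely different route (Appendix~\ref{sec:filtration}, Section~\ref{sec:63-1irr}): one shows $h^0(X,3z_1+z_2)=2$ and $h^0(X,z_1+z_2)=1$ for \emph{every} fiber over $\ol C$, including the unique reducible degeneration you identified, then uses the filtration Lemmas~\ref{le:filtration}--\ref{le:non-reduced} to compute $c_1\bigl(f_*\OO_{\XX}(6S_1+3S_2-S_3)\bigr)=17S_1^2+5S_2^2=-\tfrac{25}{8}\chi$, and finally the Grothendieck--Riemann--Roch identity \eqref{eq:GRR} to obtain $L^+=7/5$. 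Without this (or a worked-out substitute), your argument for that component is incomplete.
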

\par
\subsection{Parity given by torsion conditions}  \label{sec:g3parity}
Let $\QQ(k_1,k_2,k_3,-1)$ be a stratum in $\cE_3$.
We consider first a half-translation surface $(X,q)$ in $\QQ(k_1, k_2, k_3, -1)$ that is not hyperelliptic.
Later, in Lemma~\ref{le:nohypcomponent} we will see that in all strata this is 
generically the case. We
write ${\rm div}(q)_0 = k_1 z_1 + k_2 z_2 + k_3 z_3$ and let $p$ be the pole of $q$.
\par
Consider the canonical embedding $X \hookrightarrow \PP^2$. Take a general line $L$ 
passing through the pole of $q$ but not through its zeros. Then $L$ intersects $X$ 
at three points $r_1$, $r_2$ and $r_3$. By the long exact sequence of cohomology 
associated to the short 
exact sequence $$ 0\to \OO_{\PP^2}(-1)\to \OO_{\PP^2}(3) \to \OO_{X}(3) \to 0 $$
we obtain an isomorphism $H^0(\PP^2, \OO_{\PP^2}(3)) \cong H^0(X, \OO_{X}(3))$.
Note that $\OO_X(3) \cong \omega_X^{\otimes 3}$. Thus
there exists a unique plane cubic $E$ such that 
\be \label{eq:EcdotX}
 E \cdot X  = {\rm div}(q)_0  + r_1 + r_2 + r_3.
\ee
\par
\begin{proposition} \label{prop:Eirred}
Fix a half-translation surface $(X,q)$ in $\QQ(k_1,k_2,k_3,-1)$ and suppose it is not hyperelliptic. For a generic choice of $L$ the plane 
cubic $E$ is irreducible and $\divisor(q)_0$ is contained
in the smooth locus of $E$.
\end{proposition}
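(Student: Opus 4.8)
The plan is to recognize the cubic $E=E_L$ as a member of a \emph{pencil} of plane cubics and then to prove irreducibility and smoothness as genericity statements for that pencil. Writing $\ell_L$ for the linear form cutting out $L$, the lines through $p$ form a pencil, so the assignment $\ell_L\mapsto q\cdot\ell_L\mapsto E_L$ is linear (via the isomorphism $H^0(\PP^2,\OO_{\PP^2}(3))\cong H^0(X,\OO_X(3))$ and $\OO_X(3)\cong\omega_X^{\otimes 3}$), and the $E_L$ sweep out a pencil of cubics. Its base scheme is exactly $\divisor(q)_0$: on $X$ the common zeros of the sections $q\cdot\ell_L$ are $\divisor(q)_0$ together with the common part of the moving divisors $D_L:=\ell_L\cdot X-p=r_1+r_2+r_3$, and since the $D_L$ range over the base-point-free $g^1_3$ given by $|K_X-p|$ (projection from $p$) they have no common point; a short length count shows that two general members share no curve component, so the base scheme is the length-$9$, curvilinear scheme $\divisor(q)_0$ supported on the $z_i$. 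It therefore suffices to show that the general member of this pencil is irreducible and reduced, and smooth at each $z_i$.

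For irreducibility, I use that a plane cubic fails to be irreducible and reduced precisely when it contains a line. So suppose $E_L$ contains a line $M$; then $M\cdot X$ is a degree-$4$ divisor in $|K_X|$ with $M\cdot X\le\divisor(q)_0+D_L$. Write $M\cdot X=F_M+R_M$, where $F_M$ is supported on $\{z_1,z_2,z_3\}$ and $R_M$ on the complement, so $F_M\le\divisor(q)_0$, $R_M\le D_L$, and $\deg R_M=4-\deg F_M$; since $\deg D_L=3$ necessarily $\deg F_M\ge 1$. If $\deg F_M=4$ then $R_M=0$, so $M\cdot X\le\divisor(q)_0$ and $M$ lies in every $E_L$, i.e.\ in the base scheme; this is impossible because the base scheme is zero-dimensional. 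If $\deg F_M\in\{2,3\}$ the lines with the prescribed contact at the $z_i$ form a finite set (a tangent line, or a line through two of the $z_i$, and so on), each with a \emph{fixed} residual $R_M$, and $R_M\le D_L$ then holds for at most finitely many $L$. The only potentially moving family is $\deg F_M=1$: here $M$ runs through the pencil of lines through a single $z_i$, and its residual $R_M$ is a degree-$3$ member of $|K_X-z_i|$; the requirement $R_M\le D_L$ forces $R_M=D_L\in|K_X-p|$, which is impossible since $z_i\ne p$ gives $|K_X-z_i|\cap|K_X-p|=\emptyset$ (a common member would yield $z_i\sim p$). Hence only finitely many $L$ give a reducible $E_L$.

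For smoothness along $\divisor(q)_0$, I use that the base scheme is curvilinear, being a divisor on the smooth curve $X$: at $z_i$ its local ideal contains a local equation of $X$ and is therefore not contained in $\mathfrak m_{z_i}^2$. If every member $E_L$ were singular at $z_i$, the local ideal of the base scheme $(F_{L_0},F_{L_1})$ would lie in $\mathfrak m_{z_i}^2$, a contradiction. Thus not all members are singular at $z_i$, and since within the pencil $F_{L_0}+tF_{L_1}$ the gradient at $z_i$ is linear in $t$, at most finitely many members are singular at $z_i$. Taking $L$ outside the finite union of these bad loci over $i=1,2,3$ together with the reducibility locus proves the proposition.

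The main obstacle is excluding the possibility that a one-parameter family of line components forces \emph{every} $E_L$ to be reducible; this is exactly the case $\deg F_M=1$, and it is resolved by the incompatibility of the two $g^1_3$'s $|K_X-z_i|$ and $|K_X-p|$ attached to distinct points of the non-hyperelliptic curve $X$, whose canonical model is the smooth plane quartic in which we are working (this is where the non-hyperelliptic hypothesis enters). The remaining reducibility cases are rigidity statements yielding only finitely many exceptional $L$, and the smoothness assertion reduces cleanly to the curvilinearity of $\divisor(q)_0$.
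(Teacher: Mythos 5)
Your pencil-and-base-scheme framework is appealing, and most of it is sound: the cases $\deg F_M\in\{1,2,3\}$ are handled correctly (the $\deg F_M=1$ case via $z_i\sim p$ is clean, and it also disposes of a fixed \emph{conic} component, whose residual lines would fall into that case), and the smoothness argument via curvilinearity of the base scheme plus linearity of the gradient in the pencil parameter is correct granted the base-scheme identification; it is genuinely different from the paper's proof, which rules out nodes and cusps of $E$ at the $z_i$ by explicit local computations with parametrized singular cubics (Lemmas~\ref{le:g3cusp} and~\ref{le:g3node}). But there is a genuine gap, and it sits exactly at the case you dismiss fastest: $\deg F_M=4$, i.e.\ a line $M$ with $M\cdot X\le\divisor(q)_0$ supported entirely on the zeros. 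Such a line is automatically a \emph{fixed component} of the pencil: $(M\cdot E_L)\ge 4>3$ for every $L$, so $M\subseteq E_L$ for all $L$. This is precisely the configuration that would make the base scheme one-dimensional. You exclude it ``because the base scheme is zero-dimensional,'' but zero-dimensionality was justified only by ``a short length count,'' and no length count proves it: a common component $C$ of two general members satisfies $C\cdot X\le\divisor(q)_0$, which is numerically consistent both for $\deg C=1$ (degree $4\le 9$) and $\deg C=2$ (degree $8\le 9$); only a common cubic is excluded by degrees. So the argument is circular — the zero-dimensionality you invoke is exactly what the $\deg F_M=4$ case threatens.

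Nor is this case empty by linear-equivalence bookkeeping alone: in $\QQ(6,3,-1)$, for instance, a hyperflex $K_X\sim 4z_1$ together with $3z_2\sim 2z_1+p$ is perfectly consistent at the level of divisor classes, yet it would make every $E_L$ reducible. The missing idea is a geometric fact you never use: the residual points $r_1,r_2,r_3$ are \emph{collinear}, since they all lie on $L$. If $M\subseteq E_L$ with $M\cdot X$ supported on the zeros, write $E_L=M+Q$; then the conic $Q$ contains the three distinct collinear points $r_i$, hence contains $L$ itself, and therefore $p\in L\cdot X\le E_L\cdot X=\divisor(q)_0+r_1+r_2+r_3$ — a contradiction, since for generic $L$ the pole $p$ is none of the $z_i,r_i$. (This simultaneously shows the above ``hyperflex'' configurations cannot occur on a smooth quartic carrying such a $q$.) This collinearity argument is, implicitly, how the paper's Lemma~\ref{le:Eirrednew} reduces at the outset to line components meeting exactly one $r_i$, before running its stratum-by-stratum arguments. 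With this one insertion your proof closes up and is, in my view, a tidier organization than the paper's; without it, the central case is unproved.
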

\par
To help the reader quickly grasp our idea, we postpone the proof of the above technical statements to Appendix~\ref{sec:g3local} and continue with the parity construction. From $r_1 + r_2 + r_3 \sim \OO_{E}(1)$ and 
\eqref{eq:EcdotX} we deduce that
\be \label{eq:g3torsion}
  3\LL(X,q) \sim \OO_{E}(3).
\ee
The key observation is that linear equivalence may or may not hold when dividing both sides by three, thus providing a parity to distinguish the two components. 
\par
\begin{proposition} \label{prop:g3parity}
In the above setting, the parity $\dim H^0(X,\LL(X,q)) = 2$ if 
and only if $\LL(X,q) \sim \OO_{E}(1)$ and the parity $\dim H^0(X,\LL(X,q)) = 1$ if 
and only if $\LL(X,q) \not\sim \OO_{E}(1)$ but $ 3\LL(X,q) \sim \OO_{E}(3).$
\par
Moreover, in a family of quadratic differentials $(X_t, q_t)$ 
with $X_t$ a smooth non-hyperelliptic curve for all $t \in \Delta$, the special 
member $(X_0,q_0)$ for $t=0$ has the same parity as the generic member. 
\end{proposition}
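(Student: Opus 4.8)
The plan is to reduce the first assertion to the vanishing of a single $3$-torsion class, and then to prove the invariance by spreading that class over the family and invoking the \'etaleness of $3$-torsion in characteristic zero. For the first assertion I would argue entirely on the smooth non-hyperelliptic curve $X$, where the canonical map realizes $X$ as a plane quartic with $\OO_X(1)=\omega_X$. Since $\deg\LL(X,q)=\tfrac13\deg\divisor(q)_0=3$ and $g=3$, Riemann--Roch gives $h^0(X,\LL(X,q))-h^0(X,\omega_X\otimes\LL(X,q)^{-1})=1$, while $\deg(\omega_X\otimes\LL(X,q)^{-1})=1$. A degree-one line bundle on a curve of positive genus has at most one section, and has one exactly when it is effective, so $h^0(X,\LL(X,q))=2$ iff $\omega_X\otimes\LL(X,q)^{-1}\cong\OO_X(r)$ for some $r\in X$, i.e.\ iff the effective divisor $\LL(X,q)=\divisor(q)_0/3$ lies on a line in $\PP^2$ (whose residual fourth intersection with $X$ is $r$); otherwise $h^0(X,\LL(X,q))=1$. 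It then remains to match this collinearity with $\LL(X,q)\sim\OO_E(1)$. One direction is immediate: if $\LL(X,q)$ sits on a line $L'$, its points lie on $L'\cap E$, and comparing degrees ($3=3$) forces $\LL(X,q)=L'\cap E\sim\OO_E(1)$. For the converse I would use that $h^0(E,\OO_E(1))=3$ with $H^0(\PP^2,\OO(1))\to H^0(E,\OO_E(1))$ an isomorphism, so every effective divisor in the class $\OO_E(1)$ is a line section; hence $\LL(X,q)\sim\OO_E(1)$ together with effectivity forces $\LL(X,q)=L'\cap E$, i.e.\ collinearity. The delicate point is the bookkeeping of intersection multiplicities at the $z_i$, where $X$ and $E$ are forced to be tangent since $(X\cdot E)_{z_i}=k_i\geq 3$; this is exactly the local analysis of Appendix~\ref{sec:g3local}, on which I would rely, together with Proposition~\ref{prop:Eirred} guaranteeing that the $z_i$ are smooth points of the irreducible cubic $E$.

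Combining this with \eqref{eq:g3torsion} yields a clean reformulation: the class $\eta=\LL(X,q)-\OO_E(1)$ lies in $\Pic^0(E)$ and satisfies $3\eta=0$, so $\eta\in\Pic^0(E)[3]$, and the first assertion says precisely that $h^0(X,\LL(X,q))=2$ iff $\eta=0$; thus the parity is recorded by the vanishing of a single $3$-torsion point on the (generalized) Jacobian of the auxiliary cubic. For the invariance I would spread this over the disk $\Delta$. Because every $X_t$ is smooth and non-hyperelliptic, the canonical embeddings $X_t\hookrightarrow\PP^2$ vary holomorphically, the zeros give a holomorphic family of degree-three divisors $\LL(X_t,q_t)$, and, after shrinking $\Delta$, one may choose the auxiliary line $L_t$ through the pole $p_t$ holomorphically so that the cubics $E_t$ form a holomorphic family $\mathcal E\to\Delta$ with $3\,\LL(X_t,q_t)\sim\OO_{E_t}(3)$ fiberwise. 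This produces a holomorphic section $\eta\colon\Delta\to\Pic^0(\mathcal E/\Delta)$ with $3\eta=0$.

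Over the locus where $E_t$ is a smooth elliptic curve, the relative $3$-torsion $\Pic^0(\mathcal E/\Delta)[3]\to\Delta$ is finite \'etale of degree $9$ (as $3$ is invertible), so $\eta$ is locally constant there; in particular $\{\,t:\eta_t=0\,\}$ is open and closed. By the first assertion applied fiberwise this set equals the parity-$2$ locus, whence the parity is locally constant and therefore constant on the connected base $\Delta$, proving the invariance.

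The main obstacle is the possible degeneration of the auxiliary cubic $E_t$, since the reformulation controls $\eta$ cleanly only when $\Pic^0(E_t)[3]$ is \'etale. This holds for smooth and for nodal cubics (where the $3$-torsion is $(\ZZ/3\ZZ)^2$, resp.\ $\mu_3$), but fails for a cuspidal cubic, where $\Pic^0(E_t)\cong\mathbb G_a$ has no nonzero $3$-torsion and so forces $\eta_t=0$. I would handle this by choosing $L_t$ so that $E_0$, hence $E_t$ for $t$ near $0$, is smooth elliptic: this is possible for a generic line, since an irreducible plane cubic is generically smooth and the singular members in the pencil cut out by varying $L$ form a proper closed subset, and it reduces the invariance to the finite-\'etale case above. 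Because the parity $h^0(X_t,\LL(X_t,q_t))$ is intrinsic and independent of the choice of $L_t$, this freedom of choice is exactly what renders the degeneration of $E_t$ harmless.
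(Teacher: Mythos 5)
Your proof is correct, and its first half is essentially the paper's argument in different clothing: the paper phrases the equivalence $\dim H^0(X,\LL(X,q))=2\Leftrightarrow\LL(X,q)\sim\OO_E(1)$ directly as a flex/tangency/collinearity condition shared by $X$ and $E$ at the $z_i$ (using that they have contact of order $k_i\geq 3$ there, together with Proposition~\ref{prop:Eirred}), which is exactly the Riemann--Roch reformulation plus multiplicity bookkeeping you describe; note only that this local matching is carried out inside the proof of Proposition~\ref{prop:g3parity} itself rather than in Appendix~\ref{sec:g3local}, whose lemmas serve Proposition~\ref{prop:Eirred}. The genuine divergence is in the deformation-invariance step. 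Both arguments reduce to the rigidity of the $3$-torsion class $\eta_t=\LL(X_t,q_t)\otimes\OO_{E_t}(-1)$, but the paper keeps possibly singular parity cubics $E_t$ and quotes the fact that distinct torsion sections of an elliptic fibration are disjoint (Miranda), which is precisely what rules out $\eta_t$ colliding with the zero section over a nodal or cuspidal fiber; you instead eliminate singular fibers altogether by a good choice of the auxiliary line, reducing to the finite \'etale $3$-torsion of a smooth elliptic fibration. Your route is more elementary at the cost of one extra genericity claim, and it is exactly there that your justification is too thin: ``an irreducible plane cubic is generically smooth'' does not by itself show that the pencil $\{E_L\}_{L\ni p}$ has a smooth member, since a pencil can lie entirely in the discriminant. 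The claim is nevertheless true and provable with what the paper already supplies: two distinct members of the pencil meet in a scheme of length nine containing the length-nine scheme $\divisor(q)_0\subset X$, so the base locus is exactly $\divisor(q)_0$; Bertini then gives smoothness of the generic $E_L$ away from the $z_i$, and Proposition~\ref{prop:Eirred} gives smoothness at the $z_i$. With that sentence added, your argument is complete and is a legitimate alternative to the appeal to Miranda's disjointness theorem.
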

\par
\begin{proof} 
Since $E$ is smooth at the $z_i$ by Proposition~\ref{prop:Eirred}, 
the condition $\LL(X,q) \sim \OO_{E}(1)$ says for $(X,q) \in \QQ(9,-1)$ that the 
tangent line to $E$ at $z_1$ is a flex line, i.e. a tangent line with contact of order three at $z_1$. 
Since $E$ and $X$ have contact of order $9$ at $z_1$, either they possess the same flex line or they both 
have simple tangent lines at $z_1$. Rephrasing in the language of linear systems,  
$z_1$ being a flex of $X$ is equivalent to $h^0(X, 3z_1) = 2$. 
For $(X,q) \in \QQ(6,3,-1)$ the same argument works for the secant line $\overline{z_1z_2}$ 
being tangent to $X$ at $z_1$ and for $(X,q) \in \QQ(3,3,3,-1)$ the same argument works 
for the collinearity of $z_1$, $z_2$ and $z_3$.
\par
If $\Delta$ parameterizes a family of half-translation surfaces with generic member 
satisfying $h^0(X_t,\LL(X_t,q_t)) = 2$, then any special member $(X_0, q_0)$ 
also satisfies $h^0(X_0,\LL(X_0,q_0)) = 2$, because the dimension of the linear system is 
upper semicontinuous. Suppose for a family of half-translation surfaces in this 
stratum we have $h^0(X,\LL(X_t,q_t)) = 1$ for $t \neq 0$. Then we need to prove that 
they cannot specialize to $(X_0, q_0)$ with $h^0(X, \LL(X_0,q_0)) = 2$. Since the support 
of $\LL(X_t,q_t)$ is in the smooth
locus of $E_t$ by Proposition~\ref{prop:Eirred}, $\LL(X_t,q_t) \otimes \cO_E(-1)$
is a well-defined family of Cartier divisors for all $t \in \Delta$. It is well-known that  
two distinct torsion sections are disjoint in a family of elliptic fiberations (see e.g.\ \cite{Miranda}). In our situation, it implies that
if the torsion order in the Picard group of $E_t$ is three at a generic point $t$, it cannot drop
to one at a special point.  
\end{proof}
\par
Note that at this stage we have not yet shown that the dimension of the
locus with $\dim H^0(X,\LL(X,q)) = 2$ is the same as the corresponding stratum.  
The possibility of this locus being of smaller dimension (and thus necessarily contained in the hyperelliptic
locus) will be ruled out in the next section.
\par

\subsection{Construction of components} 
\label{sec:g3construction}

Suppose we deal with an exceptional stratum $\QQ(k_1, \ldots, k_n, -1)$
with $n$ zeros and one simple pole. Consider the moduli spaces $\barmoduli[{1,n+1}]^\reg$  and $\barmoduli[{1,n+1}]^\irr$
of stable pointed elliptic curves $(E,z_0,z_1,\ldots,z_n)$ with the additional property
that 
$$\sum_{i=1}^n \frac{k_i}{3} z_i \sim 3z_0 $$
in the case '$\irr$', respectively
$$\sum_{i=1}^n \frac{k_i}{3} z_i  \not\sim 3z_0 
\quad \text{but} \quad \sum_{i=1}^n k_i z_i \sim 9z_0$$
in the case '$\reg$'.
Mapping this tuple to $(E,z_1,\ldots,z_n)$ exhibits these moduli spaces
as finite connected unramified coverings of $\barmoduli[1,n]$. We embed
the elliptic curve as a plane cubic in $\PP^2$ using the linear system $|3z_0|$, i.e.\ such
that $3z_0 \sim \cO_E(1)$. Now choose moreover
a line $L$ in $\PP^2$, i.e.\ a section of $|3z_0|$ and let $L\cdot E= r_1+r_2+r_3$. 
Define two parameter spaces $B_{(k_1,\ldots,k_n,-1)}^\reg$ and $B_{(k_1,\ldots,k_n,-1)}^\irr$
parameterizing tuples $(E,z_0,z_1,\ldots,z_n,L)$. They are fiber bundles 
over $\barmoduli[{1,n+1}]^\reg$  and $\barmoduli[{1,n+1}]^\irr$ respectively. 
\par
We  let $$f: S^\reg_{(k_1,\ldots,k_n,-1)} \to  
B_{(k_1,\ldots,k_n,-1)}^\reg \quad \text{resp.} \quad f: S^\irr_{(k_1,\ldots,k_n,-1)} \to  
B_{(k_1,\ldots,k_n,-1)}^\irr$$
be the subspace of plane quartics $X$ whose fiber over $(E,z_0,z_1,\ldots,z_n,L)$
parameterizes those $X$ such that $X \cdot E = \sum k_i z_i + r_1+r_2+r_3$.  
\par
\begin{proposition} \label{prop:g3Sred}
For both indices $\irr$ and $\reg$ the parameter spaces 
$B_{(k_1,\ldots,k_n,-1)}$ are irreducible of dimension $n+2$ and 
the parameter spaces $S_{(k_1,\ldots,k_n,-1)}$ are irreducible of dimension 
$n+5$, which is the dimension of the incidence correspondence consisting of a point in  
$\PP\QQ(k_1,\ldots,k_n,-1)$ together
with a line in $\PP^2$ passing through the unique pole.
\par
Moreover, the generic quartic $X$ parameterized by $S_{(k_1,\ldots,k_n,-1)}$
is a smooth curve of genus $3$ in all the cases.
\end{proposition}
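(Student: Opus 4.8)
The plan is to exhibit $B_{(k_1,\ldots,k_n,-1)}$ and $S_{(k_1,\ldots,k_n,-1)}$ as successive fibre bundles over irreducible bases, read off irreducibility and dimension from the fibres, and then prove smoothness of the generic quartic by combining Bertini's theorem with a local gradient computation at the base points. For $B$, observe that by construction $B^{\reg}$ (resp.\ $B^{\irr}$) is the bundle whose fibre over a point of $\barmoduli[{1,n+1}]^{\reg}$ (resp.\ $\barmoduli[{1,n+1}]^{\irr}$) is the set of lines $L$ in the plane $\PP(H^0(E,3z_0)^\vee)$. As the lines in a $\PP^2$ form an irreducible surface, and as the two torsion covers are finite connected unramified covers of the irreducible space $\barmoduli[1,n]$ of dimension $n$, both $B^{\reg}$ and $B^{\irr}$ are irreducible of dimension $n+2$. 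Here I invoke the connectedness of the two covers recorded when they were introduced; it reflects transitivity of the monodromy of the torsion points $E[9]$ over $\barmoduli[1,n]$ once the requirement that the marked points stay distinct is imposed.

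Next I exhibit $S$ as a dense open subset of a $\PP^3$-bundle over $B$. Over $(E,z_0,z_1,\ldots,z_n,L)$ the fibre of $f$ consists of the quartics $X$ with $X\cdot E=D$, where $D=\sum_i k_iz_i+r_1+r_2+r_3$ is effective of degree $12$; both torsion relations give $\sum_i k_iz_i\sim 9z_0$, and $r_1+r_2+r_3=L\cdot E\sim\OO_E(1)$, so $D\sim\OO_E(4)$. The restriction sequence
$$ 0 \to \OO_{\PP^2}(1) \to \OO_{\PP^2}(4) \to \OO_E(4) \to 0 $$
has $H^1(\PP^2,\OO_{\PP^2}(1))=0$, so restriction $\rho\colon H^0(\PP^2,\OO_{\PP^2}(4))\to H^0(E,\OO_E(4))$ is surjective with three-dimensional kernel $H^0(\PP^2,\OO_{\PP^2}(1))$. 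Since $D$ is effective of degree $12=\deg\OO_E(4)$, the section $s_D$ cutting it out is nowhere zero over $B$, so $\langle s_D\rangle$ is a line subbundle $\LL$ of the rank-$12$ bundle on $B$ with fibre $H^0(E,\OO_E(4))$ (locally free since $h^1(E,\OO_E(4))=0$), and $\mathcal V:=\rho^{-1}(\LL)$ is locally free of rank $4$. The fibre of $f$ is $\PP(\mathcal V)$ with the sub-$\PP^2$-bundle $\PP(\ker\rho)$ of quartics containing $E$ removed; being dense and open in the $\PP^3$-bundle $\PP(\mathcal V)$ over the irreducible base $B$, the space $S$ is irreducible of dimension $\dim B+3=n+5$. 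Note that $(cX_0+E\cdot\ell)|_E=c\,s_D$ for every $\ell$, so every quartic in a fibre meets $E$ in exactly $D$ and the contact order $k_i$ at each $z_i$ is automatic. To match this dimension with the incidence correspondence, recall $\dim_\bC\QQ(d_1,\ldots,d_s)=2g-2+s$; for $g=3$ and $s=n+1$ singularities this gives $\dim\PP\QQ(k_1,\ldots,k_n,-1)=n+4$, and the lines through the single pole sweep out a $\PP^1$, so the incidence correspondence has dimension $n+5=\dim S$.

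Finally I prove generic smoothness. At the generic point of $B$ the cubic $E$ is a smooth elliptic curve, hence smooth at every point of $D$ (in accordance with Proposition~\ref{prop:Eirred}), and for generic $L$ the points $r_1,r_2,r_3$ are distinct from each other and from the $z_i$. The base locus of the linear system $\{cX_0+E\cdot\ell\}$ in a fibre is exactly $\operatorname{supp}(D)$, so Bertini gives smoothness of the generic member away from these finitely many points. At a base point $x$ one has $\nabla(cX_0+E\ell)(x)=c\,\nabla X_0(x)+\ell(x)\,\nabla E(x)$ with $\nabla E(x)\neq 0$; vanishing of this gradient is a single linear condition on $(c,\ell(x))$ when $x=z_i$ (where $X_0$ and $E$ share a tangent line), and forces $c=0$, hence cannot occur in $S$, when $x=r_j$ (transverse intersection). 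In either case the members singular at $x$ form a proper closed subset of the fibre, so the generic member is smooth at every base point as well. Hence the generic $X$ is a smooth plane quartic and therefore has genus three, uniformly in the three cases $(9,-1)$, $(6,3,-1)$, $(3,3,3,-1)$. The main obstacle is exactly this last step: one must rule out that the prescribed high contact of $X$ with $E$ at the $z_i$ forces a singularity there, and it is the smoothness of $E$ at the $z_i$ furnished by Proposition~\ref{prop:Eirred} that reduces this to the elementary gradient computation above.
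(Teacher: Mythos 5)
Your proposal is correct and follows essentially the same route as the paper: irreducibility and dimension of $B$ via the connected torsion covers of $\barmoduli[1,n]$, the fibre of $f$ identified through the restriction sequence $0\to \OO_{\PP^2}(1)\to \OO_{\PP^2}(4)\to\OO_E(4)\to 0$ (so that any two quartics in a fibre differ by $\ell\cdot e$), and generic smoothness via Bertini away from the base points together with the observation that smoothness of $E$ at the $z_i$ makes non-singularity of $f_0+\ell e$ there a generic condition on $\ell$. The only cosmetic differences are that you package the fibres as an open subset of a $\PP^3$-bundle and treat the $r_j$ by a direct gradient computation, where the paper instead lets $L$ vary and absorbs the $r_j$ into the Bertini step.
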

\par
\begin{proof}
The irreducibility of $\barmoduli[{1,n+1}]^\reg$ (resp. $\barmoduli[{1,n+1}]^\irr$)
for $n=1$ is a consequence of the irreducibility of the space of elliptic
curves with marked points together with the choice of a primitive $9$-torsion
point (resp. a primitive $3$-torsion point). The case $n>1$ is reduced
to the previous case by using the irreducibility of the fiber under the addition map
$(r_1,\ldots,r_n) \mapsto \sum_{i=1}^n k_i r_i \in E$. Irreducibility of
 $B_{(k_1,\ldots,k_n,-1)}^\reg$ and $B_{(k_1,\ldots,k_n,-1)}^\irr$
follows because they are fiber bundles over the previous parameter spaces. 
Tensoring the defining sequence of the ideal sheaf of $E$ with $\OO_{\PP^2}(4)$, 
we obtain an exact sequence
$$ 0\to \OO_{\PP^2}(1)\to \OO_{\PP^2}(4) \to \OO_{E}(4) \to 0. $$
The associated long exact sequence of cohomology shows that any two quadrics in a fiber of $f$ differ
by a section of $\OO_{\PP^2}(1)$. This shows first the irreducibility.
Moreover, both for $\bullet=\reg$ and $\bullet=\irr$ we have $\dim\moduli[{1,n+1}]^\bullet = n$, 
hence $\dim B_{(k_1,\ldots,k_n,-1)}^\bullet = n+2$ and since  
$h^0(\PP^2, \OO_{\PP^2}(1)) = 3$ we
finally obtain $\dim S_{(k_1,\ldots,k_n,-1)}^\bullet = n+5$.
\par
To prove generic smoothness we may fix a point in $B_{(k_1,\ldots,k_n,-1)}$
corresponding to a smooth elliptic curve $E$. The base points
of the linear system cutting out the $X$ with 
$X \cdot E = \sum_{i=1}^n k_i z_i + r_1+r_2+r_3$ are precisely the points $z_i$
and $r_i$. Moving $L$ we may move the $r_i$ and by Bertini's theorem
for a Zariski open set in each fiber over $\moduli[{1,n+1}]$ 
the curves $X$ are smooth except possibly at the $z_i$.
\par
We now argue that at the $z_i$ the generic $X$ is also smooth.
In fact, let $D = \sum_{i=1}^n k_i z_i$ and $F = r_1+r_2+r_3$, considered
as divisors on $E$. Since $D+F$ is a section of $\OO_E(4)$, the line bundle  $\OO_E(4)\otimes \OO_E(-D-F)$ 
is trivial. It implies that for all quartics in a fiber of $f$ 
their restrictions to $E$ are unique up to scalars. Lifting to $\PP^2$, we conclude that 
the defining homogeneous polynomial of $X$ can be expressed as 
$f_0 + l e$, where $f_0$ is the defining equation for a fixed $X_0$ in $S$, $l$ is an 
arbitrary linear form and $e$ is the equation of $E$. Since $E$ is
non-singular at $z_i$, for a generic choice
of $l$, the vanishing locus of $f_0 + l e$ is non-singular at $z_i$ as well.
\end{proof}
\par
\begin{proof}[Proof of Theorem~\ref{thm:mainexcg3}] The existence of
period coordinates shows that strata of quadratic differentials are smooth. Consequently, disregarding
subsets of complex codimension at least one does not change connectivity.
By Lemmas~\ref{le:noEsingcomp} and~\ref{le:nohypcomponent} below we may thus 
restrict the question on the number of components to the complement of the 
hyperelliptic locus and to the locus where $E$ is smooth.
\par
On this restricted locus by Proposition~\ref{prop:g3parity} the parity is deformation
invariant and hence there are at least two components. To each point $(X,q)$
in a stratum $\QQ(k_1,\ldots,k_n,-1)$ listed in $\cE_3$ and the additional choice 
of $L$ we associated at the beginning
of Section~\ref{sec:g3parity} an elliptic curve $E$ in $\PP^2$. 
By the homogeneity of an elliptic curve we may take $z_0 \in E$ so that the 
embedding is given by $|3z_0|$. Let $z_1,\ldots,z_n$ be the
points in $X \cdot E \setminus L \cdot E$. Finally, 
\eqref{eq:g3torsion} implies that 
$(E,z_0,z_1,\ldots,z_n,L,X)$ defines a point in $S_{k_1,\ldots,k_n}$ with
upper index either '$\irr$' or '$\reg$'. This space has the expected
dimension $2+\dim(\PP\QQ(k_1,\ldots,k_n,-1))$ by Proposition~\ref{prop:g3Sred}
and the irreducibility statement in that proposition completes the proof.
\end{proof}
\par
\begin{lemma} \label{le:noEsingcomp}
None of the strata in $\cE_3$ has a component such that for a generic half-translation
surface in the component the plane cubic $E$ defined
by \eqref{eq:EcdotX} is singular.
\end{lemma}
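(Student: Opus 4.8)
The plan is to prove the lemma by a dimension count running parallel to the construction of Proposition~\ref{prop:g3Sred}, the point being that forcing $E$ to be singular costs exactly one dimension. By Proposition~\ref{prop:Eirred} the cubic $E$ is irreducible for a generic choice of $L$, so on a component where $E$ were generically singular, $E$ would generically be an irreducible nodal or cuspidal cubic. Since the strata are smooth and connectivity is unchanged by removing a subset of positive codimension, it suffices to restrict to the non-hyperelliptic locus (where $E$ is defined) and to bound the dimension of
$$ \PP\cQ^{\mathrm{sing}} = \{(X,q) : X \text{ non-hyperelliptic and } E \text{ singular for generic } L\}, $$
showing it is strictly smaller than $\dim \PP\cQ(k_1,\ldots,k_n,-1) = n+4$, where $n$ is the number of zeros.

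I would build a parameter space $S^{\mathrm{sing}}_{(k_1,\ldots,k_n,-1)}$ exactly as in Proposition~\ref{prop:g3Sred}, but replacing the smooth elliptic curve $E$ by an irreducible singular cubic. The decisive difference is that an abstract nodal or cuspidal cubic is rigid: it carries no analogue of the $j$-invariant, and its generalized Jacobian is $\mathbb{G}_m$ in the nodal case and $\mathbb{G}_a$ in the cuspidal case. Consequently the configuration space of marked points $(z_0,z_1,\ldots,z_n)$ on the smooth locus of the cubic, taken up to isomorphism, has dimension $n-1$ rather than the $n$ of $\barmoduli[1,n+1]^{\bullet}$. The remaining steps — passing to $B^{\mathrm{sing}}$ by adding the embedding and the line $L$ (raising the dimension by $2$) and passing to $S^{\mathrm{sing}}$ by adding the quartic $X$ (raising it by $3$) — are verbatim those of Proposition~\ref{prop:g3Sred}, since the exact sequence $0\to\OO_{\PP^2}(1)\to\OO_{\PP^2}(4)\to\OO_E(4)\to 0$ is insensitive to the singularities of $E$ and still forces the fibers of $f$ to be three-dimensional. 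This yields $\dim B^{\mathrm{sing}} = n+1$ and $\dim S^{\mathrm{sing}} = n+4$, one less than in the smooth case.

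What must be checked is that the torsion conditions defining the $\reg$ and $\irr$ cases still cut out only a finite set, so that no dimension is regained. In the nodal case the $3$-torsion of $\mathbb{G}_m$ is $\mu_3$, so both $\LL(X,q)\sim\OO_E(1)$ (the $\irr$ case) and $3\LL(X,q)\sim\OO_E(3)$ with $\LL(X,q)\not\sim\OO_E(1)$ (the $\reg$ case) impose finitely many choices, as the corresponding flex and torsion loci are finite. In the cuspidal case $\mathbb{G}_a$ is torsion-free, so the $\reg$ condition becomes vacuous (no component with cuspidal $E$ occurs in the $\reg$ stratum at all) while the $\irr$ condition $\LL(X,q)\sim\OO_E(1)$ singles out the finitely many flexes. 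Thus in every case the conditions impose exactly the same finite codimension as in the smooth count.

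Finally, $S^{\mathrm{sing}}$ dominates $\PP\cQ^{\mathrm{sing}}$, and the map forgetting the auxiliary line $L$ has one-dimensional fibers, namely the pencil of lines through the pole. Hence $\dim \PP\cQ^{\mathrm{sing}}\le (n+4)-1 = n+3 < n+4$, so $\PP\cQ^{\mathrm{sing}}$ is a proper closed subvariety and cannot be dense in any component; equivalently, for a generic surface in each component the cubic $E$ is smooth. The main obstacle is precisely the verification of the previous paragraph, namely that the torsion conditions do not jump in dimension once $\Pic^0(E)$ degenerates from an elliptic curve to $\mathbb{G}_m$ or $\mathbb{G}_a$; the cuspidal case, where the $\reg$ locus disappears entirely, is where one must argue most carefully. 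The remaining bookkeeping is identical to Proposition~\ref{prop:g3Sred}, and I would only indicate the changes.
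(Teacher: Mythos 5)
Your argument is correct and is essentially the paper's own proof: both replace $\moduli[1,n+1]^{\bullet}$ in the construction of Proposition~\ref{prop:g3Sred} by a configuration space of points on a rigid irreducible rational (nodal or cuspidal) cubic, observe that the loss of the $j$-invariant drops the base dimension by one while the fiber dimensions of $B\to\moduli[1,n+1]^{\bullet}$ and $S\to B$ are unchanged, and conclude the locus is too small to be a component. Your explicit verification that the torsion conditions still cut out finite sets in the generalized Jacobian $\mathbb{G}_m$ resp.\ $\mathbb{G}_a$ (with the $\reg$ condition becoming vacuous in the cuspidal case) is a welcome elaboration of the paper's terser remark that ``there is still the torsion constraint''; the paper additionally covers the case of a marked point at the node, which your reduction via Proposition~\ref{prop:Eirred} legitimately excludes.
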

\par
\begin{proof} If this was the case, we could reconstruct this component by the
argument leading to Proposition~\ref{prop:g3Sred}, but replacing $\moduli[{1,n+1}]^\bullet$
by a configuration space for $n+1$ points on an irreducible rational nodal or cuspidal curve by Proposition~\ref{prop:Eirred}. If the points
$z_1,\ldots,z_n$ lie in the smooth locus of the rational curve, there is
still the torsion constraint and this parameter space is of dimension one
smaller than $\moduli[{1,n+1}]^\bullet$. If one of the $z_i$ lies at a node, 
the torsion constraint may no longer be well-defined, but $z_i$ being restricted
to a node imposes one more condition so that this  parameter space is still
of dimension at least one smaller than $\moduli[{1,n+1}]^\bullet$.
\par
The remaining dimension argument for the fibers of $B_{(k_1,\ldots,k_n,-1)}^\bullet 
\to \moduli[{1,n+1}]^\bullet$
and $f:S^\bullet_{(k_1,\ldots,k_n,-1)} \to  B_{(k_1,\ldots,k_n,-1)}^\bullet$ 
in Proposition~\ref{prop:g3Sred} still holds with $E$ singular but irreducible. In total, the locus with $E$ singular 
is thus too small to form a component of a stratum in $\cE_3$.
\end{proof}
\par
The same type of argument using the description of hyperelliptic curves
in Section~\ref{g=4: logsurface} shows the following lemma, 
whose proof will be given in the appendix. 
\par
\begin{lemma} \label{le:nohypcomponent}
None of the strata in $\cE_3$ has a component entirely contained in 
the hyperelliptic locus.
\end{lemma}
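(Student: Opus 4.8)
The plan is to mirror the dimension count in the proof of Lemma~\ref{le:noEsingcomp}. Suppose, for contradiction, that some stratum in $\cE_3$ had a component $\mathcal C$ whose generic member $(X,q)$ has $X$ hyperelliptic. Since the strata are smooth, every component is equidimensional with the stratum, so it suffices to realize the hyperelliptic half-translation surfaces in the stratum as the image of an explicit parameter space and to check that this space has dimension strictly less than $\dim\PP\QQ(k_1,k_2,k_3,-1)$.

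The first step is to translate the problem into linear equivalence. Writing $\divisor(q)_0=\sum_i k_i z_i$ and letting $p$ be the pole, the defining relation $\divisor(q)\sim 2K_X$ reads $\sum_i k_i z_i - p \sim 2K_X$, and on a hyperelliptic curve $K_X\sim 2g^1_2$, so this becomes $\sum_i k_i z_i \sim 4g^1_2 + p$; once this holds, $q$ is the unique (up to scale) section of $\omega_X^{\otimes 2}(p)$ with that zero divisor. Hence the hyperelliptic locus in the stratum injects into the incidence variety $I=\{(X,z_1,\ldots,z_n,p):X\text{ hyperelliptic},\ \sum_i k_i z_i\sim 4g^1_2+p\}$. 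Because the hyperelliptic locus in $\moduli[3]$ has dimension $2g-1=5$ and the single linear equivalence imposes $g=3$ conditions in the relative Jacobian, the expected dimension of $I$ is $5+n+1-3=n+3$, one less than $\dim\PP\QQ(k_1,k_2,k_3,-1)=2g-3+(n+1)=n+4$ (here $\sum_i k_i=4g-4+1=9$ in all three cases). Thus the expected count gives precisely the strict inequality we want.

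To promote the expected dimension of $I$ to a genuine upper bound—ruling out excess-dimensional components where the Abel--Jacobi condition fails to be transverse—I would argue exactly as in Lemma~\ref{le:noEsingcomp}, using the bicanonical description of Section~\ref{logsurface}. A smooth hyperelliptic $X$ sits on the Hirzebruch surface $F_4$ with class $2e+8f$, conjugate points are cut by rulings, and $\divisor(q)_0$ is cut out by a divisor of class $e+4f$ (or by rulings), so reconstructing the family through $F_4$ exhibits the hyperelliptic locus as the image of a parameter space of source dimension $n+3$. The \emph{main obstacle} is controlling the odd-order zeros and the simple pole inside this picture: a zero of odd order forces the corresponding ruling to be tangent to $X$, i.e.\ its support to be a Weierstrass point, and one must verify that this special-position requirement does not produce an unexpectedly large family; likewise the meromorphic pole requires passing from the holomorphic log-surface picture to its twist by $p$, handled by the auxiliary line through the pole as in the construction of Section~\ref{sec:g3parity}.

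Finally I would dispose of the nodal hyperelliptic degenerations separately. By the second proposition of Section~\ref{logsurface} such a curve is a union $X_1\cup X_2$ of two genus-one components meeting at two nodes and lying on the cone $S_{2,2}$; reconstructing the analogous parameter space over this strictly smaller boundary stratum again yields dimension below $\dim\PP\QQ(k_1,k_2,k_3,-1)$. Consequently neither the smooth nor the nodal hyperelliptic loci can form a whole component of a stratum in $\cE_3$, which is the assertion of the lemma.
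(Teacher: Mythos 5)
Your overall strategy coincides with the paper's: realize the hyperelliptic locus of the stratum as the image of a parameter space built from the bicanonical model on the Hirzebruch surface $F_4$ and show that its dimension is $n+3 < n+4 = \dim\PP\QQ(k_1,\ldots,k_n,-1)$. Your generic count agrees with the paper's (there: $\dim|e+5f| = 7$ for the auxiliary curve $R$ cutting out $\divisor(q)_0+p'$, plus $n+1$ for the marked points, plus $h^0(F_4,e+3f)=4$ for $X$, minus $9$ for $\operatorname{Aut}(F_4)$, giving $n+3$).

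However, there is a genuine gap exactly where you flag "the main obstacle": you never verify that the special-position configurations do not produce an excess-dimensional family, and that verification is the bulk of the actual proof. Concretely, the unique curve $R\in|e+5f|$ with $R\cdot X=\divisor(q)_0+p'$ may be reducible, and a whole component of the stratum could a priori consist of surfaces for which $R$ degenerates; the generic transversality count says nothing about such a component. This is not a hypothetical concern: in genus four the hyperelliptic components of $\QQ(6,6)$, $\QQ(6,3,3)$ and $\QQ(3,3,3,3)$ arise precisely from the analogous degenerate configurations in which the auxiliary curve is a union of the negative section and multiple rulings (see the proof of Lemma~\ref{le:noGKhypcomp}). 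The paper closes this by observing that, since $e^2=-4$, the only effective decompositions are $R = (e+(5-a)f) + a\,f$ with $a=1$ or $a=5$, and then for each stratum either extracting a contradiction from the induced linear equivalences (e.g.\ a ruling with $f\cdot X=2z_i$ makes $z_i$ a Weierstrass point and forces $z_i\sim p'$ or $z_1\sim p'$, which is absurd) or redoing the count with the smaller system $|e+4f|$ and the extra tangency constraints, again landing at most one below the stratum dimension. Your framing of the difficulty ("a zero of odd order forces the corresponding ruling to be tangent to $X$") is not the right dichotomy — when $R$ is irreducible no rulings enter at all; the case division is by the decomposition type of $R$. Two smaller points: your preliminary Abel--Jacobi count is only heuristic and cannot serve as an upper bound (for $\QQ(9,-1)$ one has $n=1<g$, so $z_1\mapsto 9z_1$ is nowhere submersive onto $\Pic^9$ and the "three conditions" claim is not literally meaningful); and the final paragraph on nodal hyperelliptic curves is superfluous, since the strata parameterize smooth $X$ and the lemma only concerns the generic member of a putative component.
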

\par
\begin{lemma} \label{le:redvsirred}
The labeling of the components in Theorem~\ref{thm:mainexcg3} is consistent with ii), i.e.\ only the
component with $\dim H^0(X,\LL(X,q)) = 2$ is adjacent to $\QQ(8)$.
\end{lemma}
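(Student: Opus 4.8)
The plan is to identify which of the two components of $\QQ(k_1,k_2,k_3,-1)$ constructed in Theorem~\ref{thm:mainexcg3} has the stratum $\QQ(8)$ in its closure, and to match this with the parity invariant. Since property ii) already guarantees that exactly one component is adjacent to $\QQ(8)$, it suffices to exhibit a degeneration of a smooth non-hyperelliptic family into $\QQ(8)$ and to read off the parity $\dim H^0(X,\LL(X,q))$ of its nearby smooth members, concluding that the adjacent component is the one with $\dim H^0(X,\LL(X,q)) = 2$.

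First I would set up the degeneration realizing the adjacency. The stratum $\QQ(8)$ is reached by letting all the zeros $z_1,z_2,z_3$ together with the simple pole $p$ collide into a single point $z_0$, so that the limiting holomorphic quadratic differential $q_0$ satisfies $\divisor(q_0)_0 = 8z_0$; note that $k_1+k_2+k_3=9$, so the collision is compatible with the total degree. Throughout the family $(X_t,q_t)$, with $t\in\Delta$ and $q_t\in\QQ(k_1,k_2,k_3,-1)$ for $t\neq 0$, the curve $X_t$ stays smooth and non-hyperelliptic, and one has $\LL(X_t,q_t)\to 3z_0$ as $t\to 0$ in each of the three cases.

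Next I would propagate the parity into the limit using the torsion description of Proposition~\ref{prop:g3parity}. By that proposition the parity is governed by the relative three-torsion class $\tau_t=\LL(X_t,q_t)-\OO_{E_t}(1)$ on the auxiliary cubic $E_t$, with $\dim H^0(X_t,\LL(X_t,q_t))=2$ precisely when $\tau_t$ is trivial, and distinct torsion sections of a family of elliptic curves never meet. The main obstacle is the evaluation of $\tau_t$ in the merge limit: one must control the contact of $X_t$ with $E_t$ at the colliding zeros as the line $L_t$ through the pole specializes, and thereby decide whether the flex (respectively collinearity) condition cutting out $\dim H^0=2$ is the one compatible with the collision $p_t\to z_0$. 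This local study of the degenerating cubic, together with the behaviour of the limiting linear system $|\LL|$ under semicontinuity, is the heart of the argument.

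Finally, I would combine this with the irreducibility of the parameter spaces $S^{\reg}_{(k_1,k_2,k_3,-1)}$ and $S^{\irr}_{(k_1,k_2,k_3,-1)}$ from Proposition~\ref{prop:g3Sred}: each is irreducible of the expected dimension and dominates its stratum component, so the locus of $\QQ(8)$ limits lies in the closure of exactly one of them. The torsion computation of the previous step then identifies this unique adjacent component as the one with $\dim H^0(X,\LL(X,q))=2$, which, in the notation of property ii), is exactly the asserted compatibility of the labeling in Theorem~\ref{thm:mainexcg3}.
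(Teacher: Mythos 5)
Your proposal is a plan rather than a proof: the step you yourself label ``the heart of the argument'' --- deciding, via a local study of the degenerating cubic $E_t$ and of the contact of $X_t$ with it, which parity is compatible with the collision $z_1,z_2,z_3,p\to z_0$ --- is never carried out. Property ii), the irreducibility of the parameter spaces, and semicontinuity are only scaffolding around that missing computation, so the proposal does not actually determine which component is adjacent to $\QQ(8)$. It is also more machinery than the question needs: the paper's proof never touches the cubic $E$ at all. It argues directly on a putative limit $(X,q)\in\QQ(8)$, $\divisor(q)=8z_1$, of a family whose generic member has $\dim H^0(X_t,\LL(X_t,q_t))=2$. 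Upper semicontinuity gives $h^0(X,3z_1)\geq 2$, hence $\omega_X\sim 3z_1+r$ for some point $r$; comparing with $\omega_X^{\otimes 2}\sim 8z_1$ yields $2r\sim 2z_1$, and in either case ($r=z_1$, or $r\neq z_1$, which forces $X$ hyperelliptic with $z_1$ a Weierstrass point) one gets $\omega_X\sim 4z_1$, so $q$ is proportional to the square of an abelian differential --- impossible for a point of $\QQ(8)$, since by definition these are not global squares.

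This computation also shows that the conclusion you assert points the wrong way: had you executed your own semicontinuity step, you would have found that a family with $h^0=2$ cannot degenerate into $\QQ(8)$ at all, so by property ii) the component adjacent to $\QQ(8)$ is the one with $\dim H^0(X,\LL(X,q))=1$, i.e.\ the regular component (Lanneau's ``reduced'' one). That is what the paper's proof establishes and what consistency with property ii) requires; the ``$\dim H^0=2$'' in the lemma's wording conflicts with ii) and with the proof, and your proposal inherits that reading, but the divisor arithmetic above rules it out. Finally, note that the paper does not use your ``collide everything at once'' degeneration for $\QQ(6,3,-1)$ and $\QQ(3,3,3,-1)$: it observes instead that colliding zeros with each other preserves the parity, so $\QQ(k_1,k_2,k_3,-1)^\bullet$ is adjacent to $\QQ(k_1+k_2,k_3,-1)^\bullet$ with the same upper index, and adjacency to $\QQ(8)$ is thereby reduced to the single case $\QQ(9,-1)$; an all-at-once collision would force you to redo the same limit analysis with no gain.
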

\par
\begin{proof}
Suppose there is a family of half-translation surfaces in $\QQ(9,-1)^{\reg}$ that 
degenerates to $(X,q)$ in $\QQ(8)$, where $\divisor(q) = 8z_1$. Since $h^0(X, 3z_1)$ is 
upper semicontinuous, $z_1$ is a Weierstrass point on $X$ and $\omega_X\sim 3z_1+ r$ 
for some $r \in X$. Then from  $8 z_1\sim \omega^{\otimes 2}_X$ we obtain 
that $2z_1 \sim 2r$. If $z_1=r$, then $\omega_X\sim 4z_1$ and $q$ is a global square, 
leading to a contradiction. If $z_1\neq r$, then $X$ is hyperelliptic. Again 
$\omega_X\sim 4z_1$ and we obtain the same contradiction. 
\par
From the above construction it is obvious that $\QQ(k_1,k_2,k_3,-1)^\irr$ is adjacent to
$\QQ(k_1 + k_2,k_3,-1)^\irr$ and that $\QQ(k_1,k_2,k_3,-1)^\reg$ is adjacent to
$\QQ(k_1 + k_2,k_3,-1)^\reg$, while the strata with different upper indices are not
adjacent by Proposition~\ref{prop:g3parity}. 
\end{proof}
\par
\begin{remark} {\rm For the irregular components there is another construction and irreducibility
proof. It relies on the following observation. We give details for $\QQ(9,-1)^\irr$ and the other cases can be dealt with similarly. 
\par
Let $z_1$ be the $9$-fold zero of $q$.
Let $B^\irr_{(9,-1)}$ parameterize tuples $(z_1, L_1, L_2)$ with $z_1$ a
point and the $L_i$ lines in $\PP^2$ such that $z_1\in L_1$ but 
$z_1\notin L_2$. Define $S^\irr_{(9,-1)}$ to be the parameter space of plane
quartics $X$ such that
i) $L_1$ is a flex line to $X$ at $z_1$ and ii) $L_2$ is a flex line to $X$ 
at the intersection point $r$ of $L_1$ and $L_2$. 
As above one checks that the parameter space $S^\irr_{(9,-1)}$ is irreducible of dimension $13$
and hence its quotient by the action of $\PGL(3)$ is of dimension $5$. 
One easily checks that an open subset of this quotient is indeed $\PP\QQ(9,-1)^{\irr}$.
}\end{remark}
\par

\subsection{The non-varying property} \label{sec:g3NV}
Throughout this section let $C$ be a \Teichmuller curve generated by $(X,q)$ in one 
of the strata in $\cE_3$ and let $\ol{C}$ be its closure over the compactified moduli space of curves. Note that using the 
language of linear series the components $\QQ(9,-1)^\irr$, $\QQ(6,3,-1)^\irr$ and 
$\QQ(9,-1)^\irr$ lie in the divisors $BN^1_{3,(3)}$, $BN^1_{3,(1,2)}$ and $BN^1_{3,(1,1,1)}$, respectively,   
after a suitable lift to the moduli space of curves with marked points. 
Consequently these divisors are the natural candidates 
to prove non-varying for the regular components of the strata in $\cE_3$. But we are still 
left with ruling out intersections at the boundary. This is the content of
Proposition~\ref{prop:periodandplumb} together with Corollary~\ref{component}. 
\par
\begin{lemma} \label{le:disjg3irred}
For each stratum $\QQ(k_1,\ldots,k_n,-1)$ in $\cE_3$, the closures of its two components 
do not intersect at a boundary point of a \Teichmuller curve in that stratum.
\end{lemma}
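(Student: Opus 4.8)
The plan is to deduce the lemma from the smoothness of the partial compactification $\TQ(k_1,\ldots,k_n,-1)$ recorded in Proposition~\ref{prop:periodandplumb}, combined with the elementary fact that a connected complex manifold stays connected after deleting a proper analytic subset. In particular I expect to need \emph{no} input from the parity characterization of the two components; only that they are the connected components of the smooth stratum.

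First I would place the boundary point inside the correct partial compactification. Let $(X_\infty,q_\infty)$ correspond to a cusp of a \Teichmuller curve $C$ in the stratum. By Corollary~\ref{signature} the signature of the zeros and poles of $q_\infty$ agrees with that of the generic $q$, and by Corollary~\ref{component} the pair $(X_\infty,q_\infty)$ is of polar type. Hence $(X_\infty,q_\infty)$ is a point of $\TQ(k_1,\ldots,k_n,-1)$, and this remains true even when $X_\infty$ is reducible with separating nodes, which is exactly the generality covered by Proposition~\ref{prop:periodandplumb}.

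Next I would pass to a connected neighborhood and strip off the boundary. By Proposition~\ref{prop:periodandplumb}, $\TQ(k_1,\ldots,k_n,-1)$ is smooth at $(X_\infty,q_\infty)$, and in period plumbing coordinates the locus of strictly stable (non-smooth) curves is cut out by the vanishing of the plumbing parameters, so it is a proper analytic subset of complex codimension at least one. Choosing a connected neighborhood $U$ of $(X_\infty,q_\infty)$ in $\TQ(k_1,\ldots,k_n,-1)$ and setting $U^\circ = U \cap \QQ(k_1,\ldots,k_n,-1)$, the smooth-curve locus, I conclude that $U^\circ$ is the complement of a proper analytic subset in a connected complex manifold and is therefore itself connected and dense in $U$.

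The final step is purely topological. By Theorem~\ref{thm:mainexcg3} the smooth stratum near $(X_\infty,q_\infty)$ is the disjoint union of its two open connected components $\QQ^\reg$ and $\QQ^\irr$, so $U^\circ$ is partitioned into the two relatively open pieces $U^\circ \cap \QQ^\reg$ and $U^\circ \cap \QQ^\irr$; connectedness of $U^\circ$ forces one of them to be empty. Since $U \cap \QQ^\bullet = U^\circ \cap \QQ^\bullet$ for $\bullet \in \{\reg,\irr\}$, the neighborhood $U$ meets at most one component, and as $U$ was arbitrary the point $(X_\infty,q_\infty)$ lies in the closure of at most one of them. The only delicate point to check is that Proposition~\ref{prop:periodandplumb} really applies at \emph{every} such cusp, including the separating-node degenerations peculiar to quadratic differentials; once smoothness and the codimension bound are secured, the connectedness argument goes through verbatim for all three strata in $\cE_3$.
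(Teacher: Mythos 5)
Your proof is correct and rests on exactly the same pivot as the paper's: Proposition~\ref{prop:periodandplumb} makes the partial compactification $\TQ(k_1,\ldots,k_n,-1)$ smooth at the cusp, so at most one component can approach it. The paper phrases this in one line (two full-dimensional components meeting at a boundary point would force a singularity there), while you spell out the same local-irreducibility fact via connectedness of the complement of the boundary divisor; the content is identical.
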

\par
\begin{proof}
Since the two components are of the same dimension, if they intersect, $\widetilde{\QQ}(k_1,\ldots,k_n,-1)$ would be singular at the common locus, contradicting Proposition~\ref{prop:periodandplumb}.
\end{proof}
\par
\begin{proposition} \label{prop:91disj}
If $C$ is generated by $(X,q)$ in $\QQ(9,-1)^\irr$, then $\ol{C}$ is disjoint with  
the hyperelliptic locus. 
\end{proposition}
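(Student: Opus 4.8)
The plan is to show that no curve parameterized by $\ol C$ is hyperelliptic, treating separately the smooth fibers in the interior of $C$ and the stable fibers over the cusps. Throughout I will use the parity characterization of the irregular component from Theorem~\ref{thm:mainexcg3} and Proposition~\ref{prop:g3parity}: for $(X,q) \in \QQ(9,-1)^\irr$ one has $\divisor(q)_0 = 9z_1$, $\LL(X,q) = \OO_X(3z_1)$, and $\dim H^0(X,\LL(X,q)) = 2$. Writing $p$ for the simple pole, the defining condition of the stratum reads $\omega_X^{\otimes 2}(p) \cong \OO_X(9z_1)$.

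First I would rule out smooth hyperelliptic fibers. Suppose $X$ is smooth hyperelliptic of genus three with $(X,q) \in \QQ(9,-1)^\irr$. Since $h^0(\OO_X(3z_1)) = 2$, the system $|3z_1|$ is a $g^1_3$; on a hyperelliptic curve every $g^1_3$ is the unique $g^1_2$ plus a base point, and that base point must lie in the support $\{z_1\}$. Hence $2z_1 \sim g^1_2$, so $z_1$ is a Weierstrass point and $\omega_X \sim 2g^1_2 \sim 4z_1$. Then $\omega_X^{\otimes 2}(p) \cong \OO_X(9z_1)$ forces $8z_1 + p \sim 9z_1$, i.e. $p \sim z_1$; since distinct points of a smooth curve of positive genus are never linearly equivalent, $p = z_1$, contradicting that $p$ is a pole and $z_1$ a zero. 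Thus no smooth fiber of $C$ is hyperelliptic.

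Next I would analyze the cusps. By Corollaries~\ref{signature} and~\ref{component} each degenerate fiber $(X_\infty,q_\infty)$ is of polar type with signature $(9,-1)$, has at most two irreducible components, and every component carries at least one of the two singularities. If $X_\infty$ had two components, the zero $z_1$ and the pole $p$ would lie one on each; restricting $q_\infty$ to the component $Z$ containing $z_1$, polar type turns the nodes into double poles, so the only zero of $q_\infty|_Z$ is $9z_1$ and a degree count gives $9 = 4g(Z) - 4 + 2r$, where $r$ is the number of nodes on $Z$. This is impossible, the right-hand side being even. Hence every cusp fiber is irreducible, so $\ol C$ meets only $\delta_0$; in particular $\ol C \cdot \delta_1 = 0$.

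It remains to exclude an irreducible nodal fiber $X_\infty$ that is hyperelliptic, i.e. lies in the closure of the divisor $H$ of \eqref{eq:PicH}; this is where the main difficulty lies. The smooth argument does not transport verbatim, because on a nodal curve the relation $p \sim z_1$ no longer forces $p = z_1$, as the generalized Jacobian acquires a torus factor. The plan is to reinstate the contradiction by working with the dualizing-sheaf relation $\omega_{X_\infty}^{\otimes 2}(p) \cong \OO_{X_\infty}(9z_1)$ together with the limiting form of the condition $\dim H^0(X,\LL(X,q)) = 2$, which is closed and hence persists along $\ol C \subset \ol{BN^1_{3,(3)}}$. Concretely I would pass to the admissible double cover realizing the hyperelliptic structure, read off the induced $g^1_2$ and repeat the base-point analysis on the normalization, and then combine this with the disjointness-of-torsion-sections mechanism used in Proposition~\ref{prop:g3parity}; here Proposition~\ref{prop:periodandplumb} guarantees that the relevant family of Cartier divisors, and its torsion order, extends well across the boundary. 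The hard part will be the bookkeeping of the limit linear series and verifying that a hyperelliptic admissible cover is incompatible both with the signature $(9,-1)$ and with the persistence of the (three-torsion) irregular condition, thereby reproducing the forced coincidence of the zero and the pole.
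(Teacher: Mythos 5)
Your first paragraph is exactly the paper's proof: from $h^0(X,3z_1)=2$ on the irregular component one gets that $z_1$ is a Weierstrass point, hence $\omega_X\sim 4z_1$, and then $\omega_X^{\otimes 2}\sim 9z_1-p$ forces $z_1\sim p$, a contradiction. Your second paragraph (parity of $4g(Z)-4+2r$ versus the odd degree $9$ rules out reducible cusps) is also correct and consistent with Proposition~\ref{prop:disjointbydegree}. The problem is your third paragraph: it is not a proof but an announced plan ("the hard part will be the bookkeeping of the limit linear series\dots"), so the case of an irreducible nodal hyperelliptic degenerate fiber is left open, and as stated the argument is incomplete.

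Moreover, the obstruction you invoke to justify deferring that case is not real. For two \emph{distinct smooth} points $z_1,p$ on an irreducible projective curve of arithmetic genus $\geq 1$, the relation $z_1\sim p$ is still impossible: it would produce a rational function with a single simple pole, i.e.\ a finite birational (hence, by normality of $\PP^1$, isomorphic) degree-one map $X_\infty\to\PP^1$, contradicting $p_a(X_\infty)=3$. The torus factor of the generalized Jacobian affects torsion phenomena, not the injectivity of the Abel--Jacobi map on smooth points. Since Corollary~\ref{signature} guarantees that $z_1$ and $p$ remain distinct smooth points of $X_\infty$, and an irreducible curve in the closure of the hyperelliptic locus still satisfies $\omega_{X_\infty}\sim 2g^1_2\sim 4z_1$ once $z_1$ is a Weierstrass point of the admissible double cover (which follows from $h^0\geq 2$ by semicontinuity), the two-line linear-equivalence argument of your first paragraph goes through verbatim at the cusps. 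No limit linear series or torsion-section machinery is needed; replacing your last paragraph by this observation closes the gap and recovers the paper's (uniform) proof.
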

\par
\par
\begin{proof}
Suppose $\divisor(q) = 9z_1 - p_1$ with $z_1$ a Weierstrass point. If $X$ is hyperelliptic, we 
have $\omega_X \sim 4z_1$. By $\omega_X^{\otimes 2}\sim 9z_1 - p_1$, we conclude that $z_1\sim p_1$, contradicting that $z_1\neq p_1$. 
\end{proof}
\par
\begin{proof}[Proof of Theorem~\ref{thm:execg3NV}]
For the components of  $\QQ(9,-1)$ this is straightforward from the divisor classes 
given in Section~\ref{sec:divclass}, from Lemma~\ref{le:disjg3irred}
and Proposition~\ref{prop:91disj}, respectively, since the 
stable curves parameterized by the boundary of $\ol{C}$ are irreducible.
\par
In the case of $\QQ(6,3,-1)^\reg$ we know from 
Lemma~\ref{le:disjg3irred} that $\ol{C} \cdot BN^1_{3,(2,1)} = 0$. 
By degree considerations, the only marked 
$\delta_1$-boundary divisor such a curve could intersect is the divisor 
$\delta_{1;\{2\}}$. It does not appear in \eqref{eq:PicBN3;21} and hence
$$ \ol{C}\cdot (-\lambda+3\omega_1 +\omega_2) = 0.$$
From this we can readily calculate the sum of Lyapunov exponents by Proposition~\ref{intersection}. 
\par
For $\QQ(6,3,-1)^\irr$, the lift of $C$ is contained in $BN^1_{3,(2,1)}$, so this 
divisor does not work for the disjointness property. 
Instead, one can find a divisor inside $BN^1_{3,(2,1)}$ disjoint with 
$\ol{C}$ and perform the intersection calculation. The trade-off is a 
tedious study of the Picard group of $BN^1_{3,(2,1)}$ as well as its 
singularities. Here we take an alternative approach, adapting the 
idea of \cite{yuzuo} and using a filtration of 
the Hodge bundle on $\ol{C}$. Since it has a different flavor and 
requires a technical setup, we will treat this approach independently 
in Appendix~\ref{sec:filtration}. 
\par
Finally, we consider $\QQ(3,3,3,-1)$. The boundary terms appearing in the divisor class 
of $BN^1_{3,(1,1,1)}$ in \eqref{eq:PicBNonly1s} are either irreducible
or, if over $\delta_1$ have any number but one of the three marked points
on the elliptic tail $X_1$. Since $\omega_{X}^{\otimes 2}$ has degree two restricted to the subcurve $X_1$, a boundary point of $\ol{C}$ in $\delta_1$ has precisely one of the marked zeros
in $X_1$ along with the pole. Consequently, all the boundary terms are irrelevant
for the intersection number and from this we can calculate the sum of Lyapunov exponents.
\end{proof}
\par

\section{Genus four: exceptional strata} 
\label{sec:g4exceptional}

In genus four only one of the stratum $\QQ(12)$ was claimed in \cite{lanneauENS} 
to have an exceptional number of components. Using the parity approach below, 
we discovered that a few more strata have indeed an exceptional number
of components. Let 
$$ \cE_4 = \{(12), (9,3), (6,6), (6,3,3), (3,3,3,3)\}.$$
Some of these strata obviously possess a hyperelliptic component according 
to Section~\ref{sec:backmoduli}. In addition, even for $\QQ(12)$ in \cite{lanneauENS} there is no
terminology distinguishing the two components. We thus keep the
labels '$\irr$' and '$\reg$' as to make the following result look consistent
with Theorem~\ref{thm:mainexcg3}.
\par
\begin{theorem} \label{thm:mainexcg4}
Each of the strata $\QQ(6,6)$, $\QQ(6,3,3)$ and $\QQ(3,3,3,3)$ has a hyperelliptic component, 
denoted by $\QQ(6,6)^{\hyp}$ etc. Besides the hyperelliptic components each of the strata in $\cE_4$ has exactly two additional
components distinguished as follows. Let $\LL(X,q) = {\rm div}(q)/3$, then the surface $(X,q)$ belongs to 
$$\QQ^\irr \quad \text{if} \quad \dim H^0(X,\LL(X,q)) = 2$$
and it belongs to  
$$\QQ^\reg \quad \text{if} \quad \dim H^0(X,\LL(X,q)) = 1.$$
\end{theorem}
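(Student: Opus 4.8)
The plan is to transport the genus-three strategy of Section~\ref{sec:g3exceptional} to the quadric-surface geometry of Section~\ref{g=4: logsurface}. Throughout, $X$ denotes the canonical model, a curve of class $3f_1 + 3f_2$ on the quadric $Q \subset \PP^3$, and $E$ the elliptic curve of class $2f_1 + 2f_2$ cut out by $q$, so that $E \cdot X = \divisor(q)$ has degree $12$. Since $\OO_X(f_1 + f_2) \cong K_X$ by adjunction, restriction to $E$ gives $\OO_E(f_1+f_2)$, a degree-four class with $3\cdot\OO_E(f_1+f_2) \sim X|_E \sim \divisor(q)$ on $E$. Consequently $\LL(X,q)$ differs from $\OO_E(f_1+f_2)$ by a three-torsion element of $\Pic^0(E)$, and this torsion order is the invariant that will separate the regular and irregular components.

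First I would establish the genus-four analogue of Proposition~\ref{prop:Eirred}: for a generic choice in each stratum $E$ is irreducible and $\divisor(q)$ lies in its smooth locus, which lets me treat $\LL(X,q)\otimes\OO_E(-(f_1+f_2))$ as a well-defined torsion section over the base. Next I would prove the dichotomy $\dim H^0(X,\LL(X,q)) = 2$ if and only if $\LL(X,q) \sim \OO_E(f_1+f_2)$. By Riemann--Roch a degree-four class $\LL$ on the genus-four curve $X$ has $h^0 = 2$ exactly when $K_X - \LL$ is effective of degree two, i.e. $\LL$ is a $g^1_4$; the high-contact argument of Proposition~\ref{prop:g3parity}, applied now to the intersection of $E$ with $X$ along the quadric, identifies this $g^1_4$ condition with the vanishing of the torsion class on $E$. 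Deformation invariance of the parity then follows as in genus three: two distinct torsion sections of a family of elliptic fibrations stay disjoint \cite{Miranda}, so the torsion order cannot drop in a family of smooth non-hyperelliptic curves. This already produces at least two non-hyperelliptic components in each stratum.

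Then I would realize these two components, as in Proposition~\ref{prop:g3Sred}, starting from irreducible moduli. The base is a covering of the moduli of pointed elliptic curves carrying the appropriate primitive three- or nine-torsion data, to which one adjoins the data recording the embedding $E \hookrightarrow Q$ of class $(2,2)$; over this base the curves $X$ of class $(3,3)$ with $X \cdot E = \divisor(q)$ form a projective bundle, since any two such curves differ by a section of $\OO_Q(1,1)$ (twist the ideal of $E$ by $\OO_Q(3,3)$ and take cohomology). A dimension count matching $2 + \dim\PP\QQ(\cdots)$, together with a Bertini argument for generic smoothness of $X$ away from and at the prescribed intersection points, shows each parameter space is irreducible of the expected dimension, hence yields exactly one component per torsion type. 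The loci where $E$ degenerates, or where $Q$ specializes to the Gieseker--Petri cone (Hirzebruch surface $F_2$), are excluded as components by the dimension-drop argument of Lemma~\ref{le:noEsingcomp}.

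Finally, the hyperelliptic components of $\QQ(6,6)$, $\QQ(6,3,3)$ and $\QQ(3,3,3,3)$ must be separated off, while $\QQ(12)$ and $\QQ(9,3)$ carry none as dictated by the list in Section~\ref{sec:hyploci}. These arise from the non-canonical double coverings of that section and are described geometrically by the scroll $F_5$; I would use that description to verify that a hyperelliptic half-translation surface in these strata is genuinely not a limit of the non-hyperelliptic families, and, via the analogue of Lemma~\ref{le:nohypcomponent}, that neither $\QQ^\reg$ nor $\QQ^\irr$ is entirely contained in the hyperelliptic locus. The main obstacle is precisely this interplay: unlike genus three, the canonical model sits on a two-dimensional quadric whose two rulings, cone degeneration, and scroll degeneration must all be controlled simultaneously, so the irreducibility and dimension bookkeeping of the component construction --- and the clean separation of the hyperelliptic stratum from the torsion-defined components --- is substantially more delicate than the plane-quartic computation of genus three.
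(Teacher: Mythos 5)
Your overall architecture matches the paper's: the torsion order of $\LL(X,q)\otimes\OO_E(-1)$ on the $(2,2)$-curve $E\subset Q$, an irreducible base of pointed elliptic curves with $3$- or $9$-torsion data carrying a projective bundle of $(3,3)$-curves, and exclusion of the hyperelliptic and Gieseker--Petri loci via the $F_5$ and $F_2$ models. The gap is in your first step. You propose to prove ``the genus-four analogue of Proposition~\ref{prop:Eirred},'' i.e.\ that $E$ is irreducible with $\divisor(q)$ in its smooth locus. In genus three that proposition holds for \emph{every} non-hyperelliptic $(X,q)$ because one is free to move the auxiliary line $L$; in genus four there is no auxiliary choice --- $E$ is uniquely determined by $q$ --- and the corresponding statement is false: even for smooth, non-hyperelliptic, non-Gieseker--Petri-special $X$, the parity curve can be reducible or non-reduced, or singular at points of $\divisor(q)$ (e.g.\ $E$ splitting into two conics, or into a ruling plus a $(1,2)$-curve). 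A generic-point version is true but insufficient: the deformation-invariance argument via disjointness of torsion sections needs control at \emph{every} member of a family, and the theorem's claim that $\dim H^0(X,\LL(X,q))$ separates the components at every point (not only generic ones) requires knowing what happens when $E$ degenerates, since $h^0$ is only upper semicontinuous.

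The paper's substitute --- explicitly flagged there as the point where genus four diverges from genus three --- is the dichotomy of Lemma~\ref{le:g4Esuffsmooth}: either $E$ is ``sufficiently smooth'' (reduced, possibly reducible, with $\divisor(q)$ supported on its smooth locus, which is all the torsion argument needs), or else $\dim H^0(X,\LL(X,q))=1$ automatically. Combined with semicontinuity this closes the parity argument; its proof is a substantial case analysis over the possible splittings of the $(2,2)$-class, and the reducible configurations are also precisely where the hyperelliptic components emerge. Without this lemma or an equivalent, your proof that the parity is constant on each component, and hence the stated characterization of $\QQ^\irr$ versus $\QQ^\reg$, does not go through. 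A secondary underestimate: the exclusion of the Gieseker--Petri and hyperelliptic loci is not a one-line dimension drop as in Lemma~\ref{le:noEsingcomp}; it requires redoing the parameter-space count on $F_2$ and $F_5$ and sorting through the reducible cases of the curve $R$ replacing $E$ there, which is where the genuine hyperelliptic components of $\QQ(6,6)$, $\QQ(6,3,3)$ and $\QQ(3,3,3,3)$ must be recognized and separated off.
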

\par
Many of the exceptional strata are non-varying. 
\par
\begin{theorem} \label{thm:execg4NV}
The exceptional strata in genus four with the 
exception of $\QQ(6,6)^\irr$, $\QQ(6,3,3)^\irr$ and $Q(3,3,3,3)^\irr$ are non-varying. 
The values are collected as follows:
$$\begin{array}{|c|l|l|l|}
\hline
& \text{\rm hyperell.\ comp.} \,& \text{\rm component} \, 
\QQ^\irr & \text{\rm component} \, \QQ^\reg \\
\hline
\QQ(12) & --- & L^+ = 11/7 \, & L^+ = 10/7  \\
\hline
\QQ(9,3) & --- & L^+ = 92/55 \, & L^+ = 82/55 \\
\hline
\QQ(6,6) &L^+ = 2 & \,\text{\rm varying}\, & L^+ = 3/2 \\
\hline
\QQ(6,3,3) &L^+ = 11/5 &\,\text{\rm varying}\, & L^+ = 31/20 \\
\hline
\QQ(3,3,3,3) &L^+ = 12/5 &  \,\text{\rm varying}\, & L^+ = 8/5 \\
\hline
\end{array}
$$
\end{theorem}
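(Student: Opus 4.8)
The plan is to split the exceptional strata into hyperelliptic, regular, and irregular components, and in each surviving (non-varying) case to reduce the computation of $L^+$ to a single intersection number on $\ol{C}$. The common engine is Proposition~\ref{intersection}, which gives $L^+(C)=2\,(\ol{C}\cdot\lambda)/\chi$ together with $\ol{C}\cdot\omega_i = S_i\cdot\omega_\pi=\chi/(d_i+2)$. The hyperelliptic components require no new work: $\QQ(3,3,3,3)^\hyp$, $\QQ(6,3,3)^\hyp$ and $\QQ(6,6)^\hyp$ are components of hyperelliptic half-translation surfaces of types (1), (2) and (3) respectively (all with $g=4$, $k=1$) in the classification of Section~\ref{sec:hyploci}, so Corollary~\ref{cor:hypNonvar} yields $L^+ = 12/5,\ 11/5,\ 2$ directly.

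For the regular components I would use the pointed Brill-Noether divisors already tabulated. For each stratum in $\cE_4$ the class $\LL(X,q)=\divisor(q)/3$ is exactly the divisor class tested by the divisor $D$ paired with that stratum in Proposition~\ref{prop:disjointbydegree}, i.e.\ $W=BN^1_{4,(4)}$ for $\QQ(12)$, $BN^1_{4,(3,1)}$ for $\QQ(9,3)$, $BN^1_{4,(2,2)}$ for $\QQ(6,6)$, and so on. Since on the regular component $\dim H^0(X,\LL(X,q))=1$ and this parity is deformation invariant (the genus-four analogue of Proposition~\ref{prop:g3parity} proved alongside Theorem~\ref{thm:mainexcg4}), the lift $\ol{C}$ avoids the interior of $D$; by Proposition~\ref{prop:disjointbydegree} it also avoids every boundary divisor occurring with nonzero coefficient in the class of $D$. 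Hence $\ol{C}\cdot D=0$, all boundary terms drop out, and the class reduces to $\ol{C}\cdot\big(-\lambda+\sum_i a_i\,\omega_i\big)=0$. Substituting $\ol{C}\cdot\omega_i=\chi/(d_i+2)$ determines $\ol{C}\cdot\lambda$; for instance $\QQ(9,3)^\reg$ gives $\ol{C}\cdot\lambda=(6/11+1/5)\chi$ and thus $L^+=82/55$, and the remaining cases likewise reproduce $10/7,\ 3/2,\ 31/20,\ 8/5$.

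For the two surviving irregular components $\QQ(12)^\irr$ and $\QQ(9,3)^\irr$ this argument fails, because $\dim H^0(X,\LL(X,q))=2$ forces the lift of $\ol{C}$ into the corresponding Brill-Noether divisor, exactly as in the genus-three case $\QQ(6,3,-1)^\irr$. Here I would instead adapt the Hodge-bundle filtration method of Appendix~\ref{sec:filtration} (following \cite{yuzuo}): the second section of $\LL(X,q)$ present on the irregular locus singles out a sub-object of the filtration of the Hodge bundle over $\ol{C}$ whose degree, once its boundary contributions are pinned down using Corollary~\ref{component} and the smoothness of the partial compactification in Proposition~\ref{prop:periodandplumb}, supplies the extra relation needed to solve for $\ol{C}\cdot\lambda$ and hence for $L^+=11/7$ and $92/55$.

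For the three remaining components $\QQ(6,6)^\irr$, $\QQ(6,3,3)^\irr$ and $\QQ(3,3,3,3)^\irr$ the claim is that $L^+$ \emph{varies}, so there the plan is the opposite: exhibit, in each component, two explicit Teichm\"uller curves — conveniently square-tiled (origami) surfaces — and compute their slopes $s(C)=(\ol{C}\cdot\delta)/(\ol{C}\cdot\lambda)$ via their Siegel-Veech constants, displaying two distinct values. The main obstacle is the third step: the hyperelliptic and regular cases are essentially bookkeeping against the divisor table, but controlling the degree of the filtration piece attached to $\LL(X,q)$ — in particular its boundary contributions — is precisely the delicate point that forces the separate, technical treatment of the analogous $\QQ(6,3,-1)^\irr$ in Appendix~\ref{sec:filtration}, and I expect the genus-four computation to be of comparable difficulty.
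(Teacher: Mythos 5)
Your overall strategy coincides with the paper's: Corollary~\ref{cor:hypNonvar} for the hyperelliptic components (your identification of the three covering types with $g=4$, $k=1$ is right), the Brill--Noether divisors $BN^1_{4,(4)},\dots,BN^1_{4,(1,1,1,1)}$ paired with Proposition~\ref{prop:disjointbydegree} and Proposition~\ref{intersection} for the regular components, the filtration method of Appendix~\ref{sec:filtration} for $\QQ(12)^\irr$ and $\QQ(9,3)^\irr$, and explicit square-tiled surfaces with distinct $L^+$ for the three varying irregular components. All of your numerical values check out against the divisor classes in Lemma~\ref{le:PicBN4}.

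There is, however, one inferential gap in your treatment of the regular components. From ``$\ol{C}$ avoids the interior of $D$'' and ``$\ol{C}$ avoids every boundary divisor occurring with nonzero coefficient in the class of $D$'' you conclude ``hence $\ol{C}\cdot D=0$.'' This does not follow: $D$ is a closure in $\barmoduli[4,n]$, and $\ol{C}$ does meet boundary divisors carrying coefficient zero in the class of $D$ --- for instance $\delta_0$, which is absent from every class in Lemma~\ref{le:PicBN4} and which every Teichm\"uller curve in these strata meets. At such a cusp $\ol{C}$ could a priori meet $D$, contributing positively to $\ol{C}\cdot D$ while contributing nothing to the boundary terms you control, and the resulting relation $\ol{C}\cdot(-\lambda+\sum_i a_i\omega_i)=0$ would fail. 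The paper closes exactly this gap with Lemma~\ref{le:disjg4irred}: a cusp of a Teichm\"uller curve in the regular component cannot lie in the closure of the irregular locus (equivalently, cannot satisfy $\dim H^0(X_\infty,\LL(X_\infty,q_\infty))=2$), because the partial compactification $\TQ(k_1,\ldots,k_n)$ by polar-type surfaces is smooth with period plumbing coordinates (Proposition~\ref{prop:periodandplumb}) and the limiting differential retains its signature (Corollaries~\ref{signature} and~\ref{component}), so two components of equal dimension meeting there would produce a singular point. You invoke precisely these ingredients for the irregular components, but they are equally indispensable here; your argument needs this boundary-disjointness step made explicit before the intersection computation is legitimate.
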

\par
\smallskip

\subsection{Parity given by torsion conditions}  
\label{sec:g4parity}

Let $\QQ(k_1,\ldots,k_n)$ be a stratum in $\cE_4$ and consider a half-translation
surface $(X,q)$ in this stratum that is not hyperelliptic and not in the 
Gieseker-Petri locus.
Later, in Lemma~\ref{le:noGKhypcomp} we will see that this holds for a generic 
surface in all components
but the hyperelliptic ones. Let  $X\hookrightarrow \PP^3$ be the canonical embedding.
Since $X$ is not in the Gieseker-Petri locus, the image of the canonical embedding
lies in a smooth quadric surface $Q \cong \PP^1 \times \PP^1$.
\par
Let $\cO_Q(1)$ denote the hyperplane class in $\PP^3$ restricted to $Q$. 
It has divisor class $(1,1)$ in the Picard group of $Q$. A general section $E$ of $|\cO_Q(2)|$ is an elliptic 
curve of degree four with divisor class $(2,2)$.
We have the following exact sequence 
$$ 0\to \OO_{Q}(-1) \to \OO_{Q}(2) \to \OO_{X}(2) \to 0, $$
hence $H^0(Q,\OO_{Q}(2))\to H^0(X,\OO_{X}(2))$ is an isomorphism. 
Note that $\OO_X(2)\cong \omega^{\otimes 2}_{X}$. Therefore, 
a bicanonical divisor of $X$ is uniquely cut out by a section $E$ of $|\OO_Q(2)|$. 
Therefore, we can choose $E$ uniquely corresponding to $q$ modulo scalars, that is
\be \label{eq:g4defE}
E \cdot X = \divisor(q).
\ee
\par
We say that $(E,\divisor(q))$ is {\em sufficiently smooth} if $E$ is reduced (possibly
reducible) and $\divisor(q)$ is supported on the smooth locus of $E$. In that case, we also say
that $X$ has a {\em sufficiently smooth parity curve}. This condition allows us to study torsion in the Picard group of divisors supported away from 
the singular locus of $E$. Since $X$ is a section of $|\OO_Q(3)|$, we have
\be \label{eq:g4torsion}
3\LL(X,q) \sim \OO_E(3).
\ee
in the Picard group of $E$. This linear equivalence may or may not hold when dividing both 
sides by three. Contrary to the case of genus three we cannot just focus on
irreducible $E$. Our substitute is the following lemma. As before, we postpone the proof of all
the technical results to the appendix. 
\par
\begin{lemma} \label{le:g4Esuffsmooth}
In the above setting, suppose $X$ is neither hyperelliptic nor Gieseker-Petri special. 
Then 
either $E$ is sufficiently
smooth or $\dim H^0(X,\LL(X,q)) = 1$.
\end{lemma}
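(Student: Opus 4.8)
The plan is to prove the contrapositive statement: under the standing hypotheses that $X$ is neither hyperelliptic nor Gieseker-Petri special, I will show that $\dim H^0(X,\LL(X,q))=2$ forces $E$ to be reduced with $\divisor(q)$ supported on its smooth locus. Write $\LL=\LL(X,q)$, a line bundle of degree four on the genus four curve $X$. Since $\deg(\omega_X\otimes\LL^{-1})=2$ and a non-hyperelliptic curve carries no $g^1_2$, Riemann-Roch and Serre duality give $h^0(X,\LL)=1+h^0(X,\omega_X\otimes\LL^{-1})\in\{1,2\}$, with $h^0(X,\LL)=2$ exactly when $\omega_X\otimes\LL^{-1}\cong\OO_X(p+q)$ is effective. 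Because $3\LL=\divisor(q)$ is a bicanonical divisor we have $3\LL\sim\omega_X^{\otimes 2}$, and combining this with $\LL\sim\omega_X-p-q$ yields $3(p+q)\sim\omega_X$ and hence $\LL\sim 2p+2q$. In particular $3(p+q)$ is an effective canonical divisor, so it is cut out on $X$ by a conic $C_0\in|\cO_Q(1)|$ with $C_0\cdot X=3p+3q$. These are the geometric consequences of $h^0(X,\LL)=2$ that I will play off against the degeneration type of $E$.

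Next I would enumerate the degenerations of $E$. Since $E\in|\OO_Q(2)|$ has arithmetic genus one, failing to be sufficiently smooth means either (a) $E$ is non-reduced, so its bidegree $(2,2)$ equation has a repeated factor, or (b) $E$ is reduced but $\divisor(q)$ meets $\operatorname{Sing}(E)$, namely the node or cusp of an irreducible rational $E$ or a node joining two components. Throughout, the decisive constraint is that every multiplicity of $\divisor(q)=E\cdot X$ is divisible by three.

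For case (a), the repeated factor is either a whole conic, giving $E=2C$ with $C\in|\cO_Q(1)|$, or a ruling, giving $E=2\ell+E'$ with $\ell$ of class $f_1$ and $E'$ of class $2f_2$. If $E=2C$, then the defining section of $q$ is a perfect square, so $q=\omega^2$ is a global square of an abelian differential; this is impossible since the strata in $\cE_4$ consist of primitive quadratic differentials (compare Proposition~\ref{global}), so this sub-case never occurs. If $E=2\ell+E'$, then each point of $\ell\cap X$ enters $\divisor(q)$ with even local multiplicity coming from $2\ell$, and divisibility by three forces $E'$ to pass through those points; chasing the multiplicities fibre by fibre forces $\ell$ and the components of $E'$ to be totally tangent to $X$, so that the contact points satisfy $3a\sim\OO_X(f_1)$ and $3b\sim\OO_X(f_2)$. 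Feeding these relations into $\omega_X\otimes\LL^{-1}$ I expect to find it non-effective, hence $h^0(X,\LL)=1$, the borderline coincidences being excluded precisely by the hypothesis that $X$ is neither hyperelliptic nor Gieseker-Petri special (Lemma~\ref{le:noGKhypcomp}).

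Case (b) is where I expect the main difficulty. Here $E$ is reduced of arithmetic genus one, so its generalized Jacobian is $\mathbb{G}_m$, $\mathbb{G}_a$, or an elliptic curve according to the type of $E$, and the torsion relation $3\LL\sim\OO_E(3)$ of \eqref{eq:g4torsion} must be re-analysed on a partial normalization once $\divisor(q)$ is allowed to meet $\operatorname{Sing}(E)$. The plan is to localize at a singular point $z_i\in\operatorname{Sing}(E)$ in the support of $\divisor(q)$, split the contact multiplicity $k_i$ (divisible by three) branch by branch, and translate the linear equivalence underlying $h^0(X,\LL)=2$ into a relation in the generalized Jacobian that cannot hold when a branch of $E$ is pinned at $z_i$, with the triply tangent conic $C_0$ of the first step supplying the contradicting section. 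The hard part is the bookkeeping: one must treat the irreducible nodal, irreducible cuspidal, and each reducible configuration of $E$ separately, in every case reducing the putative extra section of $\LL$ either to a global square (excluded) or to a hyperelliptic or Gieseker-Petri degeneration of $X$ (excluded by hypothesis), and thereby conclude $h^0(X,\LL)=1$.
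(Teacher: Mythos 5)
Your setup is sound and coincides with what the paper actually uses: the contrapositive formulation, the identity $h^0(X,\LL)=1+h^0(X,\omega_X\otimes\LL^{-1})$ on a non-hyperelliptic curve, the consequence that $h^0(X,\LL)=2$ forces $\omega_X\sim 3(p+q)$ and hence a $(1,1)$-curve triply tangent to $X$ at $p$ and $q$, and the disposal of $E=2C$ via the global-square contradiction. But from that point on the proposal is a plan rather than a proof, and the plan is exactly where the content of the lemma lives. Already in case (a) with $E=2\ell+E'$ your assertion that divisibility by three ``forces $\ell$ and the components of $E'$ to be totally tangent to $X$'' is false as a blanket statement: the residual $(0,2)$ curve must be a union of two distinct rulings $L_1,L_2$ whose intersections with $X$ are disjoint (they are two fibers of the same $g^1_3$), and the admissible contact patterns depend on the stratum. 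For $\QQ(6,6)$ one is forced to $2(\ell\cdot X)=3z_1+3z_2$, which is not integral; for $\QQ(3,3,3,3)$ the only option is $\ell\cdot X=z_1+z_2+z_3$ with no tangency at all, and the contradiction is that some $L_i$ would have to share two points with the ruling $\ell$ of the opposite class. Conversely, for $\QQ(6,3,3)$ the totally tangent configuration $\ell\cdot X=3z_1$, $L_1\cdot X=3z_2$, $L_2\cdot X=3z_3$ genuinely occurs, and one must then prove directly that $h^0(X,2z_1+z_2+z_3)=1$ by playing the two resulting $g^1_3$'s against the hypothesis that $X$ is not Gieseker--Petri special; your ``I expect to find it non-effective'' elides precisely this step.

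Case (b) contains no argument at all. The paper's route there is not a generalized-Jacobian computation: from $h^0(X,\LL)=2$ it extracts a $(1,1)$-hyperplane section $H$ with $H\cdot E=\divisor(q)/3$ and then exhausts the decompositions $E=L+R$ (line plus residual $(1,2)$-curve, subdivided by whether $R$ is irreducible) and $E=C_1+C_2$ (two conics meeting in at most two points), in each case pinning down the possible divisors $X\cdot C_i$ compatible with the given signature and deriving a contradiction from B\'ezout-type counts, from the uniqueness of the two $g^1_3$'s on a non-Gieseker--Petri-special curve, or from non-hyperellipticity (e.g.\ the $\QQ(6,6)$ configuration $X\cdot C_1=5z_1+z_2$, $X\cdot C_2=z_1+5z_2$ requires exactly this bookkeeping). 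It is not clear that a uniform ``relation in the generalized Jacobian that cannot hold when a branch is pinned'' exists, and you do not exhibit one. Since the case analysis \emph{is} the proof of this lemma, deferring it as ``bookkeeping'' leaves the statement unestablished.
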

\par
\begin{proposition}\label{prop:g4parity}
For $X$ neither hyperelliptic nor Gieseker-Petri special and
with sufficiently smooth parity curve, the parity
$$\dim H^0(X,\LL(X,q)) = 2 \quad \text{if and only if} \quad \LL(X,q) \sim \OO_{E}(1)$$ 
and 
$$\dim H^0(X,\LL(X,q)) = 1 \quad \!\!\text{if and only if} \quad\!\! \LL(X,q) \not\sim \OO_{E}(1) 
\quad\! \text{but} \quad \!  3 \LL(X,q) \sim \OO_{E}(3).$$
\par
Moreover, in a family of quadratic differentials $(X_t, q_t)$ 
with $X_t$ a smooth non-hyperelliptic curve, not Gieseker-Petri special and
with sufficiently smooth parity curve
for all $t \in \Delta$, the special member $(X_0,q_0)$ has the same parity as the generic member. 
\end{proposition}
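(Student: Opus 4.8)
The plan is to mirror the genus three argument of Proposition~\ref{prop:g3parity}, replacing the plane cubic and its flex/secant geometry by the quadric surface $Q\cong\PP^1\times\PP^1$ and the hyperplane sections of $E$. First I would record the numerics. Since $X$ is non-hyperelliptic, $\LL=\LL(X,q)$ has degree $4$, and Riemann--Roch gives $h^0(X,\LL)=1+h^0(X,\omega_X-\LL)$ with $\deg(\omega_X-\LL)=2$; as a non-hyperelliptic genus four curve carries no $g^1_2$, we get $h^0(X,\omega_X-\LL)\le 1$, so that $h^0(X,\LL)\in\{1,2\}$ and $h^0(X,\LL)=2$ precisely when $\omega_X-\LL$ is effective. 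The point is that both sides of the desired equivalence are governed by a single object, namely a $(1,1)$-curve $H$ on $Q$ through $\divisor(q)/3$: the canonical system of $X$ and the complete system $|\OO_E(1)|$ are both cut out by the $(1,1)$-curves on $Q$, i.e. by hyperplane sections. For the latter one checks that $E$ is nondegenerate, since a reduced curve of degree four cannot lie in the conic $Q\cap\Pi$ for any hyperplane $\Pi$, whence the restriction $H^0(\PP^3,\OO(1))\to H^0(E,\OO_E(1))$ is an isomorphism onto the four-dimensional space computed by Riemann--Roch on the arithmetic genus one curve $E$.

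The heart of the argument is a contact--transfer step at the zeros $z_i$ of $q$. By sufficient smoothness each $z_i$ lies in the smooth locus of $E$, while $X$ is smooth, and by \eqref{eq:g4defE} these two smooth branches meet with intersection multiplicity $(E\cdot X)_{z_i}=k_i$. Hence for any $(1,1)$-curve $H$ and any $m\le k_i$ one has $(H\cdot E)_{z_i}\ge m$ if and only if $(H\cdot X)_{z_i}\ge m$, because the local equation of $H$ restricted to the two branches agrees to order $k_i$. Applying this at every $z_i$ with $m=k_i/3<k_i$ shows that $h^0(X,\LL)=2$, equivalently the existence of a $(1,1)$-curve $H$ with $H\cap X\ge\LL$, is equivalent to the existence of a $(1,1)$-curve $H$ with $(H\cdot E)_{z_i}\ge k_i/3$ for all $i$, hence with $H\cap E\ge\LL$; as $\deg(H\cap E)=4=\deg\LL$ this forces $H\cap E=\LL$, that is $\LL\sim\OO_E(1)$. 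This proves $h^0(X,\LL)=2\Leftrightarrow\LL\sim\OO_E(1)$. The remaining characterization follows from the torsion relation \eqref{eq:g4torsion}: the class $\LL-\OO_E(1)$ always lies in $\Pic^0(E)$ and is $3$-torsion, so its order is $1$ or $3$, and $h^0(X,\LL)=1$ is equivalent to $\LL\not\sim\OO_E(1)$ together with $3\LL\sim\OO_E(3)$.

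For the deformation invariance I would argue as in Proposition~\ref{prop:g3parity}. If the generic member has $h^0=2$, then so does the special member, since $h^0(X_t,\LL_t)$ is upper semicontinuous and bounded above by $2$. The substantive direction is to rule out a jump from $h^0=1$ for generic $t$ to $h^0=2$ at $t=0$, and here the torsion viewpoint is decisive: by sufficient smoothness $\LL_t\otimes\OO_{E_t}(-1)$ is a family of Cartier divisor classes supported in the smooth locus, defining a section of the relative Picard scheme of $E_t$ that is $3$-torsion at every $t$, and a generic order of three cannot specialize to order one because distinct torsion sections of such a family are disjoint (see \cite{Miranda}). I expect the main obstacle, and the only genuine departure from the genus three situation, to be that the parity curve $E_t$ is now typically reducible or nodal, so the relevant group is a generalized Jacobian rather than an elliptic curve; making the non-collision of torsion sections rigorous requires running this disjointness statement over the N\'eron model of the family $E_t$, which is legitimate precisely because $\divisor(q_t)$ remains in the smooth locus throughout. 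The local contact-transfer computation, by contrast, is routine once set up, so the degenerate-fibre torsion argument is where the real care is needed.
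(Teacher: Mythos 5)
Your proof is correct and follows essentially the same route as the paper's: both characterize $\dim H^0(X,\LL(X,q))=2$ by the existence of a hyperplane section of $Q$ cutting out $\LL(X,q)$ plus two residual points, transfer the contact condition from $X$ to $E$ at the zeros, and then handle deformation invariance via upper semicontinuity together with the fact that the $3$-torsion class $\LL(X,q)\otimes\OO_E(-1)$ cannot drop to order one in a family. The only difference is one of exposition: you spell out the local contact-transfer computation and the need to work in the generalized Jacobian when $E_t$ is reducible, points the paper's proof leaves implicit (the latter being absorbed into its Lemma~\ref{le:g4Esuffsmooth}).
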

\par
\begin{proof} By Riemann-Roch, $\dim H^0(X,\LL(X,q)) = 2$ is equivalent
to the existence of a unique section $C$ of $|\cO_Q(1)|$ in $Q$ and two points $r,s \in X$
such that $C \cdot X = \LL(X,q) +r +s$. Since $E \cdot X = 3\LL(X,q)$, 
this is equivalent to $C \cdot E = \LL(X,q)$. Since by definition
$C \cdot E \sim \cO_E(1)$, this equivalence proves the first statement.
\par
For a sufficiently smooth parity curves $E$, the line bundle $\LL(X,q) \otimes \OO_E(-1)$
is well-defined and is torsion of order either one or three that cannot jump
in a family. As in genus three, using upper semicontinuity of cohomology, we only 
need to show that in a family of half-translation surfaces  whose special fiber is no
longer sufficiently smooth, the dimension of $H^0(X,\LL(X,q))$ does 
not increase. This is precisely the content of Lemma~\ref{le:g4Esuffsmooth}.
\end{proof}
\par
Note that at this stage we have not yet shown that the dimension of the
locus with $\dim H^0(X,\LL(X,q)) = 2$ is the same as the corresponding 
stratum in $\cE_4.$ The case of smaller dimension (thus necessarily contained 
in the hyperelliptic
locus or in the Gieseker-Petri divisor) will be ruled out in the next section.
\par

\subsection{Construction of components} \label{sec:g4construction}

Let $B_{(k_1,\ldots,k_n)}^\irr$ resp.\ $B_{(k_1,\ldots,k_n)}^\reg$ be subset of the moduli 
space of stable elliptic curves  $(E,q_1,r_1,\ldots,r_n, D)$ with
$n+1$ marked points and additionally with the linear equivalence class
$D$ of a divisor of degree two satisfying the condition
\be \label{eq:g4:parcond}
\sum_{i=1}^n \frac{k_i}3 r_i \sim 2q_1+D
\ee  resp.\ satisfying 
$$\sum_{i=1}^n \frac{k_i}3 r_i \not \sim 2q_1+D \quad \text{but} \quad 
\sum_{i=1}^n k_i r_i \sim 6q_1+3D$$
and $2q_1 \not \sim D$. The linear systems $|2q_1|$ and $|D|$ define maps
$E \to \PP^1$ and by definition of the parameter spaces the product $E \to Q:= \PP^1 \times
\PP^1$ is an embedding. Instead of $D$ we could choose a point $q_2 \in E$
up to a two-torsion translation and let $D$ be the divisor class associated to $\cO_E(2q_2)$.
We will consider $Q \hookrightarrow \PP^3$ using the Veronese
embedding $|\cO(1,1)|$ and, as above, let $\cO_Q(k)$ be the restriction 
of $\cO_{\PP^3}(k)$ to $Q$. Thus, $E$ can be viewed as a section of $|\cO_Q(2)|$. 
\par
Let $f:S_{(k_1,\ldots,k_n)}^\irr \to B_{(k_1,\ldots,k_n)}^\irr$ 
and $f:S_{(k_1,\ldots,k_n)}^\reg \to B_{(k_1,\ldots,k_n)}^\reg$ be the fibrations
whose fiber over $(E,q_1,r_1,\ldots,r_n,D)$ consists of all sections 
$X$ of $|\cO_Q(3)|$ together with the quadratic differential $q$ 
obtained from restricting  $E$ to $X$.
\par
\begin{proposition} \label{prop:g4parametersp}
The parameter spaces $B^\bullet_{(k_1,\ldots,k_n)}$ are irreducible of dimension $n+1$ 
for both upper indices $\bullet = \irr$ and $\bullet = \reg$
and the parameter spaces $S^\bullet_{(k_1,\ldots,k_n)}$ are irreducible of dimension 
$n+5$, which equals $\dim \PP\QQ(k_1,\ldots,k_n)$.
\par
Moreover, a generic section $X$ parameterized by $S_{(k_1,\ldots,k_n)}$
is a smooth curve of genus four in all the cases.
\end{proposition}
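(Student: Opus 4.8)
The plan is to mirror the genus-three argument of Proposition~\ref{prop:g3Sred}: I would realize each $B^\bullet_{(k_1,\ldots,k_n)}$ as a cover of a moduli space of pointed elliptic curves, and each $S^\bullet_{(k_1,\ldots,k_n)}$ as a projective bundle over it, deducing irreducibility and dimension from these structures and treating generic smoothness by a local Bertini-type argument.

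First I would analyze the base. In the $\irr$ case, condition \eqref{eq:g4:parcond} determines the degree-two class $D \sim \sum_i \tfrac{k_i}{3} r_i - 2q_1$ uniquely from $(E,q_1,r_1,\ldots,r_n)$, so forgetting $D$ identifies $B^\irr_{(k_1,\ldots,k_n)}$ with (a dense open of) $\barmoduli[{1,n+1}]$, which is irreducible of dimension $n+1$; the generic such datum satisfies $2q_1 \not\sim D$, so the product map $E \to \PP^1 \times \PP^1$ is an embedding. In the $\reg$ case the two defining conditions say precisely that $\tau := D - D_0$ is a nonzero $3$-torsion class, where $D_0 \sim \sum_i \tfrac{k_i}{3} r_i - 2q_1$ is the class forced in the $\irr$ case; thus $B^\reg_{(k_1,\ldots,k_n)} \to \barmoduli[{1,n+1}]$ is finite étale of degree $8$, classifying $\tau$. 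To see it is connected, hence irreducible, I would invoke monodromy: the monodromy of the family $E[3]$ over the $j$-line surjects onto $\SL_2(\mathbb{F}_3)$, which acts transitively on $(\ZZ/3)^2 \setminus \{0\}$. Thus both bases are irreducible of dimension $n+1$.

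Next I would treat the fibration $f\colon S^\bullet \to B^\bullet$. Over a point with $E$ embedded in $Q = \PP^1\times\PP^1$ in class $\OO_Q(2)$, the torsion conditions together with \eqref{eq:g4torsion} guarantee $\sum_i k_i r_i \in |\OO_E(3)|$, so this divisor is cut out by a section of $\OO_E(3)$ unique up to scalar. Twisting the ideal-sheaf sequence of $E$ by $\OO_Q(3)$ yields
\[
0 \to \OO_Q(1) \to \OO_Q(3) \to \OO_E(3) \to 0,
\]
and since $H^1(Q,\OO_Q(1)) = 0$ the restriction $H^0(\OO_Q(3)) \to H^0(\OO_E(3))$ is surjective with four-dimensional kernel $H^0(\OO_Q(1))$. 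Hence the $X \in |\OO_Q(3)|$ restricting to the fixed divisor on $E$ form a linear system $\PP^4$, and as the point of $B^\bullet$ varies these assemble into a $\PP^4$-bundle. Therefore $S^\bullet$ is irreducible of dimension $(n+1)+4 = n+5$, matching $\dim \PP\QQ(k_1,\ldots,k_n) = 2g-3+n = n+5$ for $g=4$.

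Finally, for generic smoothness I would fix a point of $B^\bullet$ with $E$ smooth and write a general member of the fibre as $X = X_0 + s_E\, h$, with $X_0$ a fixed cubic, $s_E$ the defining section of $E$, and $h$ an arbitrary section of $\OO_Q(1)$. The base locus of this system is exactly $E \cap X_0 = \{r_1,\ldots,r_n\}$, so Bertini gives a general $X$ smooth off the $r_i$; at each $r_i$ one computes $dX(r_i) = dX_0(r_i) + h(r_i)\,ds_E(r_i)$ with $ds_E(r_i) \neq 0$ (as $E$ is smooth there), and varying the value $h(r_i)$, which is free since $\OO_Q(1)$ is very ample, avoids $dX(r_i)=0$. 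Thus a general $X$ is smooth, and adjunction on $Q$ gives $2p_a(X)-2 = \OO_Q(3)\cdot\OO_Q(1) = 6$, so $X$ has genus four. I expect the two genuinely genus-four points to be the main obstacles: proving connectivity of the degree-eight cover $B^\reg$ through the transitivity of $\SL_2(\mathbb{F}_3)$ on nonzero $3$-torsion, and establishing smoothness of $X$ at the forced base points $r_i$, where the high order of contact of $X$ with $E$ must be reconciled with transversality of $X$ to itself.
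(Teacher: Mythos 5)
Your proof is correct and follows essentially the same route as the paper: the same ideal-sheaf sequence $0 \to \OO_Q(1) \to \OO_Q(3) \to \OO_E(3) \to 0$ produces the four-dimensional fibres of $f$, the dimension counts agree, and generic smoothness is handled exactly as in the genus-three case (Proposition~\ref{prop:g3Sred}) by writing $X = X_0 + h\,s_E$ and moving the value $h(r_i)$ at the forced base points. The only point of divergence is the irreducibility of the base: you realize $B^\reg$ directly as an open piece of the degree-eight \'etale cover of $\barmoduli[{1,n+1}]$ classifying the nonzero $3$-torsion class $D - D_0$ and quote transitivity of the $\SL_2(\mathbb{F}_3)$-monodromy, whereas the paper reduces to $n=1$ via the addition map $(r_1,\ldots,r_n)\mapsto \sum k_i r_i$ and cites irreducibility of the moduli of elliptic curves with a primitive torsion point --- two phrasings of the same underlying transitivity statement, so this is a cosmetic rather than substantive difference.
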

\par
\begin{proof}
Forgetting the last marked point and using the interpretation of $D$ exhibit a finite, dominant map to a quotient of the moduli
space of elliptic curves with $n+1$ marked points by a finite group action. 
This proves the first dimension count. Tensoring the defining sequence for $E$ with $\OO_{Q}(3)$, 
we obtain an exact sequence
$$ 0\to \OO_{Q}(1) \to \OO_{Q}(3) \to \OO_{E}(3) \to 0, $$ 
and read off $h^0(Q,\cO_Q(1))=4$, which implies that the dimension of $S_{(k_1,\ldots,k_n)}$ is four 
larger than $B_{(k_1,\ldots,k_n)}$. The irreducibility of $B_{(k_1,\ldots,k_n)}$
for $n=1$ is a consequence of the irreducibility of the space of elliptic
curves with one marked point together with a choice of primitive $4$-torsion
point (respectively a primitive $12$-torsion point). The case $n>1$ is reduced
to the previous case by using the irreducibility of the fiber under the addition map
$(r_1,\ldots,r_n) \mapsto \sum_{i=1}^n k_i r_i \in E$. 
\par
The proof that a generic curve $X$ is smooth is completely parallel
to the case of genus three.
\end{proof}
\par
\begin{proof}[Proof of Theorem~\ref{thm:mainexcg4}] The existence of
period coordinates shows that strata are smooth. Consequently, disregarding
subsets of complex codimension at least one does not change connectivity.
By Lemmas~\ref{le:nog4Esingcomp} and~\ref{le:noGKhypcomp} below we may thus 
restrict the question on
the number of components to the complement of the hyperelliptic locus 
and the Gieseker-Petri locus and to half-translation surfaces with sufficiently
smooth parity curves. 
\par
On this complement by Proposition~\ref{prop:g4parity} the parity is deformation
invariant and hence there are at least two components for each stratum listed in $\cE_4$. 
Recall that to a point $(X,q)$ in a stratum $\QQ(k_1, \ldots, k_n)$ in $\cE_4$ we associated at the beginning
of Section~\ref{sec:g4parity} an elliptic curve $E$ and a map 
$E \to Q \cong \PP^1 \times \PP^1$. By the homogeneity of an elliptic curve
we may define $q_1 \in E$ so that the first projection is given by $|2q_1|$
and let $D$ be the pullback of $\cO_{\PP^1}(1)$ via the second projection.
Since $X \cdot E = \divisor(q)$, we can associate $X$ canonically
$n$ points on $Q$. Finally, equation~\eqref{eq:g4torsion} implies that 
$(X,E,q_1,D,\divisor(q))$ defines a point in $S_{k_1,\ldots,k_n}$ with
upper index either $\irr$ or $\reg$. The irreducibility statement in 
Proposition~\ref{prop:g4parametersp} completes the proof.
\end{proof}
\par 
\begin{lemma}  \label{le:nog4Esingcomp}
None of the strata in $\cE_4$ has a component such that for a generic half-translation
surface in that component the parity curve defined by \eqref{eq:g4defE} is singular. 
\end{lemma}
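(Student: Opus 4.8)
The plan is to follow the proof of Lemma~\ref{le:noEsingcomp} in genus three, transported to the quadric-surface picture of Section~\ref{g=4: logsurface}. Suppose, aiming for a contradiction, that some stratum in $\cE_4$ possessed a component $\mathcal{Z}$ for whose generic half-translation surface the parity curve $E \subset Q$ defined by \eqref{eq:g4defE} is singular. Since $E$ is a section of $|\OO_Q(2)|$ of arithmetic genus one, this means $E$ is either an irreducible rational curve with a node or a cusp, or a (reduced or non-reduced) reducible curve. In either case I would reconstruct $\mathcal{Z}$ by exactly the parametrization of Proposition~\ref{prop:g4parametersp}, replacing the moduli space of smooth pointed elliptic curves by the relevant parameter space of singular arithmetic-genus-one curves, and then derive a contradiction from a dimension count against $\dim \PP\QQ(k_1,\ldots,k_n) = n+5$.

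First I would treat the irreducible singular case. A rational nodal or cuspidal curve of arithmetic genus one carries no modulus, its $j$-invariant being pinned at the boundary of the $j$-line; hence the configuration space of the $n+1$ marked points $q_1, r_1, \ldots, r_n$ on its smooth locus, taken modulo its (at least one-dimensional) automorphism group, has dimension one smaller than $\dim \moduli[{1,n+1}] = n+1$. The linear-equivalence condition \eqref{eq:g4:parcond} still determines $D$ and so does not affect this relative count, and forcing a marked point onto the node only lowers the dimension further. Crucially, the fibre dimension of $f\colon S \to B$ is computed from the divisor sequence $0 \to \OO_Q(1) \to \OO_Q(3) \to \OO_E(3) \to 0$, which is valid for every effective divisor $E \in |\OO_Q(2)|$ regardless of its singularities, so $h^0(Q,\OO_Q(1)) = 4$ and the fibre stays four-dimensional. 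The resulting locus therefore has dimension at most $n + 4 < n+5$.

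The genuinely new feature compared with genus three---where Proposition~\ref{prop:Eirred} guaranteed an irreducible $E$---is the reducible case, which I expect to be the main obstacle. Here the point is that a reducible $E \subset Q$ forces the underlying abstract genus-one curve to be reducible, hence to lie in the boundary of $\moduli[{1,n+1}]$, which is of codimension at least one; equivalently, the reducible members of $|\OO_Q(2)|$ (the splittings $(1,1)+(1,1)$, $(2,1)+(0,1)$, $(2,0)+(0,2)$, and their non-reduced limits) already form loci of codimension at least two inside $\PP H^0(\OO_Q(2)) = \PP^8$. Since the fibre dimension is again four by the same divisor sequence, each reducible stratum contributes a locus of dimension strictly below $n+5$. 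The delicate bookkeeping lies in verifying that the torsion condition \eqref{eq:g4:parcond} remains a condition of the expected codimension on such degenerate curves---on a reducible or nodal genus-one curve the relevant group law lives on the $\mathbb{G}_m$-part of the generalized Jacobian---so that no reducible stratum acquires unexpected extra moduli. Once this is checked case by case against the list of bidegree splittings, every degenerate locus loses at least one dimension relative to the smooth count of Proposition~\ref{prop:g4parametersp}, and the union of all such loci cannot fill out a component of a stratum in $\cE_4$.
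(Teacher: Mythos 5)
Your proposal is correct and follows essentially the same route as the paper: the published proof likewise reconstructs the putative component via the parametrization of Proposition~\ref{prop:g4parametersp}, notes that the torsion constraint \eqref{eq:g4:parcond} remains valid on the generalized Jacobian of a ``sufficiently smooth'' (possibly reducible) singular $E$, and concludes from the loss of the $j$-invariant that the base dimension drops by at least one. One small correction: when a marked point sits at a node you cannot claim the dimension drops \emph{further}, since the parity condition \eqref{eq:g4:parcond} then ceases to make sense and returns one parameter for $D$; the paper's count is that losing two parameters (the $j$-invariant and the node constraint) outweighs this gain of one, which still gives the required strict inequality.
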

\par
\begin{proof}
The proof is identical to Lemma~\ref{le:noEsingcomp}. If $E$ is singular but sufficiently
smooth, the torsion constraint is still valid, so dimension of the base
space drops by one, since the parameter for the $j$-invariant of $E$ has been lost.
If $E$ is singular and at least one of the $z_i$ is at a node, the parity condition
\eqref{eq:g4:parcond} no longer makes sense, but this cannot compensate the
loss of at least two parameters for the $j$-invariant of $E$ and for the location
of the $z_i$ at a node.
\end{proof}
\par
\begin{lemma} \label{le:noGKhypcomp} 
Except the hyperelliptic components defined in Section~\ref{sec:hyploci} no component of a stratum in $\cE_4$
is contained in the hyperelliptic locus or contained in the Gieseker-Petri locus.
\end{lemma}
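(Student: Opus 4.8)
The plan is to prove both assertions by dimension counts built on the log-surface models of Section~\ref{g=4: logsurface}, exactly in the spirit of Lemma~\ref{le:noEsingcomp}. Since the strata of quadratic differentials are smooth, $\PP\QQ(k_1,\ldots,k_n)$ is pure of dimension $n+5$ by Proposition~\ref{prop:g4parametersp}; hence a sublocus fails to be a component as soon as its dimension is strictly smaller than $n+5$. It therefore suffices to bound the Gieseker-Petri locus, and the hyperelliptic locus away from the designated components $\QQ^{\hyp}$, by $n+4$.

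For the Gieseker-Petri part I would run the construction of Section~\ref{sec:g4construction} verbatim, replacing the smooth quadric $Q\cong\PP^1\times\PP^1$ by the quadric cone, i.e.\ its resolution $F_2$, on which a $3$-connected Gieseker-Petri special $X$ has class $3e+6f$ and $\divisor(q)$ is cut by an anticanonical curve $E\in|2e+4f|$. The fibration $S^{GP}\to B^{GP}$ has the same fiber dimension as in the smooth case: tensoring the ideal sequence of $E$ by $\OO_{F_2}(3e+6f)$ gives $0\to\OO_{F_2}(e+2f)\to\OO_{F_2}(3e+6f)\to\OO_E(3e+6f)\to 0$ with $h^0(\OO_{F_2}(e+2f))=4$, so $\dim S^{GP}=\dim B^{GP}+4$. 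The one difference occurs in the base: a smooth quadric induces on a $(2,2)$-curve two independent pencils $g^1_2$ (the parameter $D$ of Proposition~\ref{prop:g4parametersp}), whereas $F_2$ carries a single ruling and, since $E$ is disjoint from the exceptional section $e$ (as $e\cdot E=0$), $E$ inherits only the pencil $|f|_E$. Thus $(F_2,E)$ has only the $j$-invariant as a modulus, one fewer than in the smooth case (indeed $\dim|-K_{F_2}|-\dim\mathrm{Aut}(F_2)=8-7=1$, versus $8-6=2$ for the smooth quadric), so $\dim B^{GP}=n$ and $\dim S^{GP}=n+4<n+5$. As in Lemma~\ref{le:noEsingcomp}, letting $E$ degenerate further or a zero sit at a node of $E$ only lowers the count. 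Hence no component lies in the Gieseker-Petri locus.

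For the hyperelliptic part I would use the scroll model $F_5$ of Section~\ref{g=4: logsurface}: the bicanonical image of a hyperelliptic $X$ has class $2e+10f$ and $\divisor(q)$ is cut by the reducible parity curve $E=L\cup R$ with $L=e$ and $R\in|e+6f|$. Reconstructing the hyperelliptic surfaces in the stratum as the image of a parameter space $S^{\hyp}$ built from $(F_5,L\cup R)$, the zeros distributed over $L$ and $R$, and the sections $X\in|2e+10f|$ with $X\cdot E=\divisor(q)$ — here the relevant fiber sequence is $0\to\OO_{F_5}(4f)\to\OO_{F_5}(2e+10f)\to\OO_E(2e+10f)\to 0$ — I would show that this count is irreducible and of dimension equal to $\dim\PP\QQ^{\hyp}$ exactly when the designated hyperelliptic component exists, namely for $\QQ(6,6)$, $\QQ(6,3,3)$ and $\QQ(3,3,3,3)$, realized by the genus-zero coverings (1)--(3) of Section~\ref{sec:hyploci}. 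In those cases the hyperelliptic locus then coincides with $\QQ^{\hyp}$ and contains no further component, while for $\QQ(12)$ and $\QQ(9,3)$ no admissible distribution of the prescribed zeros over $L\cup R$ fills out dimension $n+5$, so the hyperelliptic locus has dimension $<n+5$.

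The main obstacle is the hyperelliptic case. Here the parity curve $E$ is reducible and nodal, so the torsion-and-semicontinuity argument of Lemma~\ref{le:g4Esuffsmooth} and Proposition~\ref{prop:g4parity} no longer applies directly and must be replaced by an explicit analysis on the two-component curve $L\cup R$. Moreover the count has to be sharp: for three of the five strata the hyperelliptic locus is a genuine component of full dimension $n+5$, so one cannot merely bound it below $n+5$ but must prove that this locus is irreducible and identify it with the single known component $\QQ^{\hyp}$, ruling out any spurious extra hyperelliptic component. This requires keeping careful track of how the prescribed zeros split between the line $L$ and the rational normal curve $R$ and of the residual action of $\mathrm{Aut}(F_5)$.
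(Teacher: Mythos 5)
Your overall strategy is the paper's own: dimension counts on the log-surface models of Section~\ref{g=4: logsurface}, run in parallel to Proposition~\ref{prop:g4parametersp}, bounding the Gieseker--Petri and hyperelliptic loci by $n+4$ except where the known components $\QQ^{\hyp}$ emerge. Your Gieseker--Petri count for irreducible $E$ is correct (base $n$, fiber $h^0(\OO_{F_2}(e+2f))=4$, total $n+4$), and the observation that the anticanonical curve on $F_2$ misses $e$ and so carries only one $g^1_2$, losing one modulus relative to the smooth quadric, is exactly the right mechanism.

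The genuine gap is the sentence ``letting $E$ degenerate further or a zero sit at a node of $E$ only lowers the count.'' This is false as stated, and it is precisely where the paper has to work. When the parity curve $R\in|2e+4f|$ on $F_2$ (resp.\ $|e+6f|$ on $F_5$) becomes reducible or non-reduced, or when $\divisor(q)$ meets $R_{\rm sing}$, the condition $X\cdot R=D$ imposes \emph{fewer} conditions on $X$, so the fiber dimension can \emph{increase}; one must check case by case that the base drops by at least as much. The paper's proof does this explicitly (e.g.\ for $R\supset 3f_1+f_2$ the fiber gains six dimensions while the base loses seven; for $R\supset 2f_1$ with $X\cdot R$ supported at the node one must enumerate the local intersection multiplicities on each branch). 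Moreover it is exactly in one of these degenerate configurations --- $R=e+3f_1+3f_2$ on $F_5$ --- that the fiber grows enough to produce a full-dimensional locus, which is then identified with $\QQ^{\hyp}$ for $(6,6)$, $(6,3,3)$, $(3,3,3,3)$ and ruled out for $(12)$ (global square) and $(9,3)$ (odd orders cannot be cut by fibers). Your plan acknowledges that the hyperelliptic half must be sharp, but without this case analysis the argument is incomplete; your generic counts alone cannot see the hyperelliptic components at all. A minor further correction: the line component of the parity curve of a hyperelliptic $X$ is not the section $e$ of $F_5$ but a line in the residual plane of $S=\PP^2\cup S_{1,6}$; since $e\cdot X=0$ in any case, the whole of $\divisor(q)$ is cut by $R\in|e+6f|$ alone, and the relevant twist is $\OO_{F_5}(e+4f)$ rather than $\OO_{F_5}(4f)$ (both happen to have $h^0=5$, so your numbers survive).
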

\par
The proof of Lemma~\ref{le:noGKhypcomp} will be given in the appendix. 
\par
\begin{remark} {\rm For the irregular components there is another construction and irreducibility
proof. It relies on the following observation. We give details for $\QQ(12)^\irr$ and
the other cases can be dealt with similarly. 
\par
Let $z_1$ be the $12$-fold zero of $q$.
Since $z_1$ is a Weierstrass point, $\omega_X\sim 4z_1+ r+s$ for some points
$r$ and $s$. Since $\omega_X^{\otimes 2} \sim 12 z_1$, we conclude that 
$4 z_1\sim 2r+2s$, hence $\omega_X\sim 3r+3s$. Conversely, if we can find 
$r,s\in X$ such that $\omega_X\sim 3r+3s$ and $\omega_X \sim 4p+r+s$, 
then one easily checks that $\omega^{\otimes 2}_X\sim 12z_1$. Now, 
consider $(X,\omega)$ in $\Omega\MM_4(3,3)^{\nonhyp}$ such that 
$\divisor(\omega) = 3r+3s$. The dimension of this locus modulo scalars is $8$. 
Let $z_1$ be a Weierstrass point of $X$ and choose a plane section through $z_1$ 
whose intersection with the canonical embedding of $X$ is $4z_1+x+y$. If we 
require $(x,y) = (r,s)$, this imposes two conditions, hence the dimension 
of the locus where $\omega_X\sim 3r+3s\sim 4z_1+r+s$ is equal to $8-2 = 6$, 
which equals $\dim \PP\QQ(12)$. 
}\end{remark}

\subsection{The non-varying properties} \label{sec:g4NV}

Throughout this section let $C$ be a \Teichmuller curve generated by $(X,q)$ in one 
of the strata in $\cE_3$ and let $\ol{C}$ be its closure in the compactified moduli space. The non-varying property of the
hyperelliptic components is an immediate consequence of Corollary~\ref{cor:hypNonvar}.
Note that using the 
language of linear series the components $\QQ(12)^\irr$, $\QQ(9,3)^\irr$, 
$\QQ(6,6)^\irr$ , $\QQ(6,3,3)^\irr$ and $\QQ(3,3,3,3)^\irr$ lie in the divisors 
$BN^1_{4,(4)}$, $BN^1_{4,(3,1)}$, $BN^1_{4,(2,2)}$, $BN^1_{4,(2,1,1)}$ and $BN^1_{4,(1,1,1,1)}$ 
respectively, after a suitable lift the moduli space of curves with marked points. 
Consequently these divisors are the natural candidates 
to prove non-varying for the regular components. But we are still 
left with ruling out intersections at the boundary. As for $g=3$, this is the content of
Proposition~\ref{prop:periodandplumb} together with Corollary~\ref{component}. 
\par
\begin{lemma} \label{le:disjg4irred}
For any stratum $\QQ(k_1,\ldots,k_n)$ in $\cE_4$, the closures of its two components 
$\widetilde{\QQ}(k_1,\ldots,k_n)^\irr$ and $\widetilde{\QQ}(k_1,\ldots,k_n)^\reg$
do not intersect at a boundary point of a \Teichmuller curve in the stratum lifted to $\barmoduli[4,n]$ by marking the zeros of $q$.
\end{lemma}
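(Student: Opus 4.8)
The plan is to imitate the argument of Lemma~\ref{le:disjg3irred} almost verbatim: the only inputs needed are the smoothness of the partial compactification and the fact that the two components are of the same top dimension, so that they cannot share a point without forcing a local reducibility that contradicts smoothness.

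First I would place the alleged intersection point inside a smooth space. Let $(X_\infty, q_\infty)$ be a boundary point of a \Teichmuller curve in $\QQ(k_1,\ldots,k_n)$ that is supposed to lie on both $\widetilde{\QQ}(k_1,\ldots,k_n)^\irr$ and $\widetilde{\QQ}(k_1,\ldots,k_n)^\reg$. By Corollary~\ref{component}, $(X_\infty,q_\infty)$ is of polar type, so it is a point of the partial compactification $\TQ(k_1,\ldots,k_n)$. By Proposition~\ref{prop:periodandplumb}, this partial compactification is smooth, with period plumbing coordinates furnishing a local chart near $(X_\infty,q_\infty)$; in particular $\TQ(k_1,\ldots,k_n)$ is a complex manifold, and hence locally irreducible, at this point.

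Next I would compare dimensions. By Proposition~\ref{prop:g4parametersp}, both $\widetilde{\QQ}(k_1,\ldots,k_n)^\irr$ and $\widetilde{\QQ}(k_1,\ldots,k_n)^\reg$ have dimension equal to $\dim \QQ(k_1,\ldots,k_n)$, so each is a top-dimensional irreducible component of $\TQ(k_1,\ldots,k_n)$. Were the two closures to meet at $(X_\infty,q_\infty)$, two distinct irreducible components of equal dimension would pass through that point, so $\TQ(k_1,\ldots,k_n)$ would be reducible --- and therefore singular --- there, contradicting the smoothness just established. This yields the claim.

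The main point to be careful about is that Proposition~\ref{prop:periodandplumb} genuinely applies to the boundary points at hand. Unlike the case of abelian differentials, a stable half-translation surface may carry separating nodes; but these are precisely the polar-type degenerations included in $\TQ(k_1,\ldots,k_n)$, and the smoothness statement of Proposition~\ref{prop:periodandplumb} covers them. Thus invoking Corollary~\ref{component} to guarantee polar type is the essential step that places the intersection point inside the smooth locus. I do not expect any further obstruction: once the point is known to lie on a smooth space, the dimension count from Proposition~\ref{prop:g4parametersp} closes the argument immediately.
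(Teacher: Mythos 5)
Your proposal is correct and matches the paper's intended argument: the paper gives no separate proof for this lemma, stating just before it that ruling out boundary intersections "is the content of Proposition~\ref{prop:periodandplumb} together with Corollary~\ref{component}," exactly as in the genus-three case (Lemma~\ref{le:disjg3irred}), whose one-line proof is the same smoothness-plus-equal-dimension contradiction you spell out.
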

\par
\begin{proof}[Proof of Theorem~\ref{thm:execg4NV}]
By Lemma~\ref{le:disjg4irred} and the disjointness of the boundary
of the \Teichmuller curve with the boundary terms appearing in Lemma~\ref{le:PicBN4}
as established in Proposition~\ref{prop:disjointbydegree}, the non-varying property of the regular components in $\cE_4$ is a direct 
consequence of Proposition~\ref{intersection}. 
\par
For $\QQ(12)^{\irr}$ and $\QQ(9,3)^{\irr}$, due to the same issue as $\QQ(6,3,-1)^{\irr}$ in genus three, we will take an alternative approach to prove their non-varying property. The details will be given in Appendix~\ref{sec:filtration}. 
\end{proof}

\section{Genus one} \label{sec:g1}

\begin{theorem}  \label{thm:nvg1}
In genus one, the following strata are non-varying:
$$\begin{array}{|l|l||l|l|}
\hline
\text{\rm Stratum} \,& L^+ \,&\text{\rm Stratum} \,& L^+ \, \\
\hline
\QQ(n,-1^n) & L^+ = 2/(n+2) &
\QQ(n,1,-1^{n+1}) & L^+ = 2/(n+2) \\
\hline
\end{array}
$$
\par
Moreover, the stratum $\QQ(4,2,-1^6)$ is varying.
\end{theorem}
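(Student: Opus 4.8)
The plan is to reduce each value to the genus-one relation $\psi_i=\lambda+\sum_{i\in S}\delta_{0;S}$ of Section~\ref{sec:divclass}, evaluated on the section of a single well-chosen zero. For the non-varying strata $\QQ(n,-1^n)$ and $\QQ(n,1,-1^{n+1})$ I would mark all zeros and poles, lift the Teichm\"uller curve $C$ to the appropriate $\barmoduli[1,m]$, and single out the zero $z$ of top order $n$. Restricting the relation to $C$ and applying Proposition~\ref{intersection}, which gives $C\cdot\psi_z=-S_z^2=\chi/(n+2)$, produces
$$\frac{\chi}{n+2}=C\cdot\lambda+\sum_{z\in S}C\cdot\delta_{0;S}.$$
Once the last sum is shown to vanish, $C\cdot\lambda=\chi/(n+2)$, whence $L^+(C)=2\,C\cdot\lambda/\chi=2/(n+2)$ for every such curve; this is the asserted non-varying value.

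The vanishing of $\sum_{z\in S}C\cdot\delta_{0;S}$ is where the boundary description enters, through the degree-count method of Proposition~\ref{prop:disjointbydegree}. A divisor $\delta_{0;S}$ is met only when some cusp of $C$ carries a rational tail, a genus-zero component $Z$ of $X_\infty$ joined to the rest at a single node and containing the marked points of $S$. On such a tail $q_\infty$ is of polar type, hence restricts to a section of $\omega_{X_\infty}^{\otimes 2}|_Z$ of degree $-2$ twisted by the $\ell$ simple poles on $Z$, so its zeros total $\ell-2$. Carrying the top zero $z$ of order $n$ would force $\ell\ge n+2$ simple poles onto $Z$, more than the stratum contains. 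Thus no rational tail carries $z$, $\ol{C}$ is disjoint from every $\delta_{0;S}$ with $z\in S$, and the sum vanishes.

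For the varying stratum $\QQ(4,2,-1^6)$ this mechanism fails precisely because both zeros are even. Running the identical argument with the order-four zero $z_1$ yields
$$L^+(C)=\frac13-\frac{2}{\chi}\sum_{1\in S}C\cdot\delta_{0;S},$$
but here the degree count does allow $z_1$, together with all six simple poles, to split off onto a rational tail, where $\ell-2=4$ matches the order of $z_1$, while the order-two zero stays on the genus-one component. Since each $C\cdot\delta_{0;S}\ge 0$, my plan is to exhibit two explicit Teichm\"uller curves, most naturally two square-tiled surfaces in the stratum: one for which no periodic direction pinches off the order-four zero, so that $\sum_{1\in S}C\cdot\delta_{0;S}=0$ and $L^+=1/3$, and one admitting such a direction, so that $L^+<1/3$. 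The resulting two distinct values exhibit the stratum as varying.

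The main obstacle is this last step. The non-varying half is essentially forced by the relation $\psi_z=\lambda$ peculiar to genus one, but the varying half requires producing the two surfaces and checking, from their cylinder decompositions, both the flat-geometric splitting behaviour and the resulting values of $\chi$ and of $\sum_{1\in S}C\cdot\delta_{0;S}$. I would carry this out by an explicit combinatorial construction, cross-checked against the numerical data supplied by Delecroix and Zorich, thereby reducing the varying statement to a finite verification.
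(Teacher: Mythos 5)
Your argument coincides with the paper's: the non-varying values follow from the genus-one relation $\psi_1=\lambda+\sum_{1\in S}\delta_{0;S}$ restricted to $\ol{C}$, combined with the degree count $\deg\big(\omega_{X_\infty}^{\otimes 2}|_Z\big)=-2$ showing the top-order zero can never sit on a rational tail, and then Proposition~\ref{intersection}; the varying claim for $\QQ(4,2,-1^6)$ is likewise settled by exhibiting two explicit square-tiled surfaces, exactly as the paper does in Appendix~\ref{sec:varying} (where the examples have $L^+=0$ and $L^+=1/4$). The only minor discrepancy is your expectation that one example attains the bound $L^+=1/3$; the paper's data does not realize this value, but since any two distinct values of $L^+$ suffice, this does not affect the argument.
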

\par
An example justifying that  $\QQ(4,2,-1^6)$ is varying will be given in 
the appendix.
\par
\begin{proof}
Let $C$ be a Teichm\"uller curve generated by a half-translation surface in  $\QQ(n, -1^n)$
and lift $C$ to $\barmoduli[{1,n+1}]$ by marking all the zeros and poles of $q$. For any degenerate fiber over $\ol{C}$ 
the zero $z_1$ does not lie in a component of genus zero, because $\omega_{X}^{\otimes 2}$ restricted to this rational tail has degree $-2$. 
Hence we conclude that 
$$\ol{C} \cdot \sum_{1\in S} \delta_{0;S} = 0,$$ 
i.e.\ $C\cdot (\psi_1 - \lambda) = 0$. 
Using Proposition~\ref{intersection} we read off $L^{+}(C) = \frac{2}{n+2}. $
\par
For the stratum  $\QQ(n, 1, -1^{n+1})$ the same argument works without any change.
\end{proof}
\par
We suspect that all the strata  $\QQ(a, b, -1^{a+b})$ for $a \geq b>1$ 
are varying. Genus one strata are also the first testing ground for
finer asymptotic questions on the behavior of Lyapunov exponents as
the number of poles grows. This is beyond the scope of this paper. Nevertheless, the above method already provides an upper bound of $L^{+}$ for all 
strata in genus one. 
\par
\begin{theorem}
Let $C$ be a Teichm\"uller curve in the stratum $\QQ(d_1, \ldots, d_r)$ in genus one. Suppose $d_1$ is the largest order of zeros of $q$. Then 
$L^{+}(C)\leq \frac{2}{d_1 + 2}$. 
\end{theorem}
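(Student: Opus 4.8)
The plan is to imitate the proof of Theorem~\ref{thm:nvg1}, but to retain the boundary contribution as an inequality rather than forcing it to vanish. First I would lift $C$ to $\barmoduli[{1,r}]$ by marking all zeros and poles of $q$, taking the zero $z_1$ of maximal order $d_1$ to be the first marked point and writing $S_1$ for the corresponding section of the universal curve $\pi:\XX\to C$. It is essential to work on this space, where all singularities are marked, because it is precisely here that Proposition~\ref{intersection} computes $S_1^2 = -\chi/(d_1+2)$; since $\psi_1 = -\pi_*(\sigma_1^2)$, this reads $\ol{C}\cdot\psi_1 = -S_1^2 = \chi/(d_1+2)$.

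Next I would invoke the genus one relation $\psi_1 = \lambda + \sum_{1\in S}\delta_{0;S}$ recorded in Section~\ref{sec:g1}. Intersecting with $\ol{C}$ gives
\[ \frac{\chi}{d_1+2} \;=\; \ol{C}\cdot\psi_1 \;=\; \ol{C}\cdot\lambda \;+\; \ol{C}\cdot\sum_{1\in S}\delta_{0;S}. \]
Because $C$ parameterizes smooth curves, its closure $\ol{C}$ is an irreducible complete curve meeting the boundary in only finitely many cusps and lying in no boundary divisor; hence $\ol{C}\cdot\delta_{0;S}\ge 0$ for every admissible $S$, so the entire sum is non-negative. Since $\chi>0$ (which follows from $\ol{C}\cdot\delta = 6\chi c\ge 0$ together with $c\ge 0$ in Proposition~\ref{intersection}), I may drop the non-negative boundary term to obtain $\ol{C}\cdot\lambda \le \chi/(d_1+2)$. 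Combining this with $L^{+}(C) = 2\deg\lambda/\chi = 2(\ol{C}\cdot\lambda)/\chi$ yields $L^{+}(C)\le 2/(d_1+2)$, as desired.

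The argument is short, so the only delicate points are bookkeeping ones rather than a serious obstacle: I must lift to $\barmoduli[{1,r}]$ with every singularity marked (not to $\barmoduli[{1,1}]$, where $\psi_1=\lambda$ and the slack would spuriously collapse), and I must confirm the sign of $\chi$ so that the inequality points the right way. The one conceptual remark I would make is that the bound is saturated exactly when $\ol{C}\cdot\sum_{1\in S}\delta_{0;S}=0$, that is, when the maximal-order zero $z_1$ never degenerates onto a rational component; this vanishing is what produces the equalities in the non-varying strata $\QQ(n,-1^n)$ and $\QQ(n-1,1,-1^{n})$ of Theorem~\ref{thm:nvg1}, and the strict inequality elsewhere measures precisely the failure of this vanishing.
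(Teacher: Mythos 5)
Your proposal is correct and follows essentially the same route as the paper: lift to $\barmoduli[{1,r}]$ with all singularities marked, use the relation $\psi_1=\lambda+\sum_{1\in S}\delta_{0;S}$ together with $\ol{C}\cdot\sum_{1\in S}\delta_{0;S}\ge 0$ (since $\ol{C}$ is not contained in the boundary), and then read off the bound from Proposition~\ref{intersection}. The additional remarks on the sign of $\chi$ and on when the bound is saturated are consistent with the paper's treatment of Theorem~\ref{thm:nvg1}.
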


\begin{proof}
As in the preceding proof,  we now have an inequality 
$$\ol{C} \cdot \sum_{1\in S} \delta_{0;S} \geq 0,$$
since $C$ is not entirely contained in the boundary of $\barmoduli[{1,r}]$. 
Using the relation of divisor classes on $\barmoduli[{1,r}]$, we conclude that $C\cdot (\psi_1 - \lambda) \geq 0$. Then by Proposition~\ref{intersection} the desired upper bound follows right away. 
\end{proof}

\section{Genus two}

Note that any Riemann surface of genus two is hyperelliptic, but being a hyperelliptic 
half-translation surface requires more, i.e. the quadratic differential is invariant under 
the hyperelliptic involution. Throughout we denote by $a'$ the conjugate of $a$
under the hyperelliptic involution. 
\par
Several strata of (non-hyperelliptic) quadratic differentials with a small number
of zeros are indeed empty, e.g.\ $\QQ(4)$, $\QQ(3,1)$, $\QQ(2,2)^{\nh}$, $\QQ(2,1,1)^{\nh}$ 
and $\QQ(1,1,1,1)^{\nh}$. This was shown by \cite{lahypcomp} and can also be quickly retrieved
in our language. For example, for $(X,q) \in \QQ(1,1,1,1)$, let 
$\divisor(q) = z_1 + z_2 + z_3 + z_4\sim z_1 + z_2 + z'_1 + z'_2$. Hence 
$z_3 + z_4\sim z'_1 + z'_2$. Up to relabeling, we conclude that $z_1$ and $z_2$ are conjugate and
so are $p_3$ and $p_4$. Consequently $(X,q)$ is a hyperelliptic half-translation surface, hence 
$\QQ(1,1,1,1)^{\nh}$ is empty.
\par
\begin{theorem}  \label{thm:nvg2}
In genus two, besides the components of hyperelliptic flat surfaces
and the empty strata mentioned above, the following strata are non-varying: 
$$\begin{array}{|l|l||l|l|}
\hline
\text{\rm Stratum} \,& L^+ \,&\text{\rm Stratum} \,& L^+ \, \\
\hline
\QQ(5,-1) & L^+ = 6/7 &
\QQ(4,2,-1,-1) & L^+ = 5/6 \\
\hline
\QQ(6,-1,-1)^\nh &L^+ = 3/4  &
\QQ(3,3,-1,-1)^\nh & L^+ = 4/5 \\
\hline
\QQ(4,1,-1) &L^+ = 1  &
\QQ(3,1,1,-1) & L^+ = 16/15 \\
\hline
\QQ(3,2,-1) &L^+ = 9/10 & 
\QQ(2,2,1,-1) & L^+ = 1 \\
\hline
\QQ(7,-1,-1,-1) & L^+ = 2/3&
\QQ(4,3,-1,-1,-1) & L^+ = 11/15 \\
\hline
\QQ(5,1,-1,-1) & L^+ = 6/7 &
\QQ(3,2,1,-1,-1) & L^+ = 9/10 \\
\hline
\end{array}
$$
All the other strata in genus two of dimension less than or equal to seven
are varying.
\end{theorem}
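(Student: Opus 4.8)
The plan is to convert the entire non-varying question into the computation of the single number $\ol C\cdot\lambda$. By the formula $L^+(C)=2\deg\lambda/\chi$ of the preceding proposition, by Proposition~\ref{intersection}, and by the relation \eqref{eq:Lcs}, a stratum is non-varying exactly when $\ol C\cdot\lambda/\chi$ — equivalently the slope $s(C)=(\ol C\cdot\delta)/(\ol C\cdot\lambda)$ — is the same for every Teichm\"uller curve $C$ in it, since $\kappa$ depends only on the signature. I would therefore run two parallel engines for pinning down $\ol C\cdot\lambda$ and then argue the varying strata separately.

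First I would isolate the strata whose Teichm\"uller curves never meet $\delta_1\subset\barmoduli[2]$. By Corollary~\ref{component} every irreducible component of a degenerate fibre carries a quadratic differential of polar type, and $\omega^{\otimes2}$ restricted to a genus-one component meeting the rest at a single node has degree $2$; hence the orders of the zeros it carries sum to $2$, augmented by the order of any genuine pole lying on it, and with at most one simple pole available this sum is at most $3$. For $\QQ(5,-1)$ and $\QQ(4,1,-1)$ the zero of largest order has order $\geq 4$ and cannot be accommodated on a genus-one tail, so no genus-one/genus-one degeneration occurs, $\ol C\cdot\delta_1=0$, and Lemma~\ref{slope10} forces slope $10$. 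Combined with \eqref{eq:relg2} this gives $L^+=6\kappa$, evaluating to $6/7$ and $1$.

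For the remaining non-varying strata the curve does meet $\delta_1$, so I would instead lift $\ol C$ to $\barmoduli[{2,m}]$ by marking the one or two zeros of largest order and intersect with a pointed Brill--Noether divisor: the Weierstrass class $W$ of \eqref{eq:Wg2} when $m=1$, and $BN^1_{2,(1,1)}$ of \eqref{eq:BNg2} when $m=2$. Proposition~\ref{prop:disjointbydegree} — again through the degree count of Corollary~\ref{component} — guarantees that $\ol C$ avoids every boundary divisor occurring in these classes, in particular $\delta_{1;\emptyset}$ and $\delta_{0;\{1,2\}}$, so that $\omega_i\cdot\ol C=\psi_i\cdot\ol C=\chi/(d_i+2)$ by Proposition~\ref{intersection}. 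Provided $\ol C\cdot D=0$, the class relation collapses to $\ol C\cdot\lambda=3\,\psi_1\cdot\ol C$ respectively $\ol C\cdot\lambda=(\psi_1+\psi_2)\cdot\ol C$, yielding $L^+=6/(d_1+2)$ respectively $L^+=2\bigl(1/(d_1+2)+1/(d_2+2)\bigr)$; these reproduce the remaining values in the table.

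The delicate point — and what I expect to be the main obstacle — is the hypothesis $\ol C\cdot D=0$, i.e.\ that the generic surface is not Brill--Noether special. For most strata this holds because a generic marked pair of zeros is not conjugate under the hyperelliptic involution, but in a case such as $\QQ(3,2,-1)$ the order-three zero is \emph{forced} to be a Weierstrass point, so $\ol C\subseteq W$ and the naive $m=1$ computation fails. Here I would instead mark both zeros: since $2z_1\sim K_X$ while $z_1+z_2\not\sim K_X$ generically, $\ol C$ avoids $BN^1_{2,(1,1)}$ and the $m=2$ relation delivers the correct $L^+=9/10$. The bulk of the work is thus the uniform boundary bookkeeping establishing disjointness from the correct divisors, together with checking Brill--Noether non-speciality stratum by stratum. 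Finally, for the varying direction I would exhibit, in each remaining stratum of dimension at most seven, two explicit square-tiled Teichm\"uller curves whose sums $L^+$, computed from \eqref{eq:EKZ}, differ; this rests on the computer data of Delecroix and Zorich and completes the dichotomy.
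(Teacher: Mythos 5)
Your overall architecture coincides with the paper's: $\QQ(5,-1)$ and $\QQ(4,1,-1)$ are handled by showing all degenerate fibres are irreducible and invoking Lemma~\ref{slope10}, the remaining ten strata by lifting to $\barmoduli[2,1]$ or $\barmoduli[2,2]$ and proving disjointness from $W$ resp.\ $BN^1_{2,(1,1)}$, and the varying cases by square-tiled examples. The formulas you extract, $L^+=6/(d_1+2)$ and $L^+=2\bigl(1/(d_1+2)+1/(d_2+2)\bigr)$, do reproduce the table.

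However, your treatment of $\QQ(3,2,-1)$ rests on a false assertion. From $3z_1+2z_2-p\sim\omega_X^{\otimes 2}$ one cannot conclude $2z_1\sim\omega_X$; on the contrary, if $z_1$ were a Weierstrass point then $z_1+p\sim 2z_2$, and since these are distinct effective divisors of degree two this forces $2z_2\sim\omega_X\sim 2z_1$ and hence $p=z_1$, a contradiction. So $\ol{C}$ is \emph{disjoint} from $W$, not contained in it, and the paper proves exactly this and keeps $\QQ(3,2,-1)$ in the $m=1$/Weierstrass group. The genuine obstruction to your naive $m=1$ formula for this stratum is different: $\ol{C}$ does meet the separating boundary (the degree count permits a degeneration with $z_1$ and the pole on one elliptic component and $z_2$ on the other), so the $\delta_1$-term in \eqref{eq:Wg2} does not drop out and one must solve $W\cdot\ol{C}=0$ together with \eqref{eq:relg2} and \eqref{eq:Lcs} as a linear system, which yields $9/10$ rather than $6/5$. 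Your substitute computation with $BN^1_{2,(1,1)}$ and both zeros marked is nevertheless valid (the two zeros are never conjugate, and $\delta_{0;\{1,2\}}$, $\delta_{1;\emptyset}$ are excluded by degree counts), so the error does not corrupt the final value, but the stated justification is wrong and $\QQ(3,2,-1)$ is not covered by the $m=2$ row of the table in Proposition~\ref{prop:disjointbydegree}, so that disjointness check would have to be supplied separately. Finally, note that ``a generic marked pair of zeros is not conjugate'' is not the statement you need: non-varying requires disjointness for \emph{every} surface on \emph{every} Teichm\"uller curve in the stratum, including the reducible degenerate fibres, and the bulk of the paper's proof consists precisely of these unconditional linear-equivalence and admissible-cover arguments, which your proposal only gestures at.
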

\par
Examples of square-tiled surfaces certifying the varying strata are listed in the appendix. 
\par
The proof of Theorem~\ref{thm:nvg2} uses three types of divisors, grouped in the following sections.
\subsection{Irreducible degenerations}
\begin{lemma} For a Teichm\"uller curve $C$ generated by $(X,q)$ 
in one of the strata $\QQ(5,-1)$ or $\QQ(4,1,-1)$ 
all the stable curves parameterized
by points in the boundary of $C$ are irreducible. Consequently,
Theorem~\ref{thm:nvg2} follows in these cases from Lemmas~\ref{slope10} and \eqref{eq:Lcs}.
\end{lemma}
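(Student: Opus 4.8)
The plan is to establish the irreducibility claim first and then read off the values, since the numerical consequence is the routine part. First I would reduce the statement to showing that the closure $\ol{C}$ in $\barmoduli[2]$ is disjoint from $\delta_1$. In genus two every \emph{reducible} stable curve in fact lies in $\delta_1$: a short dual-graph count shows that a bridgeless graph would contribute $b_1\geq 1$ to the genus and thereby force a genus-zero vertex of valence at most two, violating stability, while the same count excludes stable genus-two curves with three or more components and two-component curves meeting at more than one node. Hence a reducible boundary curve is necessarily $X_\infty = Z_1\cup Z_2$ with $Z_1,Z_2$ irreducible of arithmetic genus one, meeting at a single separating node, so that $m_i:=\#(Z_i\cap\overline{X_\infty\setminus Z_i})=1$. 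By Proposition~\ref{prop:Teichbdconstr} together with Corollaries~\ref{signature} and~\ref{component}, the limit $(X_\infty,q_\infty)$ is of polar type with the same signature as $(X,q)$, and every $Z_i$ carries at least one singularity of $q_\infty$.

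The key step is then a degree count on each component, exactly in the style of the $\QQ(7,1)$ example in the proof of Proposition~\ref{prop:disjointbydegree}. Viewing $q_\infty$ as a meromorphic quadratic differential on the genus-one normalization of $Z_i$, polar type forces a double pole at the node, so the orders of the singularities of $q_\infty$ supported on $Z_i$ must sum to $4g(Z_i)-4+2m_i = 2$. For $\QQ(5,-1)$ the singularities have orders $\{5,-1\}$ and for $\QQ(4,1,-1)$ they have orders $\{4,1,-1\}$; in each case one checks directly that the orders cannot be split into two nonempty groups each summing to $2$. This contradiction rules out a reducible $X_\infty$, so all boundary curves are irreducible and $\ol{C}\cap\delta_1=\emptyset$.

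Finally, disjointness from $\delta_1$ gives slope $s(C)=10$ by Lemma~\ref{slope10}. Substituting into \eqref{eq:Lcs} yields $10 = 12 - 12\kappa/L^+(C)$, hence $L^+(C)=6\kappa$ with $\kappa$ as in \eqref{eq:EKZ}. For $\QQ(5,-1)$ one computes $\kappa=\tfrac1{24}(\tfrac{45}{7}-3)=\tfrac17$, giving $L^+=\tfrac67$, and for $\QQ(4,1,-1)$ one computes $\kappa=\tfrac1{24}(\tfrac{16}{3}+\tfrac53-3)=\tfrac16$, giving $L^+=1$, in agreement with Theorem~\ref{thm:nvg2}.

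I would expect the only point requiring genuine care to be the classification step: confirming that in $\barmoduli[2]$ the reducible stable curves are \emph{exactly} those in $\delta_1$, so that excluding a single separating node suffices, and keeping the degree bookkeeping consistent by working with the arithmetic genus of $Z_i$ and the double pole that polar type forces at the node. The per-component degree computation and the final substitution into \eqref{eq:Lcs} are then purely mechanical.
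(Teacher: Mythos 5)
Your overall strategy is the paper's: the paper's proof is a one-line appeal to Corollary~\ref{component}, whose content is exactly your per-component degree count (the singularities of $q_\infty$ supported on a component of geometric genus $\tilde g$ meeting the rest of the curve in $m$ points must sum to $4\tilde g-4+2m$, because polar type forces double poles at all the nodes), and your final substitution into \eqref{eq:Lcs} with $\kappa=1/7$ resp.\ $1/6$ is correct.

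There is, however, a genuine error in the step you yourself flagged as the delicate one. It is \emph{not} true that every reducible stable curve of genus two lies in $\delta_1$, nor that two-component stable curves meeting in more than one node are excluded by stability: the ``theta curve,'' two copies of $\PP^1$ glued at three points, is a stable curve of arithmetic genus two ($b_1$ of the dual graph is $2$, and each genus-zero vertex has valence exactly $3$, which your own inequality $2E\geq 3V$ with $E=V+1$ permits when $V=2$). This curve has no separating node and does not lie in $\delta_1$, so your reduction ``reducible $\Rightarrow$ two genus-one components at a single node'' misses it, and as written your argument does not establish the lemma's claim that \emph{all} boundary fibers are irreducible (it would only rule out $\delta_1$, which happens to be all that the numerical consequence needs). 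The gap is easily filled: for the theta configuration each component is a $\PP^1$ meeting the rest in $m=3$ points, so the degree count again demands that the singularities on each component sum to $4(0)-4+2(3)=2$, and the same partition argument ($\{5,-1\}$ resp.\ $\{4,1,-1\}$ admits no splitting into two nonempty groups each summing to $2$) disposes of it. The cleaner route is to skip the dual-graph classification entirely and observe that the quantity $4\tilde g-4+2m$ equals $2$ for every component of every reducible stable genus-two curve of polar type, so one single degree computation covers all configurations at once --- which is presumably what the paper means by ``immediately.''
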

\par
\begin{proof} This follows immediately from Corollary~\ref{component}.
\end{proof}

\subsection{The Weierstrass divisor}
\begin{lemma} For a Teichm\"uller curve $C$ generated by $(X,q)$ 
in one of the strata $\QQ(3,2,-1)$, $\QQ(7,-1,-1,-1)$, $\QQ(5,1,-1,-1)$ or $\QQ(6,-1,-1)^{\nh}$, lift $C$ to $\barmoduli[2,1]$ using the
first zero. Then $\ol{C}$ does not intersect the divisor of Weierstrass points, i.e.\ $W \cdot \ol{C} = 0$.
\end{lemma}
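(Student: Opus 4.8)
The plan is to prove the stronger, set-theoretic statement that $\ol{C}$ is disjoint from $W$, whence $W\cdot\ol{C}=0$. After lifting $C$ to $\barmoduli[2,1]$ by marking the zero $z_1$ of largest order, recall that a smooth point $z_1$ of a genus two curve $X$ lies on $W$ exactly when $z_1=z_1'$, i.e.\ when $2z_1\sim\omega_X$, and that in all cases $\divisor(q)\sim\omega_X^{\otimes2}$. By Corollary~\ref{signature} this linear equivalence and the signature persist over the cusps. I would first cut down the boundary: each of the four strata appears in the table of Proposition~\ref{prop:disjointbydegree} with divisor $W$, so $\ol{C}$ is disjoint from $\delta_1$, the only boundary divisor occurring in $W=-\lambda+3\omega_1-\delta_1$. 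Hence every cusp of $\ol{C}$ lies in $\delta_0$ and, by Corollary~\ref{component}, corresponds to an irreducible one-nodal curve of polar type.

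The core of the argument is the interior case. Assuming $2z_1\sim\omega_X$ turns $\divisor(q)\sim\omega_X^{\otimes2}$ into $\divisor(q)\sim 4z_1$. For $\QQ(3,2,-1)$ this reads $2z_2\sim z_1+p$; since $z_1\neq p$ we have $z_1+p\not\sim\omega_X$, so $h^0(z_1+p)=1$ and the unique member of $|z_1+p|$ is $z_1+p$, forcing $2z_2=z_1+p$, which is impossible as zeros and poles are distinct. For $\QQ(5,1,-1,-1)$ one gets $z_1+z_2\sim p_1+p_2$ with $h^0(z_1+z_2)=1$, again forcing a zero to coincide with a pole. For $\QQ(7,-1,-1,-1)$ one gets $3z_1\sim p_1+p_2+p_3$; but $3z_1\sim z_1+\omega_X$ has $z_1$ as a base point, so every effective divisor linearly equivalent to $3z_1$ contains $z_1$, forcing some $p_i=z_1$, again a contradiction.

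The remaining stratum $\QQ(6,-1,-1)^{\nh}$ is the delicate one. Here $\divisor(q)\sim 4z_1$ gives $p_1+p_2\sim\omega_X$, i.e.\ $p_2=p_1'$, so $\divisor(q)=6z_1-p_1-p_1'$ is invariant under the hyperelliptic involution $\iota$ and therefore $\iota^*q=\pm q$. If $\iota^*q=q$ then $(X,q)$ is a hyperelliptic half-translation surface, excluded by the superscript $\nh$; this exclusion propagates to all of $\ol{C}$ by Proposition~\ref{prop:hypopen}, in whose list $\QQ(6,-1,-1)$ appears. If $\iota^*q=-q$, then in a local coordinate $t$ at the fixed point $z_1$ with $\iota^*t=-t$ one has $\iota^*((dt)^2)=(dt)^2$, so an anti-invariant section must vanish to odd order at $z_1$, contradicting $\operatorname{ord}_{z_1}q=6$.

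Finally, over a cusp in $\delta_0$ I would pull $q_\infty$ back to the genus one normalization $E$ (with the two node preimages $a,b$) and rerun the same linear-equivalence computation in $\Pic$, all of $z_1,z_2,p,p_i$ being smooth points so that the divisors remain Cartier; for $\QQ(6,-1,-1)^{\nh}$ the limiting hyperelliptic involution and Proposition~\ref{prop:hypopen} again yield the invariance/odd-order dichotomy. I expect the boundary step to be the main obstacle: one must make the limiting Weierstrass condition $2z_1\sim\omega_{X_0}$ precise on the one-nodal curve, control the possible jump of $h^0$ of the relevant degree two line bundles there, and transport the eigenvalue argument for $\QQ(6,-1,-1)^{\nh}$ to the limiting involution supplied by the admissible cover.
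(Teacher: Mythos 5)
Your analysis of the smooth fibers is correct and coincides with the paper's: the same four linear-equivalence computations appear there, and your $\iota^*q=\pm q$ dichotomy for $\QQ(6,-1,-1)^{\nh}$ (ruling out the anti-invariant case by the parity of $\operatorname{ord}_{z_1}q$ at a fixed point of the involution) actually spells out a step the paper compresses into the bare assertion that $p_1+p_2\sim 2z_1$ makes $(X,q)$ a hyperelliptic half-translation surface. The problems lie in the boundary reduction, which you yourself flag as the main obstacle; there are two concrete gaps.

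First, the appeal to Proposition~\ref{prop:disjointbydegree} to conclude that every cusp is irreducible does not go through for $\QQ(3,2,-1)$, even though that stratum formally appears in the table. The mechanism behind that proposition is the degree count $\deg(\omega_{X_\infty}^{\otimes 2}|_Z)=4g(Z)-4+2m$, and for two elliptic components joined at one node this degree is $2$ on each side, which is exactly matched by splitting the orders as $\{3,-1\}$ and $\{2\}$. So a $\delta_1$ degeneration with $z_1,p_1$ on one elliptic component and $z_2$ on the other is not excluded by degree reasons, and the paper's own proof of this lemma devotes a separate admissible-double-cover argument to it: the limiting Weierstrass condition becomes $2z_1\sim 2t$ on the component $X_1$ containing $z_1$ (with $t$ the node), while restricting $q$ to $X_1$ gives $3z_1-p_1\sim 2t$, whence $z_1\sim p_1$, a contradiction. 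This case is simply absent from your proposal, and you cannot safely outsource it. Second, for the irreducible one-nodal cusps your plan to pass to the genus-one normalization $E$ and rerun the computation in $\Pic(E)$ would fail: on an elliptic curve every degree-two divisor class has $h^0=2$, so for instance $2z_2\sim z_1+p$ no longer forces $2z_2=z_1+p$ as divisors. The computation must be carried out on the nodal curve itself, using Riemann--Roch for Cartier divisors supported on the smooth locus of an irreducible curve of arithmetic genus two (where $h^0(D)=1$ for $\deg D=2$ unless $D\sim\omega_{X_\infty}$); this is how the paper's phrase ``this reasoning is also valid'' for degenerate fibers is to be read, and it is precisely the gluing condition at the node that you would discard by normalizing.
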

\par
Together with \eqref{eq:relg2} and \eqref{eq:Lcs} we can solve  
$W \cdot \ol{C} = 0$ for the sum of Lyapunov exponents and
this completes the theorem in these cases.
\par
\begin{proof} Suppose to the contrary $X$ is in the intersection.
In the first case $\divisor(q) = 3z_1 + 2z_2 - p_1$ together with
$2z_1\sim \omega_X$ this implies  $z_1 + p_1\sim 2z_2$ for $X$ irreducible,
impossible for $p_1\neq p_3$.
This reasoning is also valid if $X$ is reducible, consisting of 
two elliptic components $X_1, X_2$, where $X_1$ contains $z_1, p_1$, $X_2$ 
contains $p_2$ and they intersect at a node $t$. Analyzing the admissible 
double cover, we see that Figure~\ref{cap:admdouble.eps} is
the only possibility and we obtain $2z_1\sim 2t$ in $X_1$. 
\begin{figure}[hb]
    \centering      
      \psfrag{t}{$t$}
      \psfrag{z1}{$z_1$} 
      \psfrag{z2}{$z_2$}
      \psfrag{g=1}{$g=1$}
      \psfrag{P1}{$\mathbb{P}^1$}
      \includegraphics[width=9cm]{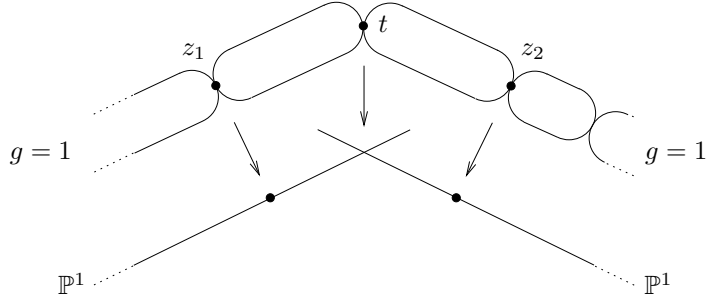}
\caption{
Admissible double cover for a reducible degeneration with $z_i$
a Weierstrass point
} \label{cap:admdouble.eps}
\end{figure}
Restricting $q$ to 
$X_1$, we also have $3z_1 - p_1\sim 2t$. Hence we conclude that 
$z_1\sim z_3$, leading to a contradiction. 
\par
In the second case, $\divisor(q) = 7z_1 - p_1 - p_2 - p_3$
and  $2z_1\sim \omega_X$, hence $4z_1\sim \divisor(q)$. 
Consequently $3z_1 \sim p_1+ p_2 + p_3$. 
By Riemann-Roch, we know $h^0(X, 3z_1) = h^0(X, 2z_1) = 2$.
This shows that $z_1$ is a base point of the linear system $|3z_1|$. 
But $p_1+p_2 + p_3 $ is a section of this linear system, 
contradicting that $z_1\neq p_1, p_2, p_3$. 
\par
In the third case, $\divisor(q) = 5z_1 + z_2 - p_1 - p_2$
and $4z_1\sim \omega^{\otimes 2}_{X}$ 
imply $p_1 + p_2\sim z_1 + z_2$, hence $z_2 \sim z'_1 = z_1$, impossible 
for $z_1\neq z_2$.
\par
In the last case, $\divisor(q) = 6z_1 - p_1 - p_2$ and $4z_1\sim \omega^{\otimes 2}_{X}$ 
imply $p_1 + p_2\sim 2z_1$, hence $(X,q)$ is a hyperelliptic half-translation surface, contradicting 
Proposition~\ref{prop:hypopen}. 
\end{proof}
\par

\subsection{The Brill-Noether divisor $BN_{2,(1,1)}$}
\begin{lemma} For a Teichm\"uller curve $C$ generated by $(X,q)$ 
in one of the strata  $\QQ(3,1,1,-1)$,
$\QQ(2,2,1,-1)$, $\QQ(4,3,-1,-1,-1)$, $\QQ(3,2,1,-1,-1)$, 
$\QQ(4,2,-1,-1)$ or $\QQ(3,3,-1,-1)^\nh$, lift $C$ to $\barmoduli[2,2]$ using the first 
two zeros. Then $\ol{C}$ does not intersect the Brill-Noether   divisor, 
i.e.\ $BN_{2,(1,1)} \cdot \ol{C} = 0$.
\end{lemma}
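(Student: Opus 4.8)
The plan is to prove set-theoretic disjointness $\ol{C}\cap BN^1_{2,(1,1)}=\emptyset$, which gives $BN^1_{2,(1,1)}\cdot\ol{C}=0$; fed into the class \eqref{eq:BNg2} together with Proposition~\ref{intersection} this relation then determines $L^+$. On a smooth genus-two curve $h^0(X,z_1+z_2)=2$ if and only if $z_1+z_2\sim\omega_X$, i.e.\ $z_2=z_1'$, so a smooth fiber of $\ol{C}$ meets the divisor exactly when the first two zeros of $q$ are hyperelliptic conjugates; the whole point is to exclude this. Two standard facts on a genus-two curve will be used repeatedly: a degree-one class has at most one effective representative, and two effective degree-two divisors are linearly equivalent only if they are equal or both equivalent to $\omega_X$.

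First I treat the smooth fibers. Assuming $z_1+z_2\sim\omega_X$ and substituting into the identity $\divisor(q)\sim\omega_X^{\otimes 2}$ collapses it to a short linear equivalence among the remaining points: for $\QQ(2,2,1,-1)$ it reads $z_3\sim p_1$; for $\QQ(3,1,1,-1)$, $\QQ(3,2,1,-1,-1)$ and $\QQ(4,2,-1,-1)$ it is a degree-two equivalence whose only admissible alternative forces two distinguished points to coincide (for $\QQ(4,2,-1,-1)$ it makes $z_1$ a Weierstrass point, hence $z_2=z_1$); and for $\QQ(4,3,-1,-1,-1)$ it reads $p_1+p_2+p_3\sim z_1+\omega_X$, where $z_1$ is a base point of $|z_1+\omega_X|=z_1+|\omega_X|$, so some pole coincides with $z_1$. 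Each outcome contradicts the distinctness of the zeros and poles. The one case that is not purely formal, and the main obstacle, is $\QQ(3,3,-1,-1)^{\nh}$: there the assumption only yields $p_1+p_2\sim\omega_X$, so $\divisor(q)=3z_1+3z_1'-p_1-p_1'$ is merely $\iota$-invariant for the hyperelliptic involution $\iota$. Since $q$ is determined by its divisor up to scale, $\iota^*q=\pm q$; the sign $+$ means $q$ descends to $X/\iota=\PP^1$, i.e.\ $(X,q)$ is a hyperelliptic half-translation surface, which is excluded in the component $\QQ(3,3,-1,-1)^{\nh}$. The sign $-$ is ruled out because an $\iota$-anti-invariant quadratic differential must have \emph{odd} order at every Weierstrass point (locally $\phi(t)(dt)^2$ with $\phi$ odd when $\iota(t)=-t$), whereas $z_1,z_1',p_1,p_1'$ are all non-Weierstrass and so $\divisor(q)$ has order zero there.

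It remains to rule out intersections at the degenerate fibers of $\ol{C}$. By Corollary~\ref{signature} these still carry $q_\infty$ with $\divisor(q_\infty)\sim\omega_{X_\infty}^{\otimes 2}$ and the same signature, and by Corollary~\ref{component} each irreducible component carries a singularity. When $X_\infty$ is irreducible the Brill-Noether facts above hold verbatim on this integral genus-two curve, so the identical linear-equivalence arguments apply and no such point lies on $\overline{BN^1_{2,(1,1)}}$; the $\iota$-eigenform argument likewise persists through the admissible double cover. When $X_\infty$ is reducible, Proposition~\ref{prop:disjointbydegree} shows that $\ol{C}$ misses $\delta_{0;\{1,2\}}$ and $\delta_{1;\emptyset}$, the boundary divisors appearing in \eqref{eq:BNg2}, and the remaining reducible configurations are incompatible with a limiting conjugacy $z_1+z_2\sim\omega_{X_\infty}$ by the admissible-cover analysis used in the preceding Weierstrass lemma. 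Hence $\ol{C}$ is disjoint from $BN^1_{2,(1,1)}$ and $BN^1_{2,(1,1)}\cdot\ol{C}=0$.
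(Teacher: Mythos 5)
Your proof follows essentially the same route as the paper's: assume $z_1+z_2\sim\omega_X$, substitute into $\divisor(q)\sim\omega_X^{\otimes 2}$, and use the uniqueness of the $g^1_2$ on a genus-two curve to force two distinguished points to coincide, with the non-hyperellipticity hypothesis (via Proposition~\ref{prop:hypopen}) handling $\QQ(3,3,-1,-1)^{\nh}$ and degree counts plus admissible covers handling the degenerate fibers. Your explicit elimination of the anti-invariant case $\iota^*q=-q$ by the parity of the vanishing order at Weierstrass points is a welcome refinement that the paper leaves implicit, while your reducible boundary cases are only gestured at --- though at roughly the same level of terseness as the paper's own ``analyzing the double cover'' remark.
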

\par
Next, one checks that in each of the cases, $\ol{C}$ does not
intersect the boundary terms appearing in \eqref{eq:BNg2}. Thus
one can solve for the sum of Lyapunov exponents.
\par
\begin{proof}
In the first case $\divisor(q) = 3z_1 + z_2 + z_3 - p_1$. We claim 
that $z_1$ and $z_2$ are not conjugate. Otherwise $z_1+z_2\sim \omega_X$ 
and consequently $z_2+p_1\sim z_1+z_3$. Hence $p_1$ and $z_1$ are both conjugate to $z_2$, 
impossible for $p_1\neq z_1$. This reasoning is also valid if $X$ is 
reducible, consisting of two elliptic components $X_1, X_2$, where $X_1$ 
contains $z_1, p_1$, $X_2$ contains $z_2, z_3$ and they intersect at a 
node. Analyzing the double cover, we see that it is impossible for 
$z_1$ and $z_2$ to have the same image. 
\par
The strata $\QQ(2,2,1,-1)$, $\QQ(4,2,-1,-1)$ and $\QQ(3,2,1,-1,-1)$ 
can be solved by the same argument.
\par
In the case $\QQ(4,3,-1,-1,-1)$ let 
$\divisor(q) = 4z_1 + 3z_2 - p_1 - p_2 - p_3$. 
If $z_1, z_2$ are conjugate, then $z_1 + z_2\sim \omega_X$ and 
$2z_1 + 2z_2\sim \divisor(q)$. Consequently we have 
$2z_1 + z_2 \sim p_1 + p_2 + p_3$. 
By Riemann-Roch, we know $h^0(X, 2z_1+z_2) = h^0(X, z_1+z_2) = 2$, 
hence $z_1$ is a base point of the linear system $|2z_1+z_2|$. 
But $ p_1 + p_2 + p_3$ is also a section of this linear system, 
contradicting that $z_1\neq p_1, p_2, p_3$. 
\par
Finally in the case $\QQ(3,3,-1,-1)^\nh$, 
let $\divisor(q) = 3z_1 + 3z_2 - p_1 - p_2$. 
If $z_1$ and $z_2$ are conjugate, then $2z_1 + 2z_2\sim \divisor(q)$ and 
$p_1 + p_2 \sim z_1 + z_2$, contradicting that $(X,q)$ is a non-hyperelliptic 
half-translation surface by Proposition~\ref{prop:hypopen}.
\end{proof}
\par


\section{Genus three: non-exceptional strata}

\begin{theorem} \label{thm:nvg3}
In genus three, besides the components of hyperelliptic flat surfaces
and those non-varying exceptional components, the following strata are non-varying: 
$$\begin{array}{|l|l||l|l|}
\hline
\text{\rm Stratum} \,& L^+ \,&\text{\rm Stratum} \,& L^+ \, \\
\hline
\QQ(8) & L^+ = 6/5  &
\QQ(8,1,-1) & L^+ = 6/5 \\
\hline
\QQ(7,1) & L^+ = 4/3  &
\QQ(7,2,-1) & L^+ = 7/6 \\
\hline
\QQ(6,2)^\nh &L^+ = 5/4  &
\QQ(5,4,-1) & L^+ = 25/21 \\
\hline
\QQ(5,3) &L^+ = 44/35  &
\QQ(10,-1,-1)^\nh & L^+ = 1 \\
\hline
\QQ(4,4) &L^+ = 4/3 & 
\QQ(5,3,1,-1) & L^+ = 44/35 \\
\hline
\QQ(6,1,1)^\nh & L^+ = 17/12  &
\QQ(4,3,2,-1) & L^+ = 37/30 \\
\hline
\QQ(5,2,1)  &L^+ = 19/14  &
\QQ(3,3,1,1) & L^+ = 22/15 \\
\hline
\QQ(4,3,1) &L^+ = 7/5  &
\QQ(3,2,2,1) & L^+ = 7/5 \\
\hline
\QQ(4,2,2) & L^+ = 4/3 &
\QQ(7,3,-1,-1) & L^+ = 16/15 \\
\hline
\QQ(3,3,2)^\nh & L^+ = 13/10 &
& \\\hline
\end{array}
$$
All the other strata in genus three of dimension less than or equal to eight
are varying.
\end{theorem}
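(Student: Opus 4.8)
The plan is to establish all nineteen non-varying values uniformly by the divisor-pairing method used above in genus two. Given a \Teichmuller curve $C$ generated by $(X,q)$ in one of the listed strata, I would lift $C$ to $\barmoduli[3,m]$ by marking the first $m$ zeros of $q$, following the grouping in the table of Proposition~\ref{prop:disjointbydegree}, and pair the closure $\ol{C}$ against the associated pointed Brill--Noether divisor: the Weierstrass divisor $W$ of \eqref{eq:PicWP} for the $m=1$ strata, the divisor $BN^1_{3,(2,1)}$ of \eqref{eq:PicBN3;21} for the $m=2$ strata, and the divisor $BN^1_{3,(1,1,1)}$ of \eqref{eq:PicBNonly1s} for the $m=3$ strata. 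Once I have shown that $\ol{C}\cdot D=0$ and that $\ol{C}$ meets none of the boundary classes occurring in the class of $D$, the defining relation collapses to a relation among $\lambda$ and the $\omega_i$ alone, which Proposition~\ref{intersection} converts into a closed formula for $L^+(C)$.

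The first task is \emph{interior disjointness}: no smooth fiber of $\ol{C}$, together with its marked zeros, may lie on $D$. This is a short Riemann--Roch computation on the genus three curve, exploiting $\divisor(q)\sim\omega_X^{\otimes 2}$ and the hypothesis that $q$ is not a global square, in the same spirit as the genus two lemmas. For example, if $(X,q)\in\QQ(8)$ had $z_1$ a Weierstrass point then $\omega_X\sim 3z_1+r$, hence $2r\sim 2z_1$; for $X$ non-hyperelliptic this forces $r=z_1$ and $\omega_X\sim 4z_1$, so $q$ would be a global square, a contradiction. The failure of $h^0(X,2z_1+z_2)=2$ and of $h^0(X,z_1+z_2+z_3)=2$ is verified the same way for the $m=2$ and $m=3$ strata. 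For the strata possessing a hyperelliptic component (those marked $\nonhyp$, and $\QQ(3,3,1,1)$) I would first restrict to the non-hyperelliptic locus, and then invoke Proposition~\ref{prop:hypopen} to ensure that the degenerate fibers stay non-hyperelliptic, while Proposition~\ref{global} guarantees that $q_0$ never degenerates to a global square.

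For the boundary, the work is already done: Corollary~\ref{component} shows that every component of a degenerate fiber is of polar type and carries a singularity of $q_\infty$, and Proposition~\ref{prop:disjointbydegree} asserts precisely that $\ol{C}$ avoids each boundary divisor appearing with nonzero coefficient in the class of $D$. Thus $\ol{C}\cdot D=0$ reduces, for instance in the case $m=2$, to $\ol{C}\cdot(-\lambda+3\omega_1+\omega_2)=0$. Substituting $\ol{C}\cdot\omega_i=\chi/(d_i+2)$ and $\ol{C}\cdot\lambda=\tfrac{\chi}{2}L^+(C)$ from Proposition~\ref{intersection} gives
\[
L^+(C)=\frac{6}{d_1+2}+\frac{2}{d_2+2},
\]
with the parallel formulas $L^+(C)=12/(d_1+2)$ for $m=1$ and $L^+(C)=\sum_{i=1}^{3}2/(d_i+2)$ for $m=3$. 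These reproduce every entry of the table and, being independent of $C$, establish the non-varying property.

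It remains to show that every other genus three stratum of dimension at most eight is varying, which I would do by exhibiting, for each such stratum, two square-tiled surfaces with distinct sums of Lyapunov exponents, the explicit examples being deferred to the appendix. I expect the genuine difficulty to lie in the interior disjointness step: the Riemann--Roch case analysis must accommodate the reducible degenerate fibers that quadratic differentials permit (which, unlike abelian differentials, may carry separating nodes), and the hyperelliptic degenerations must be ruled out uniformly rather than one stratum at a time, for which Proposition~\ref{prop:hypopen} is the crucial ingredient.
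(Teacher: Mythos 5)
Your overall strategy coincides with the paper's: lift $\ol{C}$ to $\barmoduli[3,m]$ according to the table in Proposition~\ref{prop:disjointbydegree}, prove $\ol{C}\cdot D=0$ for $D\in\{W,\,BN^1_{3,(2,1)},\,BN^1_{3,(1,1,1)}\}$, discard the boundary terms via Proposition~\ref{prop:disjointbydegree}, and solve with Proposition~\ref{intersection}. Your closed formulas $L^+=12/(d_1+2)$, $L^+=6/(d_1+2)+2/(d_2+2)$ and $L^+=\sum_i 2/(d_i+2)$ are correct and reproduce every entry of the table, and you correctly identify the roles of Propositions~\ref{global} and~\ref{prop:hypopen}.

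The genuine gap is in the interior disjointness step, which you describe as ``a short Riemann--Roch computation'' verified ``the same way'' across strata. That is accurate only for the strata whose degenerate fibers are forced to be irreducible ($\QQ(8)$, $\QQ(7,1)$, $\QQ(5,3)$ and their kin). For a large portion of the list --- e.g.\ $\QQ(8,1,-1)$, $\QQ(10,-1,-1)^\nh$, $\QQ(6,2)^\nh$, $\QQ(4,4)$, $\QQ(4,3,1)$, $\QQ(5,4,-1)$, $\QQ(4,2,2)$, $\QQ(3,3,2)^\nh$ --- Corollary~\ref{component} permits reducible stable fibers: a genus-one and a genus-two component at one node, two elliptic components at two nodes, or two rational components at four nodes. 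On such curves the condition $h^0=2$ must be interpreted through admissible triple covers, and ruling out the intersection requires the degenerate bicanonical models of Section~\ref{sec:limits}: the limit of the Veronese $\PP^2$ to the cone $S_{2,2}$ over two conics (or to $S_4$ over a rational normal quartic in the hyperelliptic case), followed by an analysis of which limiting hyperplane sections --- smooth conics versus double rulings --- can cut out $\divisor(q_\infty)$, with B\'ezout-type contradictions and appeals to Proposition~\ref{global} to exclude the global-square degenerations. None of this is a Riemann--Roch computation on the stable curve, and it constitutes the bulk of the paper's proof of this theorem. You correctly flag reducible fibers as ``the genuine difficulty,'' but flagging it is not the same as resolving it; as written, the disjointness claim for roughly half the strata in the table is asserted rather than proved. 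A minor additional point: in your sample argument for $\QQ(8)$ you assume $X$ non-hyperelliptic to force $r=z_1$, whereas the relation $2r\sim 2z_1$ with $r\neq z_1$ itself makes $X$ hyperelliptic, and one must then use that a Weierstrass point on a hyperelliptic genus-three curve satisfies $4z_1\sim\omega_X$ to reach the contradiction --- the hyperelliptic branch cannot simply be assumed away for a connected stratum like $\QQ(8)$.
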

\par
Examples of square-tiled surfaces certifying the varying strata are listed in the appendix. 
\par
Again, we prove Theorem~\ref{thm:nvg3} by the disjointness of Teichm\"uller curves with various divisors on moduli spaces of genus three curves with marked points. 
\par
\subsection{The Weierstrass divisor} 
\begin{lemma} For a Teichm\"uller curve $C$ generated by $(X,q)$ 
in $\QQ(8)$, $\QQ(7,1)$, $\QQ(8,1,-1)$ or $\QQ(10, -1, -1)^{\nh}$, lift $C$ by the first zero to $\barmoduli[3,1]$. Then the intersection
with the Weierstrass divisor $ W\cdot \ol{C} = 0.$
\end{lemma}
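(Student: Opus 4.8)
The plan is to prove the stronger set-theoretic statement that $\ol{C}$ is disjoint from the Weierstrass divisor $W$, which immediately gives $W\cdot\ol{C}=0$. By Proposition~\ref{prop:disjointbydegree} the lift $\ol{C}$ meets none of the boundary divisors occurring with nonzero coefficient in the class \eqref{eq:PicWP} of $W$; these are exactly the reducible boundary divisors, so every stable curve parameterized by $\ol{C}$ is smooth or irreducibly nodal, and by Corollary~\ref{signature} the limit differential $q_\infty$ has the same signature as $q$. Hence on every fiber $X$ the relation $\divisor(q)\sim\omega_X^{\otimes 2}$ persists (using Riemann--Roch for the dualizing sheaf in the nodal case), and it remains only to exclude that the marked zero $z_1$ is a Weierstrass point, i.e. that $h^0(X,3z_1)\geq 2$, equivalently $\omega_X\sim 3z_1+r$ for some point $r\in X$. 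I would derive a contradiction from this equivalence in each of the four strata.

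First I would dispose of $\QQ(8)$, where $\divisor(q)=8z_1\sim\omega_X^{\otimes 2}$, so $\omega_X\sim 3z_1+r$ gives $2z_1\sim 2r$. If $r=z_1$ then $\omega_X\sim 4z_1$; if $r\neq z_1$ then $h^0(2z_1)=2$, so $X$ is hyperelliptic with $z_1$ a ramification point and again $\omega_X\sim 4z_1$. In either case the unique (up to scalar) section of $\omega_X^{\otimes 2}$ with divisor $8z_1$ is the square of the abelian differential with divisor $4z_1$, so $q$ is a global square, contradicting that $(X,q)$ lies in the quadratic stratum $\QQ(8)$ (and Proposition~\ref{global} excludes this along the whole family). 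The same substitution produces the working relations in the other cases: $z_1+z_2\sim 2r$ for $\QQ(7,1)$, $\;2z_1+z_2\sim 2r+p_1$ for $\QQ(8,1,-1)$, and $4z_1\sim 2r+p_1+p_2$ for $\QQ(10,-1,-1)^\nh$.

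For $\QQ(7,1)$ and $\QQ(8,1,-1)$ I would analyze the resulting degree-two and degree-three equivalences by Riemann--Roch. The point $r$ cannot equal $z_1$ without forcing $z_2=z_1$ (resp. $z_2=p_1$), which is excluded; and once $r$ is distinct, the appearance of two distinct effective divisors in the same linear system forces $h^0\geq 2$, hence $X$ hyperelliptic. Then $z_1$ being a Weierstrass point makes it a ramification point of the $g^1_2$, so $2z_1\sim H$ and $\omega_X\sim 2H$; feeding this back yields $\divisor(q)\sim 8z_1$ and collapses $z_2$ onto $z_1$ or onto a pole, the desired contradiction. (The hyperelliptic sub-case is in fact the quickest, and it is exactly the one needed since these strata carry no non-hyperelliptic constraint.)

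The delicate case, and the step I expect to be the main obstacle, is $\QQ(10,-1,-1)^\nh$, where the relation reads $p_1+p_2\sim 4z_1-2r$. Here effectivity of the right-hand side is \emph{not} automatic: a Riemann--Roch computation gives $h^0(4z_1-2r)=h^0(3r-z_1)$, which is nonzero precisely when $r$ is itself a Weierstrass point. The case $r=z_1$ yields $\omega_X\sim 4z_1$ and $p_1+p_2\sim 2z_1$, impossible on a non-hyperelliptic curve where $h^0(2z_1)=1$ would force $p_i=z_1$. When $r\neq z_1$, effectivity forces $\omega_X\sim 3r+r'$, and combining with $\omega_X\sim 3z_1+r$ rewrites the relation as $p_1+p_2\sim z_1+r'$; since $X$ is non-hyperelliptic, $h^0(z_1+r')=1$, so $p_1+p_2=z_1+r'$ as divisors, placing the zero $z_1$ among the poles, a contradiction. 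Isolating this easily overlooked sub-case, where $r$ is a \emph{second} Weierstrass point, is the crux, and it is precisely where the non-hyperelliptic hypothesis (equivalently Proposition~\ref{prop:hypopen}, which covers $\QQ(10,-1,-1)$) must be invoked.
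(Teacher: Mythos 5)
Your opening reduction is where the argument breaks down. Proposition~\ref{prop:disjointbydegree} only gives disjointness of $\ol{C}$ from the \emph{separating} boundary divisors $\delta_1,\delta_2$ that carry nonzero coefficients in \eqref{eq:PicWP}; it does not follow that every degenerate half-translation surface over $\ol{C}$ is irreducible, because a reducible stable curve need not lie on any separating boundary divisor. Concretely, by Corollary~\ref{component} and the degree count $\deg\bigl(\omega_{X_\infty}^{\otimes 2}|_Z\bigr)=4g(Z)-4+2\,\#\bigl(Z\cap\ol{X_\infty\setminus Z}\bigr)$, the stratum $\QQ(8,1,-1)$ admits degenerations consisting of a genus $0$ component carrying $z_2,p_1$ and a genus $2$ component carrying $z_1$, glued at \emph{two} nodes: this curve has no separating node, so it sits over $\delta_0$ and is untouched by the proposition. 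Likewise $\QQ(10,-1,-1)^{\nh}$ admits a genus $0$ tail carrying $p_1,p_2$ attached at one node $t$ to a genus $3$ component $X_3$ carrying $z_1$; after forgetting the poles the tail is contracted in $\barmoduli[3,1]$, so this fiber is invisible to every boundary divisor, yet one must still decide whether $(X_3,z_1)$ lies in $W$. In both situations your working relations ($2z_1+z_2\sim 2r+p_1$, resp.\ $4z_1\sim 2r+p_1+p_2$) are meaningless, since $z_2,p_1$ (resp.\ $p_1,p_2$) live on the other component; the correct relation on the component containing $z_1$ involves the nodes instead (for instance $4z_1\sim 2s+2t$ on $X_3$), and verifying that $z_1$ is not a limit Weierstrass point there requires the admissible-cover analysis that occupies most of the paper's proof. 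Your proposal omits this entirely, so the cases $\QQ(8,1,-1)$ and $\QQ(10,-1,-1)^{\nh}$ are not proved.

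The parts of your argument that take place on irreducible fibers are essentially sound: for $\QQ(8)$ and $\QQ(7,1)$ the degree count really does force all degenerations to be irreducible, and your treatment there matches the paper's; your irreducible-case computation for $\QQ(10,-1,-1)^{\nh}$ via $h^0(4z_1-2r)=h^0(3r-z_1)$ is correct and a pleasant variant of the paper's route through $\omega_X\sim p_1+p_2+r+s$. However, in $\QQ(8,1,-1)$ the step ``two distinct effective divisors in $|2z_1+z_2|$ force $h^0\geq 2$, hence $X$ hyperelliptic'' is a non sequitur: a degree-three divisor with $h^0=2$ is a $g^1_3$, which \emph{every} genus three curve possesses. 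One must first write $\omega_X\sim 2z_1+z_2+s$, compare with $\omega_X\sim 3z_1+r$ to get $z_1+r\sim z_2+s$, and only after concluding $r=z_2$, $s=z_1$ in the non-hyperelliptic case does $z_2+p_1\sim 2z_1$ yield hyperellipticity and the contradiction. That gap is repairable, but the missing reducible degenerations are not a matter of polish; they are a genuine omission.
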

\par
\begin{proof} Suppose $(X,q)$ lies in the intersection of $\ol{C}$ with $W$. 
For the first stratum, $\divisor(q) = 8z_1$ but by definition $q$ is not a global 
square of an abelian differential, namely $4z_1 \not\sim \omega_X$. 
This implies that $h^0(X, 4z_1) = 2$. If $h^0(X, 3z_1) = 2$, 
then $h^0(X, \omega_X(-3z_1)) = 1$, hence 
$\omega_X \sim 3z_1 + r$ for some $r \neq z_1$. This holds even 
if $X$ is stable, since by Corollary~\ref{component} it is irreducible and consequently 
has a canonical morphism to $\PP^2$ with $z_1$ as a flex point. 
Then $6z_1 + 2r \sim 8z_1$, $2r \sim 2z_1$, hence $X$ is hyperelliptic. Since $z_1$ is a Weierstrass point, 
we conclude that $4z_1\sim \omega_X$, contradicting that $q$ is not a global square. 
\par
For the second stratum, we have $\divisor(q) = 7z_1 + z_2\sim \omega^{\otimes 2}_X$ for $z_1\neq z_2$. 
Note that $3z_1 + z_2 \not\sim \omega_{X}$, since otherwise $z_1\sim z_2$, impossible. 
The hypothesis implies $h^0(X, 4z_1) = 2$. 
If $h^0(X, 3z_1) = 2$, then $\omega_X\sim 3z_1 + r$ for some $r\neq z_1, z_2$. 
This holds even if $X$ is degenerate, since by Corollaries~\ref{signature} and \ref{component} 
$X$ is irreducible and 
consequently has a canonical embedding in $\PP^2$ with $z_1$ as a flex point. 
Consequently, $6z_1 + 2r\sim 7z_1 + z_2$ implies that $X$ is hyperelliptic and 
$z_1$ and $z_2$ are conjugate. But then 
$2z_1 + 2z_2\sim \omega_X\sim 3z_1 + r$ and $z_2$ has to be a Weierstrass point, 
contradicting that it is conjugate to $z_1$. 
\par
For the third stratum, we have $\divisor(q) = 8z_1 + z_2 - p_1\sim \omega^{\otimes 2}_X$. 
First consider the case when $X$ is irreducible. Note that $z_1$ cannot be a Weierstrass point. Otherwise we would have 
$\omega_X\sim 3z_1+r$, hence $2r + p_1\sim 2z_1 + z_2$. 
Then $\omega_X\sim 2z_1 + z_2 + s$ and $z_1 + r\sim z_2 + s$. If $X$ is 
hyperelliptic, then $\omega_X\sim 4z_1$ and $z_2\sim p_1$, impossible. 
Otherwise we have $r = z_2$ and $s = z_1$. Then $z_2 + p_1\sim 2z_1$, which 
still implies that $X$ is hyperelliptic, leading to the same contradiction. 
\par
If $X$ is reducible, by Corollaries~\ref{signature} and \ref{component}, 
it consists of two irreducible components $X_0$ and $X_2$ of genus $0$ and $2$, respectively, 
meeting at two nodes $t_1$ and $t_2$, where $X_0$ contains $z_2, p_1$ and $X_2$ 
contains $z_1$. If $h^0(X, 3z_1) = 2$, by analyzing the associated triple admissible cover, we 
obtain that $3z_1\sim 2t_1 + t_2$ in $X_2$ (up to relabeling $t_1, t_2$). 
Since $8z_1 + z_2 - p_1\sim \omega^{\otimes 2}_{X}$, we have 
$8z_1\sim \omega^{\otimes 2}_{X_2} ( 2t_1 + 2t_2)\sim 2z_1+2z_1' + 3z_1 + t_2$, 
hence $2t_1 + t_2\sim 3z_1\sim t_2 + 2z'_1$ and $t_1, z'_1$ are both Weierstrass points
of $X_2$. Then $z_1$ is also a Weierstrass point. Since $2z_1\sim 2t_1$, then we get 
$z_1\sim t_2$, impossible for $z_1$ being contained in the smooth locus of $X$. 
\par
For the last stratum, we have $\divisor(q) = 10z_1 - p_1 - p_2 \sim \omega^{\otimes 2}_X$. 
First consider the case when $X$ is irreducible. We can write 
$\omega_X\sim p_1 + p_2 + r + s$
for some $r,s \in X$. If $z_1$ is a Weierstrass point, then $\omega_X\sim 3z_1 + t$. Hence 
$3\omega_X \sim 9z_1 +  3t \sim 10z_1 + r + s$ and consequently $3t\sim z_1 + r + s$. 
Then $\omega_X\sim z_1 + r + s + u \sim p_1 + p_2 + r + s$ 
and $z_1 + u\sim p_1 + p_2$. We conclude that $X$ is hyperelliptic, contradicting that 
this is a non-hyperelliptic stratum and Proposition~\ref{prop:hypopen}.
\par
If $X$ is reducible, by Corollaries~\ref{signature} and \ref{component}, it consists 
of two irreducible components $X_0$ and $X_3$ of genus $0$ and $3$, respectively, 
meeting at a node $t$ such that $X_0$ contains $p_1, p_2$ and $X_3$ contains $z_1$. 
If $h^0(X, 3z_1) = 2$, by analyzing the associated triple admissible cover, we 
see that $h^0(X_3, 3z_1) = 2$, hence $\omega_{X_3}\sim 3z_1 + s$ 
for some $s\in X_3$.  
Since $10z_1\sim \omega^{\otimes 2}_{X_3}(2t) \sim 6z_1 + 2s + 2t$, we conclude that $4z_1\sim 2s + 2t$. 
We may write $\omega_{X_3}\sim s + t + u + v$. Then $t + u + v\sim 3z_1$ and $u + v+ z_1\sim 2s + t$. 
Note that $z_1\neq t$. If $z_1 = s$, then 
$2z_1\sim 2t$ and $X_3$ is hyperelliptic, contradicting the non-hyperelliptic assumption. 
If $z_1\neq s$, we have 
$\omega_{X_3}\sim u + v + z_1 + w$ and $s + t\sim z_1 + w$. Since $z_1\neq t, s$, 
the curve $X_3$ is hyperelliptic and we deduce the same contradiction. 
\end{proof}
\par
For a \Teichmuller curve $C$ in the first two strata, since the stable curves parameterized by $\ol{C}$ 
are irreducible, the theorem follows in these cases from the above lemma together with \eqref{eq:PicWP} and
Proposition~\ref{intersection}. For the last two strata $\ol{C}$ 
hits none of the boundary terms in the presentation \eqref{eq:PicBNonly1s} of $W$ by 
Proposition~\ref{prop:disjointbydegree}. Using the trivial 
intersection with $W$ we can thus calculate the values of $L^+$. 

\subsection{The Brill-Noether divisor $BN^1_{3,(2,1)}$} 
\begin{lemma} For a Teichm\"uller curve $C$ generated by $(X,q)$ 
in one of the strata $\QQ(6,2)^\nh$, $\QQ(6,1,1)^{\nh}$, $\QQ(5,3)$, $\QQ(5,2,1)$, 
$\QQ(4,4)$, $\QQ(4,3,1)$, $\QQ(5,4,-1)$, $\QQ(5,3,1,-1)$, $\QQ(7,2,-1)$ or $\QQ(7,3,-1,-1)$, 
lift $C$ to $\barmoduli[3,2]$ using the first two zeros of $q$. Then the intersection with the Brill-Noether divisor 
$BN^1_{3,(2,1)}\cdot \ol{C} = 0.$
\end{lemma}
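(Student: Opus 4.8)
The plan is to prove the \emph{disjointness} $\ol{C}\cap BN^1_{3,(2,1)}=\emptyset$, from which $BN^1_{3,(2,1)}\cdot\ol{C}=0$ follows. The crux is a uniform reformulation of membership in the divisor. For a smooth genus-three curve $X$ with marked zeros $z_1,z_2$, Riemann--Roch and Serre duality give $h^0(X,2z_1+z_2)=1+h^0(X,\omega_X-2z_1-z_2)$, and since $\deg(\omega_X-2z_1-z_2)=1$ the correction term equals $1$ precisely when $\omega_X-2z_1-z_2$ is effective. Hence $(X,z_1,z_2)\in BN^1_{3,(2,1)}$ if and only if $\omega_X\sim 2z_1+z_2+r$ for some $r\in X$; geometrically, in the canonical plane-quartic model this says that the tangent line $\ell$ to $X$ at $z_1$ passes through $z_2$. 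I will combine this with the defining relation $\divisor(q)_0-\divisor(q)_\infty\sim\omega_X^{\otimes 2}$ of the stratum.

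For the holomorphic strata $\QQ(6,2)^\nh,\QQ(6,1,1)^\nh,\QQ(5,3),\QQ(5,2,1),\QQ(4,4),\QQ(4,3,1)$ the differential $q$ is cut out by a plane conic $Q$ with $Q\cdot X=\divisor(q)$. Since $z_1$ occurs with multiplicity $\ge 2$, the conic $Q$ is tangent to $X$ at $z_1$ and so shares the tangent line $\ell$; a smooth conic meets $\ell$ only at $z_1$. As $z_2$ is a further zero of $q$ it lies on $Q$, so the Brill--Noether condition $z_2\in\ell$ forces $z_2\in\ell\cap Q=\{z_1\}$, contradicting $z_1\neq z_2$. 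I then dispose of degenerate $Q$: a double line would make $q$ a global square, excluded by primitivity and, in the limit, by Proposition~\ref{global}; a pair of distinct lines places $z_2$ on the component line through $z_1$, and substituting the resulting splitting of $\divisor(q)$ into the stratum relation returns a relation of the shape $2z_1\sim 2r$ or $D\sim 2r$ with $D$ a sum of two \emph{distinct} zeros, whence either $X$ is hyperelliptic (excluded on the components under consideration and, on the boundary, by Proposition~\ref{prop:hypopen}) or two prescribed points coincide. For the five strata other than $\QQ(6,1,1)^\nh$ this last linear-equivalence trichotomy applies directly, after subtracting twice $\omega_X\sim 2z_1+z_2+r$ from the stratum relation, without invoking the conic.

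The strata with poles, $\QQ(5,4,-1),\QQ(5,3,1,-1),\QQ(7,2,-1),\QQ(7,3,-1,-1)$, are the main obstacle. Here $q$ is a section of $\omega_X^{\otimes 2}$ twisted by the polar divisor and is no longer a conic; instead I multiply $q$ by the linear form of a line through the pole(s) to obtain a genuine plane cubic $E$ with $E\cdot X=\divisor(q)_0+r_1+r_2+r_3$, exactly the construction~\eqref{eq:EcdotX}. The tangency argument no longer closes, since a line meets the cubic $E$ in one additional point $c$, so the Brill--Noether condition only forces $z_2=c$. To exclude this I will exploit that in each of these strata $z_2$ too occurs with multiplicity $\ge 2$, so that $E$ is tangent to $X$ at $z_2$ as well, and combine this with the torsion relation~\eqref{eq:g3torsion} and the exclusion of hyperelliptic surfaces (Proposition~\ref{prop:hypopen}) and of coincidences $z_i=p_j$, forcing either a forbidden equality of a zero with a pole or hyperellipticity. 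This residual analysis, which reuses the linear-series techniques of the exceptional-strata sections, is the hardest part.

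Finally, the boundary of $\ol{C}$ is treated as in the genus-two and Weierstrass-divisor lemmas. Proposition~\ref{prop:disjointbydegree} already shows that $\ol{C}$ is disjoint from every boundary divisor appearing in the class~\eqref{eq:PicBN3;21}, so any putative intersection with $BN^1_{3,(2,1)}$ would sit over a cusp whose stable curve is either irreducible or has a tightly constrained dual graph. Using Corollary~\ref{signature} and Corollary~\ref{component}---each component of the degenerate fibre carries a singularity of $q_\infty$, bounding the number and genera of components---I reduce to finitely many reducible configurations, analyze their admissible double covers, and extend the interior argument componentwise, again invoking Propositions~\ref{prop:hypopen} and~\ref{global} to keep the limit non-hyperelliptic and non-square. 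Combining the interior and boundary cases yields $BN^1_{3,(2,1)}\cdot\ol{C}=0$.
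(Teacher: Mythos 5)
Your reformulation of membership in $BN^1_{3,(2,1)}$ as $\omega_X\sim 2z_1+z_2+r$ and the ensuing linear-equivalence trichotomy is exactly the engine of the paper's proof for smooth irreducible fibers, and that part of your argument is sound. However, there are two genuine gaps. First, you single out the strata with poles as "the main obstacle" and defer their treatment to an unexecuted "residual analysis" built on the plane cubic $E$ of \eqref{eq:EcdotX} and the torsion relation \eqref{eq:g3torsion}. That relation cannot even be invoked here: $\LL(X,q)=\divisor(q)_0/3$ is not an integral divisor for the signatures $(5,4,-1)$, $(5,3,1,-1)$, $(7,2,-1)$, $(7,3,-1,-1)$, so the parity/torsion machinery of the exceptional strata does not apply. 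In fact no detour is needed: the poles simply enter the relation $\divisor(q)\sim\omega_X^{\otimes 2}$ with negative coefficients, and subtracting $2(2z_1+z_2+r)\sim\omega_X^{\otimes 2}$ yields, e.g.\ for $\QQ(5,4,-1)$, $2r+p\sim z_1+2z_2$, from which hyperellipticity or a forbidden coincidence of points follows exactly as in the pole-free cases. As written, four of the ten strata are not proved, and the route you propose for them would fail.

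Second, the boundary analysis is missing, and it is where the real work lies. Proposition~\ref{prop:disjointbydegree} only says that $\ol{C}$ misses the boundary divisors appearing with nonzero coefficient in \eqref{eq:PicBN3;21}; it does not prevent a point of $\ol{C}\cap BN^1_{3,(2,1)}$ from lying over $\delta_0$ or over $\delta_{1;\{2\}}$, and Corollaries~\ref{signature} and~\ref{component} still allow reducible fibers of type $(g_1,g_2,n)=(1,2,1)$, $(1,1,2)$ and $(0,0,4)$ for several of these strata. Ruling out the Brill--Noether condition on such fibers is not a routine "componentwise extension of the interior argument": for $\QQ(4,4)$, $\QQ(4,3,1)$, $\QQ(5,4,-1)$ and $\QQ(5,3,1,-1)$ one must track the limit of the conic cutting out $\divisor(q)$ inside the degenerate log surfaces $S_{2,2}$ and $S_4$ of Section~\ref{sec:limits}, combine this with the admissible triple cover attached to $|2z_1+z_2|$, and conclude via Proposition~\ref{global} that $q$ would be a global square. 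Your sketch names admissible covers and Proposition~\ref{prop:hypopen} but supplies none of this analysis, so the lemma is not established at the cusps of $\ol{C}$.
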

\par
\begin{proof} The proofs for $\QQ(6,2)^{\nh}$ and $\QQ(6,1,1)^{\nh}$ are completely analogous, 
so we deal with $\QQ(6,2)^{\nh}$ only. 
Suppose that $(X,q)$ is in the intersection with the Brill-Noether divisor and suppose moreover that $X$ is
irreducible first. We have $\divisor(q) = 6z_1 + 2z_2\sim \omega^{\otimes 2}_X$ and 
$3z_1 + z_2 \not\sim \omega_X$ for $z_1\neq z_2$. If $h^0(X, 2z_1 + z_2) = 2$, 
then $\omega_X\sim 2z_1 + z_2 + r$ and $ 2z_1\sim 2r$ for some $r\neq z_1$, 
hence $X$ is hyperelliptic and $z_1, z_2$ are Weierstrass points. This contradicts 
the assumption that $C$ lies in the non-hyperelliptic component and Proposition~\ref{prop:hypopen}.
\par
When $X$ is reducible, by Corollaries~\ref{signature} and \ref{component}, it consists of 
a one-nodal union of two components $X_1$ and $X_2$ with genus $1$ and $2$, respectively, where 
$z_1\in X_2$ and $z_2\in X_1$. Denote the node by $t$. If $h^0(X, 2z_1 + z_2) = 2$, 
by analyzing the associated triple admissible cover, we conclude that $z_1$ and $t$ are 
both Weierstrass points in $X_2$. Since the restrictions of $\omega_X$ to $X_1$ and $X_2$ are 
$\OO_{X_1}(t)$ and $\OO_{X_2}(2z_1+t)$ respectively, squaring it and comparing to 
$6p_1 + 2p_2$, we conclude that $2t\sim 2z_2$ on $X_1$ and $2t\sim 2z_1$ on $X_2$. 
We thus obtain that $X$ is contained in the closure of hyperelliptic curves and $z_1, z_2$ 
are both ramification points of the double admissible cover. 
It contradicts again the assumption that $C$ lies in the non-hyperelliptic component. 
\par 
The proofs for $\QQ(5,3)$, $\QQ(5,2,1)$ and $\QQ(7,2,-1)$ are completely analogous, so we prove for $\QQ(5,3)$ only. Suppose that $X$ is in the intersection with the Brill-Noether divisor. We have $\divisor(q) = 5z_1 + 3z_2 \sim \omega^{\otimes 2}_X$. By Corollaries~\ref{signature} and \ref{component}, $X$ is irreducible. If $h^0(X, 2z_1 + z_2) = 2$, we would have $\omega_X\sim 2z_1 + z_2 + r$ for some $r\in X$. It follows that $2r\sim z_1 + z_2$, hence 
$X$ is hyperelliptic and $z_1$ and $z_2$ are conjugate. This implies that 
$\omega^{\otimes 2}_X\sim 4z_1 + 4z_2\sim 5z_1 + 3z_2$, hence $z_1\sim z_2$, impossible for $z_1\neq z_2$. 
\par
The proofs for $\QQ(4,4)$, $\QQ(4,3,1)$ and $\QQ(5,4,-1)$ are similar, so we prove for $\QQ(4,4)$ only. 
Suppose a half-translation surface $(X, q)$ parameterized by $\ol{C}$ also lies in $BN^1_{3,(2,1)}$. 
We have $\divisor(q) = 4z_1 + 4z_2\sim \omega^{\otimes 2}_X$. Moreover, $h^0(X, 2z_1 + z_2) = 2$ 
and $\omega_X\not\sim 2z_1 + 2z_2$. First consider the case $X$ is irreducible. We have $\omega_X\sim 2z_1 + z_2 + r$ 
for some $r\neq z_2$ and $2z_2\sim 2r$. Hence $X$ is hyperelliptic and $z_2, r$ are 
both Weierstrass points. But this is impossible.
\par 
If $X$ is reducible, by Corollaries~\ref{signature} and \ref{component}, 
it consists of two irreducible components $X_1$ and $X_2$ of genus $g_1$ and $g_2$, 
containing $z_1$ and $z_2$, respectively, joined at $n$ nodes $t_1,\ldots, t_n$ such 
that $2g_i - 2 + n  =  2$ for $i = 1, 2$. Moreover, $(g_1, g_2, n)$ is either 
$(1,1,2)$ or $(0,0,4)$. Below we show that both cases are impossible. 
\par
For the former case, analyzing the triple cover given by $|2z_1 + z_2|$ as shown in 
Figure~\ref{cap:admtriple.eps}, we 
conclude that $t_1 + t_2\sim 2z_1$ in $X_1$.
\begin{figure}[hb]
    \centering
      \psfrag{z1}{$z_1$} 
      \psfrag{z2}{$z_2$}
      \psfrag{x1}{$X_1$} 
      \psfrag{x2}{$X_2$}
      \psfrag{g=1}{$g=1$}
      \psfrag{P1}{$\mathbb{P}^1$}
      \includegraphics[width=9cm]{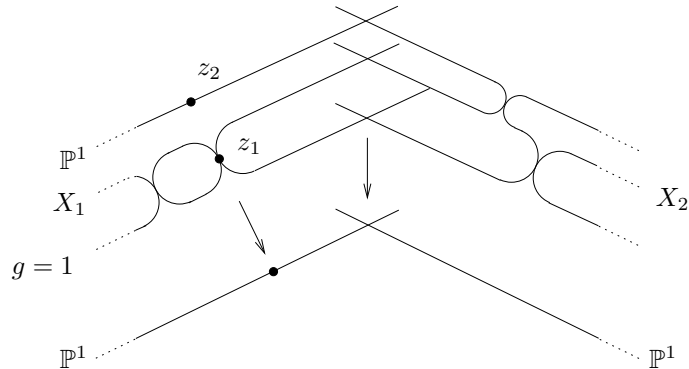}
\caption{
Admissible double cover for a reducible degeneration of type $(g_1, g_2, n) = (1,1,2)$
} \label{cap:admtriple.eps}
\end{figure}
Consider the canonical model of a generic curve $X'$ in $C$ degenerating to $X$. 
By the description in Section~\ref{logsurface}, if $X'$ is non-hyperelliptic, the log surface 
$(S, X')$ degenerates to $(S_{2,2}, X)$, where $S$ is the Veronese $\PP^2$ and $S_{2,2}$ 
is a cone over the one-nodal union of two conics in $\PP^5$. If $X'$ is hyperelliptic, 
the log surface $(S_{4}, X')$ degenerates to $(S_{2,2}, X)$, where $S_{4}$ is a cone 
over a rational normal quartic in $\PP^5$. The two components $S_1, S_2$ of $S_{2,2}$ 
contain $X_1, X_2$, respectively. The common ruling of $S_1, S_2$ contains the 
two nodes $t_1, t_2$. There exists a hyperplane section $Q'$ of $S$ or $S_4$ such 
that $Q\cdot X' = 4z'_1 + 4z'_2$. The limit of $Q'$ is a hyperplane section 
$Q$ of $S$ satisfying $Q\cdot X = 4z_1 + 4z_2$. Suppose that $Q_1$ and $Q_2$ 
are the components of $Q$ lying in $S_1$ and $S_2$, respectively. 
Then $Q_i\cdot X_i = 4z_i$ for $i = 1, 2$. Since $2z_1\sim t_1 + t_2$, the ruling 
$L_1 = \overline{vp_1}$ is tangent to $X_1$ at $z_1$. If $Q_1$ is smooth, i.e.\ if
the ruling does not pass through $v$, then $Q_1$ has $L_1$ as its tangent line at $z_1$. 
But $Q_1\cdot L_1 = 1$, leading to a contradiction. 
Hence we conclude that $Q_1 = 2L_1$ is a double ruling. The other conic $Q_2$ now also passes 
through the vertex $v$, because the hyperplane cutting out $Q$ does. Hence $Q_2$ 
consists of two rulings in $S_2$. Since $Q_2\cdot X_2 = 4z_2$, the only possibility 
is that $Q_2$ is a double ruling $2L_2$, where $L_2 = \overline{vp_2}$ is tangent to $X_2$ at $z_2$. 
We thus conclude that $t_1 + t_2 \sim 2z_2$ in $X_2$. Therefore, $2z_1 + 2z_2$ is a section of $\omega_X$. Consequently the quadratic 
differential $q$ is a global square, contradicting Proposition~\ref{global}.
\par 
For the latter case, both components of $X$ are $\PP^1$. First assume that $X$ 
is not hyperelliptic. Since $X$ is $4$-connected, its canonical embedding consists of 
two conics $X_1, X_2$ in $\PP^2$, intersecting at four nodes. The condition 
$h^0(X, 2z_1 + z_2) = 2$ implies that the line $L = \overline{z_1z_2}$ 
is tangent to $X_1$ at $z_1$. Consider the conic $Q$ satisfying 
$Q\cdot X = 4z_1 + 4z_2$. If $Q$ is smooth, then the intersection 
$Q\cdot L$ contains $2z_1 + z_2$, contradicting B\'{e}zout's theorem. 
Then $Q$ must be a double line $2L_1$ with $L_1\cdot X = 2z_1 + 2z_2$, contradicting that  
$q$ is not a global square based on Proposition~\ref{global}. If $X$ is hyperelliptic, consider the 
log surface $(S_{4}, X)$. Then both $X_1$ and $X_2$ have class equal to 
$4f$, where $f$ is the ruling class. The condition $h^0(X, 2z_1 + z_2) = 2$ 
implies that there exists a ruling $L$ such that $L\cdot X = 2z_1$ or $z_1 + z_2$. 
Consider the limiting hyperplane section $Q$ in $S_4$ such that 
$Q\cdot X_1 = 4z_1$ and $Q \cdot X_2 = 4z_2$. Since $Q\cdot L = 1$, if $Q$ is 
smooth, the only possibility is that $L\cdot X = z_1 + z_2$. Since $\OO_X(2L)\sim \omega_X$, 
this implies that $q$ is a global square, contradicting Proposition~\ref{global}. 
If $Q$ contains $L$ as a component, then it passes through $v$, hence as a consequence  
$Q = 4L$, which again contradicts that $q$ is not a global square. 
\par
For $\QQ(5,3,1,-1)$, the proof simply combines that of $\QQ(6,2)$ and of $\QQ(4,4)$, since the degeneration types of $X$ are covered in those two strata. 
Finally for $\QQ(7,2,-1)$ and $\QQ(7,3,-1,-1)$, an argument similar to that of $\QQ(6,2)$ works without any further complication. 
\end{proof}
\par
Based on the argument above, for $\QQ(6,2)^{\nh}$ the only possible intersection
of the lift of $C$ to $\barmoduli[3,2]$ with a component of the boundary
over $\delta_1$ is $\delta_{1,\{2\}}$. This boundary term does not appear
in the divisor class \eqref{eq:PicBN3;21} of $BN^1_{3,(2,1)}$. More generally,
by Proposition~\ref{prop:disjointbydegree} the \Teichmuller curve
hits none of the boundary terms in the presentation \eqref{eq:PicBN3;21} 
of $BN^1_{3,(2,1)}$. From the lemma we can thus calculate $L^+(C) = 5/4$. The same reasoning applies to 
the other strata listed in the lemma and one can quickly deduce the corresponding values of $L^+$.  

\subsection{The Brill-Noether divisor $BN^1_{3,(1,1,1)}$}
\par
\begin{lemma} For a Teichm\"uller curve $C$ generated by a half-translation surface $(X,q)$ in 
one of the strata $\QQ(4,2,2)$, $\QQ(3,3,2)^{\nh}$, $\QQ(4,3,2,-1)$, $\QQ(3,2,2,1)$
and $\QQ(3,3,1,1)^{\nh }$, lift $C$ to $\barmoduli[3,3]$ using the first three zeros of $q$.
Then $\ol{C}$ does not intersect the Brill-Noether divisor 
$BN^1_{3,(1,1,1)}$ in $\BM_{3,3}$. 
\end{lemma}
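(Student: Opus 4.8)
The plan is to rule out, for every half-translation surface $(X,q)$ occurring as a (possibly degenerate) fiber over $\ol{C}$, the defining condition of $BN^1_{3,(1,1,1)}$. By Riemann--Roch on a genus three curve, for distinct marked points the condition $h^0(X, z_1+z_2+z_3) = 2$ is equivalent to the existence of a point $r \in X$ with $\omega_X \sim z_1 + z_2 + z_3 + r$; in the plane quartic model this says that $z_1,z_2,z_3$ are collinear, with $r$ the residual point of the line. Assuming such an $r$ exists, I would substitute into the stratum relation $\divisor(q) \sim \omega_X^{\otimes 2}$ and cancel the common multiples of $z_1,z_2,z_3,r$. This yields a linear equivalence of small degree in each case, namely
\[
2z_1 \sim 2r, \quad z_1+z_2 \sim 2r, \quad 2z_1+z_2 \sim 2r+p_1, \quad z_1+z_4 \sim 2r, \quad z_1+z_2+z_4 \sim z_3+2r
\]
for $\QQ(4,2,2)$, $\QQ(3,3,2)^{\nh}$, $\QQ(4,3,2,-1)$, $\QQ(3,2,2,1)$ and $\QQ(3,3,1,1)^{\nh}$ respectively, where $p_1$ denotes the pole in the case with a pole.

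The core of the argument is a dichotomy on the dimension of the complete linear system attached to each relation. If that system has $h^0 = 1$, the equivalence upgrades to an equality of effective divisors, and matching points with multiplicities forces one of three outcomes: two distinct zeros coincide, a zero coincides with the pole, or the residual point $r$ coincides with a zero. The first two are excluded because the singularities of $q$ are distinct; in the third, $\divisor(q)$ becomes exactly $2\omega_X$, so $q$ is a global square, contradicting Proposition~\ref{global}. If instead $h^0 = 2$, the class is a $g^1_2$ and $X$ is hyperelliptic: for the two strata marked $\nh$ this is impossible by Proposition~\ref{prop:hypopen}, and for $\QQ(4,2,2)$ and $\QQ(3,2,2,1)$ I would feed the resulting conjugacy relations together with $\omega_X \sim 2\,g^1_2$ back into $\omega_X \sim z_1+z_2+z_3+r$ to force once more a coincidence of distinct zeros.

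For the cusps of $\ol{C}$ I would first invoke Corollaries~\ref{signature} and~\ref{component}: each degenerate fiber $(X_\infty,q_\infty)$ is of polar type, preserves the signature of $q$, and has every irreducible component carrying a singularity, so the number of components is bounded by the number of singularities, and the degree identity $\deg(\omega_{X_\infty}^{\otimes 2}|_Z) = 4g(Z)-4+2\#(Z \cap \ol{X_\infty \setminus Z})$ leaves only a short list of reducible types (two components of genera $(1,1)$ or $(0,0)$ joined at two or four nodes). On each type I would analyze the admissible triple cover induced by $|z_1+z_2+z_3|$, exactly as in the parallel treatment of $BN^1_{3,(2,1)}$ and, when the limit is hyperelliptic, using the log-surface degenerations of Section~\ref{logsurface}, to obtain component-wise equivalences, and then rerun the dichotomy above on each component.

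The step I expect to be the main obstacle is the stratum $\QQ(4,3,2,-1)$, where the surviving relation $2z_1+z_2 \sim 2r+p_1$ has degree three and the $h^0=2$ branch does not close immediately. Here one must split according to whether the tangent line to $X$ at $z_1$ passes through $z_2$, trace the consequences $\omega_X \sim 2z_1+z_2+s$ and $z_1+s \sim z_3+r$, and treat the hyperelliptic subcase through a base-point analysis of the pencils involved, before the contradiction with the distinctness of the zeros, or with Proposition~\ref{global}, finally emerges. Once this case and the reducible types are dispatched we obtain $BN^1_{3,(1,1,1)} \cdot \ol{C} = 0$.
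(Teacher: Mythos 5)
Your proposal is correct and follows essentially the same route as the paper: assume $h^0(X,z_1+z_2+z_3)=2$, write $\omega_X\sim z_1+z_2+z_3+r$, square against $\divisor(q)\sim\omega_X^{\otimes 2}$ to get exactly the residual equivalences you list, and close each case either by forcing coincident singularities, by Proposition~\ref{global} (global square), or by Proposition~\ref{prop:hypopen} (hyperellipticity), with the reducible fibers handled through admissible covers and the log-surface degenerations of Section~\ref{logsurface}. The only imprecision is that for $\QQ(3,3,1,1)^{\nh}$ the relation $z_1+z_2+z_4\sim z_3+2r$ has degree three, so its $h^0=2$ branch is not literally a $g^1_2$; one must first note that the two lines through $z_1,z_2$ cutting out $|z_1+z_2+z_3|$ and $|z_1+z_2+z_4|$ coincide, which yields $z_1+z_2\sim z_3+z_4$ and only then the hyperelliptic contradiction.
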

\par
\begin{proof}
For $\QQ(4,2,2)$, suppose that a half-translation surface $X$ parameterized by $\ol{C}$ 
also lies in $BN^1_{3,(1,1,1)}$. We 
have $\divisor(q) = 4z_1 + 2z_2 + 2z_3 \sim \omega^{\otimes 2}_X$.  
The linear system $|z_1 + z_2 + z_3|$ yields a $g^1_3$ for $X$. First 
consider the case when $X$ is irreducible. 
The associated triple cover implies that $\omega_X\sim z_1 + z_2 + z_3 + r$ for some 
$r\in X$ and we conclude that $2z_1 \sim 2r$. If $r = z_1$, then 
$\omega_X\sim 2z_1 + z_2 + z_3$. Consequently $q$ is a global square, 
contradicting Proposition~\ref{global}. If $r\neq z_1$, then $X$ is 
hyperelliptic and $z_1, r$ are both Weierstrass points. But 
$\omega_X\sim z_1 + z_2 + z_3 + r$ is still impossible
for $z_2, z_3\neq z_1$.  
\par
If $X$ is reducible, by Corollaries~\ref{signature} and \ref{component}, $X$ consists of 
two irreducible components $X_1$ and $X_2$ of genus $g_1$ and $g_2$, containing $z_1$ 
and $z_2, z_3$ respectively, meeting at $n$ nodes $t_1,\ldots, t_n$ such that $2g_i - 2 + n  
=  2$ for $i = 1, 2$. Moreover, $(g_1, g_2, n)$ is either $(1,1,2)$ or  $(0,0,4)$. Below we 
show that both cases are impossible. 
\par
For the former case, analyzing the triple cover, we have $z_2 + z_3\sim t_1 + t_2$ 
in $X_2$. Restricting $q$ to $X_1$, we also have $2t_1 + 2t_2\sim 4z_1$ in $X_1$. 
Consider the canonical model of a generic curve $X'$ in $C$ degenerating to $X$. By
the description in Section~\ref{logsurface}, the log surface $(S, X')$ degenerates to $(S_{2,2}, X)$. The image of
$S$ in $\PP^5$ is the Veronese $\PP^2$ if $X'$ is non-hyperelliptic, 
or it is a cone $S_4$ over a rational normal quartic if $X'$ is hyperelliptic, and 
$S_{2,2}$ is a cone over the one-nodal union of two conics. 
Denote by $S_1$ and $S_2$ the two components of $S$ and let $v$ be the vertex. Note 
that $X_i$ embeds into $S_i$ for $i=1,2$. The common ruling of $S_1, S_2$ contains the 
two nodes $t_1, t_2$. For $X'$, there exists a hyperplane section $Q'$ of $S$ such 
that $Q'\cdot X' = 4z'_1 + 2z'_2 + 2z'_3$. The limiting hyperplane section $Q$ of 
$S_{2,2}$ satisfies $Q\cdot S_{2,2} = 4z_1 + 2z_2 + 2z_3$. Let $Q_1$ and $Q_2$ be the 
components of $Q$ lying in $S_1$ and $S_2$, respectively. 
Then $Q_1\cdot X_1 = 4z_1$ and $Q_2\cdot X_2 = 2z_2 + 2z_3$. 
Since $z_2 + z_3\sim t_1 + t_2$, the ruling $L_{23} = \overline{z_2z_3}$ passes through 
$v$ and consequently $Q_2$ is the double ruling $2L_{23}$. Then $Q_1$ also passes 
through $v$, hence it consists of two rulings. Since $Q_1\cdot X_1 = 4z_1$, it 
must be the double ruling $2L_1$, where $L_1$ is tangent to $X_1$ at $z_1$. Then 
$L_1 + L_{23}$ cuts out $2z_1 + z_2 + z_3$ in $X$, hence it is a section of $\omega_X$. 
Then we conclude that $q$ is a global square, contradicting Proposition~\ref{global}. 
\par
If both components of $X$ are $\PP^1$, first assume that $X$ is not hyperelliptic. 
Since $X$ is $4$-connected, its canonical embedding consists of two conics in $\PP^2$ 
intersecting 
at four nodes. The assumption $h^0(X, z_1 + z_2+ z_3) = 2$ 
implies that $z_1, z_2, z_3$ are contained in a line $L$. Consider the conic $Q$ 
that cuts out $4z_1 + 2z_2 + 2z_3$ in $X$. 
If $Q$ is smooth, then the intersection $Q\cdot L$ contains $z_1 + z_2 + z_3$, 
contradicting B\'{e}zout's theorem. Hence $Q$ must be a union of two lines $L_1$ 
and $L_2$ such that 
$L_1\cdot X = 2z_1 + 2z_2$, $L_2\cdot X = 2z_1 + 2z_3$ or 
$L_1\cdot X = L_2 \cdot X = 2z_1 + z_2 + z_3$. 
The former case is impossible, because $L_1, L_2$ would be the same line tangent to 
$X$ at $z_1$. The latter is also impossible, because it would imply that $q$ is 
a global square, contradicting Proposition~\ref{global}. Now consider the case 
when $X$ is hyperelliptic. We use the log surface $(S_{0,4}, X)$. Both $X_1$ 
and $X_2$ have class equal to $4f$, where $f$ is the class of a ruling. There exists a 
ruling $L$ such that $L\cdot X = z_1 + z_2$ (up to relabeling 
$z_2, z_3$). Since $\omega_X\sim \OO_X(2L)$, we conclude that 
$4z_1 + 4z_2\sim 4z_1 + 2z_2 + 2z_3$, hence $2z_2\sim 2z_3$. Then the 
ruling $L$ through $z_2$ is tangent to $X$ at $z_2$, contradicting that $L\cdot X = z_1 + z_2$. 
\par
For $\QQ(3,3,2)^{\nh}$, suppose that $X$ is in the intersection with the Brill-Noether divisor. 
We have $\divisor(q) = 3z_1 + 3z_2 + 2z_3 \sim \omega^{\otimes 2}_X$. Let us deal with irreducible $X$ first. If $h^0(\OO_X(z_1 + z_2 + z_3)) = 2$, 
then $\omega_X\sim p_1 + p_2 + p_3 + r$ and $ p_1 + p_2 \sim 2r$, which contradicts that 
it is non-hyperelliptic together with Proposition~\ref{prop:hypopen}.
\par
If $X$ is reducible, by Corollaries~\ref{signature} and \ref{component}, it consists 
of two irreducible components $X_1$ and $X_2$ of genus $1$ and $2$, containing 
$z_3$ and $z_1, z_2$ respectively, meeting at a node $t$. If $h^0(X, z_1 + z_2 + z_3) = 2$, 
by analyzing the triple admissible cover, we have 
$z_1 + z_2\sim 2t$ in $X_2$, hence $t$ is a Weierstrass point and $z_1, z_2$ are 
conjugate in $X_2$. Since $3z_1 + 3z_2 + 2z_3\sim \omega^{\otimes 2}_X$, we conclude that
$3z_1 + 3z_2 \sim 6t$ in $X_2$ and $2z_3\sim 2t$. Therefore, $X$ admits a double 
admissible cover with $z_3$ as a ramification point and $z_1, z_2$ being conjugate. It contradicts
the non-hyperelliptic assumption. 
\par
The proofs for the other strata listed in the lemma are completely analogous to either one of the above two. 
\end{proof}
\par
By Proposition~\ref{prop:disjointbydegree}  a \Teichmuller curve in one of the above strata 
hits none of the boundary terms in the presentation \eqref{eq:PicBNonly1s} of 
$BN^1_{3,(1,1,1)}$. From the above lemma we can thus calculate the values of $L^{+}$ accordingly.

\section{Genus four: non-exceptional strata} \label{sec:g4nonexc}

\begin{theorem} \label{thm:nvg4}
In genus four, besides the components of hyperelliptic flat surfaces
and those non-varying exceptional components, the following strata are non-varying:
$$\begin{array}{|l|l||l|l|}
\hline
\text{\rm Stratum} \,& L^+ \,&\text{\rm Stratum} \,& L^+ \, \\
\hline
\QQ(13,-1) & 4/3 & \QQ(7,5) & 32/21  \\
\hline
\QQ(11,1) & 20/13 & \QQ(8,3,1) & 8/5 \\
\hline
\QQ(10,2)^\nh & 3/2 & \QQ(7,3,2) & 47/30 \\
\hline
\QQ(8,4)& 23/15 & \QQ(5,4,3) & 167/105 \\
\hline
\end{array}
$$
All the other strata in genus four of dimension less than or equal to nine
are varying.
\end{theorem}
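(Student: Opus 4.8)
The plan is to run, stratum by stratum, the same machine used in the genus two and three cases: lift the \Teichmuller curve $C$ to a moduli space of pointed curves by marking zeros of $q$, show that $\ol{C}$ is disjoint from a suitable pointed Brill--Noether (or Weierstrass) divisor $D$ whose class is recorded in Lemma~\ref{le:PicBN4}, check that $\ol{C}$ avoids every boundary class occurring in the expansion of $D$, and then read off $L^+$ from the resulting relation $\ol{C}\cdot(a\lambda + \sum b_i\omega_i)=0$ via Proposition~\ref{intersection}. Concretely I would group the eight strata by the divisor used: $\QQ(13,-1)$ and $\QQ(11,1)$ are lifted to $\barmoduli[4,1]$ and paired with $W=BN^1_{4,(4)}$; the strata $\QQ(10,2)^\nh$, $\QQ(8,4)$ and $\QQ(8,3,1)$ are lifted to $\barmoduli[4,2]$ and paired with $BN^1_{4,(3,1)}$; $\QQ(7,5)$ is lifted to $\barmoduli[4,2]$ and paired with $BN^1_{4,(2,2)}$; and $\QQ(7,3,2)$, $\QQ(5,4,3)$ are lifted to $\barmoduli[4,3]$ and paired with $BN^1_{4,(2,1,1)}$. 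Once disjointness $\ol{C}\cdot D=0$ and boundary avoidance are in place, the computation is mechanical: for $\QQ(13,-1)$ one gets $\ol{C}\cdot(-\lambda+10\omega_1)=0$, and since $\ol{C}\cdot\omega_1 = S_1\cdot\omega_{\pi} = \chi/15$ by Proposition~\ref{intersection}, this yields $\ol{C}\cdot\lambda = 2\chi/3 = \tfrac{\chi}{2}L^+$, hence $L^+=4/3$; the other seven values follow identically.

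The substance is the family of disjointness lemmas, of exactly the shape of the genus two and three lemmas of the previous sections. For a smooth fiber $(X,q)$ one assumes $X$ lies in $D$, writes down the forced linear equivalence, and derives a contradiction: typically $\divisor(q)\sim\omega_X^{\otimes2}$ combined with the Brill--Noether condition forces either that $q$ is a global square of an abelian differential (excluded by Proposition~\ref{global}) or that $X$ is hyperelliptic (excluded on the $\nh$ strata by hypothesis, and in the remaining cases by Proposition~\ref{prop:hypopen}). For instance on $\QQ(10,2)^\nh$, the condition $h^0(X,3z_1+z_2)=2$ together with $10z_1+2z_2\sim\omega_X^{\otimes2}$ forces $X$ to be hyperelliptic with $z_1,z_2$ ramification points, contradicting the non-hyperelliptic component. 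I would carry out each such argument by Riemann--Roch and the geometry of the bicanonical model.

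The hard part is the boundary. By Corollaries~\ref{signature} and~\ref{component} a degenerate fiber over a cusp of $C$ is of polar type with the same signature as $q$ and with few irreducible components, so I would enumerate the admissible reducible types and treat each. The genuinely delicate cases are the reducible degenerations in which the canonical (or bicanonical) map fails to be an embedding; here I would invoke the log-surface description of Section~\ref{g=4: logsurface}, degenerating the quadric $Q\cong\PP^1\times\PP^1$ to a quadric cone (resolved by $F_2$), to a scroll $F_5$, or to a reducible surface, and track how the bicanonical elliptic section $E$ with $E\cdot X=\divisor(q)$ breaks into rulings. As in the $\QQ(4,4)$ and $\QQ(4,2,2)$ arguments of the previous section, the Brill--Noether hypothesis typically forces the limiting section of $\omega_X^{\otimes2}$ to split as a square, again contradicting Proposition~\ref{global}. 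Avoidance of the individual boundary classes appearing in Lemma~\ref{le:PicBN4} then follows from the degree bound $\deg(\omega_{X_\infty}^{\otimes2}|_Z)=4g(Z)-4+2\#(Z\cap\ol{X_\infty\setminus Z})$ used in Proposition~\ref{prop:disjointbydegree}; for $\QQ(7,5)$, which is not listed in that table, I would supply the same degree computation directly to rule out the boundary terms of $BN^1_{4,(2,2)}$.

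Finally, the varying assertion---that every other stratum of dimension at most nine is varying---is established not by a theorem but by counterexample: for each such stratum I would exhibit two explicit square-tiled surfaces generating \Teichmuller curves with distinct values of $L^+$ (equivalently, distinct slopes), the complete list of which is deferred to the appendix.
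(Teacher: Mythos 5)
Your proposal follows essentially the same route as the paper: the same grouping of the eight strata with the same divisors ($W$ for $\QQ(13,-1)$, $\QQ(11,1)$; $BN^1_{4,(3,1)}$ for $\QQ(10,2)^\nh$, $\QQ(8,4)$, $\QQ(8,3,1)$; $BN^1_{4,(2,2)}$ for $\QQ(7,5)$; $BN^1_{4,(2,1,1)}$ for $\QQ(7,3,2)$, $\QQ(5,4,3)$), the same disjointness-plus-boundary-avoidance strategy feeding into Proposition~\ref{intersection}, and the same appeal to explicit square-tiled examples for the varying claim. Your observation that $\QQ(7,5)$ is absent from the table in Proposition~\ref{prop:disjointbydegree} and needs the degree computation supplied directly is correct and in fact slightly more careful than the paper's own wording.
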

\par
Examples of square-tiled surfaces certifying the varying strata are listed in the appendix. 

\subsection{The Weierstrass divisor}

\begin{lemma} For a Teichm\"uller curve $C$ generated by $(X,q)$ 
in one of the strata $\QQ(13,-1)$ or $\QQ(11,1)$, lift $C$ to $\barmoduli[4,1]$ using the first zero of $q$.
Then the intersection with the Weierstrass divisor $W \cdot \ol{C} = 0.$
\end{lemma}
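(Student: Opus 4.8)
The plan is to show that for every half-translation surface $(X,q)$ generating $C$ the high-order zero $z_1$ can never be a Weierstrass point, i.e. that $h^0(X,4z_1)\ge 2$ is impossible; membership of $(X,z_1)$ in $W=BN^1_{4,(4)}$ means exactly $h^0(X,4z_1)\ge 2$. Set-theoretic disjointness then yields $W\cdot\ol C=0$, and since the boundary part of the class \eqref{eq:PicBN4a} involves only $\delta_1,\delta_2,\delta_3$ while $\ol C$ meets no reducible boundary divisor (shown below), the intersection number vanishes and the value of $L^+$ follows from Proposition~\ref{intersection}.

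First I would reduce to irreducible stable curves. By Corollaries~\ref{signature} and~\ref{component}, a degenerate fiber $X_\infty$ over $\ol C$ carries a quadratic differential $q_\infty$ of signature $(13,-1)$ resp.\ $(11,1)$, is of polar type, and has at most two irreducible components, each containing one of the prescribed singularities. If $X_\infty$ were reducible, the component $Z$ meeting the rest in $m$ branches and containing only the zero $z_1$ of order $d\in\{13,11\}$ would, by restricting $q_\infty$ and counting the order-$d$ zero against the $m$ double poles at the nodes, satisfy $d-2m=4g(Z)-4$; but then $4g(Z)+2m=17$ resp.\ $15$, which is impossible by parity. Hence $X_\infty$ is irreducible and $\ol C$ meets only $\delta_0$.

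Now assume $X$ is irreducible and $h^0(4z_1)\ge 2$. When $X$ is hyperelliptic the argument is immediate: a general point $z$ has $h^0(4z)=1$, so $z_1$ must be a ramification point, whence $\omega_X\sim 6z_1$ and therefore $z_1\sim p_1$ resp.\ $z_1\sim z_2$, a contradiction (this also follows from the description in Section~\ref{g=4: logsurface} together with Proposition~\ref{prop:hypopen}). When $X$ is non-hyperelliptic, Clifford's theorem gives $h^0(4z_1)\le 2$, so $h^0(4z_1)=2$ and $\omega_X\sim 4z_1+r+s$ with $r+s$ the unique (rigid) effective divisor in $|\omega_X-4z_1|$. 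Squaring and comparing with $\omega_X^{\otimes2}\sim\divisor(q)$ gives $2r+2s+p_1\sim 5z_1$ for $\QQ(13,-1)$ and $2r+2s\sim 3z_1+z_2$ for $\QQ(11,1)$. I would then pass to the canonical model $X\subset Q\subset\PP^3$, with $Q$ a smooth quadric in general and a quadric cone on the Gieseker-Petri locus, as in Section~\ref{g=4: logsurface}, the irreducible nodal limits being treated by the bicanonical models of Section~\ref{sec:limits}. The Weierstrass relation says that $|\omega_X|$ osculates $X$ to order $4$ at $z_1$, while membership in the stratum says that the bicanonical series — cut by the $(2,2)$-curves of Section~\ref{g=4: logsurface}, resp.\ by sections of $\omega_X^{\otimes2}(p_1)$ — osculates $X$ to order $11$ resp.\ $13$ at $z_1$. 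Writing $|\omega_X^{\otimes2}|$ as the image of $\mathrm{Sym}^2H^0(\omega_X)$ modulo the quadric $Q$ and analysing the bicanonical vanishing sequence at $z_1$ in terms of its gap sequence, one derives a contradiction; in the degenerate subcases (cone, or $z_1$ a total ramification point of a trigonal pencil with $3z_1\sim g^1_3$) the osculating $(2,2)$-curve breaks into rulings and one reads off $\omega_X\sim 6z_1$ directly, once more forcing $z_1\sim p_1$ resp.\ $z_1\sim z_2$.

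The hard part is precisely the generic smooth-quadric case. The linear equivalences above are mutually consistent in $\Pic(X)$, so Riemann-Roch alone is circular and produces no contradiction — this is the essential difference from the genus-three strata $\QQ(8)$ and $\QQ(7,1)$, where the single residual point made the analogous manipulation collapse to a $g^1_2$. Here the exclusion must instead come from the projective geometry of the quadric $Q$: one must prove that a $(2,2)$-curve cannot osculate $X$ to order $11$ (resp.\ $13$) at a point whose $|\omega_X|$-osculation is only $4$, which is the genuine obstacle and the bulk of the required work.
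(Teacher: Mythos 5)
Your framing is right and most of the scaffolding matches the paper: the reduction to irreducible degenerate fibres via Corollaries~\ref{signature} and~\ref{component}, the dispatch of the hyperelliptic case through $\omega_X\sim 6z_1$, and the linear equivalences $5z_1\sim p_1+2r+2s$ (resp.\ $2r+2s\sim 3z_1+z_2$) obtained from $\omega_X\sim 4z_1+r+s$. The genuine gap is that you stop exactly where the proof still has to be finished: the generic non-hyperelliptic case is left as an unproven assertion that a $(2,2)$-curve cannot osculate $X$ to order $11$ (resp.\ $13$) at a point of canonical osculation order $4$, and your claim that ``Riemann--Roch alone is circular and produces no contradiction'' is false. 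The paper closes this case in a few lines of divisor arithmetic on $X$, with only a light touch of the quadric.

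Concretely, for $\QQ(13,-1)$: the divisor $p_1+2r+2s$ is a member of $|5z_1|$ with $p_1\neq z_1$, so if $z_1$ is a base point of $|5z_1|$ then $z_1\in\{r,s\}$, say $r=z_1$; if it is not a base point then $h^0(5z_1)=3$ and Riemann--Roch gives $h^0(\omega_X-5z_1)=h^0(r+s-z_1)=1$, again forcing $r=z_1$. Either way $3z_1\sim p_1+2s$, so $h^0(3z_1)=2$, and residuation against $\omega_X\sim 3z_1+(2z_1+s)$ gives $h^0(2z_1+s)=2$. The two effective divisors $3z_1$ and $2z_1+s$ share the subdivisor $2z_1$, but members of two distinct $g^1_3$'s on a genus-four curve are cut by the two rulings of the quadric and cannot both meet $X$ doubly at a smooth point (one tangent direction); hence $3z_1\sim 2z_1+s$, so $s=z_1$ and $z_1\sim p_1$, absurd. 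For $\QQ(11,1)$ the paper runs the analogous argument, producing two $g^1_3$'s whose realizing rulings would both have to pass through $r$ and $s$, or both be tangent at $r$ when $r=s$, which is impossible on a quadric in $\PP^3$; the hyperelliptic subcase gives $12z_1\sim\omega_X^{\otimes 2}$ and $z_1\sim z_2$ directly. No analysis of bicanonical vanishing sequences is needed, and without supplying the one you defer to, your proposal does not prove the lemma.
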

\par
\begin{proof} Suppose that $(X,q)$ is in the intersection. 
In the case of $\QQ(13,-1)$ we have $\divisor(q) = 13z_1 - p_1 \sim \omega^{\otimes 2}_X$. 
If $z_1$ is a Weierstrass point, then $\omega_X\sim 4z_1 + r +s$ for some $r$ and $s$,
hence $5z_1\sim p_1 + 2r + 2s$. If $z_1$ is a base point of $|5z_1|$, then 
$r = z_1$ (up to relabeling $r, s$) and 
$3z_1\sim p_1 + 2s$. By Riemann-Roch, we get $h^0(X, 2z_1 + s) = 2$,  
hence $s = z_1$ and $z_1\sim p_1$, impossible. If $z_1$ is not a base point of $|5z_1|$, we have $h^0(X, 5z_1) = h^0(X, 4z_1) + 1 = 3$. 
By Riemann-Roch, we conclude that $h^0(X, r + s - z_1) = 1$, hence $r = z_1$. 
Then $\omega_X\sim 5z_1 + s$
and $3z_1\sim p_1 + 2s$. Arguing as above, we deduce the same contradiction. 
\par
For $\QQ(11,1)$, we have $\divisor(q) = 11z_1 + z_2 \sim \omega^{\otimes 2}_X$. If $z_1$ is a
Weierstrass point, then $\omega_X\sim 4z_1 + r +s$ for some $r$ and $s$,
hence $3z_1\sim z_2 + r + s$. Then $z_1 + r + s$ gives rise to a $g^1_3$ which is 
different from the one given by $z_2 + r + s$. If $X$ is not hyperelliptic, 
using its canonical embedding in a quadric surface in $\PP^3$, there do not 
exist two different rulings passing through $r$ and $s$ or both tangent to $X$
at $r$ (in the case $r=s$), leading to a contradiction. If $X$ is hyperelliptic, since $z_1$ 
is a Weierstrass point, we have $12z_1\sim \omega^{\otimes 2}_X$, hence
$z_1\sim z_2$, impossible. 
\end{proof}
\par
Since the stable curves parameterized by $\ol{C}$ in the above strata are irreducible, 
Theorem~\ref{thm:nvg4} follows in this case from this lemma together with \eqref{eq:PicWP} and
Proposition~\ref{intersection}.

\subsection{The Brill-Noether divisors $BN^1_{4, (3,1)}$, $BN^1_{4,(2,2)}$ and $BN^1_{4,(2,1,1)}$}

\begin{lemma}\label{le:g4case102} 
For a Teichm\"uller curve $C$ generated by $(X,q)$ 
in one of the strata $\QQ(10,2)^\nh$, $\QQ(8,4)$ or $\QQ(8,3,1)$, 
lift $C$ to $\barmoduli[4,2]$ using the first two zeros of $q$.
Then the intersection with the Brill-Noether divisor $BN^1_{4, (3,1)} \cdot \ol{C} = 0$. 
\end{lemma}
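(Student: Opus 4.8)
The plan is to argue by contradiction. Suppose some half-translation surface $(X,q)$ parametrized by $\ol{C}$ also lies in $BN^1_{4,(3,1)}$, so that $h^0(X,3z_1+z_2)=2$. By Riemann--Roch on the genus four curve $X$ this is equivalent to $h^0(X,\omega_X-3z_1-z_2)=1$, i.e. to the existence of an effective divisor $r+s$ of degree two with
\be
\omega_X \sim 3z_1+z_2+r+s.
\label{eq:g4BNrel}
\ee
Feeding \eqref{eq:g4BNrel} into the stratum relation $\omega_X^{\otimes 2}\sim\divisor(q)$ yields the key linear equivalence $2(r+s)\sim 4z_1$ for $\QQ(10,2)^\nh$, $2(r+s)\sim 2z_1+2z_2$ for $\QQ(8,4)$, and $2(r+s)\sim 2z_1+z_2+z_3$ for $\QQ(8,3,1)$. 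The goal is to force a configuration forbidden for the stratum: for $\QQ(8,4)$ (whose zeros are even) that $q$ is a global square, contradicting Proposition~\ref{global} and the definition of the stratum; for $\QQ(10,2)^\nh$ in addition that $(X,q)$ is a hyperelliptic half-translation surface, contradicting Proposition~\ref{prop:hypopen}; and for $\QQ(8,3,1)$, whose odd zeros already make $q$ non-square, that the relation collapses two of the distinct zeros or imposes a hyperelliptic structure the stratum cannot carry.

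First I would treat an irreducible $X$ through its bicanonical model on the quadric $Q\subset\PP^3$ of Section~\ref{g=4: logsurface}, where $X$ has class $(3,3)$, the two $g^1_3$'s are the ruling classes $R_1,R_2$ with $R_1+R_2\sim\omega_X$, and $\divisor(q)=E\cdot X$ for the curve $E\in|\OO_Q(2)|=|(2,2)|$. Then \eqref{eq:g4BNrel} says there is a conic $C\in|(1,1)|$ with $C\cdot X=3z_1+z_2+r+s$, i.e. with contact of order three with $X$ at $z_1$, and I would split according to whether $|3z_1+z_2|$ has $z_1$ as a base point. In the base-point case $2z_1+z_2\sim R_i$ is a ruling tangent to $X$ at $z_1$; subtracting its double from $E$ leaves a residual divisor of class $(0,2)$ that is forced to be supported at $z_1$, and depending on the stratum this either realizes $\divisor(q)=2(\ell+\ell')$ with $\ell+\ell'\in|(1,1)|$, exhibiting $q$ as a global square, or is numerically impossible because no pair of rulings of the opposite family can attain the required multiplicity at $z_1$. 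In the base-point-free case $C$ is an irreducible $(1,1)$ conic, and here I would use that the stratum relation writes $\omega_X^{\otimes 2}$ as a multiple of $2z_1+z_2$ to conclude that the canonical-class divisor it produces must break into rulings, or that $E$ itself acquires a double component $2D$ with $D\in|(1,1)|$; either outcome gives $\omega_X\sim 2D$, hence a global square, or forces $R_1\sim R_2$, i.e. $X$ Gieseker--Petri special.

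Finally I would dispose of the degenerate models and boundary curves. When $X$ is Gieseker--Petri special or hyperelliptic the quadric degenerates to the cone $F_2$, respectively to the Hirzebruch scroll $F_5$, as described in Section~\ref{g=4: logsurface}, and running the same ruling bookkeeping on these surfaces reproduces the global-square conclusion (or, for $\QQ(10,2)^\nh$, exhibits the hyperelliptic involution). For the reducible stable curves occurring in $\ol{C}$, Corollaries~\ref{signature} and~\ref{component} bound the number of components by the number of zeros and pin down their genera, after which the admissible triple cover given by $|3z_1+z_2|$ distributes the tangency data across the components exactly as in the genus three argument for $\QQ(4,4)$, again yielding a contradiction. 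I expect the main obstacle to be the base-point-free case together with the degenerate-quadric analysis: there one cannot close by linear equivalence of divisor classes alone, since a stray two-torsion class survives in the passage from $2(r+s)$ to $r+s$, and one must instead use the intersection theory on $F_5$ and on the limit log surface $(S,X)$ to show that the divisor of $q$ genuinely splits as a double conic (or, for $\QQ(8,3,1)$, that it collides two zeros). This is the technical heart, entirely parallel to the cone computation $(S_{2,2},X)$ carried out for $\QQ(4,4)$ in genus three.
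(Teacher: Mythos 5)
Your opening moves match the paper's: assume $h^0(X,3z_1+z_2)=2$, write $\omega_X\sim 3z_1+z_2+r+s$, and derive $2r+2s\sim 4z_1$ (resp.\ $2z_1+2z_2$, $2z_1+z_2+z_3$). The gap is in what comes next. You correctly identify the obstruction --- one cannot divide $2(r+s)\sim 4z_1$ by two because a two-torsion class may survive --- but you do not resolve it; you defer it to ``intersection theory on $F_5$ and on the limit log surface,'' which is precisely the part that has to be carried out. In addition, the ruling bookkeeping you propose contains an unjustified step: from $(R\cdot X)_{z_1}=2$ and $(E\cdot X)_{z_1}=10$ one cannot conclude that the tangent ruling $R$ sits inside the parity curve $E$ with multiplicity two. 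If $R\not\subset E$, then $E\cdot R$ is a zero-cycle of degree $(2,2)\cdot(1,0)=2$, and $(E\cdot R)_{z_1}=2$ is perfectly consistent with both contact orders; so ``subtracting its double from $E$'' is not available without a further argument, and the residual $(0,2)$ divisor you want to analyze need not exist.

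The paper closes exactly this gap with a short divisor-theoretic step that is the real key idea and that makes the whole quadric/degenerate-surface analysis unnecessary. Since $2r+2s$ is an effective member of $|4z_1|$, one gets $h^0(\omega_X-4z_1)\ge 1$, hence a second effective canonical divisor $4z_1+u+v$. If $4z_1+u+v$ and $3z_1+z_2+r+s$ coincide as divisors, then $r=z_1$ (say). If they are distinct, then $h^0(3z_1)=h^0(4z_1)=2$, so $z_1$ is a base point of $|4z_1|$; as $2r+2s\in|4z_1|$, again $r=z_1$. Either way $2z_1\sim 2s$, and one concludes either that $q$ is a global square (when $s=z_1$) or that $X$ is hyperelliptic with $z_1$, and then $z_2$, a Weierstrass point --- contradicting Proposition~\ref{global} or Proposition~\ref{prop:hypopen}. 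This sidesteps the two-torsion issue entirely and requires no case analysis on $F_2$, $F_5$ or limit log surfaces. If you wish to keep your geometric route, the base-point argument above is the missing ingredient you would need to import.
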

\par
\begin{proof} 
Suppose that $(X,q)$ is in the intersection of $\ol{C}$ with $BN^1_{4, (3,1)}$. 
In the case of $\QQ(10,2)^\nh$ we know that $\divisor(q) = 10z_1 + 2z_2 \sim \omega^{\otimes 2}_X$. 
From $h^0(X, 3z_1 + z_2) = 2$ we deduce that $\omega_X\sim 3z_1 + z_2 + r + s$, hence 
$4z_1\sim 2r + 2s$ for some $r$ and $s$. Then $\omega_X\sim 4z_1 + u + v \sim 3z_1 + z_2 + r + s$. 
If these are the same divisor, we have $u = z_2$ and $r = z_1$, hence $2z_1 \sim 2s$. Note that $s\neq p_1$, 
for otherwise $q$ would be a global square. Then we conclude 
that $X$ is hyperelliptic, $z_1$ is a Weierstrass point and $s = p_2$ is also a 
Weierstrass point, contradicting that $(X,q)$ is non-hyperelliptic
and Proposition~\ref{prop:hypopen}.  
\par
If $4z_1 + u + v \sim 3z_1 + z_2 + r + s$ are two different divisors, 
then $z_2 + r + s$ yields a $g^1_3$. By Riemann-Roch, $3z_1$ also yields a 
$g^1_3$. By $h^0(X, 4z_1) = h^0(X, 3z_1) = 2$, $z_1$ is a base 
point of $|4z_1|$. But $2r + 2s$ is a section, hence $r = z_1$. Then we still conclude 
that $X$ is hyperelliptic and $z_1, z_2$ are Weierstrass points, leading to the same contradiction. 
\par
The proofs for $\QQ(8,4)$ and $\QQ(8,3,1)$ are completely analogous. 
\end{proof}
\par
\begin{lemma} For a Teichm\"uller curve $C$ generated by $(X,q)$ 
in $\QQ(7,5)$, lift $C$ to $\barmoduli[4,2]$ using the two zeros of $q$.
Then the intersection with the Brill-Noether divisor $BN^1_{4, (2,2)} \cdot \ol{C} = 0.$
\end{lemma}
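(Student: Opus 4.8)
The plan is to prove the stronger statement that $\ol{C}$ is disjoint from the closure of the geometric Brill--Noether locus $BN^1_{4,(2,2)}$: since $\ol{C}$ is complete and this locus is an effective divisor, set-theoretic disjointness already forces the intersection number to vanish. I would first dispose of the boundary and then argue on the integral fibres.

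\textbf{Reduction to integral fibres.} Let $X_\infty$ be a stable curve over a cusp of $\ol{C}$. By Corollary~\ref{signature} the signature of $q_\infty$ is again $(7,5)$, and by Corollary~\ref{component} the pair $(X_\infty,q_\infty)$ is of polar type with every component carrying a zero. As $q$ has only two zeros, a reducible $X_\infty$ would split into exactly two components, one carrying the order-$7$ zero $z_1$ and the other the order-$5$ zero $z_2$. On the component $Z$ containing $z_1$ the section $q_\infty$ restricts to a section of $\omega_{X_\infty}^{\otimes 2}|_Z$, a line bundle whose degree $4g(Z)-4+2n$ is even ($n$ the number of node branches on $Z$); being of polar type the section is nonzero at the nodes, so its zero divisor is exactly $7z_1$, of odd degree $7$ -- a contradiction. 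The same parity obstruction applies to $z_2$. Hence every cusp of $\ol{C}$ is integral, and $\ol{C}$ avoids all boundary divisors $\delta_{i;S}$ with $i>0$ and the rational-tail divisors $\delta_{0;S}$ ($|S|\ge 2$) that carry nonzero coefficients in the class of $BN^1_{4,(2,2)}$ computed in Lemma~\ref{le:PicBN4}.

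\textbf{Integral fibres.} It remains to show that an integral genus-four fibre $(X,z_1,z_2)$ of $\ol{C}$, with $\omega_X^{\otimes 2}\sim 7z_1+5z_2$, never satisfies $h^0(X,2z_1+2z_2)=2$. Assuming it does, Riemann--Roch gives $K_X\sim 2z_1+2z_2+r+s$ for an effective degree-two divisor $r+s$, and squaring this relation against $2K_X\sim 7z_1+5z_2$ produces the key linear equivalence $2r+2s\sim 3z_1+z_2$. Linear algebra alone is consistent with this, so I would pass to the bicanonical geometry of Section~\ref{g=4: logsurface}: the differential is cut on $X\subset Q$ by a $(2,2)$-curve $E$ with $E\cdot X=7z_1+5z_2$, while the Brill--Noether condition is realised by a hyperplane section $C$ of $Q$ with $C\cdot X=2z_1+2z_2+r+s$, tangent to $X$ at $z_1$ and at $z_2$. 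Comparing $E$ with the $(2,2)$-curve $2C$ reproduces $2r+2s\sim 3z_1+z_2$ and pins down the positions of $r,s$ relative to the two rulings of $Q$. Running the resulting case analysis on $Q$ (smooth quadric, the cone $Q_0$ resolving to $F_2$ for Gieseker--Petri curves, and the scroll $F_5$ for hyperelliptic $X$), exactly as in the treatment of $\QQ(4,4)$ in genus three, several branches give $z_1\sim z_2$, which is absurd, and the delicate branch forces $K_X\sim 2z_1+4z_2$ together with $3z_1\sim 3z_2$. On a smooth quadric the latter makes both rulings of $Q$ through $z_1$ tangent to $X$ at $z_1$, which is impossible since the two rulings through a point have distinct tangent directions; the cone and hyperelliptic branches are excluded analogously, the latter using that $\QQ(7,5)$ carries no hyperelliptic half-translation surface (an invariant divisor $7z_1+5z_2\sim 6g^1_2$ would force $z_1+z_2\sim g^1_2$ with both points Weierstrass) together with Proposition~\ref{prop:hypopen}.

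The main obstacle is precisely this final step: the relation $2r+2s\sim 3z_1+z_2$ cannot be beaten by linear equivalence, so the argument must descend to the projective geometry of the bicanonical model and exhaust the tangency configurations of the conic $C$ and the curve $E$ along the rulings of $Q$, the quadric cone, and the scroll $F_5$. Once the integral fibres are excluded, $\ol{C}$ is disjoint from $BN^1_{4,(2,2)}$, which gives $BN^1_{4,(2,2)}\cdot\ol{C}=0$; the value $L^+=32/21$ then follows from Proposition~\ref{intersection}.
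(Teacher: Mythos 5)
Your boundary reduction is correct and complete: the parity argument ($\deg \omega_{X_\infty}^{\otimes 2}|_Z = 4g(Z)-4+2n$ is even while $7$ and $5$ are odd) does show that every cusp of $\ol{C}$ is irreducible, which is exactly what is needed to kill all boundary terms in the class of $BN^1_{4,(2,2)}$. The derivation $\omega_X \sim 2z_1+2z_2+r+s$ and $2r+2s \sim 3z_1+z_2$ also matches the intended argument. The gap is in what you do next. Your premise that this relation ``cannot be beaten by linear equivalence'' is false, and on the strength of that premise you replace a two-line Riemann--Roch argument by a case analysis on the bicanonical model (smooth quadric, cone, scroll) that you only describe and do not carry out; as written, the branches are not enumerated, the claim that comparing $E$ with $2C$ ``pins down the positions of $r,s$'' is not substantiated, and you never address the branch in which $r=z_1$ (which is in fact the one the correct argument funnels everything into). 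So the core step of the lemma is asserted, not proved.

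The paper's proof is the proof of the $(3,1)$ case (Lemma~\ref{le:g4case102}) with the coefficients changed, and it stays entirely inside linear series. Since $2r+2s$ has only even multiplicities while $3z_1+z_2$ has multiplicities $3$ and $1$, the two divisors are distinct, so $h^0(X,3z_1+z_2)\geq 2$ and $\omega_X \sim 3z_1+z_2+u+v$. Compare with $\omega_X \sim 2z_1+2z_2+r+s$. If these are the same divisor, then (up to relabeling) $r=z_1$ and $2s \sim z_1+z_2$, so $X$ is hyperelliptic with $z_1,z_2$ conjugate, whence $\omega_X^{\otimes 2}\sim 6z_1+6z_2 \sim 7z_1+5z_2$ gives $z_1\sim z_2$, absurd. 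If they are distinct divisors, then $z_1+u+v \sim z_2+r+s$ are distinct, so $h^0(X,2z_1+z_2)=h^0(X,z_1+u+v)= 2 = h^0(X,3z_1+z_2)$, i.e.\ $z_1$ is a base point of $|3z_1+z_2|$; since $2r+2s$ is a member of that system, again $r=z_1$, $2s\sim z_1+z_2$, and the same contradiction follows. No appeal to the quadric, the cone $Q_0$, or the scroll $F_5$ is needed, and the hyperelliptic alternative is excluded directly by $\gcd$-type arithmetic on $7z_1+5z_2$ rather than by Proposition~\ref{prop:hypopen} (which does not list $\QQ(7,5)$). If you want to keep your projective route you must actually exhaust the tangency configurations; as it stands the proposal does not establish the lemma.
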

\par
\begin{proof}The proof of Lemma~\ref{le:g4case102} can be 
copied almost verbatim, replacing
coefficients $(3,1)$ by $(2,2)$ at the appropriate places in the above.
\end{proof}
\par
\begin{lemma} \label{le:g4case732}
For a Teichm\"uller curve $C$ generated by $(X,q)$ 
in $\QQ(7,3,2)$ or $\QQ(5,4,3)$, lift $C$ to $\barmoduli[4,3]$ using the three zeros of $q$.
Then the intersection with the Brill-Noether divisor $BN^1_{4, (2,1,1)} \cdot \ol{C} = 0.$
\end{lemma}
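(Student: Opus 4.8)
The plan is to show that no half-translation surface $(X,q,z_1,z_2,z_3)$ parameterized by $\ol C$ lies on $BN^1_{4,(2,1,1)}$; equivalently, that $h^0(X,2z_1+z_2+z_3)=1$ for every fiber of $\ol C$. Since, by Proposition~\ref{prop:disjointbydegree}, $\ol C$ avoids all the boundary divisors occurring with nonzero coefficient in the class of $BN^1_{4,(2,1,1)}$ computed in Lemma~\ref{le:PicBN4}, such disjointness immediately forces the intersection number $BN^1_{4,(2,1,1)}\cdot\ol C$ to vanish. A useful preliminary observation is that \emph{both strata consist entirely of non-hyperelliptic curves}: if $X$ were hyperelliptic, invariance of $q$ under the involution would place all three zeros at Weierstrass points, and the relation $\divisor(q)\sim\omega_X^{\otimes 2}\sim 6\,g^1_2$ would force two distinct Weierstrass points to be conjugate, which is impossible. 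Moreover $q$ is never a global square in these strata, as each has zeros of odd order, so Proposition~\ref{global} applies along $\ol C$.

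\textbf{Smooth fibers.} For a smooth irreducible fiber I would argue by contradiction, following the template of Lemma~\ref{le:g4case102}. Assuming $h^0(X,2z_1+z_2+z_3)=2$, Riemann--Roch produces an effective divisor $r+s$ with $\omega_X\sim 2z_1+z_2+z_3+r+s$; squaring and comparing with $\omega_X^{\otimes 2}\sim\divisor(q)$ gives $2(r+s)\sim 3z_1+z_2$ for $\QQ(7,3,2)$, respectively $2(r+s)\sim z_1+2z_2+z_3$ for $\QQ(5,4,3)$. In each case the right-hand side carries a point of odd multiplicity, so it cannot equal the even divisor $2(r+s)$; hence the linear system on the right has a second section, and (as $X$ is non-hyperelliptic) $h^0$ equals $2$ exactly. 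Thus $X$ admits a special $g^1_4$ with a point of ramification index $3$ resp.\ $2$. Interpreting this on the canonical model, which for a non-Gieseker--Petri curve lies on the smooth quadric $Q\cong\PP^1\times\PP^1$ of Section~\ref{g=4: logsurface} (and on the quadric cone resolved by $F_2$ in the Gieseker--Petri case), such ramification forces a ruling with excessive tangency to $X$; combining this with the $(2,2)$-curve $E$ that cuts out $\divisor(q)$ collapses two of the $z_i$ or expresses $\omega_X$ as twice an effective class, contradicting respectively the distinctness of the zeros or Proposition~\ref{global}.

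\textbf{Reducible fibers and the main obstacle.} For the cusps I would invoke Corollaries~\ref{signature} and~\ref{component}: the signature of $q_\infty$ is preserved, each component carries at least one zero, and the number of components is at most three. This leaves a short list of degeneration types — a genus-one or genus-zero tail split off along a node, together with the two-node configurations compatible with the degree constraint $\deg(\omega_{X_\infty}^{\otimes 2}|_Z)=4g(Z)-4+2m$ — which I would enumerate and rule out one at a time. On each type the hypothesis $h^0(2z_1+z_2+z_3)=2$ yields an admissible cover; pulling the induced linear equivalences back to the components and using the log-surface limits of $Q$ from Section~\ref{sec:limits} — the cone degenerations and the scroll $F_5$ — I would show that the resulting divisor relations again force $q$ to be a global square (excluded by Proposition~\ref{global}) or force a hyperelliptic admissible cover of a type not realized in these strata (the explicit description of hyperelliptic half-translation surfaces in Section~\ref{sec:hyploci}, together with Proposition~\ref{prop:hypopen} where it applies). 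This degeneration analysis, rather than the smooth case, is the main obstacle, since the bookkeeping of which rulings and conics cut out the zeros on each component of the limit log surface is delicate — exactly as in the genus-three treatments of $\QQ(4,4)$ and $\QQ(4,2,2)$. Once every type is excluded, $\ol C$ is disjoint from $BN^1_{4,(2,1,1)}$, proving the lemma and feeding into the computation of $L^+$ via Proposition~\ref{intersection} in the proof of Theorem~\ref{thm:nvg4}.
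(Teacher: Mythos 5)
The paper's own proof of this lemma is a single sentence: it is ``completely parallel to Lemma~\ref{le:g4case102}'', i.e.\ pure divisor arithmetic on $X$. Your opening moves match that template: assume $h^0(X,2z_1+z_2+z_3)=2$, write $\omega_X\sim 2z_1+z_2+z_3+r+s$, square against $\divisor(q)$ to get $2r+2s\sim 3z_1+z_2$ (resp.\ $z_1+2z_2+z_3$), and observe that the right-hand side is not an even divisor, so the linear system moves. But from there your argument has two genuine gaps. First, the ``preliminary observation'' that these strata contain no hyperelliptic curves is unjustified: a quadratic differential on a hyperelliptic curve need \emph{not} be invariant under the hyperelliptic involution (for $g=4$ the anti-invariant part of $H^0(\omega_X^{\otimes 2})$ already has dimension $2$, and a general section is neither invariant nor anti-invariant), so you cannot conclude that the zeros sit at Weierstrass points. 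Since this claim is load-bearing in your smooth-fiber argument (you use it to pin $X$ onto the smooth quadric and to discard the hyperelliptic branch), the argument does not close. The template handles this differently: when the divisor arithmetic forces $X$ hyperelliptic, it simultaneously forces specific points to be Weierstrass or conjugate, and then $\divisor(q)\sim\omega_X^{\otimes 2}\sim 6g^1_2$ yields $z_i\sim z_j$ for distinct zeros, or exhibits $q$ as a global square against Proposition~\ref{global} --- that computation has to be carried out, not replaced by a blanket non-hyperellipticity claim.

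Second, the endgame is asserted rather than derived. From $h^0(3z_1+z_2)\ge 2$ Riemann--Roch gives a second presentation $\omega_X\sim 3z_1+z_2+u+v$, and the proof of Lemma~\ref{le:g4case102} closes by comparing the two canonical divisors $2z_1+z_2+z_3+r+s$ and $3z_1+z_2+u+v$, splitting into the ``same divisor'' and ``different divisors'' cases and extracting $z_i\sim z_j$ or a hyperelliptic relation in each. Your substitute --- that the ``special $g^1_4$'' forces ``a ruling with excessive tangency'' which, combined with the $(2,2)$-curve $E$, ``collapses two of the $z_i$'' --- is not a derivation, and it reaches for the wrong tools: rulings on the quadric cut out the $g^1_3$'s of $X$, not the $g^1_4$ you produced, and the parity curve $E$ belongs to the exceptional-strata analysis of Section~\ref{sec:g4parity}, not to this lemma. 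Finally, the degeneration analysis you defer as ``the main obstacle'' is in fact short: Corollaries~\ref{signature} and~\ref{component} together with the degree count $\deg(\omega_{X_\infty}^{\otimes 2}|_Z)=4g(Z)-4+2m$ eliminate every reducible type for $\QQ(5,4,3)$ and all but an elliptic tail carrying only the order-two zero for $\QQ(7,3,2)$; you still owe that remaining case, but it is not the combinatorial enumeration you describe.
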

\par
\begin{proof}
The proof of this lemma is completely parallel to Lemma~\ref{le:g4case102}.
\end{proof}
\par
By Proposition~\ref{prop:disjointbydegree} the \Teichmuller curves in these strata
hit none of the boundary terms in the presentation \eqref{eq:PicBN4b} 
of $BN^1_{4,(3,1)}$, $BN^1_{4,(2,2)}$ or $BN^1_{4,(2,1,1)}$, respectively. From
the lemma the claim on the sum of Lyapunov exponents follows.
\par

\begin{appendix} 

\section{Filtration of the Hodge bundle} \label{sec:filtration}
There are three exceptional components $\QQ(6,3,-1)^{\irr}$, $\QQ(12)^{\irr}$ and $\QQ(9,3)^{\irr}$, 
non-varying as claimed in Theorems~\ref{thm:execg3NV}
and~\ref{thm:execg4NV}, whose proofs are not given yet. Since their lifts are contained in 
the Brill-Noether divisors that were used to show non-varying 
for the regular components, respectively, it seems difficult to come up with some other divisors 
disjoint with the \Teichmuller curves in these strata. Here we take an alternative approach, 
adapting the idea of \cite{yuzuo}. Let us first do some preparatory setup. Some of the lemmas below are also stated in \cite{yuzuo}, but we include their proofs for completeness. 
\par 
Let $f : \XX \to C$ be the universal curve over a \Teichmuller curve $C$. Let $S_1, \ldots, S_k$ be 
the disjoint sections specializing to the zeros $z_1,\ldots,z_n$ in each fiber. 
If $h^0(X, \sum_{i=1}^k a_iz_i) = n$ for every fiber $X$ in $\XX$, then $f_{*}\OO_{\XX}(\sum_{i=1}^k a_i S_i)$ 
is a vector bundle of rank $n$ on $C$. Moreover, any subsheaf of a vector bundle on $C$ is locally free. 
\par
\begin{lemma}
\label{le:filtration}
If $h^0(X, \sum_{i=1}^k d_i z_i)$ is the same for every fiber $X$ and if  
$$h^0\Big(X, \sum_{i=1}^k d_i z_i\Big) = h^0\Big(X, \sum_{i=1}^k (d_i - a_i) z_i\Big) +  \sum_{i=1}^k a_i, $$
then the first chern classes of the corresponding direct images are related by
$$ c_1 \Big(f_{*}\OO_{\XX}\Big(\sum_{i=1}^k d_i S_i\Big)\Big) = c_1 \Big(f_{*}\OO_{\XX}\Big(\sum_{i=1}^k (d_i-a_i) S_i\Big)\Big) + \sum_{i=1}^k c_1 (f_{*}\OO_{a_i S_i}(d_i S_i)). $$
\end{lemma}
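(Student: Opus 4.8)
The plan is to exhibit all three vector bundles in the statement as the terms of a short exact sequence of locally free sheaves on $C$, obtained by pushing forward a short exact sequence on $\XX$, and then to invoke additivity of the first Chern class. First I would write down on $\XX$ the sequence
\[
0 \to \OO_\XX\Bigl(\sum_{i=1}^k (d_i - a_i) S_i\Bigr) \to \OO_\XX\Bigl(\sum_{i=1}^k d_i S_i\Bigr) \to \bigoplus_{i=1}^k \OO_{a_i S_i}(d_i S_i) \to 0,
\]
obtained by twisting the structure sequence of the effective divisor $\sum_i a_i S_i \subset \XX$ by $\OO_\XX(\sum_i d_i S_i)$. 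The identification of the quotient as a direct sum uses that the sections $S_i$ are pairwise disjoint, so that the restriction of $\OO_\XX(\sum_j d_j S_j)$ to the thickened section $a_i S_i$ only records the $d_i S_i$ contribution.

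Next I would push forward by $f$ and write $L=\OO_\XX(\sum_i d_i S_i)$ and $L'=\OO_\XX(\sum_i (d_i - a_i) S_i)$. The long exact sequence reads
\[
0 \to f_* L' \to f_* L \to \bigoplus_{i=1}^k f_* \OO_{a_i S_i}(d_i S_i) \to R^1 f_* L' \to \cdots.
\]
Since $a_i S_i$ is finite flat of degree $a_i$ over $C$ (the total space $\XX$ is smooth along $S_i$, which passes through smooth points of the fibers), each sheaf $f_* \OO_{a_i S_i}(d_i S_i)$ is locally free of rank $a_i$. The hypothesis that $h^0(X, \sum_i d_i z_i)$ is constant, together with Grauert's theorem, makes $f_* L$ locally free with formation commuting with base change to fibers; combined with the numerical hypothesis this forces $h^0(X, \sum_i (d_i - a_i) z_i)$ to be constant as well, so the same conclusion holds for $f_* L'$.

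The crux is to show that the connecting map to $R^1 f_* L'$ vanishes, equivalently that $f_* L \to \bigoplus_i f_* \OO_{a_i S_i}(d_i S_i)$ is surjective, and here I would argue fiber by fiber. Restricting the sequence on $\XX$ to a fiber $X$ and taking cohomology gives an exact sequence whose relevant portion is
\[
0 \to H^0\Bigl(\sum_i (d_i - a_i) z_i\Bigr) \to H^0\Bigl(\sum_i d_i z_i\Bigr) \to \bigoplus_{i=1}^k H^0\bigl(\OO_{a_i z_i}(d_i z_i)\bigr) \to H^1\Bigl(\sum_i (d_i - a_i) z_i\Bigr).
\]
The middle target has dimension $\sum_i a_i$, so the hypothesis $h^0(\sum_i d_i z_i) = h^0(\sum_i (d_i - a_i) z_i) + \sum_i a_i$ says exactly that the evaluation map $H^0(\sum_i d_i z_i) \to \bigoplus_i H^0(\OO_{a_i z_i}(d_i z_i))$ is surjective on every fiber. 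Because source and target are locally free with formation commuting with base change, fiberwise surjectivity upgrades by Nakayama's lemma to surjectivity of the map of sheaves on $C$.

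Consequently $0 \to f_* L' \to f_* L \to \bigoplus_i f_* \OO_{a_i S_i}(d_i S_i) \to 0$ is a short exact sequence of locally free sheaves, and additivity of $c_1$ in exact sequences yields the asserted identity. The only genuine obstacle is the vanishing of the connecting homomorphism; this is precisely what the numerical hypothesis guarantees, via cohomology and base change, while the remaining points are routine bookkeeping with the disjoint sections $S_i$.
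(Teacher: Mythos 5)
Your proof is correct and follows essentially the same route as the paper: both push forward the same twisted structure sequence of $\sum_i a_i S_i$ and reduce to showing that the three-term truncation $0 \to f_*L' \to f_*L \to \bigoplus_i f_*\OO_{a_iS_i}(d_iS_i) \to 0$ is exact, the only (cosmetic) difference being that the paper kills the connecting map by noting via Riemann--Roch that the two $R^1f_*$ terms are locally free of equal rank and hence isomorphic, whereas you verify the equivalent fact directly by fiberwise surjectivity of the evaluation map plus Nakayama. Both arguments rest on the same dimension count supplied by the numerical hypothesis.
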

\par
\begin{proof}
We have the exact sequence 
$$ 0 \to f_{*}\OO_{\XX}\Big(\sum_{i=1}^k (d_i-a_i) S_i\Big) \to f_{*}\OO_{\XX}\Big(\sum_{i=1}^k d_i S_i\Big) \to \sum_{i=1}^k f_{*}\OO_{a_i S_i}(d_i S_i) $$
$$ \to R^1 f_{*}\OO_{\XX}\Big(\sum_{i=1}^k (d_i-a_i) S_i\Big) \to R^1f_{*}\OO_{\XX}\Big(\sum_{i=1}^k d_i S_i\Big) \to 0. $$
All terms are locally free by assumption. The two $R^1f_{*}$ terms have the same rank 
by Riemann-Roch, hence they are isomorphic and then the claim follows.  
\end{proof}
\par
\begin{lemma}
\label{le:trivial}
Suppose the $a_i$ are positive integers. If $h^0(X, \sum_{i=1}^k a_i z_i) = 1$ for every 
fiber $X$, then $f_{*}\OO_{\XX}(\sum_{i=1}^k a_i S_i) = \OO_C$. 
\end{lemma}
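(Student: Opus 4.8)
The plan is to exhibit a canonical morphism $\OO_C \to f_{*}\OO_{\XX}(\sum_{i=1}^k a_i S_i)$ and prove it is an isomorphism by a fiberwise check. First I would record that, since $h^0(X,\sum_{i=1}^k a_i z_i)=1$ is constant in the flat proper family $f\colon\XX\to C$, cohomology and base change (Grauert's theorem, applicable because $C$ is a smooth, hence reduced, curve) shows that $f_{*}\OO_{\XX}(\sum_{i=1}^k a_i S_i)$ is a line bundle on $C$ whose formation commutes with base change; in particular its fiber at a point $c\in C$ is canonically identified with $H^0(X_c,\sum_{i=1}^k a_i z_i)$.

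Next, because the $a_i$ are positive, the divisor $\sum_{i=1}^k a_i S_i$ is effective, so its canonical defining section yields an inclusion of sheaves $\OO_{\XX}\hookrightarrow \OO_{\XX}(\sum_{i=1}^k a_i S_i)$. Pushing forward along $f$, and using that the fibers of $f$ are connected (stable) curves so that $f_{*}\OO_{\XX}=\OO_C$, produces a natural morphism $\varphi\colon \OO_C \to f_{*}\OO_{\XX}(\sum_{i=1}^k a_i S_i)$ of line bundles on $C$.

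The key step is to verify that $\varphi$ is an isomorphism on each fiber. Over $c\in C$ it becomes the map $H^0(X_c,\OO_{X_c})\to H^0(X_c,\sum_{i=1}^k a_i z_i)$ sending the constant function $1$ to the canonical section cutting out the divisor $\sum_{i=1}^k a_i z_i$. That image is nonzero, while both source and target are one-dimensional (the former since $X_c$ is connected, the latter by hypothesis), so the fiber map is an isomorphism. By compatibility of $\varphi$ with base change, the induced map on fibers $\OO_C\otimes k(c)\to f_{*}\OO_{\XX}(\sum_{i=1}^k a_i S_i)\otimes k(c)$ is an isomorphism for every $c$.

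Finally, a morphism of line bundles that is an isomorphism at every point of $C$ is surjective by Nakayama's lemma, and a surjection of line bundles is automatically an isomorphism; hence $f_{*}\OO_{\XX}(\sum_{i=1}^k a_i S_i)\cong \OO_C$. The only point requiring care is the invocation of base change, which is legitimate exactly because the relevant $h^0$ is constant equal to $1$; once this is granted the remaining steps are formal.
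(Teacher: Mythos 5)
Your proof is correct, but it follows a genuinely different route from the paper's. The paper argues by induction on the multiplicities: it peels off one copy of $S_1$ at a time via the exact sequence
$$0 \to f_{*}\OO_{\XX}\Big((a_1-1)S_1+\sum_{i=2}^k a_iS_i\Big) \to f_{*}\OO_{\XX}\Big(\sum_{i=1}^k a_iS_i\Big) \to f_{*}\OO_{S_1}\Big(\sum_{i=1}^k a_iS_i\Big),$$
observes that the first two terms are line bundles (by the constancy of $h^0$) whose quotient injects into the torsion-free sheaf $\OO_{S_1}(a_1S_1)$ on the curve, hence is zero, and descends step by step to $f_{*}\OO_{\XX}=\OO_C$. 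You instead produce the isomorphism directly: Grauert's theorem makes $f_{*}\OO_{\XX}(\sum a_iS_i)$ a line bundle commuting with base change, the canonical section of the effective divisor gives a map $\OO_C\to f_{*}\OO_{\XX}(\sum a_iS_i)$, and a fiberwise check (nonzero map between one-dimensional spaces) plus Nakayama finishes it. Your argument is shorter and exhibits an explicit canonical trivialization, whereas the paper's inductive version is uniform with the surrounding Lemmas~\ref{le:filtration} and~\ref{le:non-reduced}, which reuse the same exact-sequence bookkeeping to track first Chern classes; both rely on the same initial observation that constant $h^0$ forces the direct image to be locally free. The one point you rightly flag -- that base change is legitimate because $h^0\equiv 1$ and $C$ is reduced -- is handled correctly, so there is no gap.
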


\begin{proof}
Consider the exact sequence 
$$ 0 \to f_{*}\OO_{\XX}((a_1-1)S_1+\sum_{i = 2}^k a_i S_i) \to f_{*}\OO_{\XX}\Big(\sum_{i=1}^k a_i S_i\Big) \to f_{*}\OO_{S_1}\Big(\sum_{i=1}^k a_i S_i\Big) $$
$$ \to R^1f_{*}\OO_{\XX}\Big((a_1-1)S_1+\sum_{i = 2}^k a_i S_i\Big) \to R^1f_{*}\OO_{\XX}\Big(\sum_{i=1}^k a_i S_i\Big) \to 0. $$
By assumption we have 
$$h^0\Big(X, \sum_{i=1}^k a_i S_i\Big) = h^0\Big(X, (a_1-1)S_1+\sum_{i = 2}^k a_i S_i\Big) = 1,$$ 
hence the first two $f_{*}$ terms are line bundles and the last two $R^1f_{*}$ terms are vector bundles whose ranks differ by one. The middle term is also a line bundle isomorphic to $\OO_{S_1}(a_1S_1)$, which cannot have a torsion subsheaf. Hence the first two line bundles are isomorphic. Now the claim follows by induction on $a$. 
\end{proof}
\par
\begin{lemma}
\label{le:non-reduced}
Let $S$ be a section in the above setting and $a \geq 1$. Define $aS$ to be the subscheme of $\XX$ whose ideal sheaf is $\OO(-aS)$.  
Then for any integer $b$ we have 
$$ c_1(f_{*}\OO_{aS}(bS)) = \sum_{i=0}^{a-1} c_1(f_{*}\OO_{S}((b-i)S)). $$ 
\end{lemma}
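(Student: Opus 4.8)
The plan is to realize $\OO_{aS}(bS)$ as an iterated extension whose graded pieces are line bundles supported on $S$, and then push everything forward along $f$, using that the section $S$ maps isomorphically to $C$. Writing $I = \OO_{\XX}(-S)$ for the ideal sheaf of $S$, so that $\OO_{aS} = \OO_{\XX}/I^a$, I would first set up on $\XX$ the decreasing filtration
\[
\OO_{aS} = I^0/I^a \supseteq I^1/I^a \supseteq \cdots \supseteq I^{a-1}/I^a \supseteq I^a/I^a = 0,
\]
whose successive quotients are $I^i/I^{i+1}$ for $0 \le i \le a-1$. Here I use that $I^i = \OO_{\XX}(-iS)$, valid since $S$ is a Cartier divisor in the smooth locus of the fibres.

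Next I would identify the graded pieces. Tensoring the structure sequence $0 \to I \to \OO_{\XX} \to \OO_S \to 0$ with the line bundle $\OO_{\XX}(-iS)$ and using that an invertible sheaf is flat (so $\operatorname{Tor}_1(\OO_S, \OO_{\XX}(-iS)) = 0$) gives $I^i/I^{i+1} \cong \OO_{\XX}(-iS)|_S \cong \OO_S(-iS)$, where the twist records the conormal bundle $N_{S/\XX}^{\vee} = \OO_S(-S)$. Tensoring the entire filtration with the invertible sheaf $\OO_{\XX}(bS)$ is exact and therefore exhibits $\OO_{aS}(bS)$ as an iterated extension with graded pieces $\OO_S((b-i)S)$ for $i = 0, \ldots, a-1$.

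Finally I would apply $f_{*}$. Every sheaf occurring in the filtration is supported on $aS$, which is finite over $C$ with zero-dimensional fibres, so its $R^1f_{*}$ vanishes; consequently $f_{*}$ sends each short exact sequence of the filtration to a short exact sequence of locally free sheaves on $C$, and $f_{*}\OO_S((b-i)S)$ is the line bundle $\OO_S((b-i)S)$ transported along the isomorphism $f|_S : S \xrightarrow{\sim} C$. By additivity of $c_1$ on short exact sequences, summing the resulting relations $c_1(f_{*}(I^iG/I^aG)) = c_1(f_{*}(I^{i+1}G/I^aG)) + c_1(f_{*}\OO_S((b-i)S))$ over $i$, with $G = \OO_{aS}(bS)$, yields the claimed formula. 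The only step requiring genuine care — and the one I would flag as the crux — is the identification of the graded pieces as $\OO_S((b-i)S)$, since it is precisely the conormal twist that produces the shift $b \mapsto b-i$; once this and the $R^1f_{*}$ vanishing are in place, the additivity of $c_1$ finishes the argument immediately.
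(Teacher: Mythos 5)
Your proof is correct and is essentially the paper's argument in a slightly different packaging: the paper runs an induction using the single exact sequence $0 \to \II_{aS/(a+1)S} \to \OO_{(a+1)S} \to \OO_{aS} \to 0$ with $\II_{aS/(a+1)S} \cong (N^{*}_{S/\XX})^{\otimes a} \cong \OO_S(-aS)$, while you write down the full conormal filtration $I^i/I^a$ at once; the graded pieces $\OO_S((b-i)S)$, the vanishing of $R^1f_*$ on sheaves finite over $C$, and the additivity of $c_1$ are identical in both. No gaps.
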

\par
\begin{proof}
The claim holds trivially for $a=1$. Suppose it is true for all positive integers smaller than or equal to $a$. Treating $aS$ as a subscheme of $(a+1)S$, we have 
$$ 0 \to \II_{aS/(a+1)S}   \to \OO_{(a+1)S} \to \OO_{aS} \to 0. $$
The ideal sheaf $\II_{aS/(a+1)S}$ is isomorphic to $(N^{*}_{S/\XX})^{\otimes a}$, where $N^{*}_{S/\XX}$
is the conormal bundle isomorphic to $\OO_S(-aS)$.
Tensor the sequence with $\OO_{\XX}(bS)$ and apply $f_{*}$. We obtain that 
$$ 0 \to f_{*}\OO_S((b-a)S) \to f_{*}\OO_{(a+1)S}(bS) \to f_{*}\OO_{aS}(bS) \to 0, $$
since $R^1f_{*}$ is zero acting on any line bundle over $S$. Then the claim follows by induction. 
\end{proof}
\par
Let $\omega$ be the relative dualizing sheaf, $\gamma = c_1(\omega)$ and $\eta$ the nodal locus in $\XX$. In order to deal with quadratic differentials, 
we need to express $c_1(f_{*}(\omega^{\otimes 2}))$ by tautological classes on the moduli space of curves. An introduction to the calculations of the following type can be found in \cite[Chapter 3.E]{harrismorrison}.
\par
Since all the higher direct images of $\omega^{\otimes 2}$ are zero, by Grothendieck-Riemann-Roch we have 
\bas
\chern(f_{*}(\omega^{\otimes 2}))  & =  f_{*}\Big(\chern(\omega)^2\cdot\Big(1 - \frac{\gamma}{2} + \frac{\gamma^2 + \eta}{12}  \Big)\Big) \\
                                                             & =  f_{*}\Big( \Big(1 + \gamma + \frac{\gamma^2}{2}  \Big)^2\cdot  \Big(1 - \frac{\gamma}{2} + \frac{\gamma^2 + \eta}{12}  \Big)\Big) \\
                                                              & =  f_{*}\Big(1  + \frac{3}{2}\gamma + \frac{13\gamma^2 + \eta}{12}   \Big) \\
                                                               & =  (3g-3) +  \Big(\frac{13}{12}\kappa + \frac{1}{12}\delta \Big) \\
                                                                 & =  (3g-3) + (\lambda + \kappa), 
\eas
where $\kappa = f_{*}(\gamma^2)$. 
Hence we conclude that 
$$ c_1 (f_{*}(\omega^{\otimes 2})) = \lambda + \kappa. $$
Recall the notation and the intersection calculation in Proposition~\ref{intersection}. We have 
$$ \omega^{\otimes 2} = f^{*}\cF \otimes \OO_{\XX}\Big(\sum_{j=1}^n d_j S_j\Big). $$
By the projection formula, we get 
$$ f_{*}(\omega^{\otimes 2}) = \cF \otimes f_{*}\OO_{\XX}\Big(\sum_{j=1}^n d_j S_j\Big). $$
Note that $f_{*}\OO_{\XX}(\sum_{j=1}^n d_j S_j)$ is a vector bundle of rank $3g-3$, whose fibers are $H^0(X, \omega_X^{\otimes 2})$ for $X$ parameterized in $C$. Then we conclude that 
\bas
 \frac{c_1(f_{*}\OO_{\XX}(\sum_{j=1}^n d_j S_j))}{\chi} & =  \frac{C\cdot \lambda}{\chi} + \frac{C\cdot\kappa}{\chi} - (3g-3)    \\
                                                      & =  \frac{1}{2}L^{+}(C) + 6\kappa_{\mu} - (3g-3), 
 \eas
 where $\chi = \deg(\cF)$ and 
 $\kappa_{\mu} = \frac{1}{24} \big(\sum_{j=1}^{n}\frac{d_j(d_j+4)}{d_j + 2}\big)$
 for the signature $\mu = (d_1, \ldots, d_n)$. 
Therefore, we obtain the following expression: 
\ba \label{eq:GRR}
 L^{+}(C) & = (6g-6) + 2 \cdot \frac{c_1(f_{*}\OO_{\XX}(\sum_{j=1}^n d_j S_j))}{\chi} - 12 \kappa_{\mu}. 
\ea
\par

\subsection{The component $\QQ(6,3,-1)^{\irr}$} \label{sec:63-1irr}
Let $(X,q)$ be a half-translation surface parameterized by a \Teichmuller curve $C$ in $\QQ(6,3,-1)^{\irr}$. 
We have $6z_1 + 3z_2 - p \sim \omega_{X}^{\otimes 2}$. Moreover, by the parity we know $ h^0(X, 2z_1 + z_2) = 2$. 
\par
\begin{lemma}
We have $h^0(X, 3z_1 + z_2) = 2$ and $h^0(X, z_1 + z_2) = 1$ for all $X$ in $C$. 
\end{lemma}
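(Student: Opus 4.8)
The plan is to argue fibrewise and to reduce everything to Riemann--Roch together with the linear equivalence $\omega_X^{\otimes2}\sim6z_1+3z_2-p$ coming from $q$, which persists over all of $\ol{C}$ by Corollary~\ref{signature}. First I would record the chain of inclusions $H^0(z_1+z_2)\subseteq H^0(2z_1+z_2)\subseteq H^0(3z_1+z_2)$, so that the hypothesis $h^0(2z_1+z_2)=2$ immediately gives $1\le h^0(z_1+z_2)\le 2$ and $h^0(3z_1+z_2)\ge2$ on every fibre. It therefore suffices to pin down the two extreme values, i.e.\ to forbid an upward jump in each case.

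For $h^0(3z_1+z_2)=2$, I would apply Riemann--Roch on the (arithmetic) genus-three fibre $X$: since $\deg(3z_1+z_2)=4$ one gets $h^0(3z_1+z_2)=2+h^0(\omega_X(-3z_1-z_2))$, and the residual line bundle $\omega_X(-3z_1-z_2)$ has degree $0$, hence $h^0\le1$ with equality if and only if $\omega_X\sim3z_1+z_2$. Squaring this equivalence and comparing with $\omega_X^{\otimes2}\sim6z_1+3z_2-p$ would force $z_2\sim p$; as $z_2$ and $p$ are distinct smooth points, a nonconstant function with divisor $z_2-p$ would give a degree-one map to $\PP^1$, impossible on a connected curve of arithmetic genus at least one. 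This rules out the jump to $3$.

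For $h^0(z_1+z_2)=1$ I would argue by contradiction: $h^0(z_1+z_2)=2$ means $z_1+z_2$ is a $g^1_2$, so $X$ is hyperelliptic with $z_1,z_2$ conjugate and $\omega_X\sim2(z_1+z_2)$. Then $\omega_X^{\otimes2}\sim4z_1+4z_2$, and comparing with $\omega_X^{\otimes2}\sim6z_1+3z_2-p$ yields $p+z_2\sim2z_1$. Distinguishing whether $z_1$ is a ramification point of the hyperelliptic pencil, one branch gives $2z_1\sim z_1+z_2$ and hence $p\sim z_1$, while the other forces $h^0(2z_1)\ge2$ even though $p+z_2\ne2z_1$; both contradict $z_1\ne z_2,p$. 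Hence $h^0(z_1+z_2)=1$.

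The hard part is that these equalities are asserted for \emph{all} $X$ in $C$, i.e.\ including the nodal fibres over the cusps of $\ol{C}$, where upper semicontinuity only bounds $h^0$ from below on the generic fibre and so cannot by itself exclude an upward jump at the boundary. I expect the delicate point to be the reducible degenerations: one must check that Riemann--Roch and Serre duality still apply to the relevant line bundles on the stable curves, that the equivalence $\omega_X^{\otimes2}\sim6z_1+3z_2-p$ continues to hold, and---using Corollary~\ref{component}, which forces every component of a degenerate fibre to carry a singularity of $q_\infty$---that no $\PP^1$-component can create a spurious pencil through $z_1,z_2$ or make $z_2$ and $p$ linearly equivalent. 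Once these configuration constraints are in place, the two computations above go through on the boundary as well and yield the constancy of $h^0$ needed to feed into Lemma~\ref{le:filtration}.
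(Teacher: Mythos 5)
Your handling of the irreducible fibres is correct and coincides with the paper's own argument: Riemann--Roch converts $h^0(X,3z_1+z_2)=3$ into $\omega_X\sim 3z_1+z_2$, which against $\omega_X^{\otimes 2}\sim 6z_1+3z_2-p$ forces $z_2\sim p$; and $h^0(X,z_1+z_2)=2$ forces $X$ hyperelliptic with $z_1,z_2$ conjugate, whence $2z_1\sim z_2+p$ produces a second $g^1_2$ and the contradiction $z_1\sim z_2$. Your bookkeeping via the inclusions $H^0(z_1+z_2)\subseteq H^0(2z_1+z_2)\subseteq H^0(3z_1+z_2)$ is also fine.

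The genuine gap is the boundary fibres. You correctly identify them as the crux but then only assert that "once these configuration constraints are in place, the two computations above go through," without establishing the constraints or running the computation. This is not a removable formality: the lemma is used precisely to make $f_*\OO_\XX(3S_1+S_2)$ and its relatives locally free on all of $\ol{C}$, so the nodal fibres carry the entire content. Concretely, two things are missing. First, the classification of degenerations: Corollaries~\ref{signature} and~\ref{component}, together with the degree of $\omega_{X_\infty}^{\otimes 2}$ on each component, show that the only reducible fibre is $X_1\cup_t X_2$ with $g(X_1)=1$, $g(X_2)=2$, $z_2,p\in X_1$ and $z_1\in X_2$; without this one cannot even set up the argument. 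Second, on such a curve your two steps need modification: a degree-zero line bundle on a reducible nodal curve with a nonzero section need not be trivial unless its multidegree is $(0,0)$ (which must be read off from the configuration just described), and the implication "$h^0(z_1+z_2)=2$ implies $X$ hyperelliptic with $z_1,z_2$ conjugate" becomes an admissible-cover statement whose use on $X_1\cup X_2$ is not the one you invoke. The paper avoids both issues by computing directly on the components: $h^0(X_2,3z_1)=2$ by Riemann--Roch on the genus-two piece, $h^0(X_1,z_2)=1$, and matching values at the node $t$ gives $h^0(X,3z_1+z_2)=2+1-1=2$; likewise $h^0(X,z_1+z_2)=1+1-1=1$. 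Supplying this component-wise count (or verifying the multidegree and triviality claims needed to rerun your Riemann--Roch argument on the nodal fibre) is what your proposal still owes.
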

\par
\begin{proof}
Consider first the case when $X$ is irreducible. If $h^0(X, 3z_1 + z_2) \geq 3$, then $\omega_X\sim 3z_1 + z_2$ and 
$6z_1 + 2z_2 \sim 6z_1 + 3z_2 - p$. It implies that $z_2\sim p$, impossible. If $h^0(X, z_1 + z_2) \geq 2$, then $X$ 
is hyperelliptic and $z_1, z_2$ are conjugate. Then we have $\omega_X\sim 2z_1 + 2z_2$ and $2z_1 \sim z_2 + p$, contradicting 
that $z_1, z_2$ are conjugate. 
\par
Next, suppose $X$ is reducible. By Corollaries~\ref{signature} and~\ref{component}, the only degenerate type is that $X$ consists of $X_1$ and $X_2$ of genus 
$1$ and $2$, respectively, joined at a node $t$ such that $X_1$ contains $z_2, p$ and $X_2$ contains $z_1$. 
By $h^0(X_2, 3z_1) = 2$ and $h^0(X_1, z_2) = 1$, it implies that $h^0(X, 3z_1 + z_2) = 2$, since a section on $X_1$ and a section on $X_2$ need to have the same value if they can be glued to form a global section on $X$. Similarly by $h^0(X_2, z_1) = 1$ and $h^0(X_1, z_2) = 1$, we conclude that 
$h^0(X, z_1 + z_2) = 1$. 
\end{proof}
\par
Now we are ready to calculate $L^+$. Let $S_1$, $S_2$ and $S_3$ be the sections in $\XX$ corresponding to the loci of $z_1$, $z_2$ and $p$, respectively. 
By the exact sequence 
$$ 0 \to f_{*}\OO_{\XX}(6S_1+3S_2 - S_3) \to f_{*}\OO_{\XX}(6S_1+3S_2) \to f_{*}\OO_{S_3} \to 0, $$
we have 
$$c_1(f_{*}\OO_{\XX}(6S_1+3S_2 - S_3)) = c_1(f_{*}\OO_{\XX}(6S_1+3S_2)). $$
Since $R^1f_{*}\OO_{X}(3S_1 + S_2) = 0$ by the above lemma, we obtain that  
\bas
c_1 (f_{*}\OO_{\XX}(6S_1 + 3S_2 )) & =  c_1(f_{*}\OO_{\XX}(3S_1+S_2)) + c_1 (f_{*}\OO_{3S_1}(6S_1)) + c_1(f_{*}\OO_{2S_2}(3S_2)) \\
                                                & =  c_1(f_{*}\OO_{\XX}(3S_1 + S_2)) + 15 S_1^2 + 5 S_2^2, 
\eas
where Lemma~\ref{le:non-reduced} is used in the last equality. 
Similarly using Lemmas~\ref{le:filtration} and~\ref{le:trivial} one can show that 
\bas
 c_1(f_{*}\OO_{\XX}(3S_1 + S_2)) & =  c_1(f_{*}\OO_{\XX}(2S_1 + S_2)) \\
 & =  c_1(f_{*}\OO_{\XX}(S_1 + S_2)) + 2S_1^2  \\
 &=  2S_1^2. 
\eas
Then we obtain that 
$$  c_1 (f_{*}\OO_{\XX}(6S_1+3S_2 - S_3)) = 17 S_1^2 + 5S_2^2 = - \frac{25}{8} \chi, $$
where we use the self-intersection formula of the $S_j$ in Proposition~\ref{intersection}. 
Finally by the equality~\eqref{eq:GRR}, we conclude that $L^{+}(C) = 7/5$.  
\par

\subsection{The component $\QQ(12)^{\irr}$} \label{sec:12irr}
Let $(X,q)$ be a half-translation surface parameterized by a \Teichmuller curve $C$ in $\QQ(12)^{\irr}$. 
We have $12z \sim \omega_X^{\otimes 2}$. By the parity we know $h^0(X, 4z) = 2$. 

\begin{lemma}
We have $h^0(X, 6z) = 3$, $h^0(X, 5z) = 2$ and $h^0(X, 3z) = 1$ for all $X$ in $C$. 
\end{lemma}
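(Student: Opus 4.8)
The plan is to read off all three dimensions from Riemann--Roch on $X$, using only the defining relation $12z\sim 2K_X$ of the stratum, the parity equality $h^0(X,4z)=2$ that characterizes $\QQ(12)^\irr$, and two exclusions: $X$ is not hyperelliptic and $q$ is not a global square. The non-square condition is forced by Proposition~\ref{global}, and it is equivalent to $K_X\not\sim 6z$, since $h^0(X,2K_X-12z)=h^0(X,\OO_X)=1$ shows that the only section of $\omega_X^{\otimes 2}$ vanishing to order $\geq 12$ at $z$ is $q$ up to scale, and this is a square exactly when $6z\sim K_X$. First I would record that every fiber is of the relevant shape: because $\QQ(12)$ has a single singularity, Corollaries~\ref{signature} and~\ref{component} force each degenerate fiber to be irreducible and to carry again a quadratic differential with one zero of order $12$, so no reducible configuration ever arises and I always argue on an irreducible Gorenstein curve with $z$ a smooth point. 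A hyperelliptic fiber is likewise impossible: every quadratic differential on a hyperelliptic curve is invariant under the hyperelliptic involution, so the single zero $z$ would be a fixed point, i.e.\ a Weierstrass point, whence $6z\sim K_X$ and $q$ would be a square.

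With these preliminaries in place the two easy values follow immediately. For $n=6$, Riemann--Roch gives $h^0(X,6z)=3+h^0(X,K_X-6z)$ with $\deg(K_X-6z)=0$, so the correction term is $1$ precisely when $K_X\sim 6z$; as $q$ is not a square this term vanishes and $h^0(X,6z)=3$. For $n=5$, $h^0(X,5z)=2+h^0(X,K_X-5z)$ with $\deg(K_X-5z)=1$; a correction of $1$ would give $K_X\sim 5z+t$, hence $2K_X\sim 10z+2t$, and comparison with $2K_X\sim 12z$ yields $2z\sim 2t$. If $z\neq t$ this produces a $g^1_2$ and $X$ is hyperelliptic, while $z=t$ gives $K_X\sim 6z$; both are excluded, so $h^0(X,5z)=2$.

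The delicate statement is $h^0(X,3z)=1$. The lower bound is clear, and $h^0(X,3z)=h^0(X,K_X-3z)\geq h^0(X,K_X-4z)=1$, where the last equality is the parity $h^0(X,4z)=2$ (which via Riemann--Roch gives $K_X\sim 4z+r+s$ with $h^0(X,r+s)=1$). So I must rule out $h^0(X,3z)=2$. Assuming it, both $|3z|$ and $|K_X-3z|=|z+r+s|$ are $g^1_3$'s summing to $K_X$, and squaring $K_X\sim 4z+r+s$ against $2K_X\sim 12z$ yields $4z\sim 2(r+s)$; if in addition $r+s\sim 2z$ then $K_X\sim 6z$, contradicting that $q$ is not a square. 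The hard part will be to exclude the residual possibility that $(r+s)-2z$ is a nontrivial two-torsion class, equivalently that the two $g^1_3$'s are distinct and $z$ is a total ramification point of the trigonal pencil $|3z|$, since this case survives all the purely numerical constraints.

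To handle it I would pass to the bicanonical model of Section~\ref{g=4: logsurface}: on the smooth quadric $Q$ carrying the canonical curve, $h^0(X,3z)=2$ means the ruling $\ell$ through $z$ is tritangent, $\ell\cdot X=3z$, and the parity conic $E\in|\OO_Q(2)|$ with $E\cdot X=\divisor(q)=12z$ is then tangent to $\ell$ to maximal order. As in the genus three analysis of $\QQ(4,4)$ and $\QQ(4,2,2)$, tracing the forced degeneration of $E$ should exhibit it as a double ruling, so that $q$ becomes a global square, contradicting Proposition~\ref{global}. This quadric/log-surface argument, rather than the Riemann--Roch bookkeeping, is where the real work lies; once $h^0(X,3z)=2$ is excluded on every fiber, the three dimensions are constant over $C$ and feed into the filtration computation exactly as in Section~\ref{sec:63-1irr}.
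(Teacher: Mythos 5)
Your computations of $h^0(X,6z)$ and $h^0(X,5z)$ follow the same Riemann--Roch route as the paper, but the write-up has two problems, one repairable and one a genuine gap. The repairable one: your blanket exclusion of hyperelliptic fibers rests on the claim that every quadratic differential on a hyperelliptic curve is invariant under the hyperelliptic involution. This is false for $g\geq 3$: the anti-invariant part of $H^0(X,\omega_X^{\otimes 2})$ has dimension $g-2$, so for $g=4$ it is nonzero. The paper never excludes hyperelliptic fibers wholesale; it only observes that in the configurations that actually arise (e.g.\ $2r\sim 2z$ with $r\neq z$) the points involved become Weierstrass points, whence $\omega_X\sim 6z$ and $q$ is a global square, contradicting Proposition~\ref{global}. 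Your $n=5$ step can be repaired in exactly this way without any a priori non-hyperellipticity.

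The genuine gap is in $h^0(X,3z)=1$. You correctly isolate the residual case where $|3z|$ and $|z+r+s|$ are two distinct $g^1_3$'s with $\omega_X\sim 4z+r+s$ and $2r+2s\sim 4z$, but you then assert that this case ``survives all the purely numerical constraints'' and defer to an unexecuted log-surface argument (``tracing the forced degeneration of $E$ \emph{should} exhibit it as a double ruling''). In fact it does not survive the numerical constraints: since $h^0(X,4z)=h^0(X,3z)=2$, the point $z$ is a base point of $|4z|$, and $2r+2s$ is an effective member of $|4z|$, so $z\in\{r,s\}$, say $r=z$; then $\omega_X\sim 5z+s$ forces $h^0(X,5z)\geq 3$, contradicting the value you have just established. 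This short base-point argument is exactly how the paper closes the case; without it, or a completed version of your quadric argument, the proof of $h^0(X,3z)=1$ is incomplete.
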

 
\begin{proof}
Since $q$ has a unique zero, $X$ is irreducible. If $h^0(X, 6z) \geq 4$, we have 
$\omega_X \sim 6z$ and $q$ is a global square, impossible. If $h^0(X, 5z)\geq 3$, then 
$\omega_X\sim 5z+ r$ for some $r\neq z$. We have $10z + 2r\sim 12z$, hence $2r \sim 2z$,  
$X$ is hyperelliptic and $z$ is a Weierstrass point, still contradicting that $q$ is not a global square. 
 If $h^0(X, 3z) \geq 2$, $X$ cannot be Gieseker-Petri special, for otherwise $q$ would be a global square. Then $|3z|$ gives rise to a 
 $g^1_3$ and suppose $|z + r + s|$ is the other $g^1_3$. We have $\omega_X\sim 4z + r + s$ and $2r + 2s \sim 4z$. But $z$ is a base point 
 of $|4z|$, hence $r = z$ and $\omega_X\sim 5z+ s$, contradicting that $h^0(X, 5z) = 2$ as shown before. 
\end{proof}

Let $S$ be the section of zeros of $q$ in $\XX$. By the above lemma, $R^1f_{*}\OO_{X}(5S) = 0$, hence we conclude that  
\bas
c_1 (f_{*}\OO_{\XX}(12S)) & =  c_1(f_{*}\OO_{\XX}(5S)) + c_1 (f_{*}\OO_{7S}(12S)) \\
                                                & =  c_1(f_{*}\OO_{\XX}(5S)) + 63 S^2. 
\eas
Similarly we can show that
\bas
 c_1(f_{*}\OO_{\XX}(5S)) & =  c_1(f_{*}\OO_{\XX}(4S)) \\
 & =  c_1(f_{*}\OO_{\XX}(3S)) + 4S^2 \\
 &=  4S^2. 
\eas
Then we obtain that 
$$  c_1 (f_{*}\OO_{\XX}(12S)) = 67 S^2 = - \frac{67}{14} \chi. $$
By the identity~\eqref{eq:GRR}, we conclude that $ L^{+}(C) = 11/7$. 
\par
\subsection{The component $\QQ(9,3)^{\irr}$} \label{sec:93irr}
Let $(X,q)$ be a half-translation surface parameterized by a \Teichmuller curve $C$ in $\QQ(9,3)^{\irr}$. 
We have $9z_1 + 3z_2 \sim \omega_X^{\otimes 2}$. By the parity we know $h^0(X, 3z_1 + z_2) = 2$. 
\par
\begin{lemma}
We have $h^0(X, 4z_1 + z_2) = 2$ and $h^0(X, 2z_1 + z_2) = 1$ for all $X$ parameterized in $C$. 
\end{lemma}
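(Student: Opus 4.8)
The plan is to follow the template of the two preceding subsections, reducing both equalities to statements about linear series on the fibers of $f\colon \XX \to C$ and then exploiting the two inputs $9z_1 + 3z_2 \sim \omega_X^{\otimes 2}$ and the parity $h^0(X, 3z_1 + z_2) = 2$. The first thing I would establish is that every fiber $X$ is irreducible. By Corollaries~\ref{signature} and~\ref{component} a degenerate fiber $X_\infty$ is of polar type, each of its components carries a singularity of $q_\infty$, and there are at most two components since $q$ has only two zeros. A genuine splitting into $X_1 \ni z_1$ and $X_2 \ni z_2$ joined at $n\ge 1$ nodes would force $\deg\bigl(\omega_{X_\infty}^{\otimes 2}|_{X_1}\bigr) = 4g(X_1) - 4 + 2n = 9$, which is impossible by parity. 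Hence $X_\infty$ is irreducible, and it suffices to argue on an irreducible genus four curve, where Riemann--Roch reads $h^0(X, D) = \deg D - 3 + h^0(X, \omega_X - D)$.

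To obtain $h^0(X, 4z_1 + z_2) = 2$, note that $4z_1 + z_2 \ge 3z_1 + z_2$ already gives $h^0 \ge 2$, while $h^0 \le 3$ is automatic. Suppose $h^0(X, 4z_1 + z_2) = 3$. Riemann--Roch on this degree-five divisor forces $h^0(X, \omega_X - 4z_1 - z_2) = 1$, i.e.\ $\omega_X \sim 4z_1 + z_2 + r$ for some $r$. Squaring and comparing with $\omega_X^{\otimes 2} \sim 9z_1 + 3z_2$ yields $2r \sim z_1 + z_2$, a $g^1_2$; thus $X$ is hyperelliptic with $z_2 = \iota(z_1)$ and $r$ a Weierstrass point. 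Combining $\omega_X \sim 3(z_1 + z_2)$ with $\omega_X \sim 4z_1 + z_2 + r$ gives $z_1 + r \sim 2z_2$, and applying the hyperelliptic involution $\iota$ gives $z_2 + r \sim 2z_1$. The latter exhibits two distinct effective divisors in the class of $2z_1$, so $h^0(X, 2z_1) \ge 2$, whence $z_1$ is a Weierstrass point and $2z_1 \sim z_1 + z_2$, forcing $z_1 = z_2$, a contradiction.

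For $h^0(X, 2z_1 + z_2) = 1$ the lower bound is trivial and $h^0 \le h^0(X, 3z_1 + z_2) = 2$, so I must exclude $h^0(X, 2z_1 + z_2) = 2$, i.e.\ that $|2z_1 + z_2|$ is a $g^1_3$. On a non-hyperelliptic $X$ this $g^1_3$ is base-point-free (a base point would yield a $g^1_2$), hence cut on the canonical model by a ruling $\ell$ of the quadric $Q$ containing $X$, with $\ell \cdot X = 2z_1 + z_2$; in particular $\ell$ is tangent to $X$ at $z_1$. Now I bring in the parity curve $E$ of Section~\ref{g=4: logsurface}, for which $E \cdot X = \divisor(q) = 9z_1 + 3z_2$, so $z_1, z_2 \in E$. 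Since $\ell \cdot E = 2$ and $z_1 \ne z_2$, the intersection $\ell \cap E$ is transverse at $z_1$, so $T_{z_1}E \ne T_{z_1}\ell = T_{z_1}X$; thus $E$ and $X$ meet transversally at $z_1$, contradicting $(E \cdot X)_{z_1} = 9$. This rules out the $g^1_3$.

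The main obstacle is the uniform treatment of the special fibers, those where $X$ is hyperelliptic or Gieseker--Petri special, on which the smooth quadric $Q$ degenerates and where upper semicontinuity a priori permits a jump in $h^0$. Here the transversality argument of the previous paragraph must be rerun on the degenerate log surfaces (the cone, Hirzebruch, and scroll models of Section~\ref{g=4: logsurface}), and one must invoke that $q$ is never a global square (Proposition~\ref{global}) to discard the borderline configurations, exactly as the parity analysis of Proposition~\ref{prop:g4parity} does. I expect verifying that the jump cannot occur precisely on these measure-zero fibers to be the delicate point, whereas the generic computation above is routine; once it is in place, $h^0(X,4z_1+z_2)=2$ and $h^0(X,2z_1+z_2)=1$ hold for every $X$ in $C$, as required for the ensuing filtration computation of $L^+$.
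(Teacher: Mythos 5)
Your reduction to irreducible fibers (via the parity of $\deg(\omega_{X_\infty}^{\otimes 2}|_{X_1})=4g(X_1)-4+2n$ against the odd order $9$) and your proof of $h^0(X,4z_1+z_2)=2$ are correct and essentially the paper's argument: the paper likewise assumes $\omega_X\sim 4z_1+z_2+r$, derives $2r\sim z_1+z_2$, and kills the resulting hyperelliptic configuration (it deduces $3z_1\sim 3z_2$ and that both $z_i$ are Weierstrass points, while you deduce that $z_1$ is Weierstrass and hence $z_1\sim z_2$; both endgames are valid).

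For $h^0(X,2z_1+z_2)=1$ you take a genuinely different route, and this is where the proposal is incomplete. The paper stays intrinsic: if $|2z_1+z_2|$ were a $g^1_3$, then either $X$ is Gieseker--Petri special, whence $\omega_X\sim 2(2z_1+z_2)$ and squaring against $9z_1+3z_2$ gives $z_1\sim z_2$ at once, or the residual $g^1_3$ is $|z_1+r+s|$, so $\omega_X\sim 3z_1+z_2+r+s$, squaring gives $2r+2s\sim 3z_1+z_2$, and since $z_1$ is a base point of $|3z_1+z_2|$ (as $h^0(3z_1+z_2)=h^0(2z_1+z_2)=2$) one gets $r=z_1$ and then $h^0(4z_1+z_2)=3$, contradicting the first part. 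That argument uses only linear equivalence and Riemann--Roch and applies verbatim to the irreducible nodal boundary fibers. Your transversality argument with the parity curve has two gaps. First, the inference ``$\ell\cdot E=2$ and $z_1\ne z_2$ force $(\ell\cdot E)_{z_1}=1$'' presupposes $\ell\not\subset E$; if $\ell$ is a component of $E$ the local multiplicities $(\ell\cdot X)_{z_1}=2$ and $((E-\ell)\cdot X)_{z_1}=7$ are not numerically absurd, and to exclude this you must invoke the sufficient smoothness of $E$ in the irregular component (Lemma~\ref{le:g4Esuffsmooth}, which applies exactly because $\dim H^0(X,\LL(X,q))=2$ there), which you never cite. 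Second, and more seriously, the lemma must hold for \emph{every} fiber of $\XX\to C$ for the filtration computation of $L^+$ to go through, yet you defer the hyperelliptic, Gieseker--Petri special and nodal fibers to a closing paragraph that only promises to ``rerun the argument on the degenerate log surfaces.'' Those are precisely the fibers on which your route (rulings of a smooth quadric, the parity curve $E$) is most delicate, whereas the paper's route disposes of them in a line or two; as written, your argument does not establish the statement on those fibers.
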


\begin{proof}
By Corollaries~\ref{signature} and ~\ref{component}, $X$ is irreducible. If $h^0(X, 4z_1 + z_2) \geq 3$, then
$\omega_X\sim 4z_1 + z_2 + r$, hence $2r \sim z_1 + z_2$, $X$ is hyperelliptic and $z_1, z_2$ are conjugate. Then
$\omega_X\sim 3z_1 + 3z_2$, hence $3z_1\sim 3z_2$, which implies that $z_1, z_2$ are both Weierstrass points, contradicting that they are conjugate. 
If $h^0(X, 2z_1 + z_2) \geq 2$, $X$ cannot be Gieseker-Petri special, for otherwise $\omega_X\sim 4z_1 + 2z_2$ and $z_1\sim z_2$, impossible. 
Consequently $|2z_1 + z_2|$ provides a $g^1_3$ and suppose that $|z_1 + r + s|$ is the other $g^1_3$. We have $\omega_X\sim 3z_1 + z_2 + r + s$, hence 
$2r + 2s \sim 3z_1 + z_2$. Note that $z_1$ is a base point of $|3z_1 + z_2|$, hence $r = z_1$ and we thus obtain $\omega_X\sim 4z_1 + z_2 + s$, 
contradicting that $h^0(X, 4z_1 + z_2) = 2$ as shown before. 
\end{proof}

By the above lemma $R^1f_{*}\OO_{X}(4S_1 + S_2) = 0$ and consequently
\bas
c_1 (f_{*}\OO_{\XX}(9S_1 + 3S_2)) & =  c_1(f_{*}\OO_{\XX}(4S_1+S_2)) + c_1 (f_{*}\OO_{5S_1}(9S_1)) + c_1(f_{*}\OO_{2S_2}(3S_2)) \\
                                                & =  c_1(f_{*}\OO_{\XX}(4S_1 + S_2)) + 35 S_1^2 + 5 S_2^2.  
\eas
Similarly one can show that
\bas
 c_1(f_{*}\OO_{\XX}(4S_1 + S_2)) & =  c_1(f_{*}\OO_{\XX}(3S_1 + S_2)) \\
 & =  c_1(f_{*}\OO_{\XX}(2S_1 + S_2)) + 3S_1^2  \\
 &=  3S_1^2. 
\eas
Then we obtain that 
$$  c_1 (f_{*}\OO_{\XX}(9S_1 + 3S_2)) = 38 S_1^2 + 5S_2^2 = - \frac{49}{11} \chi. $$
Finally by~\eqref{eq:GRR}, we conclude that $L^{+}(C) = 92/55$.   
\par
\begin{remark}
Once we understand the boundary behavior as well as $h^0(X, \sum a_i z_i)$ for all $X$ in a Teichm\"uller curve $C$, apparently the above method may provide a parallel proof for many other non-varying strata of quadratic differentials in low genus. We leave it as an exercise to the reader.   
\end{remark}

\subsection{The missing non-varying strata of abelian differentials}

In our earlier work \cite{chenmoeller} three strata of abelian differentials $\OM_{4}(4,2)^{\odd}$, 
$\OM_{4}(4,2)^{\even}$ and $\OM_{5}(6,2)^{\odd}$ were 
predicted to be non-varying based on the computer data of Zorich and Delecroix,  but no proof was
given there. In \cite{yuzuo} an argument for their non-varying property was found using the 
filtration of the Hodge bundle. For completeness we include a detailed proof in this section. 
\par
Let $(X,\omega)$ be a flat surface generating a \Teichmuller curve $C$ in a 
stratum of abelian differentials. Use $z_i$ to denote the zeros of $\omega$. 
\par
\begin{lemma} For all $X$ in $C$, we have the following results. 
\par 
If $C$ is in $\OM_{4}(4,2)^{\odd}$, then $h^0(X, 2z_1 + z_2) = 1$. 
\par
If $C$ is in $\OM_{4}(4,2)^{\even}$, then $h^0(X, 2z_1 + z_2) = 2$ and $h^0(X, z_1 + z_2) = 1$. 
\par
If $C$ is in $\OM_{5}(6,2)^{\odd}$, then $h^0(X, 3z_1 + z_2) = 1$. 
\end{lemma}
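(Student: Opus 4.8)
The plan is to treat all three cases uniformly through the theta characteristic cut out by $\divisor(\omega)/2$, and to pin down its number of sections by combining Clifford's inequality with the deformation invariance of the parity of the spin structure. First I would record that in each case the one-form is holomorphic, so $\divisor(\omega)\sim\omega_X$, and hence the divisor
$$L \;:=\; \tfrac12\divisor(\omega),$$
namely $L=2z_1+z_2$ in the two genus-four strata and $L=3z_1+z_2$ in the genus-five stratum, is a theta characteristic: $2L\sim\omega_X$. Since $L$ is effective we have $h^0(X,L)\ge 1$, so $h^1(X,L)=h^0(X,L)\ge 1$, $L$ is special, and Clifford's inequality applies: $h^0(X,L)\le \tfrac12\deg L+1$. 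Finally, $h^0(X,L)\bmod 2$ is the parity of the spin structure, which is constant along $C$ and equals the label of the component by \cite{kz03}.

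For $\OM_{4}(4,2)$ we have $\deg L=3$, so Clifford gives $h^0(X,L)\le 2$; the odd label then forces $h^0(X,2z_1+z_2)=1$ and the even label forces $h^0(X,2z_1+z_2)=2$. For the auxiliary statement in the even case I would rule out $h^0(X,z_1+z_2)=2$: such a pencil is a $g^1_2$, forcing $X$ hyperelliptic with $z_1,z_2$ conjugate, so $z_1+z_2\sim g^1_2$ and $\omega_X\sim 3(z_1+z_2)$; comparing with $\omega_X\sim 4z_1+2z_2$ yields $z_1\sim z_2$, i.e.\ $z_1=z_2$, a contradiction. For $\OM_{5}(6,2)$ we have $\deg L=4$, so Clifford gives $h^0(X,3z_1+z_2)\le 3$ and the odd label leaves $h^0\in\{1,3\}$. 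To exclude $h^0=3$ I would invoke the equality case of Clifford's theorem: since $0<\deg L<2g-2$, equality forces $X$ hyperelliptic with $3z_1+z_2\sim 2\,g^1_2$. But every effective member of $|2\,g^1_2|$ is a pullback $\pi^{-1}(p)+\pi^{-1}(q)$ under the hyperelliptic map $\pi$, and the odd multiplicity $3$ at $z_1$ forces $z_1$ to lie in both fibers; if $z_1$ is non-Weierstrass this makes $p=q$ and the divisor $2z_1+2\iota z_1$, while if $z_1$ is Weierstrass it makes the divisor $4z_1$, neither of which equals $3z_1+z_2$ with $z_1\ne z_2$. Hence $h^0(X,3z_1+z_2)=1$.

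The remaining, and hardest, point is that the lemma asserts these values for \emph{every} fiber over $\ol{C}$, including the degenerate ones, which is exactly what the filtration method needs. Here I would use that a \Teichmuller curve generated by an abelian differential meets only $\delta_0$ (the remark following Corollary~\ref{component}), so every boundary fiber is an integral nodal curve. The main obstacle is therefore transferring the divisor-theoretic input—Clifford's inequality and its equality case, the characterization of hyperelliptic configurations, and the deformation invariance of the spin parity—from smooth curves to these integral Gorenstein curves. For this I would use Clifford's inequality for integral Gorenstein curves with their dualizing sheaf playing the role of $\omega_X$, Cornalba's theory of spin curves to carry the parity across the boundary, and pull back the hyperelliptic/conjugacy arguments to the normalization while tracking the gluing data at the non-separating nodes. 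Once Clifford and the parity constancy are available on the boundary fibers, the three bounds above close verbatim, giving the asserted constant values of $h^0$ across all of $\ol{C}$.
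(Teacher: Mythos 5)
Your treatment of the smooth fibers is essentially the paper's argument: the paper likewise reads the values of $h^0(X,\divisor(\omega)/2)$ off the deformation-invariant spin parity (with Clifford's inequality left implicit), and your exclusion of $h^0(X,z_1+z_2)=2$ in the even component is exactly the paper's hyperelliptic contradiction $3z_1+3z_2\sim 4z_1+2z_2$. Your explicit elimination of $h^0(X,3z_1+z_2)=3$ in genus five via the equality case of Clifford's theorem is a detail the paper glosses over, and it is correct.

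The genuine problem is your boundary step. You assert that because a \Teichmuller curve of abelian differentials meets only $\delta_0$, ``every boundary fiber is an integral nodal curve.'' That is false: avoiding $\delta_i$ for $i>0$ only rules out separating nodes, and a stable curve lying over $\delta_0$ can perfectly well be reducible, e.g.\ two components joined at two or more non-separating nodes. Such fibers cannot be excluded here --- the paper's own proof explicitly treats the case ``$X$ is reducible,'' with each component $X_i$ forced to contain one zero $z_i$, and concludes $h^0(X,z_1+z_2)=1$ by observing that gluing sections from the two components into a global section imposes an extra condition. Since the machinery you propose (Clifford's inequality for integral Gorenstein curves) is predicated on integrality, it does not cover these fibers, and your closing claim that ``the three bounds above close verbatim'' does not follow. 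To repair this you need a component-wise analysis of the reducible polar-type degenerations (as the paper does for the $z_1+z_2$ claim, and as \cite{chenmoeller} does for the parity statements), not merely a transfer of Clifford and spin parity to integral curves.
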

\par 
\begin{proof}
The reader may refer to \cite[Section 4]{chenmoeller} for properties of \Teichmuller curves generated by abelian differentials. All the claims in the lemma follow directly 
from the spin parity except that $h^0(X, z_1 + z_2) = 1$ for the stratum $\OM_{4}(4,2)^{\even}$. Suppose on the contrary $h^0(X, z_1 + z_2)\geq 2$. Then $X$ is in the hyperelliptic locus and $z_1, z_2$ are conjugate. It implies that $\omega_X\sim 3z_1 + 3z_2 \sim 4z_1 + 2z_2$, hence $z_1\sim z_2$, contradicting that $z_1\neq z_2$. 
It holds even if $X$ is reducible. In that case each component $X_i$ of $X$ has to contain a zero $z_i$. By $h^0(X_i, z_i) = 1$ for $i = 1,2$, we conclude that 
$h^0(X, z_1 + z_2) = 1$, because gluing sections on the $X_i$ to form a global section on $X$ imposes an additional condition.   
\end{proof}
\par
Let $S_i$ be the section in $\XX$ corresponding to the zero $z_i$ of order $m_i$. Let $\omega$ be the relative dualizing sheaf of $f: \XX\to C$. We have   
$$ \omega = f^{*}\LL \otimes \OO_{\XX}\Big(\sum_{i=1}^k m_i S_i\Big), $$
where $\LL$ is the line bundle on $C$ corresponding to the generating abelian differential and $\deg(\LL) = \chi/2$. 
The projection formula implies that 
$$f_{*}\omega = \LL \otimes f_{*}\Big(\OO_{\XX}\Big(\sum_{i=1}^k m_i S_i\Big)\Big). $$
Since $f_{*}(\OO_{\XX}(\sum_{i=1}^k m_i S_i))$ is a vector bundle of rank $g$ whose fibers are $H^0(X, \omega_X)$, we have 
\bas
C\cdot \lambda & = c_1(f_{*}\omega) \\
      & = g \cdot \frac{\chi}{2} + c_1\Big(f_{*}\Big(\OO_{\XX}\Big(\sum_{i=1}^k m_i S_i\Big)\Big)\Big). 
\eas
By \cite[Proposition 4.5]{chenmoeller} we conclude that
\ba
\label{eq:abelian}
 L(C) & = 2\cdot \frac{C\cdot \lambda}{\chi} \\
          & = g + 2\cdot \frac{c_1(f_{*}(\OO_{\XX}(\sum_{i=1}^k m_i S_i)))}{\chi}.
\ea

\begin{theorem}
Let $C$ be a \Teichmuller curve generated by a flat surface in one of the strata
$\OM_{4}(4,2)^{\odd}$, $\OM_{4}(4,2)^{\even}$ or $\OM_{5}(6,2)^{\odd}$. 
Then the sum of Lyapunov exponents $L(C)$ equals
$29/15$, $32/15$ or $46/21$, respectively. 
\end{theorem}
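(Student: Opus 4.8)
The plan is to feed the cohomological data of the preceding lemma into the filtration formula~\eqref{eq:abelian}, so that the entire problem reduces to computing the single Chern number $c_1\bigl(f_{*}\OO_{\XX}(\sum_i m_i S_i)\bigr)$ for each stratum. First I would record the self-intersection numbers of the sections. From $\omega = f^{*}\LL \otimes \OO_{\XX}(\sum_i m_i S_i)$ with $\deg\LL = \chi/2$, the relation $\omega \cdot S_j = -S_j^2$ (as used in Proposition~\ref{intersection}), and the disjointness of distinct sections, one gets $-S_j^2 = \tfrac{\chi}{2} + m_j S_j^2$, hence
\[
S_j^2 = -\frac{\chi}{2(m_j+1)}.
\]
In particular the order-$4$, order-$6$ and order-$2$ zeros give $S^2 = -\chi/10$, $-\chi/14$ and $-\chi/6$ respectively.

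For the odd strata the computation is a single application of Lemma~\ref{le:filtration}. In $\OM_{4}(4,2)^{\odd}$ the preceding lemma gives $h^0(X,2z_1+z_2)=1$ for every fibre, so $c_1(f_{*}\OO_{\XX}(2S_1+S_2))=0$ by Lemma~\ref{le:trivial}; since $h^0(X,\omega_X)=g=4$ and $4 = 1 + (4-2) + (2-1)$, the hypothesis of Lemma~\ref{le:filtration} holds in one stroke for the jump from $2S_1+S_2$ to $4S_1+2S_2$. Combining it with Lemma~\ref{le:non-reduced} yields $c_1(f_{*}\OO_{\XX}(4S_1+2S_2)) = (4+3)S_1^2 + 2S_2^2 = 7S_1^2 + 2S_2^2$, and substitution into~\eqref{eq:abelian} gives $L(C) = 4 + 2(7S_1^2+2S_2^2)/\chi = 29/15$. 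The stratum $\OM_{5}(6,2)^{\odd}$ is identical in structure: $h^0(X,3z_1+z_2)=1$ produces the trivial subbundle, the identity $5 = 1 + (6-3)+(2-1)$ validates the one-step filtration, and one obtains $c_1 = (6+5+4)S_1^2 + 2S_2^2 = 15S_1^2 + 2S_2^2$, whence $L(C) = 5 + 2(15S_1^2+2S_2^2)/\chi = 46/21$.

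The even stratum is where I expect the only genuine subtlety, since the one-step jump fails and the argument must be carried out stepwise. Here $h^0(X,z_1+z_2)=1$ again gives a trivial starting bundle, but $h^0(X,\omega_X) - h^0(X,z_1+z_2) = 4-1 = 3$ is strictly smaller than $(4-1)+(2-1)=4$, so Lemma~\ref{le:filtration} cannot be applied in one step. The reason is that along the chain $z_1+z_2 \to 2z_1+z_2 \to 3z_1+z_2 \to 4z_1+z_2 \to 4z_1+2z_2$ the dimensions run $1,2,2,3,4$: Riemann--Roch together with the given value $h^0(X,z_1+z_2)=1$ forces $h^0(X,3z_1+z_2)=2=h^0(X,2z_1+z_2)$, so the third copy of $z_1$ imposes no new condition. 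At precisely that step the equal-rank inclusion $f_{*}\OO_{\XX}(2S_1+S_2)\hookrightarrow f_{*}\OO_{\XX}(3S_1+S_2)$ has torsion-free cokernel, hence is an isomorphism contributing nothing to $c_1$, while every other step is a clean $a=1$ application of Lemma~\ref{le:filtration}. Accumulating the contributions gives $c_1(f_{*}\OO_{\XX}(4S_1+2S_2)) = 2S_1^2 + 4S_1^2 + 2S_2^2 = 6S_1^2 + 2S_2^2$, and~\eqref{eq:abelian} returns $L(C) = 4 + 2(6S_1^2+2S_2^2)/\chi = 32/15$. The main obstacle is thus purely the bookkeeping in this even case: locating the step where the linear system stabilizes and checking that all intermediate direct images are honest vector bundles, i.e.\ that the relevant $h^0$ are constant over $C$. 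This constancy follows from the constancy of $h^0(X,z_1+z_2)$ and $h^0(X,2z_1+z_2)$ supplied by the preceding lemma, propagated through Riemann--Roch.
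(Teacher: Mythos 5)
Your proposal is correct and follows essentially the same route as the paper: it feeds the constancy of $h^0$ from the preceding lemma into Lemmas~\ref{le:filtration}, \ref{le:trivial} and \ref{le:non-reduced} to compute $c_1(f_*\OO_\XX(\sum m_i S_i))$ as $7S_1^2+2S_2^2$, $6S_1^2+2S_2^2$ and $15S_1^2+2S_2^2$ respectively, then substitutes into \eqref{eq:abelian}. The only cosmetic differences are that you derive the self-intersection $S_j^2=-\chi/(2(m_j+1))$ from adjunction rather than citing \cite[Proposition~4.5]{chenmoeller}, and you organize the even-case filtration bottom-up in single steps where the paper telescopes top-down; the intermediate values and the final answers $29/15$, $32/15$, $46/21$ agree.
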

\par
\begin{proof}
For the stratum $\OM_{4}(4,2)^{\odd}$, using the preceding Lemma to verify the hypothesis
of the Lemmas~\ref{le:filtration}, ~\ref{le:trivial} and~\ref{le:non-reduced}, we conclude
\bas
c_1(f_{*}\OO_{\XX}(4S_1 + 2S_2)) & = c_1(f_{*}\OO_{\XX}(2S_1 + S_2)) + c_1(f_{*}\OO_{2S_1}(4S_1)) + c_1(f_{*}\OO_{S_2}(2S_2)) \\
& = 7S_1^2 + 2 S_2^2 \\
& = - \frac{31}{30}\cdot \chi, 
\eas
where the self-intersection formula of the $S_i$ in \cite[Proposition 4.5]{chenmoeller} is used in the last step. 
By the relation~\eqref{eq:abelian}, we finally obtain that 
$$L(C) = 4 - 2\cdot \frac{31}{30} = \frac{29}{15}.$$
\par
For the stratum $\OM_{4}(4,2)^{\even}$, by the above lemmas we have 
\bas
c_1(f_{*}\OO_{\XX}(4S_1 + 2S_2)) & =  c_1(f_{*}\OO_{\XX}(3S_1 + S_2)) + c_1(f_{*}\OO_{S_1}(4S_1)) + c_1(f_{*}\OO_{S_2}(2S_2)) \\
                                                              & =  c_1(f_{*}\OO_{\XX}(3S_1 + S_2)) + 4 S_1^2 + 2S_2^2 \\
                                                              & = c_1(f_{*}\OO_{\XX}(2S_1 + S_2)) + 4 S_1^2 + 2S_2^2 \\
                                                              & = c_1(f_{*}\OO_{S_1}(2S_1)) + 4 S_1^2 + 2S_2^2 \\
                                                              & =  6S_1^2 + 2S_2^2 \\
                                                              & = - \frac{14}{15}\cdot \chi. 
\eas
Then by the equality~\eqref{eq:abelian}, we conclude that 
$$ L(C) = 4 - 2\cdot \frac{14}{15} = \frac{32}{15}. $$
\par
For the stratum $\OM_{6}(6,2)^{\odd}$, we have 
\bas
c_1(f_{*}\OO_{\XX}(6S_1 + 2S_2)) & = c_1(f_{*}\OO_{\XX}(3S_1 + S_2)) + c_1(f_{*}\OO_{3S_1}(6S_1)) + c_1(f_{*}\OO_{S_2}(2S_2)) \\
                                                               & = c_1(f_{*}\OO_{\XX}(3S_1 + S_2)) + 15 S_1^2 + 2S_2^2 \\
                                                               & =  15 S_1^2 + 2S_2^2 \\
                                                               & = - \frac{59}{42}\cdot\chi.                                                          
\eas
\par
Finally by~\eqref{eq:abelian}, we conclude that 
$$ L(C) = 5 - 2\cdot \frac{59}{42} = \frac{46}{21}. $$
\end{proof}
\par 

\section{Local calculations for the exceptional strata} \label{sec:exclocal}

\subsection{Genus three} \label{sec:g3local}

We begin with the proof of Proposition~\ref{prop:Eirred}, which is 
decomposed into the following lemmas. We follow the notation of
Section~\ref{sec:g3exceptional}. 
\par
\begin{lemma} \label{le:Eirrednew}
If $X$ is a non-hyperelliptic, smooth genus three curve, then 
for a generic choice of a line passing through $p$ the parity plane cubic $E$ is irreducible.
\end{lemma}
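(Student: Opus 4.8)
The plan is to convert the family of cubics $E_L$ into a pencil and then apply Bertini. Write $\divisor(q)_0 \sim \omega_X^{\otimes 2}(p)$, the linear equivalence expressing that $q$ is a section of $\omega_X^{\otimes 2}(p)$, and recall that for a plane quartic $\OO_X(1) = \omega_X$. Using the isomorphism $H^0(\PP^2,\OO_{\PP^2}(3)) \cong H^0(X,\OO_X(3))$ that defines $E$, the cubics $E$ with $E \cdot X \geq \divisor(q)_0$ correspond to $H^0\big(X,\OO_X(3)(-\divisor(q)_0)\big) \cong H^0(X,\omega_X(-p))$, which is two-dimensional since $\OO_X(3)(-\divisor(q)_0) \sim \omega_X(-p)$ has degree $3$ and $h^0(\omega_X(-p)) = 2$. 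Hence these cubics form a linear pencil $\Lambda \subset |\OO_{\PP^2}(3)| = \PP^9$, and the assignment $L \mapsto E_L$ identifies the pencil of lines through $p$ with $\Lambda$, the residual series being exactly the $g^1_3$ cut out by lines through $p$. The statement thus reduces to: the general member of $\Lambda$ is irreducible. By Bertini's theorem it suffices to show that $\Lambda$ has no fixed component and is not composed with a pencil.

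Since the $g^1_3$ is base-point free, $\bigcap_L (E_L\cdot X) = \divisor(q)_0$, so any fixed component $Z$ of $\Lambda$ satisfies $Z\cdot X \subseteq \divisor(q)_0$; that is, $Z$ meets $X$ only at the zeros of $q$. First I would exclude a fixed conic: the residual would then be a pencil of lines $\ell_L$ through a common point, and $\ell_L\cdot X$ would contain the three points $r_1,r_2,r_3$ of $L\cdot X - p$. These are collinear on $L$ and distinct for generic $L$, forcing $\ell_L = L$ and hence $L$ through a zero of $q$, contrary to the choice of $L$. The same collinearity argument excludes the possibility that $\Lambda$ is composed with a pencil: there every $E_L$ would be a union of three lines through one base point $w$, whose intersection with $X$ can be supported at the zeros only if all zeros coincide with $w$; this occurs at most for $(9,-1)$, where it would require three distinct lines each meeting $X$ with multiplicity three at $z_1$, impossible since $z_1$ admits at most one such flex line.

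The remaining and hardest case is a fixed line $\ell$. Such an $\ell$ yields an effective divisor $\ell\cdot X = \sum a_i z_i \sim \omega_X$ of degree four supported on the zeros, with $0 \le a_i \le k_i$. Subtracting twice this section from $\divisor(q)_0 \sim \omega_X^{\otimes 2}(p)$ gives $p \sim \sum (k_i - 2a_i) z_i$, which expresses the pole as a divisor supported on the zeros. For $(9,-1)$ the only admissible distribution is $a_1 = 4$, whence $p \sim z_1$ and thus $p = z_1$, contradicting that $p$ is a pole; so here no fixed line can occur and the lemma follows with no further genericity. For $(6,3,-1)$ and $(3,3,3,-1)$ several distributions $(a_i)$ survive the degree constraints, and in each I would argue that the relation $p \sim \sum(k_i - 2a_i)z_i$ either forces $p = z_i$ for some $i$ (the same contradiction), or forces $\ell$ to be a special line (a tangent, flex, hyperflex, or bitangent) all of whose intersection points with $X$ lie among the $z_i$. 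The latter configurations I would rule out by showing they impose conditions incompatible with the presence of a quadratic differential having a genuine pole $p \notin \{z_i\}$; since the components are constructed as irreducible families in Proposition~\ref{prop:g3Sred} and connectivity is insensitive to loci of codimension at least one, this may be checked on a general member. This verification — that no line has total contact four with the smooth quartic $X$ along the zeros of $q$ — is the main obstacle, and it is precisely the point at which smoothness and non-hyperellipticity of $X$, together with $p$ being a genuine pole, are used.
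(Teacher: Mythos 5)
Your Bertini reduction (realizing the cubics $E_L$ as a pencil $\Lambda$ with residual series $|\omega_X(-p)|$, then ruling out fixed components and compositeness) is a genuinely different framework from the paper's, which instead fixes a generic $L$ and enumerates the possible decompositions of $E$ (three lines, or a line plus an irreducible conic), in each subcase using $Q\sim 2\ell$ restricted to $X$ either to produce a $g^1_2$, contradicting non-hyperellipticity, or to pin $r_1$ or $r_2+r_3$ to finitely many positions that a generic $L$ avoids. Your fixed-conic case and the $(9,-1)$ fixed-line case are correct. But the proof is not complete: the fixed-line case for $\QQ(6,3,-1)$ and $\QQ(3,3,3,-1)$, which you yourself call the main obstacle, is left as a plan (``I would argue\dots'') rather than an argument, and it is exactly where the content of the lemma lies. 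Concretely, the relation $\sum(k_i-2a_i)z_i\sim p$ yields nothing when its left side is not effective: for $(6,3,-1)$ with $(a_1,a_2)=(4,0)$ it reduces to $4z_1\sim\omega_X$ (a hyperflex at $z_1$), and for $(3,3,3,-1)$ with $(a_1,a_2,a_3)=(2,2,0)$ to $2z_1+2z_2\sim\omega_X$ (a bitangent at $z_1,z_2$); in both cases the residual congruence is automatically implied by the stratum condition, so linear equivalence alone gives no contradiction --- one must actually exclude these tangency configurations, and that argument is missing.

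Two further points. First, your fallback of verifying the claim only on a general member of each component by invoking Proposition~\ref{prop:g3Sred} would not suffice: the lemma feeds into Proposition~\ref{prop:g3parity}, where $\divisor(q_t)_0$ must lie in the smooth locus of $E_t$ for \emph{every} $t$ in a degenerating family of smooth non-hyperelliptic curves, not just for generic $t$; and leaning on the irreducibility of the constructed components at this stage is uncomfortably close to circular, since the parity being established here is what identifies those components. Second, your dismissal of the composite-with-a-pencil case is not correct as stated: the intersection $E_L\cdot X$ is never supported only at the zeros (it contains $r_1+r_2+r_3$), so the claim ``all zeros coincide with $w$'' does not follow. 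The case is nonetheless ruleoutable: the lines through the base point $w$ cut on $X$ a base-point-free pencil (a $g^1_4$ if $w\notin X$, or $z_i$ plus a base-point-free $g^1_3$ if $w=z_i$), so at most one of the three lines of $E_L$ can move; the other two are fixed, and the moving one must then cut out exactly $r_1+r_2+r_3$, forcing it to be $L$ and hence $p$ to be a zero, a contradiction. With that repair and a genuine treatment of the fixed-line configurations, your route would work, but as written it has a real gap at its hardest step.
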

\par
\begin{proof} Suppose that $E$ consists of three lines. Then one of them
passes through $r_1$,$r_2$ and $r_3$ but none of the $p_i$. Hence the
other two lines intersect $X$ at $9$ points (counting with multiplicity), leading to a contradiction. 
\par
Next, suppose that $E$ consists of a line $L$ and an irreducible conic $Q$. 
By the same argument, $L$ has to contain precisely one of the $r_i$, 
say $r_1$. We now discuss case by case.
\par 
In the case $\QQ(9,-1)$ it follows that
$L \cdot X = 3z_1 + r_1$ and $Q \cdot X = 6z_1 + r_2 + r_3$. 
Since $Q \sim 2L$ as divisor classes, we have 
$2r_1 \sim r_2 + r_3$ on $X$, which implies that $X$ is hyperelliptic,
contradicting the hypothesis.
\par 
In the case $\QQ(6,3,-1)$ we conclude that $L \cdot X$ is
one of the divisors
$$r_1 + 3z_1, \quad r_1 + 2z_1 + z_2, \quad r_1 + z_1 + 2z_2 \quad \text{or} \quad  r_1 + 3z_2.$$ 
In the first case, $Q \cdot X = r_2 + r_3 + 3z_1 + 3z_2.$ This 
implies $3z_2 + r_2 + r_3 - r_1 \sim r_1 + r_2 + r_3 + p$ as sections 
of $\cO_X(1)$. Then $3z_2 \sim 2r_1 + p$. Since $3z_2 - p$ is determined 
by $q$, there are  at most finitely many choices for such $r_1$. 
One can choose $L$ away from these $r_1$. In the 
second case, $Q \cdot X = r_2 + r_3 + 4z_1 + 2z_2$. Using $Q \sim 2L$, we 
conclude that $2r_1 \sim r_2 + r_3$, contradicting the non-hyperelliptic 
assumption. In the third case, we have $r_1 + z_1 + 2z_2 ~\sim r_1 + r_2 + r_3 + p$ as sections of $\cO_X(1)$. 
Hence $r_2 + r_3 \sim z_1 + 2z_2 - p$. Since $z_1 + 2z_2 - p$ is determined by $q$, 
for such $r_2$ and $r_3$ there are finitely many choices,
since otherwise $X$ would be hyperelliptic. 
In the last case, we have $r_1 + 3z_2 \sim r_1 + r_2 + r_3 + p$, 
hence $r_2 + r_3 \sim 3z_2 - p$ which restricts  $r_2$ and $r_3$ to 
finitely many choices. 
\par
In the case $\QQ(3,3,3,-1)$ the intersection $L \cdot X$ could be 
$$r_1 + 3z_1, \quad r_1 + 2z_1 + z_2 \quad \text{or}\quad  r_1 + z_1 + z_2 + z_3.$$ 
In the first case, $3z_1 \sim r_2 + r_3 + p$, hence the choices of 
$r_2$ and $r_3$ are limited to a finite number, since $X$ is not 
hyperelliptic. In the second case, $2z_1 + z_2 \sim r_2 + r_3 + p$
and the same argument applies. Finally, in the last case 
since $Q \cdot X = r_2 + r_3 + 2z_1 + 2z_2 + 2z_3$, we have 
$2r_1 \sim r_2 + r_3$ on $X$, impossible for $X$ being non-hyperelliptic.
\end{proof}
\par
Next two preparatory lemmas imply that the $z_i$ are contained in the smooth locus of $E$.
\par
\begin{lemma}
\label{le:g3cusp}
Let $E$ be an irreducible plane cuspidal cubic with $z_1$ as its cusp. 
If a plane quartic $X$ has intersection multiplicity 
$(X\cdot E)_{z_1} \geq 4$, then $X$ is singular at $z_1$. 
\end{lemma}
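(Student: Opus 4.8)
The plan is to reduce the statement to a local computation on the single analytic branch of the cusp. Since $E$ is an irreducible cuspidal plane cubic, after a projective change of coordinates we may assume it is the standard cuspidal cubic $y^2z = x^3$, with cusp $z_1 = [0:0:1]$. Passing to the affine chart $z=1$ centered at $z_1$, the cusp is unibranch and its branch through $z_1$ admits the Puiseux parametrization $\phi: t \mapsto (t^2,t^3)$. Writing $f(x,y)$ for a local equation of the quartic $X$, I would first recall that for a unibranch curve the intersection multiplicity can be read off from this parametrization, namely $(X\cdot E)_{z_1} = \operatorname{ord}_{t=0} f(t^2,t^3)$. This is the standard description of intersection multiplicity against a branch with a parametrization, and can be verified directly from $(X\cdot E)_{z_1} = \dim_{\mathbb{C}} \mathcal{O}_{\mathbb{P}^2,z_1}/(f,g)$ with $g = y^2 - x^3$ if one prefers not to cite it.

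I would then argue by contraposition. Suppose $X$ is smooth at $z_1$. Since the hypothesis $(X\cdot E)_{z_1}\geq 4 > 0$ already forces $z_1 \in X$, the Taylor expansion of $f$ at the origin has no constant term, and smoothness means its linear part is nonzero, say $f = \alpha x + \beta y + (\text{terms of order} \geq 2)$ with $(\alpha,\beta)\neq(0,0)$. Substituting $\phi$ gives $f(t^2,t^3) = \alpha t^2 + \beta t^3 + O(t^4)$, because every monomial of degree $\geq 2$ in $(x,y)$ contributes a term of order $\geq 4$ in $t$. If $\alpha \neq 0$ the order in $t$ is exactly $2$; if $\alpha = 0$ (so $\beta\neq 0$, i.e.\ the tangent line of $X$ coincides with the cuspidal tangent $y=0$) the order is exactly $3$. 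In either case $(X\cdot E)_{z_1} = \operatorname{ord}_{t=0} f(t^2,t^3) \leq 3$.

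This contradicts $(X\cdot E)_{z_1}\geq 4$, so $X$ cannot be smooth at $z_1$; that is, $X$ is singular there, as claimed. The computation is elementary once the setup is fixed, so I do not expect a genuine obstacle; the only point deserving care is the passage from the intersection multiplicity to the order of vanishing of $f$ along the parametrized branch, which I would justify as above. The proof also makes transparent why the threshold is exactly $4$: a smooth quartic can meet the cusp with multiplicity $3$ precisely when its tangent line is the cuspidal tangent, and any higher contact is impossible without a singularity of $X$.
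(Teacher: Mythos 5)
Your proof is correct and follows essentially the same route as the paper: both reduce to the normal form $y^2=x^3$, substitute the parametrization $(t^2,t^3)$, and observe that a nonzero linear term of $f$ caps the local intersection number at $3$ (exactly $2$ if $\alpha\neq 0$, exactly $3$ if the tangent is the cuspidal tangent). The only cosmetic difference is that you measure the multiplicity as $\operatorname{ord}_t f(t^2,t^3)$ while the paper computes $\dim_{\mathbb C}\mathbb C[[t^2,t^3]]/(f(t))$ --- these agree because the cusp is unibranch --- and your contrapositive phrasing is, if anything, slightly cleaner.
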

\par
\begin{proof}
Without loss of generality, let $y^2 - x^3 = 0$ be the defining equation of $E$ (in affine coordinates) and 
$z_1 = (0,0)$. Suppose that the quartic $X$ is defined by 
$$f(x,y) = \sum_{i+j\leq 4 } a_{ij}x^i y^j.$$ 
Then we have 
$$\dim_{\mathbb C} \mathbb C[[x,y]]/(y^2 - x^3, f(x,y))\geq 4. $$
We now use the rational parameterization of $E$ by setting $y = t^3$ and $x = t^2$. 
Then 
$$f (t) =  \sum_{i+j\leq 4} a_{ij}t^{2i+3j} $$
and
$$ \dim_{\mathbb C} \mathbb C[[t^2, t^3]]/ (f(t))\geq 4. $$
Since $X$ contains $z_1$, we know that $a_{00} = 0$. If $a_{10} \neq 0$, then 
$f(t)$ is proportional to $t^2(1+b_1t+b_2t^2 + \cdots)$. The vector space $\mathbb C[[t^2, t^3]]/ (f(t))$ can be 
generated by $1, t^2, t^3$, since $1+b_2t^2 + \cdots$ is 
invertible in $\mathbb C[[t^2, t^3]]$. Consequently, 
$\dim_{\mathbb C} \mathbb C[[t^2, t^3]]/ (f(t))\leq 3$, 
contradicting the assumption. Next, if $a_{01}\neq 0$, then $f(t)$ is proportional to $t^3(1+c_1t+c_2t^2+\cdots)$. By the same token, 
$\mathbb C[[t^2, t^3]]/ (f(t))$ can be generated by $1, t^2, t^4$, hence $\dim_{\mathbb C} \mathbb C[[t^2, t^3]]/ (f(t))\leq 3$, leading 
to a contradiction. We thus conclude that $f\in (x,y)^2$, hence $X$ is singular at $z_1 = (0,0)$. 
\end{proof}
\par
\begin{lemma}
\label{le:g3node}
Let $E$ be an irreducible plane rational nodal cubic with $z_1$ as its node. Suppose that a 
plane quartic $X$ has intersection multiplicity 
$(X\cdot E)_{z_1} \geq 4$ and that $X$ is smooth at $z_1$. Then $z_1$ is not a flex of $X$.  
\end{lemma}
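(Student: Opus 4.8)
The plan is to argue locally at the node and split the intersection multiplicity branch by branch. Since $E$ is a nodal cubic, at $z_1$ it has two smooth analytic branches $B_1$ and $B_2$ whose tangent lines are \emph{distinct} (this is the defining property of a node), while $X$, being smooth at $z_1$, has a single branch with a single tangent line $T$. First I would record the additivity of local intersection multiplicity over branches,
$$(X\cdot E)_{z_1} = (X\cdot B_1)_{z_1} + (X\cdot B_2)_{z_1}.$$
Because $X$ is smooth its tangent line $T$ can coincide with at most one of the two distinct branch tangents, so $X$ is tangent to at most one of $B_1,B_2$; hence one of the two summands equals $1$. Together with $(X\cdot E)_{z_1}\ge 4$ this forces, after relabelling, $(X\cdot B_1)_{z_1}=1$ and $(X\cdot B_2)_{z_1}\ge 3$. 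In particular $T$ is the common tangent line of $X$ and $B_2$ at $z_1$, and $T$ meets the other branch $B_1$ transversally.

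Next I would pin down the contact of $T$ with $E$ by B\'ezout. As $E$ is irreducible of degree three, $T$ is not a component of $E$, so $T\cdot E$ has total degree $3$; at the node $(T\cdot E)_{z_1}=(T\cdot B_1)_{z_1}+(T\cdot B_2)_{z_1}=1+(T\cdot B_2)_{z_1}$. If $(T\cdot B_2)_{z_1}$ were $\ge 3$, this local contribution alone would be $\ge 4>3$, contradicting B\'ezout. Hence $(T\cdot B_2)_{z_1}=2$ exactly, i.e.\ $B_2$ has an ordinary, non-inflectionary tangency with its own tangent line $T$.

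The final step transfers this to $X$. Choosing affine coordinates $(x,y)$ with $z_1=(0,0)$ and $T=\{x=0\}$, I would write $X$ as a graph $x=\phi(y)=c_2 y^2+c_3 y^3+\cdots$ (no linear term, since $T$ is the tangent line of $X$) and $B_2$ as $x=\psi(y)=\beta y^2+\cdots$ (no linear term, since $T$ is also its tangent line). The condition $(X\cdot B_2)_{z_1}\ge 3$ amounts to $\operatorname{ord}_y\bigl(\phi(y)-\psi(y)\bigr)\ge 3$, hence $c_2=\beta$; and $(T\cdot B_2)_{z_1}=2$ from the previous paragraph gives $\beta\neq 0$, so $c_2\neq 0$. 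Since $(X\cdot T)_{z_1}=\operatorname{ord}_y\phi(y)=2$ whenever $c_2\neq 0$, the line $T$ meets $X$ at $z_1$ with multiplicity exactly two, which is precisely the statement that $z_1$ is \emph{not} a flex of $X$.

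The only delicate point is the bookkeeping in the last paragraph: one must be careful that the second-order coefficient of $X$ along $T$ is genuinely identified with the second-order coefficient $\beta$ of the branch $B_2$ (rather than with some mixed quantity), so that the B\'ezout bound $\beta\neq 0$ applies directly. Everything else reduces to the standard branch-additivity of local intersection numbers and a one-line degree count, so I expect this identification to be the crux of the argument.
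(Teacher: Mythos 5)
Your proof is correct, and it reaches the conclusion by a genuinely different route in its decisive step. Both arguments open the same way: branch additivity at the node plus smoothness of $X$ force one branch to meet $X$ transversally and the other, $B_2$, to meet it with multiplicity $\geq 3$, so that the tangent $T$ of $X$ is the tangent of $B_2$. From there the paper writes $E$ in the normal form $y^2=x^2+x^3$, parameterizes the relevant branch rationally, and shows by direct coefficient manipulation that the flex condition together with the order-$\geq 3$ branch contact forces $a_{00}=a_{10}=a_{01}=0$, i.e.\ $X$ singular at $z_1$ — a contradiction. You instead apply B\'ezout to $T\cdot E$ to conclude $(T\cdot B_2)_{z_1}=2$ exactly (this is where irreducibility and $\deg E=3$ enter), and then transfer that non-inflectionary contact to $X$ through the order-$\geq 3$ contact of $X$ with $B_2$, getting $(X\cdot T)_{z_1}=2$ directly. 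Your version is coordinate-free, isolates the one genuinely geometric input (a branch of an irreducible cubic cannot have inflectionary tangent at a node, by degree count), and proves the positive statement rather than deriving a contradiction; the paper's version buys self-containedness at the cost of an explicit parameterization and coefficient bookkeeping. The point you flag as delicate — identifying the second-order coefficients of the two graphs over $T$ — is indeed the crux, but it is handled correctly: for two smooth analytic branches written as graphs $x=\phi(y)$ and $x=\psi(y)$ over their common tangent, the local intersection number is $\operatorname{ord}_y(\phi-\psi)$, so $(X\cdot B_2)_{z_1}\geq 3$ gives $c_2=\beta$ and $(T\cdot B_2)_{z_1}=2$ gives $\beta\neq 0$, exactly as you state.
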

\par
\begin{proof}
Without loss of generality, let $y^2 - x^2 - x^3 = 0$ be the equation of 
$E$ and $z_1 = (0,0)$. Then the two branches at the node have tangent lines 
$L^-: x - y = 0$ and $L^{+}: x+y=0$, respectively. Since $X$ is smooth at $z_1$, 
it intersects one branch, say the one tangent to $L^+$, transversality, and 
intersects the other with multiplicity $\geq 3$. We use a local rational parameterization 
of $E$ by setting $x = s(s+2)$ and $y = s(s+1)(s+2)$. Suppose 
$f(x,y) = \sum_{i+j\leq 4}a_{ij}x^iy^j$ is the defining equation of $X$. 
Then we have  
$$f(s) = \sum_{i+j\leq4}a_{ij}s^{i+j}(s+2)^{i+j}(s+1)^j, $$ 
and $\dim_{\mathbb C}\mathbb C[[s]] / (f(s)) \geq 3$. 
Writing out the coefficients we see that 
$$ a_{00} = 0, \quad  a_{10} + a_{01} = 0, \quad  a_{01} + 2(a_{20} + a_{11} + a_{02}) = 0. $$
Now suppose that $z_1$ is a flex of $X$. Then $L^{-}$ is the corresponding flex line. 
The condition $\dim_{\mathbb C}\mathbb C[[x,y]] / (f(x,y), x-y) \geq 3$ implies that 
$$a_{00} = 0,  \quad  a_{10} + a_{01} = 0, \quad a_{20} + a_{11} + a_{02} = 0. $$
Combining the above equations, we conclude that 
$$ a_{00} = a_{10} = a_{01} = 0, $$
hence $f \in (x,y)^2$ and $X$ is singular at $z_1$, contradicting the assumption. 
\end{proof}
\par
\begin{proof}[Proof of Proposition~\ref{prop:Eirred}]
It remains to show that the $z_i$ are located at non-singular points of $E$. 
In the case of $\QQ(9,-1)$ the possibility that $z_1$ is a singular 
point of $E_1$ can be ruled out by Lemmas~\ref{le:g3cusp}.
In the case of $\QQ(6,3,-1)$ the fact that the $6$-fold zero $z_1$ is a 
smooth point can also be verified by Lemmas~\ref{le:g3cusp} 
and \ref{le:g3node}. Smoothness at $z_2$ is clear because 
the line $\overline{z_1z_2}$ intersects $E$ at $z_2$ with multiplicity one. 
The case $\QQ(3,3,3,-1)$ is dealt with by the same argument, using
the line through $z_1$, $z_2$ and $z_3$ instead.
\end{proof}
\par
We conclude this section by showing that an exceptional component in genus three cannot be contained in the
hyperelliptic locus.
\par
\begin{proof}[Proof of Lemma~\ref{le:nohypcomponent}]
Suppose on the contrary that such a component exists. In Section~\ref{sec:g3construction} we 
added a line $L$ in $\PP^2$ to the given bicanonical divisor in order 
to work with the effective divisor $\LL(X,q)+L \cdot X$. Such a line corresponds 
to a conic in $\PP^5$ under the Veronese embedding.  Here we use the 
hyperelliptic assumption and add a line in  $\PP^5$ to get
an effective divisor. More precisely, we first start with $(X,q) \in \QQ(k_1,\ldots,k_n,-1)$ 
and $X$ hyperelliptic. As in Section~\ref{logsurface}, let $S$ be a cone over a rational normal quartic
in $\PP^5$, which is the image of the ruled surface $F_4$. On $F_4$, the class of $X$ is $2e+8f$ 
and $\omega_X\sim \OO_X(2f)$. Take a ruling  $f_p$ passing through the unique pole $p$ of $\divisor(q)$. 
Let $p' = (f_p\cdot X) - p$ be the conjugate of $p$ in $X$.
Let $D = \divisor(q) + p$ be a degree $9$ divisor. A bicanonical
divisor is a section of $\OO_{X}(e+4f)$, so $D$ is a section of $\OO_X(e+5f)$.
By the exact sequence 
$$ 0\to \OO_{F_4}(-e - 3f) \to \OO_{F_4}(e+5f) \to \OO_X(e+5f) \to 0, $$
we know that $D+p'$ is cut out by a unique rational curve $R$ of class $e + 5f$. The pair $(R,D+p')$ 
takes the role of $(E,\sum_{i=1}^n k_i z_i + r_1+r_2+r_3)$. Then we can mimic the dimension count
in Proposition~\ref{prop:g3Sred}.
\par
Suppose that we have shown that $R$ is irreducible. The dimension of the linear system containing $R$ 
equals $\dim \PP H^0(F_4, e+5f) = 7$. Hence 
the parameter space for $(R, D+p')$ has dimension $n+8$. From 
$$ 0\to \OO_{F_4}(e + 3f) \to \OO_{F_4}(2e+8f) \to \OO_R(2e+8f) \to 0 $$
we deduce that for fixed $(R,D+p')$ the space of $X$ with $X\cdot R = D + p'$  
has dimension $h^0(F_4, e + 3f) =4$. 
From the triple $(X, R, D+p' )$ we recover $\divisor(q) = X \cdot R - X \cdot f_{p'}$.
Altogether, since the automorphism group of $F_4$ has dimension $9$, the space
of the triples $(X, R, D+p')$ has dimension $n+3$, smaller than 
$\dim \PP\QQ(k_1,\ldots,k_n,-1) = n+4$.
\par
Finally, we deal with the case when $R$ is reducible. In this case $R$ consists of $a$ 
fibers along with a component of class $e + (5-a)f$. Since $e^2=-4$, we have $a = 5$ or
$a = 1$, for otherwise $R$ would not be effective. 
\par
In the first case we can disregard $e$, since $e\cdot X = 0$. Let $f$ be the
fiber containing the pole $p$. It contains also some zero $z_i$, different from $p$.
This implies that $z_i = p'$, hence $f \cdot X = 2z_i$ and $z_i$ is a Weierstrass 
point, contradicting that $p \neq z_i$.
\par
Now consider $a=1$. In the case $\QQ(9,-1)$ we have $f \cdot X = 2z_1$ by the
same argument, hence $10 z_1 \sim 9 z_1 + p'$ and $z_1 \sim p'$, leading to the same contradiction. In the
remaining cases $f \cdot X = 2z_1$ with $z_1$ a $6$-fold zero or  
$f \cdot X = z_1 + z_2$ runs into a similar contradiction. We thus may suppose
that $f \cdot X =2 z_i$ with $z_i$ a $3$-fold zero. In this case we count
the dimension as above. 
\par
The component of class $e + 4f$ moves in a linear system of dimension $5$. 
For the stratum $\QQ(6,3,-1)$ the choice of $z_1, z_2$ and $p$ gives $3$ more
parameters. Now the choice of $X$ accounts for $h^0(F_4, e + 4f) =6$
parameters, and quotienting by ${\rm Aut}(F_4)$ we obtain a parameter space
of dimension $5$, less than $\dim \PP\QQ(6,3,-1) = 6$. For the stratum $\QQ(3,3,3,-1)$
the parameter space has one more dimension stemming from the choice of $z_3$, 
but since the dimension of this stratum is one larger than $\dim \QQ(6,3,-1)$
we deduce the same contradiction.
\end{proof}

\subsection{Genus four} \label{sec:g4local}
We prove here the technical lemmas for the exceptional strata in genus four.
Although this may seem a tedious case distinction, it cannot be circumvented
in some form, since the hyperelliptic components appear from our perspective
for some type of reducible parity curves.
\par
\begin{proof}[Proof of Lemma~\ref{le:g4Esuffsmooth}]
The elliptic curve $E$ has class $(2,2)$ on $Q$.
For the stratum $\QQ(12)$ the curve $E$ is indeed irreducible and reduced. 
If it is not the case, suppose first that $E$ contains a conic $C_1$ of class $(1,1)$. The the other 
component of $E$ is also a conic $C_2$ of class $(1,1)$. Both $C_1$ and $C_2$ cut out $6z_1$ in $X$, hence 
$q$ is a global square, leading to a contradiction. The other possibility is that   
$E$ consists of a line $L$ of class $(1,0)$ union a curve $R$ of class $(1,2)$, 
we have $L\cdot X = 3z_1$ and $R\cdot X = 9z_1$. Then $h^0(X, 3z_1) = 2$, 
hence $3z_1$ and $z_1 + r + s$ both admit a $g^1_3$, where $r, s$ are points in 
$X$ not equal to $z_1$. Then $\omega_X\sim 4z_1 + r + s$ and $4z_1\sim 2r + 2s$. 
Since $h^0(X, 4z_1) = h^0(X, 3z_1) = 2$, we conclude that 
$p$ is a base point of $|4z_1|$. But $2r + 2s$ is a section of this linear
series, which contradicts that $r, s\neq z_1$. 
\par
For all the other strata we first suppose that $E$ is not
reduced. If $E = 2C$ with $C$ a conic of class $(1,1)$, 
then $q$ is a global square, contradicting the standing 
assumption. If $E = 2L + C$ with $L$ a line of class $(1,0)$ and $C$ a curve of class $(0,2)$, then 
$C$ has to be a union of two distinct lines $L_1, L_2$ of class $(0,1)$. Each $L_i$ cuts out a degree $3$ divisor in $X$. 
Note that $L_1\cdot X$ and $L_2\cdot X$ are disjoint, because they belong to two different rulings in the same $g^1_3$.
Moreover, if $L$ and $L_i$ meet $X$ at the same $z_j$, then one of them intersects $X$ transversely at $z_j$. Using these observations 
we can easily rule out $\QQ(9,3)$ and $\QQ(6,6)$. For $\QQ(6,3,3)$, the only possibility is that $L\cdot X = 3z_1$, $L_1\cdot X = 3z_2$ 
and $L_2\cdot X = 3z_3$. It implies that $\omega_X\sim 3z_1 + 3z_2$, 
hence $3z_1$ gives a $g^1_3$ and $3z_2 \sim 3z_3$ gives the other.  
This is possible, but we claim that then $h^0(X, 2z_1 + z_2 + z_3) = 1$. 
Otherwise by Riemann-Roch we have $h^0(X, z_1 + 2z_2 - z_3) = 1$, 
hence $z_1 + 2z_2$ also provides a $g^1_3$, leading to a contradiction.
For $\QQ(3,3,3,3)$, the only possibility is that $L\cdot X = z_1 + z_2 + z_3$ 
(up to relabeling the $z_i$). Then $(L_1 + L_2)\cdot X = z_1 + z_2 + z_3 + 3z_4$ 
and one of them, say $L_1$, has to contain two points among $z_1, z_2, z_3$, 
which contradicts that $L_1\neq L$.  
\par
Now assume that $E$ is reduced, but not sufficiently smooth and
moreover that $\dim H^0(X,\divisor(q)/3) =2$. Below we will seek a contradiction.  
The assumption implies the existence of a hyperplane $H$ such that 
$H \cdot E = \divisor(q)/3$. Suppose first that $E = L + R$ 
for some line $L$, say of class $(1,0)$, necessarily contained in $H$. 
Then $H \cdot Q = L + L'$ with $L'$ of class $(0,1)$. 
\par
If $R$ decomposes further, say it contains another line $L_2$ of class $(1,0)$
and a curve $C$ of class $(0,2)$. From $H \cdot E = \divisor(q)/3$ we 
deduce that $L \cdot L' = z_1$, $L_2 \cdot L' = z_2$ and $L \cdot C = z_3+z_4$
are the zeros of $q$ (for some choice of numbering). But then $X \cdot (L+L_2+C)$
has multiplicity at most one at $z_1$, contradicting that it is at least three. If $R$
contains a line $L_2$ of class $(0,1)$ the intersection point of $L$ and $L'$
provides again the contradiction.
\par
Suppose now that $R$ is irreducible. We let again $L \cdot L' = z_1$
and obtain the same contradiction as above unless $R$ passes through $z_1$.
Then, if $z_2$ denotes the second intersection point of $L$ and $R$, we
obtain $H \cdot E = (L+L') \cdot (L + R) = 3z_1 + z_2$. Consequently, we deal
with the stratum $\QQ(9,3)$. Since $L' \cdot E = 2z_1$, a section 
of one of the two $g^1_3$ corresponding to $L'$ must be $2z_1 + t$ for some $t$. 
Then $\omega_X \sim (L + L')\cdot X = (z_1 + 2z_2) + (2z_1 + t)$, since $L$ and $L'$ belong to 
different ruling classes due to $H \cdot Q$ being of class $(1,1)$. Doubling it,
we obtain $6z_1 + 4z_2 + 2t \sim 9z_1 + 3z_2$, hence $3z_1 \sim z_2 + 2t$ is also a 
$g^1_3$. But one checks that it can neither be equivalent to $z_1 + 2z_2$ nor 
to $2z_1 + t$. 
\par 
The second case is that $E$ consists of two irreducible conics $C_1$ and $C_2$. If
the supports of $X \cdot C_1$ and  $X \cdot C_2$ are disjoint, then $E$ is
sufficiently smooth. If these supports are equal, then $q$ is a global square.
These supports can consist of at most three points, since otherwise $C_1$
and $C_2$ have three points in common, contradicting the intersection degree.
The same argument applies to counting a common tangent, e.g.\ for $\QQ(6,3,3)$
and  $X \cdot C_1 = 4 z_1 + 2z_2$. If $X \cdot C_1 - X \cdot C_2$ is the difference of 
two effective divisors of degree two, then $X$ is hyperelliptic. Using these
arguments, only the following cases remain.
\par
Case $\QQ(9,3)$ with $X \cdot C_1 = 6z_1$ and  $X \cdot C_2 = 3z_1 + 3z_2$.
This implies $3z_1 \sim 3z_2$, hence the ruling $L$ tangent to $X$ at
$z_1$ is a flex line. But then $C_1 = 2L$, contradicting its irreducibility.
\par
Case $\QQ(6,6)$ with $X \cdot C_1 = 5z_1 + z_2$ and  $X \cdot C_2 = z_1 + 5z_2$.
The existence of a hyperplane $H$ with $H \cdot X = 2z_1 + 2z_2$ implies
$\omega_X \sim 5z_1 + z_2 \sim z_1 + 5z_2 \sim 2z_1 + 2z_2 + r + s$ for some $r,s$.
Hence $h^0(X,3z_1) = h^0(X, 3z_2) =2$. 
Consequently, we also have $h^0(X, 2z_1 + z_2) = h^0(X, z_1 + 2z_2) =2$. 
Since $X$ is not hyperelliptic, the
ruling through $z_1$ and $z_2$ is tangent both at $z_1$ and $z_2$, a contradiction.
\par
Case $\QQ(6,3,3)$ with $X \cdot C_1 = 5z_1 + z_2$ and  $X \cdot C_2 = z_1 + 2z_2 + 3 z_3$.
The existence of a hyperplane $H$ with $H \cdot X = 2z_1 + z_2 + z_3$ implies
$$\omega_X \sim 5z_1 + z_2 \sim z_1 + 2z_2 + 3 z_3 \sim 2z_1 + z_2 + z_3 + r + s$$ 
for some $r,s$. Since $X$ is not hyperelliptic, $h^0(\omega_X(-z_1-z_2)) \leq 2$, 
hence $z_2 + z_3 = r+s$, which leads to the contradiction $z_1 \sim z_2$.
\par
Case $\QQ(3,3,3,3)$ with $X \cdot C_1 = 3z_1 + 2z_2 + z_3$ and  $X \cdot C_2 = z_2 + 2z_3 + 3z_4$.
The existence of a hyperplane $H$ with $H \cdot X = z_1 + z_2 + z_3 + z_4$ implies
that $h^0(X, 2z_1 + z_2)  = h^0(X, z_1+z_2+z_3) = 2$.
This is a contradiction, since they both contain $z_1$ and $z_2$.
\end{proof}
\par
\begin{proof}[Proof of Lemma~\ref{le:noGKhypcomp}] This
follows from the two lemmas below, given that the strata are smooth.
\end{proof}
\par
\begin{lemma}
No component of a stratum in $\cE_4$ except the hyperelliptic components lies
entirely in the hyperelliptic locus.
\end{lemma}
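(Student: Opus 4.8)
The plan is to reconstruct the locus of hyperelliptic half-translation surfaces inside each stratum of $\cE_4$ from the scroll description of Section~\ref{g=4: logsurface} and to bound its dimension, in complete analogy with the genus three argument proving Lemma~\ref{le:nohypcomponent}. Since the two components $\widetilde{\QQ}(k_1,\ldots,k_n)^\irr$ and $\widetilde{\QQ}(k_1,\ldots,k_n)^\reg$ are irreducible of dimension $n+5=\dim\PP\QQ(k_1,\ldots,k_n)$ by Proposition~\ref{prop:g4parametersp}, it suffices to show that the hyperelliptic locus of the stratum, after removing the hyperelliptic component of Section~\ref{sec:hyploci}, has dimension strictly less than $n+5$; an $(n+5)$-dimensional irreducible component distinct from the hyperelliptic one cannot then be contained in it. So let $(X,q)$ be hyperelliptic. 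By Section~\ref{g=4: logsurface} its bicanonical image lies on the scroll $F_5$ with $X$ of class $2e+10f$ and $\omega_X=\cO_X(3f)$, and $\divisor(q)$ equals the intersection $R\cap X$ for a curve $R$ of class $e+6f$; here $R$ is rational, and since the restriction $|e+6f|\to|\omega_X^{\otimes 2}|$ is an isomorphism, $R$ is uniquely determined by $q$.

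First I would treat the case that $R$ is irreducible. The count then runs parallel to Proposition~\ref{prop:g4parametersp}, with the elliptic parity curve replaced by the rational curve $R$: the curve $R$ varies in the $8$-dimensional system $|e+6f|$, and for fixed $R\cong\PP^1$ the exact sequence
$$ 0\to\cO_{F_5}(e+4f)\to\cO_{F_5}(2e+10f)\to\cO_R(2e+10f)\to 0 $$
shows that the sections $X$ whose restriction to $R$ has the prescribed zero profile $(k_1,\ldots,k_n)$ form a family of dimension $n+5$, the summand $5=h^0(F_5,e+4f)$ arising from the kernel of the restriction. Dividing the resulting $(n+13)$-dimensional parameter space by $\operatorname{Aut}(F_5)$, of dimension $10$, leaves dimension $n+3<n+5$. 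The dimension drop relative to Proposition~\ref{prop:g4parametersp} comes exactly from $R$ being rational and hence carrying no modulus, just as in Lemma~\ref{le:nohypcomponent}.

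It then remains to analyse reducible $R$. As the effective cone of $F_5$ is spanned by $e$ and $f$, such an $R$ either contains the section $e$, whence $R=e+\sum_j a_jf_j$ with $\sum_j a_j=6$, or is of the form $R=S+f_0$ with $S$ an irreducible section of class $e+5f$ and $f_0$ a ruling. In the first case $e\cdot X=0$ gives $\divisor(q)=\sum_j a_j\,(f_j\cdot X)$, which is invariant under the hyperelliptic involution, so $q=\phi^{*}\bar q$ is pulled back from a section $\bar q$ of $\cO_{\PP^1}(6)$ on the quotient $\PP^1$. The crux of the proof is then a stratum-by-stratum determination of which profiles are realized by a genuine (non-square) such $q$: since $\bar q$ contributes even order at a branch point and equal orders to a conjugate pair, the genuine invariant realizations are two distinct Weierstrass points for $\QQ(6,6)$, one Weierstrass point and one conjugate pair for $\QQ(6,3,3)$, and two conjugate pairs for $\QQ(3,3,3,3)$, of dimensions $7$, $8$ and $9$, each equal to $n+5$. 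These are non-square (owing to the odd orders, or, for $\QQ(6,6)$, because the two Weierstrass points are distinct) and are exactly the hyperelliptic components of Section~\ref{sec:hyploci}. For $\QQ(12)$ the only invariant realization is $\divisor(q)=12z_1$ with $z_1$ a Weierstrass point, which is a global square and hence excluded by Proposition~\ref{global}; for $\QQ(9,3)$ the odd order $9$ cannot occur for an invariant differential at all. In the remaining case $R=S+f_0$, the section $S$ must be tangent to $X$ along most of the prescribed zeros, and these extra tangency conditions force the dimension below $n+5$.

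Collecting the cases, the hyperelliptic locus of each stratum is the union of the $(n+3)$-dimensional irreducible-$R$ part, the invariant part, and the degenerate $R=S+f_0$ part; after removing the hyperelliptic component only loci of dimension strictly less than $n+5$ survive. Hence neither $\widetilde{\QQ}(k_1,\ldots,k_n)^\irr$ nor $\widetilde{\QQ}(k_1,\ldots,k_n)^\reg$ is contained in the hyperelliptic locus. I expect the main obstacle to be precisely the reducible-$R$ analysis: using the quotient differential $\bar q$ and the parity of its orders to separate the genuine hyperelliptic components from the square degenerations and from the lower-dimensional tangential configurations.
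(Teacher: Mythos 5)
Your overall strategy is the paper's own: realize a hyperelliptic $(X,q)$ on the scroll $F_5$ with $X$ of class $2e+10f$, cut $\divisor(q)$ by the unique curve $R\in|e+6f|$, and bound the dimension of each stratum of the resulting parameter space. Your irreducible-$R$ count ($8+n+5-10=n+3$) coincides with the paper's, and your analysis of the case $R\supseteq e$ via the descended differential $\bar q\in H^0(\PP^1,\OO(6))$ is a clean, essentially equivalent reorganization of the paper's fiber-by-fiber case check; it correctly isolates the hyperelliptic components of $\QQ(6,6)$, $\QQ(6,3,3)$, $\QQ(3,3,3,3)$ and kills $\QQ(12)$ (global square) and $\QQ(9,3)$ (parity). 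One small unjustified step there: $\divisor(q)$ being $\sigma$-invariant only gives $\sigma^*q=\pm q$; you need to rule out the anti-invariant case before writing $q=\phi^*\bar q$. This is easy (an anti-invariant quadratic differential on a hyperelliptic genus-four curve vanishes at all ten Weierstrass points, which is incompatible with every profile in $\cE_4$), but it should be said.

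The genuine gap is the case $R=S+f_0$ with $S$ irreducible of class $e+5f$. You assert that "extra tangency conditions force the dimension below $n+5$" without any argument, and tangency is not actually the mechanism: the correct argument is the same style of count you already performed, not a new one. Concretely, $S$ now moves in $|e+5f|$, of projective dimension $6$ rather than $8$; the fiber $f_0$ must pass through one of the $n$ points of $D$ and is therefore determined up to finitely many choices by $(S,D)$; and the space of curves $X\in|2e+10f|$ with prescribed restriction to $S$ is a torsor under $H^0(F_5,e+5f)$, of dimension $7$. The total is again $6+n+7-\dim\operatorname{Aut}(F_5)=n+3<n+5$. Without this (or some substitute) the reducible, non-$e$-containing decompositions are not excluded, and the lemma is not proved. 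Since you flag this case yourself as the remaining obstacle, the fix is within reach, but as written the proof is incomplete precisely there.
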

\par
\begin{proof}
By Section~\ref{g=4: logsurface}, a hyperelliptic curve $X$ of genus four
lies in $S_{1,6}$, which is the image of the Hirzebruch surface 
$F_{5}$ in $\PP^8$. Let $R$ be a rational curve of class $e+6f$. Since $e\cdot X = 0$, we have $\OO_X(R)\sim \OO_X(6f)\sim 
\omega^{\otimes 2}_X$. By the exact sequence 
$$ 0\to \OO_{F_5}(-e-4f)\to \OO_{F_5}(e+6f)\to \OO_{X}(e+6f)\to 0, $$
there exists a unique section $R$ in $|e+6f|$ that cuts out $D=\divisor(q)$ in $X$. 
\par
The pair $(R, D)$ now takes the role of $(E, D)$ and we mimic 
the dimension count in Proposition~\ref{prop:g4parametersp}. Suppose that $R$ is irreducible.
The parameter space for $R$ has dimension equal to 
$\dim \PP H^0(F_5, e+6f) = 8$.
Hence the parameter space for $(R, D)$ has dimension $8+n$. From 
$$ 0\to \OO_{F_5}(e+4f)\to \OO_{F_5}(2e+10f)\to \OO_{R}(2e+10f)\to 0, $$
we deduce that for fixed $(R,D)$, the space of (necessarily hyperelliptic) genus
four curves $X$ with $X \cdot R = D$ has dimension
$h^0(F_5, e+4f) = 5$. From $(X, R, D)$ we recover 
$\divisor(q) = X \cdot R$. Since the automorphism group of $F_5$ is $10$-dimensional, 
the space of triples $(X, R, D)$ has dimension $n+3$, smaller than
$\dim \PP \QQ(k_1,\ldots,k_n) = n+5$.
\par
Finally, we have to treat the case when $R$ is reducible. Suppose $R$ consists
of $a$ fibers $f$ along with a component of class $e+(6-a)f$. Since $e^2 = -5$ we have
$a=6$ or $a=1$, since otherwise $R$ would not be effective. 
\par
We first deal with the case $a=1$ and perform a dimension count similar
to the above. The parameter space for the component of class $e+5f$ has dimension 
equal to $\dim \PP H^0(F_5,\cO_{F_5}(e+5f)) = 6$ and the parameter space
for $(R, D)$ has dimension $6+n$. The other component of $R$
of class $f$ intersects $X$ in $2z_i$ or in $z_i + z_j$. In any case, it
is determined up to finitely many choices by $R$ and $D$. Now
the space of genus four curves $X$ with $X \cdot R = D$ has dimension
$h^0(F_5, e+5f) = 7$. As above, since the automorphism group of $F_5$ is 
$10$-dimensional, the space of triples $(X, R, D)$ has 
dimension $n+3$, smaller than the dimension of the corresponding stratum. 
\par
Finally, we deal with the case $a=6$ to retrieve the
hyperelliptic components. Now $R$ is the divisor $e$ union several (possibly 
non-reduced) fibers.
\par
For $\QQ(12)$, the fiber has to have multiplicity $6$ and $z_1$ has to be a Weierstrass point.
But then $q$ would be a global square, contradicting the hypothesis.
\par
For $\QQ(9,3)$ both $k_i$ are odd, hence it is impossible that $9z_1 + 3z_3$
is cut out by fibers only. 
\par
For $\QQ(6,6)$ and $\divisor(q) = 6z_1 + 6z_2$, there are two possibilities. 
First $R$ may contain a $6$-fold fiber $f$ with $f \cdot X = z_1 + z_2$. But then 
the $z_i$ are Weierstrass points and $q$ is a global square, contradicting the hypothesis.
The other case is that $R$ contains two $3$-fold fibers $f_1$ and $f_2$ with $f_i \cdot X = 2z_i$. 
Then both $z_i$ are Weierstrass points. We recover in this way the hyperelliptic 
component of this stratum.
\par
For $\QQ(6,3,3)$, there is only one possibility, namely $R$ contains
$3f_1 + 3f_2$, where $f_1 \cdot X = 2z_1$ and $f_2 \cdot X = z_2 + z_3$.
 Hence $z_1$ is a
Weierstrass point and $z_2, z_3$ are conjugate. We thus  recover the hyperelliptic 
component. 
\par
For $\QQ(3,3,3,3)$  there is again only one possibility, namely that
$R$ contains $3f_1 + 3f_2$, with $f_1 \cdot X = z_1 + z_2$ and $f_2 \cdot X = z_3 + z_4$ 
for an appropriate ordering of the zeros. In this way we recover again
the hyperelliptic component  of this stratum.
\end{proof}
\par
\begin{lemma}
No component of a stratum in $\cE_4$ is contained entirely in 
the Gieseker-Petri locus but is not entirely contained in the hyperelliptic locus.
\end{lemma}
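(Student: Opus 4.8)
\emph{Proof plan.} The plan is to repeat the dimension count of the preceding lemma, but to replace the bicanonical scroll $F_5$ used there by the canonical model of a Gieseker-Petri special curve. Suppose for contradiction that a component $Z$ of some stratum $\QQ(k_1,\ldots,k_n)$ in $\cE_4$ lies entirely in the Gieseker-Petri locus but not entirely in the hyperelliptic locus. Since strata of quadratic differentials are smooth and the smooth curves are dense, the generic member $(X,q)$ of $Z$ is a smooth, hence $3$-connected, non-hyperelliptic, Gieseker-Petri special curve, with $q$ not a global square. By the description of such curves in Section~\ref{g=4: logsurface}, the canonical image of $X$ lies on a quadric cone whose blow-up at the vertex is the Hirzebruch surface $F_2$, on which $X$ has class $3e+6f$ and the parity curve $E$ cutting out the zeros, $E\cdot X=\divisor(q)$, has class $2e+4f=-K_{F_2}$.

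First I would record the numerics on $F_2$: $e^2=-2$, $e\cdot f=1$, $f^2=0$, $\dim{\rm Aut}(F_2)=7$, and, by Riemann-Roch, $h^0(F_2,2e+4f)=9$, $h^0(F_2,3e+6f)=16$, $h^0(F_2,e+2f)=4$, while $E\cdot X=12$. Since the restriction $H^0(F_2,2e+4f)\to H^0(X,\omega_X^{\otimes 2})$ has kernel $H^0(F_2,-e-2f)=0$ and both spaces have dimension $9$, it is an isomorphism; hence $E$ is the \emph{unique} curve of class $2e+4f$ with $E\cdot X=\divisor(q)$, and $(X,q)$ determines the triple $(F_2,X,E)$ up to ${\rm Aut}(F_2)$.

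Next I carry out the count in the dominant case that $E$ is irreducible, hence a genus-one curve. It varies in a linear system of projective dimension $\dim\PP H^0(F_2,2e+4f)=8$. For a fixed such $E$, the divisor $\divisor(q)$ is an effective divisor of type $(k_1,\ldots,k_n)$ on the \emph{elliptic} curve $E$ linearly equivalent to $\OO_E(3e+6f)$; on a genus-one curve this linear equivalence is a single condition on the $n$ points (the group-law sum is fixed), so these divisors move in an $(n-1)$-dimensional family. For each of them the curves $X\in|3e+6f|$ with $X\cdot E=\divisor(q)$ form a torsor under $H^0(F_2,e+2f)$, of dimension $4$, the restriction being surjective since $16-4=12=\deg_E\OO_E(3e+6f)$. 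As a generic $X\subset F_2$ has finite stabilizer in ${\rm Aut}(F_2)$, the locus of such $(X,q)$ in the stratum has dimension at most $8+(n-1)+4-7=n+4$, strictly below $\dim\PP\QQ(k_1,\ldots,k_n)=n+5$ from Proposition~\ref{prop:g4parametersp}. The single unit by which this count beats the stratum dimension is exactly the condition imposed by the group law of the elliptic parity curve, the feature absent in the hyperelliptic count, where the parity curve is rational.

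The main obstacle is to rule out that $E$ is generically reducible or non-reduced along $Z$, which requires the same bookkeeping as the preceding lemma. Because $2e+4f$ lies in the cone generated by $e$ and $f$, its effective decompositions involve only the $(-2)$-section $e$, fibers $f$, and section-type curves $e+bf$ (each of arithmetic genus zero), and for each stratum in $\cE_4$ there are finitely many of them to list. Two observations handle them: $e\cdot X=0$, so a component equal to $e$ does not meet $X$; and since the surviving components are rational, the analogous incidence count returns a dimension at most $n+4$ (in fact strictly smaller, once the codimension-one failure of surjectivity of the restriction map is included). The remaining degenerate configurations --- $E$ non-reduced, or the zeros cut out by fibers, i.e.\ by rulings of the cone --- force $q$ to be a global square or $X$ to be hyperelliptic, contradicting Proposition~\ref{global} or the assumption that the generic member of $Z$ is non-hyperelliptic. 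Since no configuration yields a family of the full dimension $n+5$, no component of a stratum in $\cE_4$ lies in the Gieseker-Petri locus without lying in the hyperelliptic locus.
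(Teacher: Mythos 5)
Your overall strategy is the same as the paper's: realize the Gieseker--Petri special curve on the Hirzebruch surface $F_2$, parameterize triples (parity curve, divisor, curve $X$), and show the incidence variety has dimension below $\dim\PP\QQ(k_1,\ldots,k_n)=n+5$ after quotienting by the $7$-dimensional $\mathrm{Aut}(F_2)$. In the main (irreducible) case your count is not only correct but more careful than the printed one: you use the correct value $h^0(F_2,e+2f)=4$ and you impose the genuinely essential condition that $\divisor(q)\sim\OO_E(3e+6f)$ on the genus-one parity curve, which is where the one missing dimension comes from. (The paper's text instead writes $h^0(F_2,e+2f)=2$ and does not subtract for the group law; the two slips partially compensate, but your $n+4$ is the right number, and without the group-law condition the count would give exactly $n+5$ and prove nothing.) You also correctly justify that $(X,q)$ determines $(F_2,X,E)$ up to $\mathrm{Aut}(F_2)$.

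The gap is in your dispatch of the degenerate parity curves. Your claim that the configurations in which $\divisor(q)$ is cut out by fibers (rulings of the cone) ``force $q$ to be a global square or $X$ to be hyperelliptic'' is false for several of them, and these are exactly the cases the paper has to work hardest on. Since $e\cdot X=0$, a fiber $f$ cuts a degree-$3$ divisor $D_f$ with $2D_f\sim\omega_X$, so for instance four distinct rulings each triply tangent to $X$ give a legitimate point of $\QQ(3,3,3,3)$ with $q$ not a global square (the half-divisor is not integral) and $X$ not hyperelliptic; similarly $2f_1+f_2+f_3$ occurs in $\QQ(6,3,3)$ and $3f_1+f_2$ in $\QQ(9,3)$. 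These configurations must be eliminated by further dimension counts, and the counts are not ``analogous'' in a routine way: when a fiber appears with multiplicity, the fiber of the incidence variety over $(R,D)$ \emph{gains} dimension (the tangency conditions on $X$ weaken) while the base loses it, and one must check the trade-off case by case, as the paper does. The same care is needed when $\divisor(q)$ meets the singular locus of a reducible $R$ (e.g.\ two conics of class $e+2f$ meeting at two points, or a section-type component meeting a multiple fiber), where one has to enumerate how the local intersection multiplicity distributes over the branches before the fiber count is valid. So the skeleton of your argument is right and the decisive idea (the elliptic group law) is in place, but the reducible/non-reduced analysis as stated would not close the proof: at least three of the configurations you dismiss by ``global square or hyperelliptic'' survive that test and need their own counts.
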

\par
\begin{proof}
By Section~\ref{g=4: logsurface} we work on the Hirzebruch surface $F_{2}$ 
with $R$ a rational curve of class 
$2e+4f$ and $X$ of class $3e+6f$. Since $e\cdot X = 0$, we have $\OO_X(R)\sim \OO_X(6f)\sim 
\omega^{\otimes 2}_X$. The argument is parallel to the hyperelliptic case. 
By the exact sequence 
$$ 0\to \OO_{F_2}(-e-2f)\to \OO_{F_2}(2e+4f)\to \OO_{X}(2e+4f)\to 0, $$
there exists a unique section $R$ in $|2e+4f|$ that cuts out 
$D=\divisor(q)$ in $X$. 
\par
Suppose that $R$ is irreducible. The parameter space for $R$ has dimension equal 
to $\dim \PP H^0(F_2, 2e+4f) = 8$.
Hence the parameter space for $(R, D)$ has dimension $8+n$. From 
$$ 0\to \OO_{F_2}(e+2f)\to \OO_{F_2}(3e+6f)\to \OO_{R}(3e+6f)\to 0, $$
we deduce that for fixed $(R,D)$, the space of (necessarily hyperelliptic) genus
four curves $X$ with $X \cdot R = D$ has dimension
$h^0(F_2, e+2f) = 2$. From $(X, R, D)$ we recover 
$\divisor(q) = X \cdot R$. Since the automorphism group of $F_2$ is $7$-dimensional,
the space of triples $(X, R, D)$ has dimension $n+3$, smaller than
$\dim \PP \QQ(k_1,\ldots,k_n) = n+5$.
\par
If $R$ is reducible, there are three possible decompositions: first, two components 
$R_1$ and $R_2$, both of class $e+2f$; second, two components of class $e$ together
with four fibers and third, $R_1 = e + bf$ with $b \geq 2$ together with a
component of class $e$ and $4-b$ fibers.
\par
The components $R_1$ and $R_2$ in the first case  cannot be identical, otherwise $q$ 
would be a global square. Consequently, $X \cdot R_i = 6$ and $R_1 \cdot R_2 = 2$. If the 
divisor $X \cdot R = D$ is supported away from the singular locus $R_{\rm sing}$ 
of $R$, the same dimension count as above goes through. If $D$ meets $R_{\rm sing}$ 
and moreover the components $R_1$ and $R_2$ 
intersect transversely at two nodes $p_1$ and $p_2$, at each node $p_i$ the curve $X$ 
can only be tangent to one of the two branches. This involves a choice of
two possibilities, but once we choose one of the two, i.e.\ once we know which 
branch is tangent to $X$, then the above fiber dimension count is 
still fine. More generally, if $R_1$ and $R_2$ are tangent at $p$ 
and if $D$ contains $p$ with multiplicity $n$, then suppose $(X \cdot R_i)_p = n_i$ 
with $n_1 + n_2 = n$ and $n_i >1$. Once we specify the pair $(n_1, n_2)$, which
again amounts to finitely many choices, the Cartier divisor $D$ is determined and a similar fiber dimension count goes through. 
\par
In the second case, if the four fibers are all distinct,  
$X \cdot R = D$ is located away from  $R_{\rm sing}$ and the dimension count
is the same. If the configuration is $2f_1 + f_2 + f_3$, then 
$X \cdot R = 2D_1 + D_2 + D_3$ with disjoint divisors $D_i$. The only possible 
stratum is $(6,3,3)$ with $D_i = 3z_i$. Since $2f_1$ is a double ruling,
the parameter space of $X$ for a given $(R,D)$ increases dimension by three
compared to the above count, but the base dimension drops by three, so the dimension argument is 
fine. If the configuration is $2f_1 + 2f_2$ or $4f_1$, then $q$ is a 
global square, impossible. Finally, if the configuration is 
$3f_1 + f_2$, we are in the stratum $\QQ(9,3)$. Then the parameter space of $X$ 
for a given $(R,D)$ increases dimension by six compared to the count in the 
irreducible case, but the base dimension drops by seven, so the dimension argument still goes through. 
\par 
In the third case, suppose first that $R$ consists of an irreducible component $R_1$ of 
class $e+3f$ together with a ruling $f_1$ besides $e$. Then $R_1 \cdot f_1 = 1$ and they 
intersect transversely. So we can apply the above argument by specifying $X$ tangent 
to $R_1$ or $f_1$, if $X \cdot R$ contains the node. Next, suppose that $R$ consists 
of $R_1$ of class $e+2f$ and two rulings $f_1, f_2$ besides $e$. First suppose $f_1$ and 
$f_2$ are distinct. If $X$ is disjoint from $R_{\rm sing}$, we are done. If $X$ intersects 
$R_{\rm sing}$, since $f_i \cdot R_1 = 1$, this is also fine. Now suppose 
$f_1 = f_2$ is a double ruling and $X \cdot R$ contains the node $p = f_1 \cdot R_1$. 
Write $(X \cdot R)_p = n$, $(X \cdot R_1)_p = m$ and we have $(X \cdot f_1)_p = k < 4$ with
$m+2k = n$. This amounts to a finite case distinction. Since $R_1$ and $f_1$ 
intersect transversely at $p$, we have $m = 1$ or $k = 1$. If $m =1$, 
the condition imposed to $X$ is $k = (n-1)/2$ instead of $n$ as the expected fiber codimension. But the locus of such  $(R, 2f_1)$ in the linear system 
$|\OO(2)|$ has dimension $4$, i.e.\ codimension $5$, which is higher than 
$n - k = k+1$ since $k < 4$. So the total dimension of the parameter space is not 
enough for being a component. If $k = 1$, then the number of conditions imposed 
to $X$ is $m = n - 2$. The parameter space for $X$ has dimension two larger than
in the irreducible case,  but the base dimension drops by more than that. 
\end{proof}

\section{Varying strata: examples} \label{sec:varying}

In this section we collect all data of half-translation surfaces indicating that
beyond the cases discussed in our main results, most of the other strata are varying. We
emphasize though, that we do not dispose of any proof that all strata
beyond a certain dimension are varying. Below we give lists of explicit
surfaces, discussing one stratum at a time. 
\par
Almost all the half-translation surfaces were calculated using a computer
program by Vincent Delecroix, built on its predecessor by
Anton Zorich. These programs were designed for square-tiled surfaces. We
thus list below the monodromy permutations of the canonical double cover of a 
half-translation surface we are interested in. The program gives the sum $L$ 
of Lyapunov exponents for the double cover, but using \eqref{eq:L+L-} we
can also compute the quantity $L^+$.
\par
In the table 'Index' refers to the index of the Veech group 
in ${\rm SL}_2(\ZZ)$.
\par
{\bf The exceptional strata.} We give examples completing
Theorems~\ref{thm:execg3NV} and~\ref{thm:execg4NV}, proving that certain
components of the exceptional strata are varying:
$$\begin{array}{|l|l||l|l|l|}
\hline
\text{\rm Stratum} \,& \text{\rm Monodromy} & \text{\rm Index} & L & L^+  \\
\hline
\QQ(3,3,3,-1)^\irr & r = (2,3)(4,5,6,7) &  200 & 3 & 
\frac{13}{10} \\
& \phantom{r = \ } (9,10,11,12)(14,15)&&& \\
& u =(1,2)(3,4,8,9,5,10) &&&\\
& \phantom{r = \ }  (6,11,7,13,12,14)(15,16)&&& \\
\QQ(3,3,3,-1)^\irr & r =  (3,4)(5,6,7,8)&  2350 & \frac{150}{47} & \frac{328}{235} \\
& \phantom{r = \ }(10,11,12,13)(15,16) &&& \\
& u =(1,2,3)(4,5,9,10,6,11) &&&\\
& \phantom{r = \ } (7,12,8,14,13,15)(16,17,18) &&& \\
\hline
\hline
\QQ(6,6)^\irr & r = (2,3)(4,5)(7,8) & 768 & \frac{103}{32} & 
\frac{19949}{12015} \\
& \phantom{r = \ }(10,11)(12,13)(15,16) &&& \\
& u =(1,2,4,6,5,10,12)&&&\\
& \phantom{r = \ } (3,7,9,8,11,14,15) &&& \\
\QQ(6,6)^\irr & r = (2,3,4,5)(8,9,10,11) & 48 & \frac{7}{2} & \frac74 \\
& u = (1,2,6,4)(3,7,8,12)&&&\\
& \phantom{r = \ } (5,13,10,15)(9,14,11,16) &&& \\
\hline
\QQ(6,3,3)^\irr & r = (2,3)(4,5)(6,7,8,9) & 9420 & \frac{5593}{1570} & 
\frac{1359}{785} \\
& \phantom{r = \ }(10,11)(12,13,14,15)(17,18) &&& \\
& u =(1,2,4,6,10,12,5)(7,15)&&&\\
& \phantom{r = \ } (3,14,18,11,16,17,8)(9,13)  &&& \\
\QQ(6,3,3)^\irr & r =  (2,3)(5,6,7)(8,9)(10,11) & 3480 & 
\frac{514}{145} & \frac{999}{580} \\
& \phantom{r = \ } (12,13)(15,16,17)&&& \\
& u = (1,2,4,5,8,10,12,6)&&&\\
& \phantom{r = \ }  (3,13,16,18,11,14,15,9)&&& \\
\hline
\QQ(3,3,3,3)^\irr & r = (2,3)(4,5)(6,7,8,9) & 3390 & \frac{2121}{565} & 
\frac{1004}{565} \\
& \phantom{r = \ }(11,12,13,14)(16,17)(18,19) &&& \\
& u =(1,2,4,6,10,11,3,14,7,5)&&&\\
& \phantom{r = \ } (8,15,13,18,17,20,19,12,9,16)  &&& \\
\QQ(3,3,3,3)^\irr & r = (2,3)(5,6)(7,8,9,10)  & 690 & 
\frac{441}{115} & \frac{209}{115} \\
& \phantom{r = \ }(11,12,13,14)(15,16)(17,18) &&& \\
& u = (1,2,4,5,7,3,9,6)(8,11)&&&\\
& \phantom{r = \ } (10,13)(12,15,14,17,19,16,20,18) &&& \\
\hline
\end{array}
$$
\par
\medskip
{\bf In genus one} we give a pair of examples justifying that
$\QQ(4,2,-1^6)$ is varying:
$$\begin{array}{|l|l||l|l|l|}
\hline
\text{\rm Stratum} \,& \text{\rm Monodromy} & \text{\rm Index} & L & L^+  \\
\hline
\QQ(4,2,-1^6) & r = (1,2,3,4,5,6)(7,8,9,10,11,12) & 36 & 
\frac{5}{3} & 0 \\
& u = (1,9,4,12)(2,8,5,11)(3,7,6,10) &&&\\
\QQ(4,2,-1^6) & r = (1,2,3,4,5,6,7,8)& 148  & 
\frac{13}{6} & \frac{1}{4} \\
& \phantom{r = \ }(9,10,11,12,13,14,15,16) &&& \\
& u = (1,9,6,10,4,14,8,12)&&&\\
& \phantom{r = \ } (2,16,5,13)(3,15,7,11) &&& \\
\hline
\end{array}
$$
\par
\medskip

{\bf In genus two} all the strata of dimension at most seven except
for those listed in Theorem~\ref{thm:nvg2} are varying. We give examples in
the two cases, with largest and smallest possible orders of zeros:
$$\begin{array}{|l|l||l|l|l|}
\hline
\text{\rm Stratum} \,& \text{\rm Monodromy} & \text{\rm Index} & L & L^+  \\
\hline
\QQ(8,-1^4) & r = (2,3)(4,5)(6,7) & 160 & 
\frac{11}{5} & \frac{3}{5} \\
& \phantom{r = \ }(9,10)(11,12)(13,14) &&& \\
& u = (1,2,4,6)(3,5,8,9) &&&\\
& \phantom{r = \ } (7,11,13)(10,12,14) &&& \\
\QQ(8,-1^4) & r = (2,3)(4,5)(6,7)& 105  & 
\frac{13}{5} & \frac{4}{5} \\
& \phantom{r = \ }(9,10)(11,12)(13,14) &&& \\
& u = (1,2,4,6,7,10,12)&&&\\
& \phantom{r = \ } (3,5,8,9,11,13,14) &&& \\
\hline
\QQ(2^2, 1^2, -1^2) & r =  (2,3)(6,7,8)(9,10,11)(13,14) & 192 & 3 & 
\frac{7}{6} \\
& \phantom{r = \ } &&& \\
& u =(1,2,4,3,5,6)(7,9)&&&\\
& \phantom{r = \ } (8,10)(11,12,13,15,14,16)  &&& \\
\QQ(2^2, 1^2, -1^2) & r = (2,3,4)(6,7,8)   & 522 & 
\frac{220}{87} & \frac{27}{29} \\
& \phantom{r = \ }(9,10,11)(12,13,14) &&& \\
& u = (1,2,5,6,9,12,4,7)&&&\\
& \phantom{r = \ } (3,8,11,15,13,16,10,14) &&& \\
\hline
\end{array}
$$
\par
\medskip
{\bf In genus three} all the strata of dimension at most eight except
for those listed in Theorem~\ref{thm:nvg3} are varying. We give examples in
the two cases, with largest and smallest possible orders of zeros: 
$$\begin{array}{|l|l||l|l|l|}
\hline
\text{\rm Stratum} \,& \text{\rm Monodromy} & \text{\rm Index} & L & L^+  \\
\hline
\QQ(11,-1^3) & r = (2,3)(4,5)(6,7)(9,10) & 9828 & 
\frac{6388}{2457} & \frac{173}{189} \\
& \phantom{r = \ } (11,12,13)(14,15,16)&&& \\
& u = (1,2,4,6)(3,5,8,9) &&&\\
& \phantom{r = \ } (7,11,13,15)(10,12,14,16) &&& \\
\QQ(11,-1^3) & r =(3,4)(5,6)(7,8)(11,12)& 78390  & 
\frac{101078}{39195} & \frac{2728}{3015} \\
& \phantom{r = \ }(13,14,15)(16,17,18) &&& \\
& u = (1,2,3,5,7)(4,6,9,10,11)&&&\\
& \phantom{r = \ } (8,13,15,17)(12,14,16,18) &&& \\
\hline
\QQ(2,2,2,2) & r = (2,3)(5,6)(9,10)(12,13) & 24 & \frac{5}{2} & 
\frac{5}{4} \\
& u =(1,2,4,5,7,3,8,9)&&&\\
& \phantom{r = \ } (6,11,12,14,10,15,13,16)  &&& \\
\QQ(2,2,2,2) & r =  (2,3,4)(6,7,8)  & 36 & 
\frac{8}{3} & \frac{4}{3} \\
& \phantom{r = \ }(9,10,11)(12,13,14) &&& \\
& u = (1,2,5,6,9,8,10,7)&&&\\
& \phantom{r = \ } (3,12,4,14,16,11,15,13) &&& \\
\hline
\end{array}
$$
\par
\medskip
{\bf In genus four} all the strata of dimension at most nine except
for those listed in Theorem~\ref{thm:nvg4} are varying. We give examples in
the two cases, with largest and smallest possible orders of zeros: 
$$\begin{array}{|l|l||l|l|l|}
\hline
\text{\rm Stratum} \,& \text{\rm Monodromy} & \text{\rm Index} & L & L^+  \\
\hline
\QQ(14,-1^2)^{{\rm nh}} & r = (2,3)(5,6)(7,8)(9,10,11) & 381280 & 
\frac{138559}{47660} & \frac{114729}{95320} \\
& \phantom{r = \ }(13,14)(16,17,18) &&& \\
& u = (1,2,4,5,7,9)(3,10) &&&\\
& \phantom{r = \ } (6,8,12,13,15,16)(14,18) &&& \\
\QQ(14,-1^2)^{{\rm nh}} & r = (3,4)(6,7)(8,9)(10,11,12)&  3454784 & 
\frac{ 626281}{215924} & \frac{518319}{431848} \\
& \phantom{r = \ }(14,15)(18,19,20) &&& \\
& u = (1,2,3,5,6,8,10)(4,11)&&&\\
& \phantom{r = \ } (7,9,13,14,16,17,18)(15,20) &&& \\
\hline
\QQ(4,4,4) & r = (2,3)(5,6)(8,9)& 6480 & \frac{139}{45} & 
\frac{139}{790} \\
& \phantom{r = \ }(10,11)(12,13)(16,17) &&& \\
& u =(1,2,3,4,5,7,8,10,9)&&&\\
& \phantom{r = \ } (6,12,14,11,15,16,17,18,13)  &&& \\
\QQ(4,4,4) & r = (2,3)(5,6)(8,9)  & 180 & 
\frac{44}{15} & \frac{22}{15} \\
& \phantom{r = \ } (10,11)(12,13)(16,17)&&& \\
& u = (1,2,4,5,7,8,10,3,9)&&&\\
& \phantom{r = \ } (6,12,14,11,15,16,18,13,17) &&& \\
\hline
\end{array}
$$
We remark that showing a varying stratum, say for dimension greater
nine in genus four, would heavily requires computer resources. For example, 
in dimension nine, the simplest pillow-case tiled surface
in the stratum $(12,1,-1)$ has $9$ tiles, whose Veech group
has index $292824$ in $\SL_2(\ZZ)$ and $L^+ = 793/581$. The next
example with $10$ tiles has a Veech group of index  $2635416$
in $\SL_2(\ZZ)$ and also $L^+ = 793/581 \approx 1.36488 $. However, this stratum
is varying, as an example with $11$ tiles, a 
Veech group of index $13187664$ and $L^+ = 851/623 \approx 1.3659$ shows.
\par
\medskip
{\bf In genus five} even the smallest stratum is varying: 
$$\begin{array}{|l|l||l|l|l|}
\hline
\text{\rm Stratum} \,& \text{\rm Monodromy} & \text{\rm Index} & L & L^+  \\
\hline
\QQ(16) & r = (2,3)(5,6)(7,8)(9,10,11) & 648810 & \frac{39898}{12015} & 
\frac{19949}{12015} \\
& \phantom{r = \ }(12,13)(16,17,18) &&& \\
& u = (1,2,4,5,7,9)(6,10)&&&\\
& \phantom{r = \ }(3,12,14,8,15,16)(13,18)  &&& \\
\QQ(16) & r =  2,3)(5,6)(7,8,9)(10,11)& 6480 & \frac{10}{3} & \frac53 \\
& \phantom{r = \ }(12,13,14)(16,17) &&& \\
& u = (1,2,4,5)(3,7,10,12,14)&&&\\
& \phantom{r = \ } (6,13,17,9,8)(11,15,16,18) &&& \\
\hline
\end{array}
$$
\end{appendix}
%
%
%
\bibliography{my}

\end{document}